\def\subsubsection{\@startsection{subsubsection}{3}%
  \z@{3pt}{-\fontdimen2\font}%
  {\normalfont\bfseries}}
\def\paragraph{\@startsection{paragraph}{4}%
  \z@{3pt}{-\fontdimen2\font}%
  {\normalfont\bfseries}}
\newtheorem{theorem}{Theorem}[section]
\newtheorem{lemma}[theorem]{Lemma}
\newtheorem{proposition}[theorem]{Proposition}
\newtheorem{claim}[theorem]{Claim}
\theoremstyle{definition}
\newtheorem{remark}[theorem]{Remark}
\newtheorem{algorithm}[theorem]{Algorithm}
\newcommand{\bbE}{\mathbb{E}}
\newcommand{\bbP}{\mathbb{P}}
\newcommand{\bbR}{\mathbb{R}}
\newcommand{\bbT}{\mathbb{T}}
\newcommand{\cA}{\mathcal{A}}
\newcommand{\cB}{\mathcal{B}}
\newcommand{\cC}{\mathcal{C}}
\newcommand{\cG}{\mathcal{G}}
\newcommand{\cI}{\mathcal{I}}
\newcommand{\cJ}{\mathcal{J}}
\newcommand{\cM}{\mathcal{M}}
\newcommand{\cN}{\mathcal{N}}
\newcommand{\cP}{\mathcal{P}}
\newcommand{\cQ}{\mathcal{Q}}
\newcommand{\cR}{\mathcal{R}}
\newcommand{\cS}{\mathcal{S}}
\newcommand{\cT}{\mathcal{T}}
\newcommand{\fN}{\mathfrak{N}}
\newcommand{\fR}{\mathfrak{R}}
\newcommand{\bX}{\mathbf{X}}
\newcommand{\bY}{\mathbf{Y}}
\newcommand{\taukcone}[1]{\tau_{1,#1}}
\newcommand{\taukctwo}[1]{\tau_{2,#1}}
\newcommand{\eps}{\varepsilon}
\newcommand{\e}{\mathrm{e}}
\newcommand{\vol}[1]{\mathrm{Vol}_{#1}}
\newcommand{\dist}{\mathrm{dist}}
\newcommand{\diam}{\mathrm{diam}}
\newcommand{\sat}{\mathrm{sat}}
\definecolor{sea}{RGB}{47, 180, 241}
\definecolor{close}{RGB}{255, 255, 255}
\definecolor{far}{RGB}{255, 0, 0}
\definecolor{geometry_green}{RGB}{16, 207, 129}
\title{Sharp thresholds, hitting times and the power of choice for random geometric graphs}
\author[Ignasiak]{Dawid Ignasiak}
\author[Lichev]{Lyuben Lichev}
\address{TU Wien, Wiedner Hauptstra\ss e 8-10, A-1040 Vienna, Austria}
\email{[dawid.ignasiak|lyuben.lichev]@tuwien.ac.at}
\begin{document}

\begin{abstract}
We consider a random geometric graph process where random points $(X_i)_{i\ge 1}$ are embedded consecutively in the $d$-dimensional unit torus $\mathbb T^d$, and every two points at distance at most $r$ form an edge.
As $r\to 0$, we confirm that well-known hitting time results for $k$-connectivity (with $k\ge 1$ fixed) and Hamiltonicity in the Erd\H{o}s-R\'enyi graph process also hold for the considered geometric analogue.
Moreover, we exhibit a sort of probabilistic monotonicity for each of these properties. 

We also study a geometric analogue of the power of choice where, at each step, an agent is given two random points sampled independently and uniformly from $\mathbb{T}^d$ and must add exactly one of them to the already constructed point set.
When the agent is allowed to make their choice with the knowledge of the entire sequence of random points (offline 2-choice), we show that they can construct a connected graph at the first time $t$ when none of the first $t$ pairs of proposed points contains two isolated vertices in the graph induced by $(X_i)_{i=1}^{2t}$, and maintain connectivity thereafter by following a simple algorithm.
We also derive analogous results for $k$-connectivity and Hamiltonicity. 
This shows that each of the said properties can be attained two times faster (time-wise) and with four times fewer points in the offline 2-choice process compared to the 1-choice process.

In the online version where the agent only knows the process until the current time step, we show that $k$-connectivity and Hamiltonicity cannot be significantly accelerated (time-wise) but may be realised on two times fewer points compared to the 1-choice analogue.
\end{abstract}

\maketitle



\section{Introduction}

More than 60 years ago, Gilbert introduced two important models of random graphs~\cite{Gil59,Gil61}.
The first of them, the \emph{binomial random graph} $G(n,p)$, is nowadays most often associated with Erd\H{o}s and R\'enyi~\cite{ER59,ER60} who introduced and analysed its analogue $G(n,m)$ independently of Gilbert.
The second model, widely known as \emph{Gilbert's model} or the \emph{random geometric graph}, is obtained by embedding $n$ random points in a compact metric space and constructing an edge between every pair at distance at most a given threshold length $r$.
Originally introduced as a model for sensor networks, when $r$ is small, random geometric graphs exhibit features closer to percolation on lattices compared to the `mean field' behaviour of $G(n,p)$.
For a detailed treatment of the model, see the book of Penrose~\cite{Pen04}.

One of the main research directions in the theory of random graphs is dedicated to the analysis of their dynamic analogues.
In the world of binomial random graphs where randomness is carried by the set of edges, the corresponding random process is defined naturally via ordering the edges uniformly at random and sequentially including them in the graph.
A classic geometric analogue of the above process consists in sampling independently $n$ random points in a compact metric space (usually the unit hypercube $[0,1]^d$ or its boundary-free analogue, the unit torus $\mathbb{T}^d$) and gradually increasing the radius $r$, thus connecting points at smaller distances earlier in the process.
In a pioneering work, Penrose~\cite{Pen97} showed that the dynamic random geometric graph on $[0,1]^d$ for $d\in \{1,2\}$ or $\mathbb{T}^d$ for $d\ge 1$ becomes connected when the last isolated vertex disappears. 
Later, he extended and generalised this result to $k$-connectivity of the dynamic random geometric graph on $[0,1]^d$ for arbitrary $k,d\ge 1$~\cite{Pen04}.
Afterwards, D\'iaz, Mitsche and P\'{e}rez~\cite{DMP07} proved a sharp threshold result for Hamiltonicity in the random geometric graph on $[0,1]^2$. 
Their theorem was later strengthened to a hitting time result in every dimension $d\ge 1$ by Balogh, Bollob\'as, Krivelevich, M\"uller and Walters~\cite{BBKMW11} and M\"uller, P\'erez-Gim\'enez and Wormald~\cite{MPW11}.

Motivated by growing data sets and adaptive algorithms for their processing, in this work, we take a different perspective and define the dynamic random geometric graph via a sequence of random points $(X_i)_{i\ge 1}$ embedded consecutively in the torus $\mathbb T^d$.\footnote{The choice of the torus as an ambient space is done mostly for convenience. We believe that analogues of our results also hold for the unit cube $[0,1]^d$ with some modifications.}
Contrary to~\cite{BBKMW11,DMP07,MPW11,Pen04,Pen97}, our asymptotic parameter is the radius of connectivity $r\to 0$.
This alternative point of view allows to see the model as a discrete-time stochastic process and raises related questions.
Some of the latter may be seen as `dual' to the questions considered in the dynamic model with increasing radius: 
for example, it is natural to expect that analogues of the hitting time results for $k$-connectivity and Hamiltonicity from~\cite{BBKMW11,MPW11,Pen04,Pen97} still hold for every dimension $d\ge 1$. 
To confirm these results, our analysis overcomes an important additional obstruction: unlike in the model with increasing radius, in our model, connectivity and Hamiltonicity are \emph{not} monotone properties.
For example, the point $X_t$ could land at distance more than $r$ from each of $(X_i)_{i=1}^{t-1}$, and thus become isolated, even if the geometric graph on $(X_i)_{i=1}^{t-1}$ is already Hamiltonian.
We show that the said scenario is atypical by exhibiting a sort of `probabilistic monotonicity' for each of the discussed properties.

While the dynamic random geometric graph with growing number of vertices exhibits phenomena already established in the dynamic model of Penrose~\cite{Pen97},
some natural questions seem to have no clear analogues in this original model.
For example, our discrete-time point of view on the model allows to analyse it through the lens of the classic \emph{power of choice} paradigm introduced 
by Vvedenskaya, Dobrushin and Karpelevich~\cite{VDK96} and Mitzenmacher~\cite{Mit96} in the context of load balancing, and further developed by Azar, Broder, Karlin and Upfal~\cite{ABKU99} and by Mitzenmacher~\cite{Mit01}.
They showed that, in a process with $n$ balls and $n$ bins where two random bins are proposed at every step and an \emph{agent} places a ball in the less full of them, the maximum number of balls in a bin is considerably smaller compared to the classical purely random setting. 
Many variations of this basic model have been analysed~\cite{BCSV06, KP06, Mit01, Red06}. The power of choice in the context of random graphs was introduced by Achlioptas. 
He was interested in the question of accelerating or delaying certain monotone graph properties with respect to the original Erd\H{o}s-R\'enyi process if two edges are proposed at each step (instead of one) and the agent may choose the one that better suits their goal.
This resulted in a rich line of research with focus on the problem of delaying the appearance of a giant component~\cite{ADS09,BF01,RW12,SW07} but delaying or accelerating connectivity~\cite{ELMPS16}, avoidance of fixed subgraphs~\cite{KLS09}, containment of a perfect matching and Hamiltonicity~\cite{KLS10} have also been analysed.
Outside of the graph setup, delaying the satisfiability of a random $k$-SAT formula has also received some attention~\cite{Per15,SV13}.

We analyse a geometric version of the power of choice introduced by M\"{u}ller and Sp\"{o}hel~\cite{MS15}.
In this setting, at each round, the agent is offered two uniformly chosen points and is required to include exactly one of them in the geometric graph either based on complete information of the sequence $(X_i)_{i\ge 1}$ (offline 2-choice) or only based on the history of the process (online 2-choice).
M\"{u}ller and Sp\"{o}hel showed that the 2-dimensional random geometric graph exhibits a threshold behaviour with respect to the appearance of a linear-sized component. 
In the online 2-choice process with $n$ rounds, they confirmed that a linear-sized component typically exists when $r^3n\log\log n = \Omega(1)$ irrespectively of the strategy of the agent, and such components can be avoided by the agent when $r^3n\log\log n = o(1)$. 
The same phenomenon holds for the offline version around $r^3n=\Theta(1)$.
In our work, we analyse the geometric power of choice from a point of view coming closer to~\cite{ELMPS16,KLS10}, namely, we establish how fast the agent can attain a $k$-connected or a Hamiltonian geometric graph compared to the original (1-choice) process.
More precisely, by providing new hitting time results (for offline 2-choice) and sharp threshold results (for online 2-choice), we give precise estimates on how much the said properties can be accelerated.
In each of the two settings, constructing and maintaining a $k$-connected or a Hamiltonian graph after the corresponding hitting time/sharp threshold is done by analysing a linear-time probabilistic algorithm.

\subsection{Notation}\label{sec:notation} 
We use mostly standard notation. The more experienced reader may wish to consult our model-specific notation and skip to \Cref{sec:main_res}.

\vspace{0.4em}

\noindent
\textbf{General notation.} For an integer $n\ge 1$, we define $[n] = \{1,\ldots,n\}$. For real numbers $a,b$ and $c>0$, we write $a=b\pm c$ to denote the fact that $a\in [b-c,b+c]$. Rounding is often omitted where irrelevant.

\vspace{0.4em}

\noindent
\textbf{Geometric notation.} 
We work with the unit torus $\bbT^d$ equipped with the metric inherited from the Euclidean distance on $\mathbb R^d$.
For a point $x\in \bbT^d$ and $r\in (0,1/2)$, we denote by $B(x,r)$ the ball with centre $x$ and radius $r$.
Moreover, for $x\in \bbT^d$ and $0<r_1<r_2<1/2$, we denote by $A(x,r_1,r_2) = B(x,r_2)\setminus B(x,r_1)$ the \emph{annulus with centre $x$ and radii $r_1,r_2$}.
Also, we denote by $\theta_d$ the volume (i.e.\ Lebesgue measure) of the unit $d$-dimensional ball.
The volume of a measurable subset $U$ of $\bbT^d$ is denoted $\vol{d}(U)$.
The distance between two sets $U,W\subseteq \bbT^d$ is denoted $\dist(U,W)$. When some of the sets is reduced to a single point, curly braces are omitted: for example, we write $\dist(u,w)$ instead of $\dist(\{u\},\{w\})$. 
The diameter of a set $U\subseteq \mathbb T^d$ is denoted $\diam(U)$ and is equal to the supremum over the distances $\dist(u,v)$ with $u,v\in U$.
For sets $A, B \subseteq \mathbb R^d$, $A \oplus B$ denotes the \emph{Minkowski sum} of sets $A$ and $B$ defined as $\{ a + b: a \in A, b \in B \}$.

\vspace{0.4em}

\noindent
\textbf{Graph-theoretic notation.}
For a graph $G$, we write $V(G)$ and $E(G)$ to denote its sets of vertices and edges, respectively.
For a vertex subset $U$, we denote by $N_G(U)$ the set of vertices outside $U$ but adjacent to a vertex in $U$, and $N_G[U] = N_G(U)\cup U$.
In the same setting, we denote by $G[U]$ the subgraph of $G$ induced by $U$.
The degree of a vertex $v$ in $G$ is denoted $d_G(v)$, and maximum and minimum degrees in $G$ are denoted respectively $d_{\min}(G)$ and $d_{\max}(G)$.
For better readability, the index $G$ is omitted in $N_G$ and $d_G$ when the graph is clear from the context.

\vspace{0.4em}

\noindent
\textbf{Model-specific notation.}
Fix $d\ge 1$, a sequence $(X_i)_{i=1}^{\infty}$ of independent random points distributed uniformly in the $d$-dimensional unit torus $\mathbb{T}^d = \mathbb R^d/\mathbb Z^d$, and a \emph{radius of connectivity} $r > 0$.
For every $i\ge 1$, the points $X_{2i-1}$ and $X_{2i}$ are called \emph{partners} or a \emph{partner pair}.
For every point $Y$ among $(X_i)_{i = 1}^\infty$, we write $\Bar{Y}$ for the partner of $Y$.
For a finite set of points $S$, we denote by $G(S,r)$ (or simply $G(S)$ when the radius is clear from the context) the geometric graph on vertex set $S$ where every pair of points at distance at most $r$ forms an edge.
Moreover, for every $t\ge 1$, we write $G_t = G((X_i)_{i=1}^t,r)$ for short.
Our asymptotic parameter is $r$: in particular, our asymptotic notation is used exclusively with respect to $r\to 0$. 
The implicit constants in the usual $O,\Theta,\Omega$-notation are allowed to depend on the desired connectivity $k\ge 1$ and the dimension of the ambient space $d\ge 1$.

Conditionally on the sequence $(X_i)_{i=1}^{\infty}$, a \emph{choice set} is a subset $CS$ of $\{X_i: i\ge 1\}$ containing exactly one point from each partner pair.
Fix $\mathbb{T}^* = \bigcup_{k=0}^{\infty} (\mathbb{T}^d)^k$.
We consider two variations of the online 2-choice process. 
In the \emph{simultaneous} variation, the points in every pair are revealed at once. Then, the agent is allowed to add one of the points in a partner pair $x,y$ to the already constructed set $W$ according to a \emph{simultaneous choice function}, that is, a measurable function $f: \mathbb{T}^*\times (\mathbb{T}^d)^2\to \{1,2\}$ where $f(W,x,y)=1$ indicates adding $x$ to $W$ and $f(W,x,y)=2$ indicates adding $y$.
In the \emph{consecutive} online 2-choice process, the agent needs to take a decision based on the already constructed set $W$ and the first point in the current pair $x,y$ according to a \emph{consecutive choice function}, that is, a measurable function $g: \mathbb{T}^*\times \mathbb{T}^d\to \{1,2\}$ where $g(W,x)=1$ indicates adding $x$ to $W$ and $g(W,x)=2$ indicates adding $y$.
For every $t\ge 1$, a choice set $CS$ and a (simultaneous or consecutive) choice function $f$, we denote $\mathbf{X}_t(CS) = \{X_i: i\in [2t]\}\cap CS$ and $\mathbf{X}_t(f)$ the subset of $\{X_i: i\in [2t]\}$ produced by the choice function~$f$.

\vspace{0.4em}

\noindent
\textbf{Probabilistic notation.}
For a family of events $(A_r)_{r > 0}$ indexed by the radius of connectivity $r$ and measurable in terms of $(X_i)_{i=1}^{\infty}$,
we say that $(A_r)_{r > 0}$ (or, more often, $A_r$ itself) holds \emph{with high probability} or \emph{whp} if $\mathbb P(A_r)\to 1$ as $r\to 0$.
The complement of an event $A$ in the $\sigma$-algebra generated by $(X_i)_{i=1}^{\infty}$ is denoted by $A^c$.
We denote by $\mathrm{Bin}(n,p)$ the binomial distribution with parameters $n$ and $p$, and by $\mathrm{Po}(\lambda)$ the Poisson distribution with parameter $\lambda > 0$.

\subsection{Main results}\label{sec:main_res}

The first main theorem establishes hitting time results for $k$-connectivity and Hamiltonicity in our dynamic random geometric graph, and ensures that these properties are monotone in a probabilistic sense.
Define
\begin{alignat*}{3}
\taukcone{k} &= \taukcone{k}(r) &&:= \min\{t\ge 1: G_t \text{ has no vertex of degree at most }k-1\}.
\end{alignat*}

\begin{theorem}\label{thm:1}
For every $k\ge 1$, with high probability, for every $t\ge \taukcone{k}$, the graph $G_t$ is $k$-connected. 
Also, with high probability, for every $t\ge \taukcone{2}$, the graph $G_t$ is Hamiltonian.
\end{theorem}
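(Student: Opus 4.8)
The plan is to combine two ingredients: a \emph{static} statement — near the threshold, the only obstruction to $k$-connectivity (resp.\ Hamiltonicity) is a vertex of degree $\le k-1$ (resp.\ $\le 1$) — and a \emph{probabilistic monotonicity} statement controlling how the set of low-degree vertices evolves once $\taukcone{k}$ has been reached. Write $p=\theta_dr^d$ and, for $a\in\mathbb R$, let $t_a$ be the integer with $t_ap\approx d\log(1/r)+a\log\log(1/r)$; a first-moment computation gives $\taukcone{k}\le t_{k+1}$ whp (the expected number of vertices of degree $\le k-1$ in $G_{t_{k+1}}$ is $o(1)$), while the matching lower bound $\taukcone{k}\ge t_{k-1/2}$ whp is part of the argument below. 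For the static part I would re-run Penrose's arguments~\cite{Pen97,Pen04} (for $k$-connectivity) and those of~\cite{BBKMW11,MPW11} (for Hamiltonicity) in the regime $r\to0$, keeping track of the error terms: if $G_t$ has $d_{\min}(G_t)\ge k$ but is not $k$-connected, a minimal separator $S$ with $|S|\le k-1$ produces a component $C$ of $G_t-S$ with $|C|\ge 2$ (otherwise some vertex of $C$ would have degree $\le|S|<k$), and $C\oplus B(0,r)$ then contains no vertex outside $C\cup S$ — a bounded-diameter $C$ yields an annulus of volume $\Omega(r^d)$ containing $\le k-1$ of the $t$ points, and a $C$ whose complementary side is also large yields a long near-empty ``wall''. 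A specific such region has $\le k-1$ of the $t$ points with probability $\le (tp)^{k-1}e^{-c\,tp}=r^{\Omega(1)}$ since $tp=\Theta(\log(1/r))$; after a union bound over polynomially many (in $1/r$) candidate regions this stays polynomially small in $r$, hence survives a further union bound over the $O(\log(1/r)/r^d)$ values of $t\in[t_{k-1/2},t_\kappa]$ for a large constant $\kappa$, while over $t>t_\kappa$ the bounds decay like $e^{-c\,tp}$ and are summable. The conclusion is: \emph{whp, for every $t\ge t_{k-1/2}$ with $d_{\min}(G_t)\ge k$, the graph $G_t$ is $k$-connected} (and likewise for Hamiltonicity with $k=2$).

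Granting this, the $k$-connectivity part of \Cref{thm:1} follows from the \emph{degree monotonicity}: whp, $d_{\min}(G_t)\ge k$ for every $t\ge\taukcone{k}$. (Alternatively one could use the static statement only at $t=\taukcone{k}$ together with the fact that adding a vertex of degree $\ge k$ to a $k$-connected graph keeps it $k$-connected; Hamiltonicity has no such monotonicity under vertex addition, which is why there one genuinely needs the static statement for all $t$.) Since degrees never decrease, if $t>\taukcone{k}$ and $d_{\min}(G_t)<k$ then the deficient vertex is some $X_s$ with $\taukcone{k}<s\le t$ and $d_{G_s}(X_s)<k$, and $d_{G_s}(X_s)\sim\mathrm{Bin}(s-1,p)$; hence
\[
\mathbb P\big(\exists\, t\ge\taukcone{k}:\ d_{\min}(G_t)<k\big)\ \le\ \mathbb P\big(\taukcone{k}<t_{k-1/2}\big)\ +\ \sum_{s>t_{k-1/2}}\mathbb P\big(\mathrm{Bin}(s-1,p)<k\big).
\]
The sum is $\approx\tfrac1p\int_{m}^{\infty}\tfrac{u^{k-1}}{(k-1)!}e^{-u}\,du$ with $m=t_{k-1/2}\,p$, which equals $\Theta\big((\log(1/r))^{-1/2}\big)\to0$: the $k\log\log(1/r)$ shift in the threshold for $\taukcone{k}$ leaves precisely the slack that makes this tail summable.

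The remaining term $\mathbb P(\taukcone{k}<t_{k-1/2})$ is the crux, and here a per-time-step union bound is hopeless: there are $\Theta(\log(1/r)/r^d)$ values of $t<t_{k-1/2}$, and for those with $\mu(t):=\mathbb E\big[|\{v:d_{G_{t-1}}(v)\le k-1\}|\big]$ only polylogarithmic, $\mathbb P(d_{\min}(G_t)\ge k)$ is merely quasi-polynomially small in $r$. Instead, write $D_t=\{v\in V(G_t):d_{G_t}(v)\le k-1\}$ and let $F$ count the $t<t_{k-1/2}$ with $D_{t-1}\ne\emptyset$ and $D_t=\emptyset$; since $\taukcone{k}$ is the first time $D_t=\emptyset$, we have $\{\taukcone{k}<t_{k-1/2}\}\subseteq\{F\ge1\}$. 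A ``fresh extinction'' at time $t$ forces $X_t\in B(v,r)$ for every $v\in D_{t-1}$, so $D_{t-1}\subseteq B(X_t,r)$ and $\diam(D_{t-1})\le2r$; as $X_t$ is independent of $G_{t-1}$, this has conditional probability $\vol{d}\big(\bigcap_{v\in D_{t-1}}B(v,r)\big)\le p$. Bounding $\mathbb 1[D_{t-1}\ne\emptyset,\diam(D_{t-1})\le2r]\le\#\{v\in D_{t-1}:D_{t-1}\subseteq B(v,2r)\}$ and taking expectations, the contribution of a candidate $v$ factorises (up to a harmless local correction) into $\mathbb P(v\in D_{t-1})$ times $\mathbb P(\text{no vertex of degree}\le k-1\text{ lies outside }B(v,2r))$, and the latter is $\le e^{-c\mu(t)}$ by a Poisson-type lower-tail bound for the number of low-degree vertices in a region of volume $1-o(1)$ (such vertices in well-separated $r$-balls being asymptotically independent). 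Therefore $\mathbb E[F]\lesssim p\sum_{t<t_{k-1/2}}\mu(t)\,e^{-c\mu(t)}$, and since $\mu(t)\asymp(\log(1/r))^{k-a}$ on the scale $dt\asymp\tfrac{\log\log(1/r)}{p}\,da$, the prefactor $p$ telescopes away and $\mathbb E[F]\lesssim e^{-c\mu(t_{k-1/2})}=e^{-\Theta((\log(1/r))^{1/2})}\to0$. This establishes the degree monotonicity, hence the first assertion of \Cref{thm:1}.

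For Hamiltonicity the same skeleton applies with $k=2$, using the static statement of~\cite{BBKMW11,MPW11} in place of Penrose's: since there is no forward-propagation, we invoke ``$d_{\min}(G_t)\ge2\Rightarrow G_t$ Hamiltonian'' at \emph{every} $t\ge t_{3/2}$ and combine it with the $k=2$ degree monotonicity. This makes it essential that the static Hamiltonicity statement carry failure probability polynomially small in $r$, so that the union bound over the $O(\log(1/r)/r^d)$ values of $t$ in the window survives; this amounts to inspecting the quantitative error terms in~\cite{BBKMW11,MPW11}. I expect the main obstacle to be the bound $\mathbb P(\taukcone{k}<t_{k-1/2})\to0$: non-monotonicity allows a failure at any of $\sim r^{-d}\log(1/r)$ steps, so the argument must follow the low-degree vertices through time, and the observation that a fresh extinction forces all of them into a single $r$-ball — an event of probability $\lesssim\mu\,e^{-c\mu}$ in the expected number $\mu$ of low-degree vertices — is what breaks the deadlock. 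A secondary obstacle is putting the classical static results into the uniform, polynomially-quantitative form used above, and handling $d=1$, where the geometric step degenerates, by a direct analysis of the gaps on the circle.
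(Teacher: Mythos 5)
Your high‑level skeleton — a static ``$d_{\min}\ge k$ near the threshold forces $k$‑connectivity'' lemma combined with a probabilistic‑monotonicity statement $d_{\min}(G_t)\ge k$ for all $t\ge\taukcone{k}$ — matches the overall shape of the paper's argument, and your ``fresh extinction'' device for bounding $\mathbb P(\taukcone{k}<t_{k-1/2})$ is a genuinely different route from the paper's, which instead exchanges labels on the fixed unlabelled point configuration $G_{t_*}$ (Lemma~\ref{lem:isol}). But there are two genuine gaps.

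\textbf{The union bound over $t$ in the static step does not close.} You claim that a candidate separator region has $\le k-1$ of the $t$ points with probability $(tp)^{k-1}e^{-ctp}=r^{\Omega(1)}$ and that this survives a union bound over $\Theta(r^{-d})$ candidate regions and then over $\Theta(r^{-d}\log\log(1/r))$ time steps. This only works when the exponent $c$ is bounded away from $1$, i.e.\ when the annulus around the separated component $C$ has volume $\ge(1+\Omega(1))\theta_d r^d$, which forces $\diam(C)=\Omega(r)$. The dominant contribution, however, comes from $(k-1)$‑separated \emph{pairs} of diameter $\delta\ll r$, for which the annulus volume is only $\theta_d r^d+\Theta(\delta r^{d-1})$. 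Integrating over $\delta$, the expected number of such pairs with both degrees $\ge k$ at a single time $t$ is $\Theta\big(\mathbb E[Z_{k-1,t}]/(\log(1/r))^{d-1}\big)$ — polynomially small in $\log(1/r)$, not in $r$. Summing this over the $\Theta(r^{-d}\log\log(1/r))$ values of $t\ge t_{k-1/2}$ gives $\Theta(r^{-d}(\log(1/r))^{3/2-d})\to\infty$. The waste is that a single bad pair persists for $\Theta(1/p)$ time steps and is counted each time; the correct accounting must track ``births'' of bad configurations rather than union‑bound over times. The paper avoids this entirely: it invokes the static structure theorem (via Theorem~\ref{thm:Pen04}) at the \emph{single} deterministic time $t_*$ to get $H_{t_*}$ $k$‑connected, and then the tessellation/remote‑region argument of Proposition~\ref{prop:degk} shows that every subsequent point attaches to $H_t$ with $\ge k$ edges, propagating $k$‑connectivity forward without ever re‑invoking the static theorem. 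Your ``alternative'' (static at $t=\taukcone{k}$ only, plus the Havel‑type lemma) is exactly the paper's route; but establishing the static statement at the random time $\taukcone{k}$ is not immediate, and the paper resolves it by working with the auxiliary graph $H_t$ at the deterministic time $t_*$ instead.

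\textbf{The Hamiltonicity misdiagnosis.} You argue that, unlike $k$‑connectivity, Hamiltonicity has no vertex‑addition monotonicity and so ``one genuinely needs the static statement for all $t$,'' which would reinstate the broken union bound. The paper sidesteps this: the deterministic reference construction of \Cref{subsection-hamiltonicity} produces a Hamilton cycle from two ingredients, a tessellation condition A1 (\Cref{lem:3}) and $2$‑connectivity A2, and A1 is \emph{monotone in $t$} (\Cref{rem:lem:3}) — once a cube is $M$‑full it stays $M$‑full — so it need only be verified once at $t_*$, while A2 for all $t\ge\taukcone{2}$ is supplied by the already‑proven $k$‑connectivity part. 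No uniform‑over‑$t$ static Hamiltonicity statement, and hence no union bound, is needed.

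\textbf{A secondary gap.} Your fresh‑extinction bound relies on $\mathbb P(\text{no vertex of degree}\le k-1\text{ outside }B(v,2r))\le e^{-c\mu(t)}$. This does not follow from the second‑moment method (which only yields $O(1/\mu(t))$), nor directly from the paper's Theorem~\ref{thm:Pen04}(a), which gives Poisson convergence only for bounded limits. Some Janson‑type or dependency‑graph inequality, or Poissonisation/de‑Poissonisation, would be needed, and nothing in the paper is designed to supply it at that rate. The paper's Lemma~\ref{lem:isol} avoids the issue altogether by conditioning on the unlabelled configuration and permuting the arrival order, which needs only the (much easier) concentration of $Z_{k-1,t_*}$ from Lemma~\ref{lem:conc}.

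In short, the monotonicity half of your plan is structurally sound (modulo the lower‑tail gap), but the static half as stated — a per‑time union bound — cannot be repaired quantitatively; some dynamic tracking of the obstruction, along the lines of the paper's remote‑cube argument, is unavoidable.
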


Our second result provides an analogue of \Cref{thm:1} for the offline 2-choice process. 
For every $k\ge 1$, define $\taukctwo{k} = \taukctwo{k}(r)$ to be the smallest $t\ge 1$ such that, for every $i\in [t]$, $X_{2i-1}$ and $X_{2i}$ do not simultaneously have degree in the interval $[0,k-1]$ in $G_{2t}$.
This minimum degree constraint ensures that the graph $G(\mathbf{X}_t(CS))$ cannot be $k$-connected for any choice set $CS$ when $t < \taukctwo{k}$, and cannot be Hamiltonian when $t < \taukctwo{2}$. We show that these necessary conditions for $k$-connectivity and Hamiltonicity are typically also sufficient.

\begin{theorem}\label{thm:2}
For every $k\ge 1$, whp there is a choice set $CS$ such that the graph $G(\mathbf{X}_t(CS))$ is $k$-connected for every $t\ge \taukctwo{k}$.
Furthermore, whp there is a choice set $CS$ such that the graph $G(\mathbf{X}_t(CS))$ is Hamiltonian for every $t\ge \taukctwo{2}$.
\end{theorem}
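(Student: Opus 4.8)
The plan is to follow the blueprint of \Cref{thm:1}, adding a layer that specifies the choice set. Write $t^\star=\taukctwo{k}$. The first step is to pass to a high-probability event $\mathcal E$, measurable with respect to $(X_i)_{i\ge 1}$, on which: (i) by the very definition of $\taukctwo{k}$, no partner pair among the first $t^\star$ pairs has both points of degree at most $k-1$ in $G_{2t^\star}$; (ii) a short first-moment computation (summing $e^{-4t\theta_d r^d}$-type tails over $t\ge t^\star$, which converge because $4t^\star\theta_d r^d$ exceeds $d\log(1/r)$ by a growing amount) shows that for \emph{every} $t\ge t^\star$ at least one point of the $t$-th pair has degree at least $k$ in $G_{2t}$, and moreover its $\ge k$ neighbours in $G_{2t}$ lie, apart from a possible partner, among $X_1,\dots,X_{2t-2}$; (iii) the point set $\{X_i:i\le 2t^\star\}$ enjoys the geometric regularity estimates driving the proof of \Cref{thm:1} (concentration of point counts in regions of diameter $\Omega(r)$, control of the ``defective'' vertices of degree $\le k-1$, which are few and geometrically spread out, and the usual union bound over the configurations responsible for small vertex cuts).

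On $\mathcal E$ I would build $CS$ in two stages. For the first $t^\star$ pairs, the goal is to pick one point per pair so that the resulting $t^\star$-vertex graph $G(CS_0)$ is $k$-connected. Since at time $t^\star$ the process sits \emph{below} the $r$-covering threshold, $G(CS_0)$ necessarily has ``holes'', so $k$-connectivity cannot be read off from a local covering and must be obtained from a separator argument. Concretely: include the non-defective point of every pair containing a defective vertex of $G_{2t^\star}$, and use a Hall/matching argument (feasible because, outside the sparse ``bad set'', every $r$-ball is densely populated, and because the defective vertices are sparse) to select the remaining points so that $G(CS_0)$ has minimum degree at least $k$ and contains, around every would-be separating configuration, enough vertex-disjoint connectors to exclude cuts of size $<k$; fill in the rest arbitrarily to reach $|CS_0|=t^\star$. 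For pairs $i>t^\star$, the agent includes whichever of $X_{2i-1},X_{2i}$ currently has at least $k$ already-included neighbours; such a point exists on $\mathcal E$ because, using (ii) together with an offline \emph{reservation} of $k$ neighbours of each future ``good'' point (these reservations are mutually compatible since inclusion benefits all nearby good points at once, and the demands are local), the good point of pair $i$ always sees $\ge k$ chosen vertices among $X_1,\dots,X_{2i-2}$. Since $G(\mathbf{X}_{i-1}(CS))$ is $k$-connected by induction and adjoining a vertex of degree $\ge k$ to a $k$-connected graph preserves $k$-connectivity, $G(\mathbf{X}_t(CS))$ is $k$-connected for all $t\ge t^\star$.

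The main obstacle is the first stage: proving that the hand-picked core $G(CS_0)$ is $k$-connected despite the holes. This requires adapting the small-cut union bound from \Cref{thm:1} to a set of $t^\star$ points that the agent has \emph{selected} rather than sampled, so the bad-cut events must be controlled uniformly over admissible choice sets (equivalently, the selection must be engineered to preclude them); dovetailing this with the reservation scheme of the second stage, so that the same choices simultaneously produce a $k$-connected core and supply enough included neighbours to every later good point, is the book-keeping heart of the argument. For the Hamiltonicity statement ($k=2$) one upgrades the core lemma to ``$G(CS_0)$ is robustly Hamiltonian'', meaning every vertex outside $CS_0$ with at least two neighbours in $CS_0$ can be inserted into some Hamilton cycle of $G(CS_0)$, and one maintains this property through the stage-two insertions exactly as in the Hamiltonicity part of \Cref{thm:1}, using that each newly added vertex has $\ge 2$ neighbours in the current graph by construction.
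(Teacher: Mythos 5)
Your high-level plan does mirror the paper's two-stage structure: a carefully constructed core choice set on the first $\taukctwo{k}$ pairs, followed by an online rule for later pairs that greedily includes the partner already well-connected to the chosen set. However, the proposal has genuine gaps precisely at the places you yourself flag as the ``heart'' of the argument, and those gaps are exactly the paper's main technical contribution.

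First, your step (ii) is not enough to run the second stage. Knowing that for every $t>\taukctwo{k}$ at least one of $X_{2t-1},X_{2t}$ has degree $\ge k$ in $G_{2t}$ does not give you a point with $\ge k$ \emph{already-selected} neighbours: the adversary has discarded roughly half of $\{X_1,\dots,X_{2t-2}\}$, and the survivors might not be near the incoming point. Your ``offline reservation'' of $k$ neighbours per future good point is exactly where this must be resolved, but the compatibility claim (``the demands are local, so reservations are mutually compatible'') is not an argument — the entire difficulty is that near the sparse region the demands are \emph{not} locally satisfiable without global coordination. The paper resolves this by controlling the total volume of $k$-remote cubes relative to the partial choice set (Lemmas~\ref{lem:k-far-volume-u1}, \ref{lem:prob-far}, Proposition~\ref{prop:main-volume}), via a nontrivial geometric counting device ($Cr(\cS)$, Proposition~\ref{prop:main-geometric}) on a fine secondary tessellation; nothing in your sketch provides an analogue. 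Also, your convergence remark about $\sum e^{-4t\theta_d r^d}$ is quantitatively off: $4\taukctwo{k}\theta_d r^d = (1+o(1))\,d\log(1/r)$, so the tail sum is $\Theta(1)$, not $o(1)$, without the extra polynomial factors that must be tracked.

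Second, and more substantively, the construction of the $k$-connected core is underspecified. You propose ``include the non-defective partner of every pair containing a defective vertex.'' But in $G_{2\taukctwo{k}}$ there are typically $\Theta(\sqrt{\taukctwo{k}})$ small $(k-1)$-separated subgraphs, and the hard case is precisely when \emph{both} points of a partner pair sit near (or inside) remote regions, so neither can be discarded blindly. The paper isolates this case via Lemma~\ref{lem:special} and then resolves it in Algorithm~\ref{algo:U1}(\ref{item:I3}--\ref{item:I7}), where the decision depends on the internal structure of the two remote components (whether either hosts a $(k-1)$-separated set); the correctness of the algorithm (Lemma~\ref{lem:algo_good}) rests on the delicate events $E_k$ and $F_M$, controlled again by the geometric counting machinery. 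Your ``Hall/matching argument'' is not a substitute for this: there is no natural bipartite structure making Hall applicable, and the obstruction is not a shortage of candidate neighbours in dense regions (where the paper's Euler-orientation Lemma~\ref{lem:West} easily balances the choice set) but the combinatorics of nested $(k-1)$-separated sets in the far regions. In short, the proposal correctly sketches the skeleton but omits the new ideas (the fine tessellation, the $Cr(\cS)$ volume estimate, Algorithm~\ref{algo:U1} for partner pairs straddling two remote islands) that make the hitting-time result true, and these cannot be filled in by the generic ``small-cut union bound'' you gesture towards.
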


\begin{remark}\label{rem:main}
Deriving precise asymptotic expressions for the typical values of the hitting times $\taukcone{k},\taukctwo{k}$ in terms of $r$ is not complicated (see \Cref{lem:props,lem:hitting-time-offline-lb} and their proofs) and allows to quantitatively compare the 1-choice model and the offline 2-choice model.
Namely, for every $k\ge 1$, whp $\taukcone{k} = (2+o(1))\taukctwo{k}$.
In particular, there exists a choice set which allows the agent to construct well-connected and Hamiltonian graphs in the offline 2-choice process on roughly four times fewer points compared to the 1-choice process.
\end{remark}

We turn to the online 2-choice process. 
Our last main theorem ensures that the performance of each of the two variations of online 2-choice introduced in \Cref{sec:notation} is `midway' between the 1-choice process and the offline 2-choice process. 
More precisely, we show that obtaining $k$-connectivity or Hamiltonicity typically requires processing roughly the same number of points as in the 1-choice process but this still ensures that the number of vertices in the underlying graph remains roughly twice as small.

\begin{theorem}\label{thm:3}
Whp, for every simultaneous choice function $f$ and $\eps > 0$, $G(\mathbf{X}_t(f))$ remains disconnected for every $t\ge 0$ such that $2t\le \mathbb E[\taukcone{1}]- (\theta_d^{-1}+\eps) r^{-d}\log\log(1/r)$.
Moreover, there is a constant $C = C(d)$ and a consecutive choice function $g$ such that whp each of the following holds: 
\begin{enumerate}[label=(\roman*)]
    \item\label{item:A1} for every $k\ge 1$, $G(\mathbf{X}_t(g))$ is $k$-connected for every $t$ such that $2t\ge \mathbb E[\taukcone{k}] + kC r^{-d}\log\log(1/r)$,
    \item\label{item:A2} $G(\mathbf{X}_t(g))$ is Hamiltonian for every $t$ such that $2t\ge \mathbb E[\taukcone{2}] + 2C r^{-d}\log\log(1/r)$.
\end{enumerate}
\end{theorem}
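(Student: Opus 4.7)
The theorem contains a lower bound valid for every simultaneous choice function $f$ together with an upper bound realised by an explicit consecutive choice function $g$; the two sides rely on quite different ideas and I would address them separately.

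For the lower bound, the plan is to reduce the question to an adapted coupon-collector-with-two-choices estimate. Partition $\mathbb T^d$ into a regular grid of $N = \Theta(r^{-d})$ cells of diameter $\Theta(r)$, and note that if a chosen vertex $v \in \mathbf X_t(f)$ has no other chosen vertex in its $r$-neighbourhood of cells, then $v$ is isolated in $G(\mathbf X_t(f))$ and the graph is disconnected. A Penrose-type second-moment calculation shows that under the hypothesis $2t \le \mathbb E[\taukcone 1] - (\theta_d^{-1} + \eps) r^{-d} \log \log(1/r)$, the expected number of vertices isolated in the auxiliary $1$-choice graph $G_{2t}$ on all $2t$ proposed points already tends to infinity, of order $(\log(1/r))^{1 + \eps \theta_d}$. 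The central step is then to convert this abundance of isolated vertices in $G_{2t}$ into at least one isolated vertex in $\mathbf X_t(f)$, regardless of $f$. This is where the coupon-collector-with-two-choices comparison enters: in the cell grid, the online agent's covering time has the same leading order $N \log N$ as the classical $1$-choice hitting time, and online $2$-choice can shave only the second-order $N \log \log N$ correction. The constant $\theta_d^{-1}$ emerges from matching this correction against the exact $\log \log$ term of the asymptotic expansion of $\mathbb E[\taukcone 1]$.

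For the upper bound, I would define $g$ through a power-of-two-choices load-balancing rule. Fix a partition of $\mathbb T^d$ into cells of diameter at most $r/2$, so that any two chosen points inside the same cell are automatically adjacent. At step $i$, upon seeing the chosen set $W = \mathbf X_{i-1}(g)$ and the first point $x = X_{2i-1}$ of the current pair, let $c$ be the cell containing $x$ and $n_c(W)$ the number of chosen vertices already in $c$. Output $1$ (keep $x$) if $n_c(W) \le (i-1)/N + \kappa \log \log(1/r)$ for a large enough constant $\kappa = \kappa(d)$, and $2$ (swap to the unseen partner $X_{2i}$) otherwise. Standard online two-choice analysis then gives concentration of the cell loads $(n_c(\mathbf X_t(g)))_c$ around $t/N$ with two-sided fluctuations of order $\log \log(1/r)$. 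Taking $C = C(d)$ large enough, the assumption $2t \ge \mathbb E[\taukcone k] + k C r^{-d} \log \log(1/r)$ then forces every cell to contain at least $k+1$ chosen vertices, so every chosen vertex has at least $k$ chosen neighbours inside its own cell. To conclude $k$-connectivity in \ref{item:A1} and Hamiltonicity in \ref{item:A2}, I would adapt the hitting-time argument behind \Cref{thm:1} to the near-uniform chosen set produced by $g$; the tight control on cell loads guarantees that the structural preconditions of the $1$-choice proof carry through to $\mathbf X_t(g)$.

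The main obstacle is the lower bound. A direct second-moment attempt on the event that both partners of some pair are isolated in $G_{2t}$ gives an expected count tending to $0$ in the target regime, so one cannot avoid the coupon-collector-with-two-choices machinery. Pinning down the sharp constant $\theta_d^{-1}$ requires a careful first-order analysis of the online $2$-choice covering time of the cell grid, together with a geometric step that converts uncovered cells into certifiably isolated chosen vertices; this is the technical heart of this part of the theorem.
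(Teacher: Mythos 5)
Both halves of your proposal have genuine gaps. For the lower bound, the step you yourself call ``the technical heart'' --- converting the abundance of isolated vertices in $G_{2t}$ into a certified isolated vertex of $G(\mathbf X_t(f))$ for \emph{every} simultaneous choice function $f$ --- is not supplied; appealing to a ``coupon-collector-with-two-choices'' heuristic is not an argument, and the standard two-choice results (which concern max-load, not covering/minimum occupancy, and not an adversary who sees both proposed points) do not apply off the shelf. The idea the paper uses, and which is absent from your write-up, is a symmetry argument: one exhibits $\omega(\log\log(1/r))$ partner pairs, both of whose points are isolated at an early time $t_1=r^{-d}/\log\log(1/r)$ and have disjoint $r$-balls, such that \emph{exactly one} point of the pair remains isolated at the target time $t_0$; conditionally on this, which of the two partners is the surviving isolated one is a fair coin determined by the \emph{future} points, hence independent of everything any choice function can observe. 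The agent therefore guesses wrong on at least one such pair except with probability $2^{-\omega(1)}$. Your (correct) observation that the ``both partners isolated at $t_0$'' count vanishes shows why the naive route fails, but without the exactly-one-isolated/fair-coin mechanism there is no proof, and the constant $\theta_d^{-1}$ is exactly what makes the expected number of such ``one-survivor'' pairs diverge.

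The upper bound strategy is quantitatively infeasible as stated. A cell of diameter at most $r/2$ has volume at most $\theta_d(r/4)^d$, so your grid has $N\ge 4^d\theta_d^{-1}r^{-d}$ cells, and forcing \emph{every} cell to contain at least $k+1$ chosen points requires, even for an omniscient placer of $t$ uniform-ish points, $t\ge (1+o(1))N\log N\ge (1+o(1))\,4^d d\,\theta_d^{-1}r^{-d}\log(1/r)$; the theorem only allows $t\approx \tfrac{d}{2\theta_d}r^{-d}\log(1/r)$, smaller by a factor of at least $2\cdot 4^d$. At the $k$-connectivity hitting time a positive power of $r^{-1}$ many such cells are still empty; the graph is $k$-connected not because every cell is occupied but because the few sparse spots are handled separately. (Your load-balancing rule also cannot push the \emph{minimum} cell load up to $t/N-O(\log\log(1/r))$: rejecting a point only replaces it by a uniform point, so underloaded cells are barely favoured.) The paper's $g$ is structurally different: it runs a pure $1$-choice phase for $O(r^{-d}\log\log(1/r))$ steps, identifies the union $\Lambda$ of the few $M$-nonfull cubes, and thereafter keeps $X_{2t-1}$ iff it lands in $\Lambda$; a coupling then shows that, restricted to a neighbourhood of $\Lambda$, this process and the $1$-choice process arrive at the same rate up to a $(1+o(1))$ factor (since partner pairs rarely both hit the small set $\Lambda_{2c}$), after which \Cref{thm:1} is invoked. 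If you want to salvage your approach, the decision rule must prioritise the sparse region rather than equalise cell loads.
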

We observe that the additional number of steps above $\mathbb E[\taukcone{k}]$ required in \ref{item:A1} and \ref{item:A2} is comparable to the second-order term in $\mathbb E[\taukcone{k}]$, thus ensuring some control on the size of the critical window for each of the relevant properties.

\noindent

\paragraph{Outline of the proofs.} 
First of all, we outline the ideas behind \Cref{thm:1} starting with the $k$-connectivity part.
We consider the 1-choice process at time $t_* = t_*(k) = \mathbb E[\taukcone{k}] - h(r)/r^d$ for a function $h(r)$ tending to infinity suitably slowly.
A second moment computation shows that whp, for every $t\le t_*$, the graph $G_t$ has a vertex of degree at most $k-1$, and $G_{t_*}$ still has $\omega(1)$ of them (\Cref{lem:conc}).
Moreover, by using results of Penrose~\cite{Pen04}(\Cref{thm:Pen04}) and our choice of $h$, we derive that, at time $t_*$, whp the random geometric graph consists of a giant $k$-connected component and few vertices of degree $k-1$ located away from each other (\Cref{lem:props}(d) and (e)).

It remains to ensure that whp each of the vertices of degree $k-1$ at time $t_*$ is incorporated in the giant $k$-connected component by time $\taukcone{k}$, and that no new vertices of degree $k-1$ ever appear.
The first task is rather standard but the second is less routine: it requires to track the region where, if a new random point lands, it will connect to no more than $k-1$ of the present vertices (later called the \emph{remote} region). 
While this region becomes empty around (though possibly shortly after) time $\taukcone{k}$, volume considerations are not sufficient to show this: indeed, the remote region could be simultaneously small and rather uniformly spread around the torus, meaning that its complete disappearance could take much longer than $\taukcone{k}$.
To overcome the latter problem, we introduce a tessellation $\mathcal T$ of the torus $\mathbb{T}^d$ into cubes of volume $\Theta(r^d/\log(1/r))$ and show that typically the remote region can be covered by a `small' number of cubes in $\mathcal T$ (\emph{remote} cubes).
(In fact, the total volume of the bad cubes and the expected proportion of vertices of degree $k-1$ turn out to be comparable.)
Finally, we guarantee that the remote cubes typically group into small clusters which remain at distance at least $3r$ from each other (\Cref{cl:far_cubes}), and each of them is surrounded by many vertices in the giant $k$-connected component at distance $r+o(r)$ from themselves (similar statement holds for the vertices of degree $k-1$). 
As a result, whp vertices of degree $k-1$ become absorbed by the giant $k$-connected component by time $\taukcone{k}$, and the entire remote region becomes within reach of this giant before being visited for the first time, which implies the $k$-connectivity part of \Cref{thm:1}.
The Hamiltonicity part follows by combining the established result for $2$-connectivity and (minor adaptations of) several structural lemmas from~\cite{BBKMW11}.

\vspace{1em}

The proof of \Cref{thm:2} is the most involved part and contains the main technical innovations of this work.
The additional difficulty comes from the fact that typically there are about $\sqrt{\taukctwo{k}}$ small connected subgraphs in $G_{2\taukctwo{k}}$, each with no more than $k-1$ neighbours.
With high probability, almost all such subgraphs consist of a single vertex of degree $k-1$: pairs of such vertices are the `main obstruction' for $k$-connectivity, which intuitively justifies why the hitting time result for $k$-connectivity in \Cref{thm:2} is to be expected.
However, confirming this requires the analysis of significantly larger `islands of remote cubes'.
In particular, contrary to \Cref{thm:1}, the vertices in separate remote connected components do not necessary induce cliques in $G_{2\taukctwo{k}}$, which requires non-trivial distinction with respect to the structure of each component (\Cref{algo:U1}).
Nevertheless, our remote components have two crucial typical properties which simplify our task: each of them contains at most one `dangerous' vertex whose partner is near the remote region (\Cref{lem:special}(a)) and no remote component with diameter more than $r/10$ contains any such vertex (\Cref{lem:special}(b)).
Describing which vertices in components without a `dangerous' vertex enter the choice set is a simpler task: their partners fall in the middle of dense spots in the giant and, since these partner points do not form large clusters, $k$-connectivity of the graph is preserved when these partners are deleted.
Describing our choice in remote components containing a vertex whose partner is near (or in) another remote component is more delicate and relies on a careful structural analysis (\Cref{lem:algo_good}): this part contains several novel ideas of our work.
Once the vertices near the remote area are dealt with, the construction of the choice set at the hitting time is easily extended for `points in dense regions' of the giant component.
For the probabilistic monotonicity part of the result, we show that the remote area can be `eaten up from outside' in an online fashion before being visited again. While the idea is similar to that of \Cref{thm:1}, the approach requires a bit more care close to $\taukctwo{k}$.
The proof of the Hamiltonicity part of \Cref{thm:2} requires only minor changes in the strategy from the proof for $k$-connectivity, and in the justification from \Cref{thm:1}.

\vspace{1em}

Finally, the proof of \Cref{thm:3} relies on two main ideas. 
For the first sentence of \Cref{thm:3}, we show that $(\theta_d^{-1}+\eps)r^{-d}\log\log(1/r)$ steps before the (expected) hitting time for connectivity, the graph still contains $\omega(\log(1/r))$ isolated vertices with high probability. 
Furthermore, typically $\omega(1)$ of these vertices arrive at a point in the process when each of them and their neighbours is originally isolated.
As a result, at the arrival of such a pair, deciding which of the two partner vertices is going to stay isolated for longer is impossible and each such step presents $50\%$ chance for choosing the `wrong' vertex.

The bounds \ref{item:A1} and \ref{item:A2} come down to analysing a particular consecutive choice function $g$ constructed as follows. First, $g$ outputs $1$ at each of the first $C r^{-d}\log\log(1/r)$ steps for a suitably large constant $C = C(d)$.
At this point, the constructed random geometric graph already has one dense giant component containing almost all vertices, and few sparse areas surrounding small `remote islands' far from each other.
Once the above structure is established, the $k$-connectivity of the graph only needs to be `repaired' near the small remote islands.
From this point on, the function $g$ selects a point landing near those islands at every step when at least one such point is presented.
However, the probability that two partner vertices simultaneously land near the remote area (so each of them is desirable) is asymptotically much smaller compared to the probability that only one vertex in a partner pair lands near the remote area (in which case the consecutive choice function $g$ chooses this vertex).
As a result, the online 2-choice process and the 1-choice process typically `run with the same speed' near the remote region. Quantifying this statement is the main technical innovation in this part of our work and is sufficient to conclude; formally, our argument goes via a coupling of the online 2-choice process and the 1-choice process (defined before \Cref{cl:coupleXY}).

\vspace{1em}

\noindent
\paragraph{Plan of the paper.} In \Cref{section:prelims}, we state and show some preliminary results. \Cref{section:1-choice,section:offline-2-choice,section:online-2-choice} contain the proofs of \Cref{thm:1,thm:2,thm:3}, respectively. We conclude with a few comments and open problems in \Cref{section:conclusion}.

\section{Preliminaries}\label{section:prelims}

\subsection{A concentration inequality}

We first present a version of the Chernoff bound for binomial and Poisson random variables, see e.g.\ Theorems~A.1.12 and~A.1.15 in \cite{AS16}.

\begin{lemma}[Chernoff's bound]\label{lem:chernoff}
For a binomial or Poisson random variable $X$ with mean $\mu$ and any $\varepsilon>0$,
\[\mathbb P(X\le (1-\varepsilon)\mu)\le \exp(-\varepsilon^2\mu/2)\qquad\text{and}\qquad \mathbb P(X\ge (1+\varepsilon)\mu)\le (\e^\varepsilon (1+\varepsilon)^{-(1+\varepsilon)})^\mu.\]
\end{lemma}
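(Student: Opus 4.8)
The plan is to use the standard exponential moment method: both tail probabilities are controlled by applying Markov's inequality to $\e^{\lambda X}$ for a suitable parameter $\lambda>0$, exploiting the fact that the binomial and Poisson laws have an explicit, easily bounded moment generating function. Concretely, if $X\sim\mathrm{Po}(\mu)$ then $\mathbb E[\e^{\lambda X}]=\exp(\mu(\e^{\lambda}-1))$ by a direct series computation, and if $X\sim\mathrm{Bin}(n,p)$ with $np=\mu$ then $\mathbb E[\e^{\lambda X}]=(1-p+p\e^{\lambda})^{n}\le\exp(np(\e^{\lambda}-1))=\exp(\mu(\e^{\lambda}-1))$, using $1+x\le\e^{x}$; the same bounds hold with $\lambda$ replaced by $-\lambda$.

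For the upper tail, fix $\lambda>0$. Markov's inequality gives $\mathbb P(X\ge(1+\eps)\mu)=\mathbb P(\e^{\lambda X}\ge\e^{\lambda(1+\eps)\mu})\le\exp(\mu(\e^{\lambda}-1-\lambda(1+\eps)))$. The exponent is minimised at $\lambda=\log(1+\eps)>0$, and substituting this value yields exactly $\mathbb P(X\ge(1+\eps)\mu)\le(\e^{\eps}(1+\eps)^{-(1+\eps)})^{\mu}$, the second claimed inequality. For the lower tail, fix $\lambda>0$ and apply Markov's inequality to $\e^{-\lambda X}$: this gives $\mathbb P(X\le(1-\eps)\mu)=\mathbb P(\e^{-\lambda X}\ge\e^{-\lambda(1-\eps)\mu})\le\exp(\mu(\e^{-\lambda}-1+\lambda(1-\eps)))$, and optimising over $\lambda$ (the minimiser is $\lambda=\log(1/(1-\eps))>0$) produces $\mathbb P(X\le(1-\eps)\mu)\le(\e^{-\eps}(1-\eps)^{-(1-\eps)})^{\mu}$.

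It then remains to pass from this last bound to the cleaner form $\exp(-\eps^{2}\mu/2)$, i.e.\ to check the elementary inequality $\e^{-\eps}(1-\eps)^{-(1-\eps)}\le\e^{-\eps^{2}/2}$ for $\eps\in(0,1)$. Taking logarithms, this is equivalent to $g(\eps):=\eps+(1-\eps)\log(1-\eps)-\eps^{2}/2\ge0$, and since $g(0)=0$ and $g'(\eps)=-\log(1-\eps)-\eps=\sum_{j\ge2}\eps^{j}/j\ge0$ on $(0,1)$, the claim follows.

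I expect the only genuinely non-automatic point to be this final comparison: the upper-tail estimate is left in its sharp exponential form, but the lower-tail estimate is stated in the simplified Gaussian-type shape, so one must relate $(1-\eps)\log(1-\eps)$ to its second-order Taylor polynomial. Everything else—the moment generating function identities, the use of $1+x\le\e^{x}$, and the one-variable optimisation in $\lambda$—is routine. As this is a textbook fact, in the write-up I would simply cite Theorems~A.1.12 and~A.1.15 of~\cite{AS16} rather than reproduce the computation, as the authors presumably do.
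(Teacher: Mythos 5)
Your proof is correct and matches the standard textbook derivation; the paper itself offers no proof, simply citing Theorems~A.1.12 and~A.1.15 of~\cite{AS16} exactly as you suggest doing. (The only microscopic gap: the comparison $g(\eps)\ge 0$ is argued on $\eps\in(0,1)$, and one should remark that for $\eps\ge 1$ the lower-tail statement is vacuous or trivial since $(1-\eps)\mu\le 0$; this does not affect the correctness of the approach.)
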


\subsection{Geometric preliminaries}

In our considerations, we make use of the following volume computation. 

\begin{lemma}\label{lem:vols}
Fix $\nu = \nu(r) = o(r)$. Then, there is a constant $C = C(d) > 0$ such that the volume of the union $U$ of two Euclidean balls with radii equal to $r$ and centres at distance $\nu$ is $\theta_d r^d + (C+o(1))\nu r^{d-1}$.
\end{lemma}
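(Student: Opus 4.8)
The plan is to compute the volume of $U = B(x_1, r) \cup B(x_2, r)$ with $\dist(x_1, x_2) = \nu = o(r)$ via inclusion–exclusion: $\vol{d}(U) = 2\theta_d r^d - \vol{d}(B(x_1,r)\cap B(x_2,r))$. So the task reduces to estimating the volume of the \emph{lens} $L = B(x_1,r)\cap B(x_2,r)$, and showing that $\vol{d}(L) = \theta_d r^d - (C+o(1))\nu r^{d-1}$ for an explicit $C = C(d) > 0$. First I would note that for $\nu < 2r$ (which holds eventually since $\nu = o(r)$) the two balls genuinely overlap, and that since $r \in (0,1/2)$ the balls do not wrap around the torus, so we may work in $\mathbb R^d$ with the ordinary Euclidean volume.

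To estimate $\vol{d}(L)$, I would place the midpoint of the segment $[x_1,x_2]$ at the origin with the segment along the first coordinate axis, so $x_1 = (-\nu/2, 0, \ldots, 0)$ and $x_2 = (\nu/2, 0, \ldots, 0)$. Then $B(x_1,r)\setminus B(x_2,r)$ is the set of points closer to $x_1$ than to $x_2$ and within distance $r$ of $x_1$; by symmetry the complement of the lens inside $B(x_1,r)$ is the ``cap'' $\{y \in B(x_1,r) : y_1 > 0\}$, which equals $\{y : \|y - x_1\| \le r,\ y_1 \in (0, \nu/2]\}$ (the hyperplane $y_1 = 0$ is the perpendicular bisector, and points with $y_1 > \nu/2$ that lie in $B(x_1,r)$ also lie outside... wait, this needs the cleaner formulation). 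More carefully: $\vol{d}(B(x_1,r)) - \vol{d}(L) = \vol{d}(B(x_1,r) \setminus B(x_2,r))$, and a point $y$ is in this difference iff $\|y-x_1\| \le r$ and $\|y - x_2\| > r$. Slicing by the hyperplanes $\{y_1 = s\}$ and using Fubini, the cross-section at height $s$ is $\{(s, y') : \|y'\|^2 \le r^2 - (s+\nu/2)^2\} \setminus \{(s,y') : \|y'\|^2 \le r^2 - (s - \nu/2)^2\}$, which is nonempty of $(d-1)$-volume $\theta_{d-1}\bigl((r^2 - (s+\nu/2)^2)_+^{(d-1)/2} - (r^2-(s-\nu/2)^2)_+^{(d-1)/2}\bigr)$. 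Integrating over $s$ and doing a first-order Taylor expansion in $\nu$ (the integrand's leading contribution comes from the $s$-derivative of $(r^2 - s^2)^{(d-1)/2}$ times $\nu$, integrated against the indicator, i.e.\ essentially a telescoping/fundamental-theorem-of-calculus argument), the leading term is $\Theta(\nu r^{d-1})$. The cleanest route is probably to write the difference of the two integrals $\int_{-r}^{r}\theta_{d-1}(r^2 - (s-\nu/2)^2)_+^{(d-1)/2}\,ds$ and $\int_{-r}^r \theta_{d-1}(r^2-(s+\nu/2)^2)_+^{(d-1)/2}\,ds$ — but these are both exactly $\theta_d r^d$ by translation invariance, so one must instead directly bound $\vol{d}(B(x_1,r)\setminus B(x_2,r))$, whose cross-section at $y_1 = s$ has $(d-1)$-measure $\theta_{d-1}\bigl((r^2 - (s+\nu/2)^2)_+^{(d-1)/2} - (r^2 - (s-\nu/2)^2)_+^{(d-1)/2}\bigr)_+$, supported essentially on $s \in (-\nu/2, r]$, giving $\int \approx \theta_{d-1}\int_{\text{relevant }s}\bigl|\tfrac{d}{ds}(r^2-s^2)^{(d-1)/2}\bigr|\nu\,ds + O(\nu^2 r^{d-2})$, and this integral evaluates to an explicit positive constant times $\nu r^{d-1}$.

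The main obstacle is handling the behaviour near the boundary $s \approx r$ where $(r^2 - s^2)^{(d-1)/2}$ has an infinite derivative (for $d = 2$ it is a square-root singularity, and in dimension $d=1$ the picture degenerates entirely and must be treated by hand, where $C$ is just the constant $1$). The remedy is to split the $s$-integral into a ``bulk'' region, say $s \le r - \sqrt{\nu r}$, where the Taylor expansion is uniformly valid and contributes the main term plus a $o(\nu r^{d-1})$ error, and a ``boundary'' region $s \in (r - \sqrt{\nu r}, r]$, whose total contribution can be crudely bounded by $\theta_{d-1}(2r\sqrt{\nu r})^{(d-1)/2} \cdot \sqrt{\nu r} = O(\nu^{(d+1)/4} r^{\cdots})$ — I would just check this is $o(\nu r^{d-1})$ as $\nu = o(r)$ — and similarly for the even smaller region $s\in(-\nu/2,0]$. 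For $d = 1$ one handles it directly: the symmetric difference has length exactly $\nu$ and the union has length $2r + \nu$, giving $C = 1$. Putting the pieces together yields $\vol{d}(U) = 2\theta_d r^d - (\theta_d r^d - (C+o(1))\nu r^{d-1}) = \theta_d r^d + (C+o(1))\nu r^{d-1}$ with $C = C(d) = 2\theta_{d-1}\int_0^1 \tfrac{d}{dt}\bigl(\bigl.$ ... $\bigr)$, but since the statement only asserts existence of such a $C$, I would not bother to simplify the constant beyond noting $C(d) > 0$.
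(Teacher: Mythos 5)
Your route --- inclusion--exclusion plus Fubini slicing --- is genuinely different from the paper's, which applies the explicit hyperspherical-cap volume formula and reduces the claim to the elementary facts $\iota = \pi/2-(1+o(1))\nu/(2r)$ and $\sin u = 1-o(1)$ on $[\iota,\pi-\iota]$. Your decomposition is sound up to the control of the boundary region, and there the proposed estimate is a genuine gap. The crude bound on $s\in(r-\sqrt{\nu r},\,r]$, namely $\theta_{d-1}(2r\sqrt{\nu r})^{(d-1)/2}\sqrt{\nu r}=\Theta\bigl(\nu^{(d+1)/4}r^{(3d-1)/4}\bigr)$, has ratio $(\nu/r)^{(d-3)/4}$ to the target $\nu r^{d-1}$; this diverges for $d=2$ and is $\Theta(1)$ for $d=3$, so the check you deferred actually fails for $d\in\{2,3\}$. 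The reason is that replacing the annular cross-section of $B(x_1,r)\setminus B(x_2,r)$ by the full disc cross-section of the outer ball discards exactly the cancellation that makes the annulus thin --- near $s\approx r$ the annulus still has macroscopic radius but width $O(\nu)$, not width $O(\sqrt{\nu r})$.

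The clean repair also removes the need for the bulk/boundary split altogether. Orient $x_1$ on the positive first axis so that $B(x_1,r)\setminus B(x_2,r)\subseteq\{y_1\ge 0\}$, write $f(s)=(r^2-s^2)_+^{(d-1)/2}$, and observe that the integrand $\theta_{d-1}\bigl(f(s-\nu/2)-f(s+\nu/2)\bigr)$ is nonnegative on $[0,\infty)$ and vanishes for $s<0$; hence
\[
\vol{d}\bigl(B(x_1,r)\setminus B(x_2,r)\bigr)=\theta_{d-1}\int_0^\infty\bigl(f(s-\tfrac{\nu}{2})-f(s+\tfrac{\nu}{2})\bigr)\,ds=\theta_{d-1}\int_{-\nu/2}^{\nu/2}f(t)\,dt=\bigl(1+O((\nu/r)^2)\bigr)\theta_{d-1}\nu r^{d-1},
\]
giving $C=\theta_{d-1}$ (which agrees with the paper's $\pi^{(d-1)/2}/\Gamma((d+1)/2)$) with no singularity to tame near $s=r$. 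This is the half-line refinement of the translation-invariance observation you noted and then abandoned: over all of $\mathbb R$ the two integrals cancel exactly, but restricted to $[0,\infty)$ they telescope to exactly the short integral of $f$ near the origin.
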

\begin{proof}
Denote by $\Gamma$ the Gamma function and set $\iota = \iota(r) = \arccos(\nu/2r)$ (see \Cref{fig:lem vols}).
By using the formula for the volume of hyperspherical caps (see e.g.\ \cite{Li10}), the volume of $U$ is given by
\[\frac{\pi^{(d-1)/2} r^d}{\Gamma((d+1)/2)}\cdot 2\bigg(\int_{u=0}^{\pi}(\sin u)^{d} du - \int_{u=0}^{\iota}(\sin u)^{d} du\bigg) = \theta_d r^d + \frac{\pi^{(d-1)/2} r^d}{\Gamma((d+1)/2)}\int_{u=\iota}^{\pi-\iota}(\sin u)^{d} du.\]
Using that $\iota = \pi/2-(1+o(1))\nu/2r$ and $\sin(u) = 1-o(1)$ for all $u\in [\iota,\pi-\iota]$ shows that
\[\frac{\pi^{(d-1)/2} r^d}{\Gamma((d+1)/2)}\int_{u=\iota}^{\pi-\iota}(\sin u)^{d} du = (1+o(1))\frac{\pi^{(d-1)/2} \nu r^{d-1}}{\Gamma((d+1)/2)}\]
and completes the proof.
\end{proof}

Next, we state the Brunn-Minkowski inequality, see e.g.\ Theorem~5.11 in \cite{Pen04}.
\begin{lemma}\label{lem:brunn-minkowski}
For every two non-empty compact sets $A, B\subseteq \bbR^d$,
    \[ \vol{d}(A \oplus B) \geq (\vol{d}(A)^{1/d} + \vol{d}(B)^{1/d})^d. \]
\end{lemma}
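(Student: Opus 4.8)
The plan is to give a self-contained proof by the classical Hadwiger--Ohmann argument: first establish the inequality when $A$ and $B$ are axis-parallel boxes, then for finite unions of boxes with pairwise disjoint interiors by an induction that bisects one of the two sets, and finally pass to arbitrary compact sets by outer approximation. Throughout, the cases $\vol{d}(A)\vol{d}(B)=0$ are immediate, since then the right-hand side is at most $\vol{d}(A\oplus B)$ because $A\oplus B$ contains a translate of whichever of $A,B$ has positive volume; so I will tacitly assume both volumes are positive.

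\emph{Boxes.} After translating, write $A=\prod_{i=1}^d[0,a_i]$ and $B=\prod_{i=1}^d[0,b_i]$ with all $a_i,b_i>0$, so that $A\oplus B=\prod_{i=1}^d[0,a_i+b_i]$. Dividing the desired inequality by the positive number $\prod_i(a_i+b_i)$ reduces it to
\[\Big(\prod_{i=1}^d\tfrac{a_i}{a_i+b_i}\Big)^{1/d}+\Big(\prod_{i=1}^d\tfrac{b_i}{a_i+b_i}\Big)^{1/d}\le 1,\]
which follows by applying the AM--GM inequality to each of the two products: the two resulting arithmetic means sum to $\frac1d\sum_{i=1}^d\frac{a_i+b_i}{a_i+b_i}=1$.

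\emph{Finite unions of boxes.} Suppose $A$ is a union of $m$ boxes and $B$ a union of $n$ boxes, in each case with pairwise disjoint interiors, and induct on $m+n$; the base case $m=n=1$ is the previous paragraph. If $m\ge 2$, pick two of the boxes composing $A$; their open boxes are disjoint, so along some coordinate $j$ their projections are separated, and hence a hyperplane $H=\{x_j=c\}$ has one of them on each closed side. Let $A_+,A_-$ be the parts of $A$ on the two sides of $H$; each is a union of at most $m-1$ full-dimensional boxes (the box lying on the opposite side contributes nothing of positive measure), and each has positive volume, so $\lambda:=\vol{d}(A_+)/\vol{d}(A)\in(0,1)$. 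Since $c'\mapsto\vol{d}(B\cap\{x_j\ge c'\})$ is continuous and decreases from $\vol{d}(B)$ to $0$, choose $c'$ so that the parts $B_+,B_-$ of $B$ across $\{x_j=c'\}$ (each a union of at most $n$ boxes) satisfy $\vol{d}(B_+)=\lambda\vol{d}(B)$. Now $A_+\oplus B_+$ and $A_-\oplus B_-$ lie on opposite closed sides of $\{x_j=c+c'\}$ and are both contained in $A\oplus B$, so by the inductive hypothesis
\[\vol{d}(A\oplus B)\ge\big(\vol{d}(A_+)^{1/d}+\vol{d}(B_+)^{1/d}\big)^d+\big(\vol{d}(A_-)^{1/d}+\vol{d}(B_-)^{1/d}\big)^d=\big(\vol{d}(A)^{1/d}+\vol{d}(B)^{1/d}\big)^d,\]
where the last equality uses $\vol{d}(A_+)=\lambda\vol{d}(A)$, $\vol{d}(A_-)=(1-\lambda)\vol{d}(A)$ and likewise for $B$. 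The case $m=1<n$ is symmetric.

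\emph{General compact sets and the main obstacle.} For compact $A$, let $A_\ell$ be the union of the closed dyadic cubes of side $2^{-\ell}$ meeting $A$; then $A\subseteq A_\ell$, the $A_\ell$ are nested decreasing with $\bigcap_\ell A_\ell=A$ (as $A$ is closed), and $\vol{d}(A_\ell)\downarrow\vol{d}(A)$. Defining $B_\ell$ analogously, $(A_\ell\oplus B_\ell)_\ell$ is a nested decreasing sequence of finite unions of boxes and a short compactness argument shows $\bigcap_\ell(A_\ell\oplus B_\ell)=A\oplus B$, hence $\vol{d}(A_\ell\oplus B_\ell)\downarrow\vol{d}(A\oplus B)$; applying the finite-union case to $A_\ell,B_\ell$ and letting $\ell\to\infty$ concludes. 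I expect the only genuinely delicate point to be the bookkeeping in the inductive step: one must choose a \emph{separating coordinate} hyperplane so that the box count of $A$ strictly drops while that of $B$ does not grow, and then fix the position $c'$ by the intermediate value theorem so that the split proportion $\lambda$ matches on both sides — this is exactly what makes the two half-problems smaller and keeps the recursion valid. The box base case and the approximation step are routine.
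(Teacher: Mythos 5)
The paper does not supply its own proof of this lemma; it is quoted with a pointer to Theorem~5.11 of Penrose's book, so there is no in-paper argument to compare against. Your self-contained proof is the classical Hadwiger--Ohmann reduction, and it is correct. The box case via AM--GM is right (the two arithmetic means sum exactly to $\tfrac1d\sum_i 1 = 1$). The induction on the total number $m+n$ of pieces is set up properly: you choose a coordinate hyperplane strictly separating the open interiors of two boxes of $A$, so that at least one whole box of $A$ lies entirely on each side; consequently every positive-volume piece count $|A_\pm|$ is at most $m-1$, while splitting $B$ by $\{x_j = c'\}$ leaves each of $B_\pm$ with at most $n$ pieces, so the inductive measure $m+n$ strictly drops on both halves. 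Choosing $c'$ by the intermediate value theorem so that $\vol{d}(B_+)/\vol{d}(B) = \lambda = \vol{d}(A_+)/\vol{d}(A)$ is exactly what makes the two resulting lower bounds add up to $(\vol{d}(A)^{1/d}+\vol{d}(B)^{1/d})^d$, and the observation that $A_+\oplus B_+$ and $A_-\oplus B_-$ sit on opposite sides of $\{x_j = c+c'\}$ and inside $A\oplus B$ justifies the superadditivity step. The outer approximation by nested dyadic unions and continuity of Lebesgue measure from above is also fine, since all sets in sight are bounded and your compactness argument correctly identifies $\bigcap_\ell(A_\ell\oplus B_\ell)$ with $A\oplus B$. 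The degenerate case $\vol{d}(A)\vol{d}(B)=0$ is disposed of correctly at the outset.
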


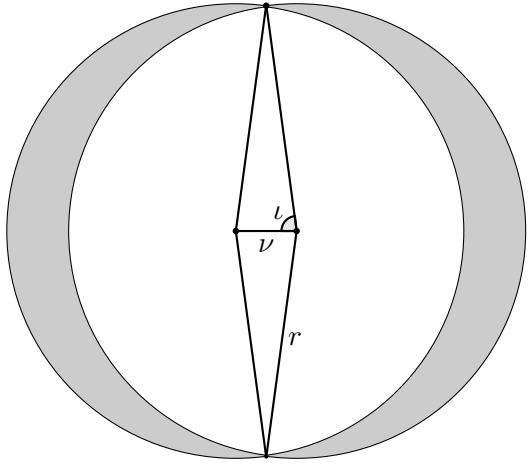
\begin{figure}
\centering
\begin{tikzpicture}[line cap=round,line join=round,x=1cm,y=1cm,scale=0.4]
\clip(-9.7,-9.2) rectangle (10,8.2);

\draw [shift={(0.5,-0.5)},line width=0.8pt,color=black,fill=black,fill opacity=0.1] (0,0) -- (96.5:0.5) arc (96.5:180:0.5) -- cycle;

\draw [line width=0.8pt] (-1.5,-0.5) circle (7.516648189186455cm);
\draw [line width=0.8pt] (0.5,-0.5) circle (7.516648189186455cm);

\fill[gray!40,even odd rule] (0.5,-0.5) circle (7.516648189186455cm) (-1.5,-0.5) circle (7.516648189186455cm);

\draw [line width=0.8pt] (-1.5,-0.5)-- (0.5,-0.5);
\draw [line width=0.8pt] (-1.5,-0.5)-- (-0.5,7);
\draw [line width=0.8pt] (-0.5,7)-- (0.5,-0.5);
\draw [line width=0.8pt] (-1.5,-0.5)-- (-0.5,-8);
\draw [line width=0.8pt] (0.5,-0.5)-- (-0.5,-8);
\begin{scriptsize}
\draw [fill=black] (-1.5,-0.5) circle (2.5pt);
\draw [fill=black] (-0.5,6.97) circle (2.5pt);
\draw [fill=black] (0.5,-0.5) circle (2.5pt);
\draw[color=black] (-0.5,-1) node {\large{$\nu$}};
\draw [fill=black] (-0.5,-7.97) circle (1.5pt);
\draw[color=black] (0.45,-4.0590303805683545) node {\large{$r$}};
\draw[color=black] (-0.1,0.1) node {\large{$\iota$}};
\end{scriptsize}
\end{tikzpicture}
\caption{An illustration from the proof of \Cref{lem:vols} when $d=2$.}
\label{fig:lem vols}
\end{figure}

\subsection{Preliminaries on random geometric graphs}\label{subsec:prelim-rgg}

Next, we derive concentration of the number of vertices of degree $\kappa \ge 0$ in a random geometric graph via a second moment computation.
For every $\kappa, \kappa',n\ge 1$ with $\kappa\le \kappa'$, denote by $Z_{[\kappa,\kappa'],n}$ the number of vertices in $G_n$ of degree in the interval $[\kappa,\kappa']$. When $\kappa=\kappa'$, we simply write $Z_{\kappa,n}$.
In particular,
\begin{equation}\label{eq:expZ}
\mathbb E[Z_{\kappa,n}] = n \binom{n-1}{\kappa} (\theta_d r^d)^{\kappa} (1-\theta_d r^d)^{n-1-\kappa}.
\end{equation}
While the following lemma is relatively standard, we have not found an appropriate reference and provide a proof for completeness.

\begin{lemma}\label{lem:conc}
Fix $\varepsilon > 0$, integers $\kappa\ge 0$ and $n = n(r)\ge 1$ such that $\mathbb E[Z_{\kappa,n}]\to \infty$. 
Then, whp we have $Z_{\kappa,n} = (1\pm \eps) \mathbb E[Z_{\kappa,n}]$.
\end{lemma}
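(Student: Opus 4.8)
The plan is a second moment computation for $Z := Z_{\kappa,n} = \sum_{i=1}^n \ind{A_i}$, where $A_i$ is the event that $X_i$ has degree exactly $\kappa$ in $G_n$; then $\mathbb E[Z] = \mathbb E[Z_{\kappa,n}]$ and, by \eqref{eq:expZ}, $\mathbb P(A_1) = \binom{n-1}{\kappa}p^\kappa(1-p)^{n-1-\kappa}$, where I write $p := \theta_d r^d$. Since $\mathbb E[Z]\to\infty$, Chebyshev's inequality reduces the claim to $\mathrm{Var}(Z) = o(\mathbb E[Z]^2)$, and as $\mathrm{Var}(Z) = n\mathbb P(A_1)(1-\mathbb P(A_1)) + n(n-1)(\mathbb P(A_1\cap A_2) - \mathbb P(A_1)^2)$ with $n\mathbb P(A_1) = \mathbb E[Z]$, it suffices to prove $\mathbb P(A_1\cap A_2) \le (1+o(1))\mathbb P(A_1)^2 + O(\mathbb P(A_1)/n)$. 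I would first record two consequences of the hypothesis $\mathbb E[Z_{\kappa,n}]\to\infty$ (here $\kappa$ is fixed): that $np^2\to 0$ (otherwise $(1-p)^{n-1-\kappa}$ decays faster than any fixed power of $p$ and $\mathbb E[Z]\to 0$), and that, in the case $np\to\infty$, also $np\le\log n + O(\log\log n)$ (bootstrap the inequality $\log n + \kappa\log(np) - np + O(1) \ge \log\mathbb E[Z] \ge 0$). The first fact yields the usual approximations $\mathbb P(A_1) = (1+o(1))\tfrac{(np)^\kappa}{\kappa!}\e^{-np}$ and $\mathbb P(A_1)^2 = (1+o(1))\tfrac{(np)^{2\kappa}}{(\kappa!)^2}\e^{-2np}$.

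To bound $\mathbb P(A_1\cap A_2)$ I would condition on $\rho := \dist(X_1,X_2)$ and split into three ranges. If $\rho>2r$, then $B(X_1,r)$ and $B(X_2,r)$ are disjoint and $X_1\not\sim X_2$, so $A_1\cap A_2$ is exactly the event that a $\mathrm{Multinomial}(n-2;p,p,1-2p)$ vector has both of its first two coordinates equal to $\kappa$; a routine calculation using $np^2\to 0$ gives $\mathbb P(A_1\cap A_2\mid\rho>2r) = (1+o(1))\mathbb P(A_1)^2$, and since $\mathbb P(\rho>2r) = 1-o(1)$ this range contributes $(1+o(1))\mathbb P(A_1)^2$. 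If $\rho\le r$, then $X_1\sim X_2$ and, conditioning on $X_1$, one obtains the clean identity $\mathbb P(A_1,\rho\le r) = \binom{n-2}{\kappa-1}p^\kappa(1-p)^{n-1-\kappa} = \tfrac{\kappa}{n-1}\mathbb P(A_1)$ (the event now asks for $\kappa-1$ further neighbours among $X_3,\ldots,X_n$), hence $\mathbb P(A_1\cap A_2,\rho\le r)\le\tfrac{\kappa}{n-1}\mathbb P(A_1)$, which after multiplying by $n(n-1)$ is $O(\kappa\,\mathbb E[Z]) = o(\mathbb E[Z]^2)$. The remaining range $r<\rho\le 2r$, where the two balls overlap, is the main obstacle.

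For $r<\rho\le 2r$, I would let $w = w(\rho) := \vol{d}(B(0,r)\cap B(\rho e_1,r))$ and note that the probability of $A_1\cap A_2$ given the positions of $X_1,X_2$ depends only on $w$: decomposing the two balls into their two ``private'' lunes (each of volume $p-w$), their intersection (volume $w$), and the complement, this probability equals $\sum_{c=0}^\kappa \frac{(n-2)!}{(\kappa-c)!^2\,c!\,(n-2-2\kappa+c)!}(p-w)^{2(\kappa-c)}w^c(1-2p+w)^{n-2-2\kappa+c}$, where $c$ counts the points landing in the overlap. Using $(1-2p+w)^{n-2-2\kappa+c}\le (1+o(1))\e^{-2np}\e^{nw}$, $(p-w)^{2(\kappa-c)}\le p^{2(\kappa-c)}$, $w^c\le p^c$ and $\frac{(n-2)!}{(n-2-2\kappa+c)!}\le n^{2\kappa-c}$, together with a short case distinction on which term dominates, one gets: if $np$ stays bounded this conditional probability is $O(\mathbb P(A_1))$, whence $\mathbb P(A_1\cap A_2, r<\rho\le 2r)\le\mathbb P(\rho\le 2r)\cdot O(\mathbb P(A_1)) = O(p\,\mathbb P(A_1))$ and $n(n-1)$ times this is $O(np\cdot\mathbb E[Z]) = o(\mathbb E[Z]^2)$; if $np\to\infty$ the $c=0$ term dominates and the conditional probability is at most $(1+o(1))\mathbb P(A_1)^2\,\e^{nw}$, so, integrating over the positions, $\mathbb P(A_1\cap A_2, r<\rho\le 2r)\le (1+o(1))\mathbb P(A_1)^2\int_{r<|y|\le 2r}\e^{nw(|y|)}\,\mathrm{d}y$. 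The crucial geometric input is that $w$ is decreasing on $[0,2r]$, so $w(|y|)\le w(r) = c_d\,p$ on the whole annulus, where $c_d := \vol{d}(B(0,r)\cap B(re_1,r))/(\theta_d r^d)$ is a scale-invariant constant lying \emph{strictly} in $(0,1)$ (two balls of radius $r$ at distance $r$ overlap in a lens of positive volume, but not in a full ball); hence $\int_{r<|y|\le 2r}\e^{nw(|y|)}\,\mathrm{d}y\le (2^d-1)\,p\,\e^{c_d np}$. Finally $p\,\e^{c_d np} = o(1)$: from $np\le\log n + O(\log\log n)$ and $c_d<1$ we get $\e^{c_d np}\le n^{c_d}(\log n)^{O(1)}$ and $p\le (1+o(1))(\log n)/n$, so the product is $O(n^{c_d-1}(\log n)^{O(1)}) = o(1)$. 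Thus $\mathbb P(A_1\cap A_2, r<\rho\le 2r) = o(\mathbb P(A_1)^2)$, which together with the two previous ranges proves $\mathbb P(A_1\cap A_2) = (1+o(1))\mathbb P(A_1)^2 + O(\mathbb P(A_1)/n)$, and hence $\mathrm{Var}(Z) = o(\mathbb E[Z]^2)$ as required.
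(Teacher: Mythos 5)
Your proof is correct and follows the same general strategy as the paper's: second moment via Chebyshev, splitting $\mathbb P(A_1\cap A_2)$ by the distance between $X_1$ and $X_2$ into the three ranges $\rho>2r$, $r<\rho\le 2r$, $\rho\le r$, and showing the middle range contributes $o(\mathbb P(A_1)^2)$. The differences are in how the middle range is handled. You write the exact multinomial decomposition (private lunes, overlap, complement), observe that the overlap volume $w(\rho)$ is decreasing in $\rho$ and bounded by $w(r)=c_d p$ with $c_d$ strictly less than $1$, and then run a case distinction on whether $np$ is bounded or $\to\infty$, finishing via $np\le\log n+O(\log\log n)$. The paper instead relaxes the event $A_1\cap A_2$ to the weaker event that $\{X_1,X_2\}$ has at most $2\kappa$ neighbours (which automatically collapses the multinomial into a single binomial $\mathrm{Bin}(n-2,V)$ with $V=\vol{d}(B(X_1,r)\cup B(X_2,r))$), uses the cruder volume bound $V\ge(3/2)\theta_d r^d$ on $\cB(r,2r)$ (equivalent to $c_d\le 1/2$), and closes with the single estimate $r^d\exp(\theta_d nr^d/2)=o(1)$ drawn from $nr^d<3d\log(1/r)/(2\theta_d)$, which works uniformly and avoids your case split. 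In short: you prove something slightly sharper in the intermediate range, at the cost of the case distinction and the explicit multinomial bookkeeping; the paper's version is a bit leaner. Both are valid.
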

\begin{proof}
We use the second moment method. 
To this end, note that~\eqref{eq:expZ} gives $\mathbb E[Z_{\kappa,n}] = \Theta(n(nr^d)^{\kappa} \e^{-\theta_d nr^d})$, and the divergence condition in the statement implies that $nr^d < 3d\log(1/r)/(2\theta_d)$.
Denote by $\cA$ the event that each of the first two vertices in $G_n$ has degree $\kappa$ in $G_n$.
While $\mathbb E[Z_{\kappa,n}]$ has been computed in~\eqref{eq:expZ}, we observe that
\begin{equation}\label{eq:sndmom}
\mathbb E[Z_{\kappa,n}^2] = \mathbb E[Z_{\kappa,n}]+n(n-1)\mathbb P(\cA).
\end{equation}

It remains to bound $\mathbb P(\cA)$ from above. Given $0\le a<b\le \infty$, denote by $\cB(a,b)$ the event that $X_1$ and $X_2$ land greatr than $a$ and at most $b$ from each other.
Then,
\begin{equation}\label{eq:Bn1}
\mathbb P(\cA\cap \cB(2r,\infty))\le \binom{n-2}{\kappa}\binom{n-2-\kappa}{\kappa} (\theta_d r^d)^{2\kappa} (1-2\theta_d r^d)^{n-2-2\kappa} = (1+o(1))\frac{\mathbb E[Z_{\kappa,n}]^2}{n^2}.
\end{equation}
Next, denote by $\cA'$ the event that the set $\{X_1, X_2 \}$ has at most $2\kappa$ neighbours in $G_n$. We have
\begin{equation}\label{eq:Bn2}
\begin{split}
\mathbb P(\cA\cap \cB(r,2r))
& \le \bbP(\cA' \cap \cB(r, 2r))
\le \theta_d (2r)^d\cdot \sum_{i=0}^{2\kappa} \binom{n-2}{i} (2\theta_d r^d)^i (1-3\theta_d r^d/2)^{n-2-2\kappa}\\
&\le 2 \theta_d (2r)^d\cdot (2\theta_d nr^d)^{2\kappa} \exp(-3\theta_d nr^d/2) \\
& = O \left( \frac{n^2 (nr^d)^{2\kappa} \exp(-2\theta_d nr^d)}{n^2} \cdot r^d \exp(\theta_d nr^d/2) \right) = o(\mathbb E[Z_{\kappa,n}]^2/n^2),
\end{split}
\end{equation}
where we used that, on the event $\cB(r,2r)$, the union of the balls with radius $r$ around $X_1$ and $X_2$ has volume between $(3\theta_d/2) r^d$ and $2\theta_d r^d$. 

Finally, when the points $X_1$ and $X_2$ are at distance smaller than $r$, they are adjacent in $G_n$.
Denote by $\cA''$ the event that $X_1$ has exactly $\kappa - 1$ neighbours other than $X_2$ in $G_n$. Then,
\begin{equation}\label{eq:Bn4}
\begin{split}
\mathbb P(\cA\cap \cB(0, r))
& \le \bbP(\cA'' \cap \cB(0, r))
\leq \theta_d r^d \cdot \binom{n - 2}{\kappa - 1} \cdot (\theta_d r^d)^{\kappa - 1} \cdot (1 - \theta_d r^d)^{n - \kappa - 1} \\
& \leq \theta_d r^d \cdot (\theta_d n r^d)^{\kappa - 1} \cdot \exp( - \theta_d nr^d )
= O(\bbE[Z_{\kappa, n}] / n^2) = o(\bbE[Z_{\kappa, n}]^2 / n^2),
\end{split}
\end{equation}
where the last equality uses the assumption that $\bbE[Z_{\kappa, n}] \rightarrow \infty$.
Then, combining~\eqref{eq:Bn1},~\eqref{eq:Bn2} and~\eqref{eq:Bn4} yields
\[\mathbb P(\cA) = \mathbb P(\cA\cap \cB(2r,\infty))+\mathbb P(\cA\cap \cB(r,2r))+\mathbb P(\cA\cap \cB(0,r)) = (1+o(1))\mathbb E[Z_{\kappa, n}]^2/n^2.\]
Finally, the latter relation together with~\eqref{eq:sndmom} and Chebyshev's inequality finishes the proof.
\end{proof}

For an integer $\kappa\ge 0$ and a graph $G$, a non-empty set $S\subseteq V(G)$ is called \emph{$\kappa$-separated (in $G$)} if $S$ has at most $\kappa$ neighbours.
The following theorem combines several results from~\cite{Pen04} to describe $\kappa$-separated sets around the threshold for minimum degree $\kappa+1$.

\begin{theorem}[see Theorem 8.1, Theorem 13.11 and Propositions 13.12--13.13 in~\cite{Pen04}]\label{thm:Pen04}
Fix $\beta > 0$, integers $\kappa\ge 0$ and $n = n(r)$ such that $nr^d\to \infty$ and $\mathbb E[Z_{\kappa,n}]\to \beta$. Then,
\begin{enumerate}[label=\emph{(\alph*)}]
    \item 
    $Z_{\kappa,n}$ converges in distribution to $\mathrm{Po}(\beta)$ as $r\to 0$,
    \item 
    if $\kappa \ge 1$, whp there is no $\kappa$-separated set of size between $2$ and $n-\kappa-2$ in $G_n$,
    \item 
    if $\kappa = 0$, whp there is a unique connected component of $G_n$ with more than $1$ vertex.
\end{enumerate}
\end{theorem}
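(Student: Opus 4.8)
The statement collects three results from~\cite{Pen04}, so the most economical route is to verify that our parameters lie within the regime treated there and then invoke Theorem~8.1, Theorem~13.11 and Propositions~13.12--13.13 of~\cite{Pen04} directly. Combining $\mathbb E[Z_{\kappa,n}]\to\beta$ with \eqref{eq:expZ} pins down $\theta_d nr^d=\log n+\kappa\log\log n+O(1)$, which is precisely the scaling window for the emergence of minimum degree $\kappa+1$ analysed in Chapter~13 of~\cite{Pen04}; since the estimates there are quantitative in $nr^d$ and do not rely on $n$ being the leading asymptotic parameter, they transfer to our setting $r\to 0$ (if desired, pass to a subsequence of radii along which $\theta_d nr^d-\log n-\kappa\log\log n$ converges). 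Below I indicate how the three parts would be obtained from scratch.

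For part~(a), I would use the method of factorial moments. By exchangeability of $X_1,\dots,X_n$, for every fixed $j\ge 1$,
\[
\mathbb E\big[(Z_{\kappa,n})_j\big]=(n)_j\,\mathbb P\big(d_{G_n}(X_1)=\dots=d_{G_n}(X_j)=\kappa\big),\qquad (m)_j:=m(m-1)\cdots(m-j+1).
\]
Conditioning on the positions of $X_1,\dots,X_j$, the dominant contribution comes from the event $\cB_j$ that these points are pairwise at distance more than $2r$: it has probability $1-O(r^d)$, and on $\cB_j$ the balls $B(X_i,r)$ are pairwise disjoint, so the joint degree probability factorises up to $1+o(1)$ into $j$ binomial probabilities, each equal to $\mathbb E[Z_{\kappa,n}]/n$ by \eqref{eq:expZ}. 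The complement of $\cB_j$ contributes negligibly by the same estimates as in \eqref{eq:Bn2} and \eqref{eq:Bn4}: whenever two of the conditioned points lie within distance $2r$, the union of two $r$-balls around them has volume strictly larger than $\theta_d r^d$, producing a spare vanishing exponential factor (for $j\ge 3$ one only adds a harmless combinatorial factor over the close pair). Hence $\mathbb E[(Z_{\kappa,n})_j]=(1+o(1))\mathbb E[Z_{\kappa,n}]^j\to\beta^j$ for every fixed $j$, and since these are the factorial moments of $\mathrm{Po}(\beta)$, which is determined by its moments, $Z_{\kappa,n}$ converges in distribution to $\mathrm{Po}(\beta)$.

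For parts~(b) and~(c), I would argue by a first-moment bound over the shapes of a would-be small separated set; this is the technical heart. If $S$ is $\kappa$-separated, so is the vertex set of any connected component of $G_n[S]$, so we may assume $S$ induces a connected subgraph; let $D=\diam(S)$. The crucial geometric fact is that the region $(S\oplus\overline{B(0,r)})\setminus S$ contains at most $\kappa$ vertices of $G_n$ --- any vertex at distance at most $r$ from $S$ not lying in $S$ belongs to $N_{G_n}(S)$ --- while it has volume at least $\theta_d r^d$, with an extra $\Omega(Dr^{d-1})$ coming from a tube of length $\Omega(D)$ and thickness $\Omega(r)$ around a path in $S$ realising the diameter (quantified via \Cref{lem:vols} for a pair of points and \Cref{lem:brunn-minkowski} in general). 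For components with $D\ge Kr$, $K$ a large constant, this produces an essentially empty region of volume $\omega(r^d)$, which a crude union bound over the $O(r^{-d})$ cells of a fixed tessellation of $\bbT^d$ rules out. For components with $D<Kr$, the relevant expected count mimics \eqref{eq:Bn2}: parametrising by the distance $\delta$ between two suitably chosen points of $S$ and exploiting the surplus volume $\Theta(\delta r^{d-1})$ of \Cref{lem:vols}, the integration over $\delta$ supplies a factor which, combined with the $\Theta(n^{-1})$ already afforded by $\mathbb E[Z_{\kappa,n}]=\Theta(1)$, makes the expected number of such sets vanish. Part~(c) is the $\kappa=0$ instance of the same argument: two connected components each of size at least $2$ would form a $0$-separated set of size in $[2,n-2]$.

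The main obstacle is the geometric bookkeeping in parts~(b)--(c): securing a lower bound on $\vol{d}(S\oplus\overline{B(0,r)})$ that is uniform over all connected $S$ and all diameters, and organising the union bound so that both the large-diameter and the small-diameter regimes are closed; in the small-diameter regime one must carefully extract the $\Theta(\delta r^{d-1})$ surplus volume (the borderline case $D=\Theta(r)$ being the tightest) while allowing $|S|$ to be large. Part~(a) and the matching of our parameters with the regime of~\cite{Pen04} are routine.
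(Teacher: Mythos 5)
Your primary route---verifying that the hypotheses $nr^d\to\infty$ and $\mathbb E[Z_{\kappa,n}]\to\beta$ place us in the scaling window $\theta_d nr^d=\log n+\kappa\log\log n+O(1)$ and then invoking Theorem~8.1, Theorem~13.11 and Propositions~13.12--13.13 of~\cite{Pen04} directly---is exactly what the paper does; this is a citation theorem and the paper gives no independent proof. Your supplementary from-scratch sketch (factorial moments for~(a), a geometric first-moment bound over connected separated sets for~(b)--(c)) is a reasonable outline of Penrose's own arguments, but it is not part of the paper's treatment and, as you note yourself, the lower bound on $\vol{d}(S\oplus\overline{B(0,r)})$ for general connected $S$ needs more than \Cref{lem:brunn-minkowski} alone (the paper's later use of Penrose's Proposition~5.15, quoted as \Cref{lem:positive-orthant}, is the tool that supplies the surplus $\Omega(Dr^{d-1})$ uniformly).
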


\section{\texorpdfstring{Hitting times and probabilistic monotonicity in the $1$-choice process}{}}\label{section:1-choice}

We turn our attention to \Cref{thm:1}. We first concentrate on showing the part concerning $k$-connectivity.

\subsection{\texorpdfstring{$k$-connectivity in the $1$-choice process}{}}\label{sec:k-conn 1-choice}

In this section, $k\ge 1$ is a fixed integer. 
Our first lemma is mostly technical and implies that there is a time in the random geometric graph process when the graph consists of a giant $k$-connected component and many (but not too many) vertices attached to it via $k-1$ edges.

\begin{lemma}\label{lem:props}
There exists $t_* = t_*(r)\ge 1$ such that the following statements hold simultaneously.
\begin{enumerate}[label=\emph{(\alph*)}]
    \item
    $t_* r^d\to \infty$,
    \item
    $\mathbb E[Z_{k-1,t_*}] \to \infty$ but also $\mathbb E[Z_{k-1,t_*}]\le \log\log(1/r)$,
    \item $\mathbb E[Z_{[0,k-2],t_*}]\to 0$; in particular, whp $G_{t_*}$ contains no vertex of degree at most $k-2$,
    \item
    if $k\ge 2$, whp $G_{t_*}$ contains no $(k-1)$-separated set of size between $2$ and $t_* - k - 1$, 
    \item
    if $k = 1$, whp $G_{t_*}$ contains a unique connected component with more than $1$ vertex.
\end{enumerate}
\end{lemma}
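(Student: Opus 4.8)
The plan is to take $t_*$ just below the threshold for minimum degree $k$: close enough that $G_{t_*}$ is still structurally ``critical'' yet far enough that $Z_{k-1,t_*}$ has diverged. From \eqref{eq:expZ}, for $n$ with $nr^d\to\infty$ one has $\mathbb E[Z_{k-1,n}]=(1+o(1))\tfrac{(\theta_d n r^d)^k}{(k-1)!\,\theta_d r^d}\e^{-\theta_d n r^d}$, and, as a function of the integer $n$, this expectation is unimodal with peak of order $r^{-d}$ near $n\asymp k/(\theta_d r^d)$ and consecutive ratio $\tfrac{n+1}{n-k+1}(1-\theta_d r^d)=1-o(1)$ once $n\gg k/(\theta_d r^d)$. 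I would then let $t_*$ be the largest integer $n$ with $\mathbb E[Z_{k-1,n}]\ge\sqrt{\log\log(1/r)}$: this is well defined since the expectation diverges at the peak and tends to $0$ afterwards, and it lies strictly past the peak. Since $\mathbb E[Z_{k-1,t_*+1}]<\sqrt{\log\log(1/r)}=o(r^{-d})$ we cannot have $t_* r^d=O(1)$, which is (a); the consecutive-ratio estimate gives $\mathbb E[Z_{k-1,t_*}]=(1+o(1))\mathbb E[Z_{k-1,t_*+1}]\le(1+o(1))\sqrt{\log\log(1/r)}$ while $\mathbb E[Z_{k-1,t_*}]\ge\sqrt{\log\log(1/r)}\to\infty$ by choice, which is (b); and taking logarithms in the asymptotics for $\mathbb E[Z_{k-1,n}]$ forces $\theta_d t_* r^d=(1+o(1))d\log(1/r)$, used below.

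For (c), for each fixed $0\le\kappa\le k-2$ formula \eqref{eq:expZ} gives $\mathbb E[Z_{\kappa,t_*}]/\mathbb E[Z_{k-1,t_*}]=\Theta\big((\theta_d t_* r^d)^{\kappa+1-k}\big)=\Theta\big((\log(1/r))^{\kappa+1-k}\big)$, and since $\kappa+1-k\le-1$ and $\mathbb E[Z_{k-1,t_*}]=O(\sqrt{\log\log(1/r)})$ this is $O\big(\sqrt{\log\log(1/r)}/\log(1/r)\big)\to0$. Summing over the finitely many $\kappa$ gives $\mathbb E[Z_{[0,k-2],t_*}]\to0$, and Markov's inequality gives that whp $G_{t_*}$ has no vertex of degree at most $k-2$.

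Parts (d) and (e) carry the weight of the lemma, and I would prove them at this $t_*$ by a first-moment estimate of the type underlying \Cref{thm:Pen04}, which is comparatively cheap here precisely because $\mathbb E[Z_{k-1,t_*}]$ is only polylogarithmic. Using (c), a minimal $(k-1)$-separated set of size $\ge2$ is either connected or the exceptional configuration consisting of two non-adjacent degree-$(k-1)$ vertices with a common $(k-1)$-element neighbourhood, and the range $[t_*/2,\,t_*-k-1]$ of set sizes in (d) reduces to the range $[2,t_*/2]$ by passing to complements; for (e) one needs that whp $G_{t_*}$ has no connected component of size in $[2,t_*/2]$. For a connected $s$-element point set $S$, $\vol{d}(S\oplus B(0,r))\ge\theta_d r^d$, with an excess of order $r^{d-1}\diam(S)$ that I would quantify via \Cref{lem:vols} in the near-degenerate regime and via \Cref{lem:brunn-minkowski} (applied to the $r$-neighbourhood of a spanning path of $S$) when $S$ is spread out; bounding the expected number of such $S$ by $\sum_{s\ge2}\binom{t_*}{s}\cdot(\text{connectivity weight})\cdot\mathbb P(\text{at most }k-1\text{ of the remaining points lie in }S\oplus B(0,r))$ and integrating out the internal geometry gives a quantity of order $(t_* r^d)^{1-d}\,\mathbb E[Z_{k-1,t_*}]=O((\log(1/r))^{1-d}\sqrt{\log\log(1/r)})$, dominated by $s=2$ and tending to $0$; the exceptional configuration is estimated similarly and is even smaller. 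For (e), this excludes components of size in $[2,t_*/2]$; two components of size larger than $t_*/2$ cannot coexist, and since $\mathbb E[Z_{0,t_*}]=o(t_*)$ not all vertices are isolated whp, so whp $G_{t_*}$ has a unique component with more than one vertex.

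The main obstacle is this last step: performing the reductions to connected sets and to the exceptional configuration, controlling $\vol{d}(S\oplus B(0,r))$ uniformly in both the degenerate and the spread-out regime, and checking that the $s=2$ term really dominates the whole range $2\le s\le t_*/2$. Parts (a)--(c), by contrast, are routine deductions from \eqref{eq:expZ}.
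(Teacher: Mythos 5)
Your choice of $t_*$ and your derivation of (a)--(c) mirror the paper's: the paper likewise picks $t_*$ so that $\mathbb{E}[Z_{k-1,t_*}]$ diverges slowly and deduces (c) from the ratio $\mathbb{E}[Z_{[0,k-2],n}]/\mathbb{E}[Z_{k-1,n}]=\Theta(1/(nr^d))$; it merely produces $t_*$ implicitly (as $n(\gamma,r)$ for a slowly diverging parameter $\gamma$) instead of by your explicit largest-$n$ rule, a cosmetic difference.

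The genuine divergence is in (d)--(e), and there the proposal has a gap. The paper does not reprove the structural Penrose statements. It applies \Cref{thm:Pen04}(b)--(c) for each \emph{fixed} Poisson parameter $\beta$ (where the failure probability is $O(\e^{-\beta/8})$ once $r\le r_3(\beta)$) and then performs a diagonal extraction: setting $\gamma(r)=\sup\{\beta\in[1,\log\log(1/r)]:r\le r_3(\beta)\}$, one has $\gamma(r)\to\infty$ slowly, and $t_*=n(\gamma,r)$ inherits (d) and (e). This extraction is precisely the device you are missing: the cited theorem is only available at a fixed $\beta$, and one need not reprove it at a divergent parameter -- one simply lets $\beta$ grow with $r$ slowly enough. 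Your alternative is a direct first-moment bound at $n=t_*$ with $\mathbb{E}[Z_{k-1,t_*}]\to\infty$, which amounts to reproving Penrose's Propositions 13.12--13.13 from scratch. That is a legitimate route in principle, but, as you say yourself, the reduction to connected and exceptional configurations, the uniform volume excess over the full diameter range, and the verification that $s=2$ dominates the whole sum over $2\le s\le t_*/2$ are all left undone, and together they constitute most of the content.

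There is also a concrete error in the sketch. The order you report, $(t_*r^d)^{1-d}\,\mathbb{E}[Z_{k-1,t_*}]=O((\log(1/r))^{1-d}\sqrt{\log\log(1/r)})$, tends to $0$ only when $d\ge 2$; at $d=1$ it is $\Theta(\sqrt{\log\log(1/r)})\to\infty$. A direct computation on $\mathbb{T}^1$ confirms this: the expected number of pairs at distance at most $r$ whose joint $r$-neighbourhood contains at most $k-1$ further points is $(1-o(1))\,\mathbb{E}[Z_{k-1,t_*}]$. So your first moment does not vanish in dimension one, and your argument as outlined cannot establish (d)--(e) there; the paper avoids this by leaning on the cited black-box theorem rather than re-deriving the bound.
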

\begin{proof}
The proof combines~\Cref{thm:Pen04} for $\kappa=k-1$ with a diagonal extraction argument.
Fix $\beta > 0$. Then, by ignoring rounding, there exists $r_0 = r_0(\beta) > 0$ satisfying that, for every $r\in (0,r_0]$, we may (and do) fix $n = n(\beta,r)\ge \beta r^{-d}$ such that 
\[\mathbb E[Z_{k-1,n}] = n \binom{n-1}{k-1} (\theta_d r^d)^{k-1} (1-\theta_d r^d)^{n-1-(k-1)} = \beta.\]
In particular, for any fixed $\beta > 0$, $n = (1+o(1))d\log(1/r)/\theta_d r^d$, implying that
\[\mathbb E[Z_{[0,k-2],n}] = (1+o(1)) \mathbb E[Z_{k-2,n}] = \Theta(n (n r^d)^{k-2} \exp(-\theta_d n r^d)) = \Theta(\mathbb E[Z_{k-1,n}]/nr^d).\]
In particular, there is $r_1 = r_1(\beta)\in (0,r_0]$ such that, for every $r\in (0,r_1]$, $nr^d\ge \beta$ and $\mathbb E[Z_{[0,k-2],n}]\le 1/\beta$. 

Further, by \Cref{thm:Pen04}(a) and Chernoff's inequality (\Cref{lem:chernoff}), for every $\beta > 0$, there is $r_2 = r_2(\beta)\in (0,r_1]$ such that, for every $r\in (0,r_2]$, $Z_{k-1,n}\ge \beta/2$ with probability at least $1-2\e^{-\beta/8}$ (where the factor of 2 accounts for the discrepancy between the actual distribution of $Z_{k-1,n}$ and its Poisson limit).
By combining the latter fact with \Cref{thm:Pen04}(b) and (c), there exists $r_3 = r_3(\beta)\in (0,r_2]$ such that, for every $r\le r_3$, \Cref{lem:props}(d) and (e) and the inequality $Z_{k-1,n}\ge \beta/2$ are satisfied simultaneously with probability at least $1-3\e^{-\beta/8}$.

Now, for every $r\in (0,r_3(1)]$, define 
\[\gamma = \gamma(r)\coloneqq \sup\{\beta \in [1,\log\log(1/r)]: r\le r_3(\beta)\}\qquad \text{and}\qquad t_* = n(\gamma,r).\]
In particular, for every $r\le r_3(1)$, each of $t_* r^d$ and $\mathbb E[Z_{k-1,t_*}]$ is at least $\gamma$, $\mathbb E[Z_{[0,k-2],n}]\le 1/\gamma$, and \Cref{lem:props}(d) and (e) hold with probability at least $1-3\e^{-\gamma/8}$. As $\gamma(r)\to \infty$ when $r\to 0$, this implies the result.
\end{proof}

\begin{remark}\label{rem:asymptotic}
Together~\eqref{eq:expZ}, \Cref{lem:props}(b) and (c) imply that $t_* = (1+o(1)) d\log(1/r)/\theta_d r^d$.
\end{remark}

In the rest of this section, we fix $t_* = t_*(r)$ as in \Cref{lem:props}. 

\begin{lemma}\label{lem:isol}
Whp each of the graphs $(G_i)_{i=1}^{t_*}$ contains at least one vertex of degree at most $k-1$.
\end{lemma}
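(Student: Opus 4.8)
I plan to replace a (doomed) union bound over all times $i\le t_*$ by a dyadic ``relay'' that carries the modest concentration of \Cref{lem:conc} near $t_*$ backwards in time, together with a trivial treatment of very small times. Recall from \Cref{rem:asymptotic} that $t_*=(1+o(1))d\log(1/r)/\theta_d r^d$. Set $m_s:=\lceil t_*/2^s\rceil$ for $s\ge 0$ and let $S$ be minimal with $m_S\le r^{-d/4}$; then $r^{-d/4}<m_{S-1}\le 2m_S$ and $S=O(\log(1/r))$. For $i\le m_S$ the statement is immediate: $\mathbb E[|E(G_{m_S})|]=\binom{m_S}{2}\theta_d r^d=O(r^{d/2})\to 0$, so whp $G_{m_S}$ is edgeless, whence so is every $G_i$ with $i\le m_S$ (being an induced subgraph), and every vertex of such $G_i$ then has degree $0\le k-1$.

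For $s\in\{0,\dots,S-1\}$ let $B_s$ be the event that some $X_i$ with $i\le m_{s+1}$ has degree at most $k-1$ in $G_{m_s}$. The role of $B_s$ is a monotonicity along a block: if $B_s$ holds, fix such an $i^\ast\le m_{s+1}$; then for every integer $i$ with $m_{s+1}\le i\le m_s$ we have $X_{i^\ast}\in V(G_i)$ and, since $G_i$ is an induced subgraph of $G_{m_s}$, also $d_{G_i}(X_{i^\ast})\le d_{G_{m_s}}(X_{i^\ast})\le k-1$. As $\bigcup_{s=0}^{S-1}\{i:m_{s+1}\le i\le m_s\}=\{i:m_S\le i\le t_*\}$, on $\bigcap_{s=0}^{S-1}B_s$ every $G_i$ with $m_S\le i\le t_*$ has a vertex of degree at most $k-1$. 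Combined with the previous paragraph, it therefore suffices to show $\sum_{s=0}^{S-1}\mathbb P(B_s^c)\to 0$.

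To bound $\mathbb P(B_s^c)$ I would condition on the unordered set $\{X_1,\dots,X_{m_s}\}$: given it, the ordering is a uniform permutation, so $\mathbb P(B_s^c\mid\{X_1,\dots,X_{m_s}\})$ is the probability that a uniform $m_{s+1}$-subset of an $m_s$-set avoids the (now determined) set $L$ of vertices of $G_{m_s}$ of degree at most $k-1$, which is at most $(1-|L|/m_s)^{m_{s+1}}\le e^{-|L|m_{s+1}/m_s}\le e^{-|L|/4}$ since $m_{s+1}\ge m_s/4$. Hence $\mathbb P(B_s^c)\le\mathbb E[e^{-Z_{[0,k-1],m_s}/4}]$. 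For $1\le s\le S-1$ I would use $Z_{[0,k-1],m_s}\ge Z_{0,m_s}$ and write $\mathbb E[e^{-Z_{0,m_s}/4}]\le e^{-\mu_s/8}+\mathbb P(Z_{0,m_s}<\mu_s/2)$ with $\mu_s:=\mathbb E[Z_{0,m_s}]$; Chebyshev's inequality and a second-moment estimate exactly as in the proof of \Cref{lem:conc} (which for isolated vertices is especially clean, the ``close'' case being empty: two isolated vertices are never within distance $2r$) give $\mathbb P(Z_{0,m_s}<\mu_s/2)\le 4/\mu_s+o(r^{d/2})$, uniformly in $s$. For the single scale $s=0$ I would instead use $Z_{[0,k-1],t_*}\ge Z_{k-1,t_*}$, \Cref{lem:props}(b), and the analogous second-moment bound for $Z_{k-1,t_*}$. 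Finally, $\mu_0\ge\mathbb E[Z_{k-1,t_*}]\to\infty$, while for $1\le s\le S-1$ unimodality of $m\mapsto m(1-\theta_d r^d)^{m-1}$ over $[m_{S-1},m_1]$ gives $\mu_s=\Omega(r^{-d/4})$ (the minimum being attained near $m_s\approx r^{-d/4}$, since at $m_1\approx t_*/2$ the value is $\asymp r^{-d/2}$ up to a sub-polynomial factor); so a short computation yields $\sum_{s=0}^{S-1}(e^{-\mu_s/8}+4/\mu_s)=o(1)$, and the error terms contribute $O(S r^{d/2})=o(1)$ since $S=O(\log(1/r))$.

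The crux is the behaviour near $t_*$: there the typical number of vertices of degree at most $k-1$ is only $\omega(1)$, possibly smaller than any fixed power of $\log(1/r)$, so a direct union bound of $\mathbb P(Z_{[0,k-1],i}=0)$ over the $\Theta(\log(1/r)/r^d)$ values of $i\le t_*$ is hopeless — for $i$ close to $t_*$ the individual probabilities are merely $o(1)$, not summably small. Relaying instead through the geometrically spaced checkpoints $m_s$ and transporting the available concentration backwards via exchangeability (which is precisely why one must track a vertex that is present already at time $m_{s+1}$ yet still has low degree at time $m_s$) circumvents this; the secondary technical point is to keep the second-moment error terms small enough, uniformly over the $\Theta(\log(1/r))$ checkpoints, for $\sum_s\mathbb P(B_s^c)$ to vanish, which is why the recursion is stopped at scale $\asymp r^{-d/4}$.
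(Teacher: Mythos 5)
Your proof is correct, and it uses the same core mechanism as the paper — exchangeability of the arrival order given the (unordered) point cloud, plus the deterministic fact that $d_{G_i}(X_{i^\ast}) \le d_{G_{m_s}}(X_{i^\ast})$ whenever $i^\ast \le i \le m_s$ — but organised around a different decomposition of $[1,t_*]$. The paper uses exactly three stages: a relay from $t_*$ to $t_*/2$ exploiting $Z_{k-1,t_*} = \omega(1)$, then one very long relay from $t_*/2$ all the way down to $\alpha = r^{-d/2-0.1}$ exploiting $Z_{0,t_*/2} = r^{-d/2+o(1)}$, and finally a trivial treatment of $i \le \alpha$ (noting $X_1$ is whp isolated in $G_\alpha$). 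You instead replace the single long relay by $\Theta(\log(1/r))$ dyadic checkpoints $m_s = \lceil t_*/2^s\rceil$ and stop the recursion at $m_S \asymp r^{-d/4}$. The paper's choice buys a shorter proof — only two applications of Chebyshev, no need to track error terms over many scales — but requires choosing the two jump points $t_*/2$ and $\alpha$ cleverly; your dyadic scheme is more systematic and would generalise more easily, at the cost of having to verify that the second-moment error in Chebyshev is $o(r^{d/2})$ uniformly over $1 \le s \le S-1$. That uniformity does hold, but note that the relevant check is not only that the "close" case ($\dist(X_1,X_2) \le r$) is vacuous for isolated vertices; the "middle" case ($\dist \in (r,2r]$) in the proof of Lemma~\ref{lem:conc} contributes an error of order $r^d\exp(\theta_d m_s r^d/2)$, which for $s \ge 1$ (so $m_s \lesssim t_*/2$) is $r^{3d/4+o(1)} = o(r^{d/2})$ — this is the place where stopping at $m_s \le t_*/2$ rather than allowing $m_s$ up to $t_*$ is actually used, matching why you single out $s = 0$ for a separate treatment via $Z_{k-1,t_*}$. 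One small notational slip: you write $\mu_0 \ge \mathbb E[Z_{k-1,t_*}]$ but elsewhere $\mu_s := \mathbb E[Z_{0,m_s}]$, and $\mathbb E[Z_{0,t_*}]$ can be $o(1)$ for $k \ge 2$; you clearly intend $\mu_0 := \mathbb E[Z_{k-1,t_*}]$ for the $s=0$ bound, which is what makes that scale work.
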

\begin{proof}
Denote by $\cA_1$ the event that $G_{t_*}$ contains at least $\mathbb E[Z_{k-1,t_*}]/2 = \omega(1)$ vertices of degree $k-1$.
By \Cref{lem:conc} and \Cref{lem:props}(b), $\cA_1$ holds whp.
Also, denote by $\cA_2$ the event that there is a point among $(X_i)_{i=1}^{t_*/2}$ which has degree $k-1$ in $G_{t_*}$. 
Observe that such a point would have degree at most $k-1$ in each of the graphs $(G_i)_{i=t_*/2}^{t_*}$.
Moreover, conditionally on $\cA_1$ and on the unlabelled random graph $G_{t_*}'$ obtained from $G_{t_*}$ by forgetting the indices (or, equivalently, the arrival times) of its vertices, every bijective assignment of the indices in $[t_*]$ to the vertices in $G_{t_*}'$ has the same probability.
In particular, in this case, $\cA_2$ is satisfied with probability at least $1-2^{-\mathbb E[Z_{k-1,t_*}]/2} = 1-o(1)$.

Now, denote by $\cA_3$ the event that $G_{t_*/2}$ contains at least $\mathbb E[Z_{0,t_*/2}]/2$ isolated vertices.
Note that, by~\eqref{eq:expZ} and \Cref{rem:asymptotic},  $\mathbb E[Z_{0,t_*/2}]= r^{-d/2+o(1)}$.
Combined with \Cref{lem:conc}, this shows that $\cA_3$ holds whp.
Also, set $\alpha = \alpha(r) = r^{-d/2-0.1}$ and denote by $\cA_4$ the event that there is a point among $(X_i)_{i=1}^{\alpha}$ which is isolated in $G_{t_*/2}$.
Again, using \Cref{rem:asymptotic}, conditionally on $\cA_3$ and on the unlabelled random graph $G_{t_*/2}'$ corresponding to $G_{t_*/2}$,
the probability that $\cA_4$ fails is at most
\[\bigg(1-\frac{\alpha}{t_*/2}\bigg)^{\mathbb E[Z_{0,t_*/2}]/2} = \exp(-r^{-0.1+o(1)}) = o(1).\]

Finally, the probability that $X_1$ is not isolated in $G_{\alpha}$ is $1-(1-\theta_d r^d)^{\alpha-1} = o(1)$. Denote this event by $\cA_5$. Then, the statement of the lemma fails with probability less than
\begin{align*}
\mathbb P(\cA_1^c\cup \cA_2^c\cup \cA_3^c\cup \cA_4^c\cup \cA_5^c)
&\le \mathbb P(\cA_1^c\cup \cA_2^c) + \mathbb P(\cA_3^c\cup \cA_4^c) + \mathbb P(\cA_5^c)\\
&\le \mathbb P(\cA_1^c) + \mathbb P(\cA_2^c\mid \cA_1) + \mathbb P(\cA_3^c) + \mathbb P(\cA_4^c\mid \cA_3) + \mathbb P(\cA_5^c) = o(1),
\end{align*}
as desired.
\end{proof}

For every $t\ge 1$, denote by $H_t$ the graph obtained from $G_t$ by deleting all vertices of degree at most $k-1$ (in $G_t$).
The end of the section is dedicated to the proof of the following proposition.

\begin{proposition}\label{prop:degk}
Whp $H_{t_*}$ is $k$-connected and, for every $t\ge t_*$, $X_{t+1}$ is adjacent to $k$ or more vertices~in~$H_t$.
\end{proposition}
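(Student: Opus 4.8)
I would treat $k=1$ separately and then focus on $k\ge 2$.

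For $k=1$, \Cref{lem:props}(e) gives that whp $G_{t_*}$ has a unique component with more than one vertex; deleting the (degree‑$0$) vertices leaves precisely this component, so $H_{t_*}$ is connected. For $k\ge 2$, I would work on the whp events of \Cref{lem:props}(c),(d) (so $G_{t_*}$ has minimum degree at least $k-1$, the set $W$ of vertices of $G_{t_*}$ of degree at most $k-1$ is exactly the set of vertices of degree $k-1$, and $G_{t_*}$ has no $(k-1)$‑separated set of size between $2$ and $t_*-k-1$) and on $|W|=Z_{k-1,t_*}=O(\log\log(1/r))$ (from \Cref{lem:conc} and \Cref{lem:props}(b)). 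On top of this, routine first‑moment estimates based on \Cref{lem:vols} show that whp any two vertices of $W$ lie at distance more than $10r$ (so $G_{t_*}[W]$ has no edge) and no vertex of $W$ is adjacent to a vertex of $G_{t_*}$‑degree at most $2k$; and a union bound over the $O(\log\log(1/r))$ vertices of $W$ combined with \Cref{lem:chernoff} and a standard first‑moment/tessellation argument shows that whp, for each $w\in W$, the random geometric subgraph $P_w$ on the points of $B(w,4r)\setminus B(w,r)$ is $k$‑connected and each neighbour of $w$ sends at least $k$ edges to $P_w$. Adding the neighbours of $w$ one at a time to $P_w$ yields a $k$‑connected subgraph $Q_w$ of $G_{t_*}$ with $N_{G_{t_*}}(w)\subseteq V(Q_w)$, and since no vertex of $W$ is within $4r$ of $w$ we get $V(Q_w)\subseteq V(H_{t_*})$.

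Granting this, $H_{t_*}$ has minimum degree at least $k$: a vertex not incident to $W$ keeps its (already $\ge k$) degree, while a neighbour of some $w\in W$ has $G_{t_*}$‑degree at least $2k+1$ and, the vertices of $W$ being more than $10r$ apart, loses at most one edge upon deleting $W$. For $k$‑connectivity I would argue by contradiction: suppose $A\sqcup S\sqcup B=V(H_{t_*})$ with $A,B$ non‑empty, $|S|\le k-1$, $S=N_{H_{t_*}}(A)$ and $A$ a union of components of $H_{t_*}-S$. Since $G_{t_*}[W]$ is edgeless, each $w\in W$ has $N_{G_{t_*}}(w)\subseteq A\cup S\cup B$; classify $w$ by whether $N_{G_{t_*}}(w)$ meets $A$ only, $B$ only, both, or neither — in the last case $N_{G_{t_*}}(w)=S$ and, such vertices forming a $(k-1)$‑separated set of $G_{t_*}$, by \Cref{lem:props}(d) there is at most one of them. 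If no $w\in W$ meets both $A$ and $B$, then $\tilde A:=A\cup\{w\in W:N_{G_{t_*}}(w)\text{ meets only }A\}$ has $N_{G_{t_*}}(\tilde A)\subseteq S$, so $\tilde A$ and $V(G_{t_*})\setminus(\tilde A\cup S)$ are non‑empty with total size $t_*-|S|>t_*-k-1$; \Cref{lem:props}(d) then forces one of the two sides to be a single vertex $a$, and the resulting $N_{G_{t_*}}(a)\subseteq S$ contradicts minimum degree $\ge k$. Hence some $w\in W$ has neighbours $c\in A$ and $c'\in B$; as $c,c'\in V(Q_w)$, $Q_w$ is $k$‑connected and $|V(Q_w)|>k$, there are $k$ internally disjoint $c$–$c'$ paths inside $Q_w\subseteq H_{t_*}$, and since $|S|\le k-1$ one of them avoids $S$, giving an $A$–$B$ path in $H_{t_*}-S$ — a contradiction. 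Ruling out this last "bridging" scenario is the technical heart of the argument and is exactly the role of the local pieces $Q_w$.

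For the second assertion, observe that ``$X_{t+1}$ has fewer than $k$ neighbours in $H_t$'' is the event $X_{t+1}\in\mathrm{bad}_t$, where $\mathrm{bad}_t:=\{x\in\mathbb T^d:|B(x,r)\cap V(H_t)|<k\}$ is measurable with respect to $(X_i)_{i\le t}$, so — as $X_{t+1}$ is independent of these — it suffices to prove $\sum_{t\ge t_*}\mathbb E[\vol{d}(\mathrm{bad}_t)]\to 0$. A point $x$ lies in $\mathrm{bad}_t$ either because $B(x,r)$ contains at most $k-1$ points of $G_t$ (contribution $\mathbb P(\mathrm{Bin}(t,\theta_d r^d)\le k-1)\approx\mathbb E[Z_{k-1,t}]/t$), or because $B(x,r)$ contains at most $k$ points of $G_t$ one of which has degree at most $k-1$ (a contribution which, via \Cref{lem:vols}, is of at most the same order as the first), or because $B(x,r)$ contains at least two points of degree at most $k-1$ (negligible, by the same first‑moment reasoning as above). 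Writing $u=t\theta_d r^d$ and using $t_* r^d\to\infty$, \Cref{rem:asymptotic} and $\mathbb E[Z_{k-1,t_*}]\le\log\log(1/r)$, each resulting series is dominated by its first terms and sums to $O\!\big(\mathbb E[Z_{k-1,t_*}]/\log(1/r)\big)=o(1)$; the case $k=1$ is identical, with ``degree at most $k-1$'' read as ``isolated''.
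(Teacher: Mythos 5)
Your first part follows essentially the paper's path: for each vertex $w$ of degree at most $k-1$ you build a local $k$-connected subgraph $Q_w\subseteq H_{t_*}$ (playing the role of the paper's \Cref{cl:local} and the annulus subgraphs $J_i$) and use it together with \Cref{lem:props}(d) to rule out a $(k-1)$-separator. The choice of $B(w,4r)\setminus B(w,r)$ versus $B(w,2r)\setminus\{w\}$ and your classification of $w$ against $A$, $B$, $S$ are minor variants of the paper's bookkeeping with the sets $J_i\setminus S$ and $L_i$. One loose phrase: the single vertex $a$ your dichotomy produces has $N_{G_{t_*}}(a)\subseteq S$, but the contradiction should be with $a\in V(H_{t_*})$ (so $d_{G_{t_*}}(a)\ge k$) — the minimum degree of $G_{t_*}$ itself is only $k-1$ at time $t_*$.

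Your second part takes a genuinely different and, in principle, cleaner route — but the estimate you wave at is precisely the step the paper's authors flag as subtle. The paper introduces a tessellation $\cT$ with cubes of volume $\Theta(r^d/\log(1/r))$, proves in \Cref{cl:far_cubes} that remote cubes at time $t_*$ group into tight, well-separated clusters, and argues cluster by cluster that $k$ points arrive nearby before any lands inside; the proof outline in \Cref{sec:main_res} explicitly warns that ``volume considerations are not sufficient.'' You instead reduce everything to $\sum_{t\ge t_*}\mathbb E[\vol{d}(\mathrm{bad}_t)]\to 0$, which if correct bypasses the tessellation machinery entirely. The reduction itself is sound, but your dispatch of Cases 2 and 3 is exactly where the difficulty lies. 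A naive first-moment use of \Cref{lem:vols} — bounding $\mathbb P\big(|(B(x_0,r)\cup B(y,r))\cap V(G_t)|\le 2k-2\big)$ and integrating over $y\in B(x_0,r)$ — gives a Case-2 contribution of order $(tr^d)^{2k-1-d}e^{-\theta_d tr^d}$, whose sum over $t\ge t_*$ is $\Theta\big((\log(1/r))^{k-d-1}\mathbb E[Z_{k-1,t_*}]\big)$ and is \emph{not} $o(1)$ once $k>d$. To get the claimed order one must instead condition separately on the counts in the intersection $B(x_0,r)\cap B(y,r)$ and in the two lunes $B(x_0,r)\setminus B(y,r)$ and $B(y,r)\setminus B(x_0,r)$ — the lunes contribute an $e^{-\Theta(\nu tr^{d-1})}$ penalty at separation $\nu$ — and then integrate in $\nu$; this yields a Case-2 bound of order $(tr^d)^{k-d}e^{-\theta_d tr^d}$ (Case 3 smaller still), whence $\sum_{t\ge t_*}\mathbb E[\vol{d}(\mathrm{bad}_t)]=O\big(\mathbb E[Z_{k-1,t_*}]/(\log(1/r))^d\big)=o(1)$. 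This intersection/lune decomposition is the missing content and must be written out for your shorter argument to stand.
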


To prepare the ground for the proof of \Cref{prop:degk}, we show that vertices of degree at most $k-1$ in $G_{t_*}$ are typically `far' from each other, and are also adjacent to vertices whose degree is significantly higher.

\begin{lemma}\label{lem:Ear}
\begin{enumerate}[label=\emph{(\alph*)}]
    \item
    Whp, for every choice of distinct $i,j\in [t_*]$ with $X_i$ of degree at most $k-1$ in $G_{t_*}$ and $|X_i-X_j|\le 3r$, $X_j$ has degree at least $2k$.
    \item
    Fix a constant $c > 0$, $i\in [t_*]$ and a region $A\subseteq A(X_i,r,2r)$ of volume at least $cr^d$. Then, the probability that $X_i$ has degree at most $k-1$ in $G_{t_*}$ and $A$ contains at most $k-1$ points is $o(1/t_*)$.
\end{enumerate}

\end{lemma}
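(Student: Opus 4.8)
The plan is to prove both parts by a union bound over the relevant indices, using the independence between the point $X_i$ (and its immediate neighbourhood) and the locations of the remaining points, together with the fact that $t_* r^d = (1+o(1))d\log(1/r)/\theta_d$ so that $(1-\theta_d r^d)^{t_*} = r^{d+o(1)}$. For part~(b), fix $i\in[t_*]$ and the region $A\subseteq A(X_i,r,2r)$ of volume at least $cr^d$. Conditionally on $X_i$, the event that $X_i$ has degree at most $k-1$ in $G_{t_*}$ and that $A$ contains at most $k-1$ of the points $(X_j)_{j\ne i}$ is the event that, among the $t_*-1$ remaining points, at most $k-1$ land in $B(X_i,r)$ and at most $k-1$ land in $A$; since $B(X_i,r)$ and $A$ are disjoint (as $A\subseteq A(X_i,r,2r)$) and $\vol{d}(B(X_i,r)\cup A)\ge (1+c/\theta_d)\theta_d r^d$, a binomial tail estimate (or \Cref{lem:chernoff}) bounds this probability by $O\bigl((t_* r^d)^{2(k-1)}(1-(1+c/\theta_d)\theta_d r^d)^{t_*-1}\bigr) = r^{(1+c/\theta_d)d+o(1)}$. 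Dividing by $t_* r^d = r^{o(1)}$ and noting $(1+c/\theta_d)d > 0$ strictly, we get $o(1/t_*)$ as required; here I would be slightly careful to absorb the $\binom{t_*-1}{\le k-1}$ factors into the $r^{o(1)}$ error since $k$ is fixed.

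For part~(a), I would take a union bound over ordered pairs $(i,j)\in[t_*]^2$ of the probability of the bad event $\mathcal E_{ij}$ that $X_i$ has degree at most $k-1$ in $G_{t_*}$, that $|X_i-X_j|\le 3r$, and that $X_j$ has degree at most $2k-1$. Conditioning on $X_i$ and $X_j$, I would split into the case $|X_i-X_j|\le r$ and $r < |X_i-X_j|\le 3r$. In the latter case $B(X_i,r)$ and $B(X_j,r)$ are disjoint, so $\mathcal E_{ij}$ forces at most $k-1$ of the $t_*-2$ other points into $B(X_i,r)$ and at most $2k-1$ into $B(X_j,r)$; since these balls have combined volume $2\theta_d r^d$, this has probability $O\bigl((t_* r^d)^{3k-2}(1-2\theta_d r^d)^{t_*-2}\bigr) = r^{2d+o(1)}$. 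In the former case the balls overlap but their union still has volume at least $\theta_d r^d$, and more importantly $X_i\in B(X_j,r)$ and $X_j\in B(X_i,r)$ are adjacent, so the degree conditions force at most $k-2$ further points into $B(X_i,r)$ and at most $2k-2$ further points into $B(X_j,r)$; bounding the union of these balls below by $\theta_d r^d$ (in fact by $\theta_d r^d + \Omega(r^d)$ when the centres are bounded away, or using \Cref{lem:vols} when they are close) and again using a binomial tail gives probability $r^{d+o(1)}$ — wait, this only gives $r^{d+o(1)}$, which after multiplying by $t_*^2 = r^{-2d+o(1)}$ is not $o(1)$. So the overlapping case needs more care.

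The main obstacle is precisely this overlapping regime $|X_i-X_j|\le r$: a naive bound using only $\vol{d}(B(X_i,r)\cup B(X_j,r))\ge\theta_d r^d$ loses too much. The fix is to use \Cref{lem:vols}: writing $\nu = |X_i-X_j|$, the union has volume $\theta_d r^d + \Theta(\nu r^{d-1})$ when $\nu = o(r)$, and at least $\theta_d r^d + \Omega(r^d)$ when $\nu = \Theta(r)$. One then integrates the conditional probability over the location of $X_j$ in $B(X_i,r)$: the contribution from $X_j$ at distance $\nu\in[\rho,2\rho]$ from $X_i$ is at most (volume of that shell, $O(\rho^{d-1}r)$ — or $O(\rho^d)$, whichever, I'd use $O(\rho^{d-1}\,d\rho)$ after integrating in $\nu$) times $r^{o(1)}(1-\theta_d r^d - c'\rho r^{d-1})^{t_*}$ for a constant $c'>0$, which is $r^{d+o(1)}\exp(-c'' \rho r^{-1}\log(1/r))$. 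Integrating $\rho$ from $0$ to $r$, the dominant contribution comes from $\rho = O(r/\log(1/r))$, contributing a further factor $O(r/\log(1/r))\cdot r^{-1} = O(1/\log(1/r))$... hmm, that still isn't enough to beat $t_*^2$. Actually the cleaner route: observe that if $X_i$ has degree $\le k-1$ and $X_j$ with $|X_i - X_j|\le r$ also has degree $\le 2k-1$, then $\{X_i,X_j\}$ is a $(3k-1)$-separated set, i.e.\ a $(3k-2)$-separated set of size $2$ containing a low-degree vertex; but rather than invoke \Cref{thm:Pen04}(b) directly (which concerns $(k-1)$-separated sets), I would mimic the proof of \Cref{lem:conc}, specifically the bound \eqref{eq:Bn4}: the pair contributes $\theta_d r^d\cdot O((t_* r^d)^{3k-2})\exp(-\theta_d t_* r^d) = r^{d+o(1)}\cdot\theta_d r^d = r^{2d+o(1)}$ per ordered pair $(i,j)$, where the extra $\theta_d r^d$ comes from integrating over the position of $X_j$ inside $B(X_i,r)$. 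Summing over the $t_*^2 = r^{-2d+o(1)}$ ordered pairs gives $r^{o(1)}$ — still marginal. To make this genuinely $o(1)$ I would instead phrase it as: whp there is no $(k-1)$-separated set $S$ with $2\le|S|\le t_*-k-1$ (\Cref{lem:props}(d), valid since $k\ge 2$ in the nontrivial case; for $k=1$ part~(a) is about degree $\le 0$ vertices and uses \Cref{lem:props}(e) plus a direct argument), hence if $X_i$ has degree $\le k-1$ and $X_j$ is within $3r$, then either $X_j$ is a neighbour of $X_i$ — in which case $\{X_i\}\cup\{X_j\}$ being a small $(k-1)$-separated set is excluded, forcing $X_j$ to have a neighbour outside $N[X_i]$, and one iterates — or $X_j$ is at distance in $(r,3r]$ and the clean disjoint-ball computation above applies. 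The careful bookkeeping of this iteration, ensuring it terminates with $X_j$ having degree $\ge 2k$, is the technical heart; I expect it to follow the pattern of the analogous arguments in \cite{Pen04,BBKMW11} but will require stating an auxiliary claim that a low-degree vertex together with its $\le k-1$ neighbours forms a clique whose external neighbourhood is controlled.
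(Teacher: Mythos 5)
Your treatment of part (b) is essentially the paper's: bound the probability that both $B(X_i,r)$ and $A$ (disjoint, combined volume $\ge(\theta_d+c)r^d$) contain few points by a single binomial tail, getting $(t_*r^d)^{2k-2}\exp(-(\theta_d+c)t_*r^d)=t_*^{-1-c/\theta_d+o(1)}=o(1/t_*)$. The phrase ``dividing by $t_*r^d$'' is not what you actually need; the relevant comparison is directly against $1/t_*=r^{d+o(1)}$, and the strict inequality $(1+c/\theta_d)d>d$ finishes it. This matches the paper.

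For part (a), your diagnosis is right --- the overlapping regime $|X_i-X_j|\le r$ is where a naive bound fails, because the $r^{o(1)}$ error hides $\log$-polynomial factors that diverge --- but neither of your two fixes closes the gap. Your ``integrate over $\rho=|X_i-X_j|$'' sketch doesn't cleanly beat $t_*^2$, as you noticed. And your structural fallback via \Cref{lem:props}(d) does not work as stated: if $X_i$ has degree $\le k-1$ and its neighbour $X_j$ has degree $\le 2k-1$, the pair $\{X_i,X_j\}$ has at most $(k-2)+(2k-2)=3k-4$ external neighbours, so it is a $(3k-4)$-separated set, not a $(k-1)$-separated set; \Cref{lem:props}(d) says nothing about it when $k\ge2$, and the ``iterate'' step you gesture at has no clear terminating invariant that reaches degree $\ge 2k$. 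Also note that in your non-overlapping case you drop the $\theta_d(3r)^d$ factor for $\mathbb P(|X_i-X_j|\le 3r)$; without it, $t_*^2(t_*r^d)^{3k-2}\exp(-2\theta_dt_*r^d)$ is of order $(\log\log(1/r))^2(\log(1/r))^k$, which diverges --- you need that extra $r^d$.

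The paper's fix is to introduce the intermediate scale $\nu:=(\log(1/r))^{-1/2}r$ and split the overlapping regime by whether $|X_i-X_j|<\nu$ or $|X_i-X_j|\in[\nu,3r]$. When $|X_i-X_j|<\nu$, two savings combine: the region available to $X_j$ has volume only $\theta_d\nu^d$ rather than $\theta_dr^d$, and since $X_j$ is already a neighbour of $X_i$ the degree condition on $X_i$ forces $\le k-2$ (not $\le k-1$) further points into $B(X_i,r)$. Together these give a per-pair probability $\Theta(\nu^d(t_*r^d)^{k-2}\exp(-\theta_dt_*r^d))$, and the union bound over $t_*^2$ pairs evaluates to $O((\nu/r)^d\log\log(1/r))=O((\log(1/r))^{-d/2}\log\log(1/r))=o(1)$. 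When $|X_i-X_j|\in[\nu,3r]$, \Cref{lem:vols} gives $\vol{d}(B(X_i,r)\cup B(X_j,r))\ge\theta_dr^d+C\nu r^{d-1}/2$, and the bonus appears in the exponential as a factor $\exp(-C(\nu/r)t_*r^d/2)=\exp(-\Theta((\log(1/r))^{1/2}))$, which is $\exp(-\omega(\log\log(1/r)))$ and swamps the polylogarithmic growth from the union bound. The choice $\nu=(\log(1/r))^{-1/2}r$ is exactly what makes both halves close simultaneously; your proposal is missing this one idea.
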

\begin{proof}
We show (a) first. Define $\nu = \nu(r)\coloneqq (\log(1/r))^{-1/2}r$.
Then, for any $i\neq j$, the probability that $|X_i-X_j|\le \nu$ and $B(X_i,r)$ contains at most $k-1$ points among $(X_t)_{t=1}^{t_*}$ is at most
\[\theta_d \nu^d \cdot \mathbb P(\mathrm{Bin}(t_*-2,\theta_d r^d)\le k-2) = \Theta(\nu^d (t_* r^d)^{k-2} (1-\theta_d r^d)^{t_*-2-(k-2)}).\]
By a union bound over all possible choices of $i\neq j$, our choice of $t_*$ in \Cref{lem:props} and~\eqref{eq:expZ}, the probability that the latter event holds for some pair $i, j$ is 
\[O(t_*^2 \nu^d (t_* r^d)^{k-2} \exp(-\theta_d r^d t_*)) = O((\nu/r)^d \log\log(1/r))=o(1).\]

At the same time, by \Cref{lem:vols}, the union of any two balls with radius $r$ and distance between their centres at least $\nu$ has volume at least $\lambda = \lambda(r) \coloneqq \theta_d r^d+ C\nu r^{d-1}/2$ for the constant $C>0$ chosen therein.
Thus, the probability that $|X_i-X_j|\in [\nu,3r]$ and $B(X_i,r)\cup B(X_j,r)$ contains at most $3k$ points among $(X_t)_{t=1}^{t_*}$ is at most
\begin{align*}
\theta_d (3r)^d \cdot \mathbb P(\mathrm{Bin}(t_*-2,\lambda)\le 3k-2) 
&= \Theta(r^d\cdot (t_* \lambda)^{3k-2} (1-\lambda)^{t_*-2-(3k-2)})\\
&= r^d\cdot (t_* r^d)^{3k-2} \exp(-\theta_d t_* r^d-C (\nu/r)\cdot t_* r^d/2+O(1)).    
\end{align*}
Again, by a union bound over all possible choices of $i\neq j$ and using \Cref{rem:asymptotic}, the above expression is of order 
\[t_* (t_* r^d)^{3k-1} \exp(-\theta_d t_* r^d) \cdot \exp(-\omega(\log\log(1/r))) = o(1),\] 
where the last equality follows from our choice of $t_*$ in \Cref{lem:props}. This justifies part (a).

We turn to the proof of part (b). Denote by $a\ge cr^d$ the volume of $A$.
Then, a direct computation shows that the probability of the event complementary to (b) for a fixed $i\in [t_*]$ is dominated by 
\begin{align*}
\mathbb P(\mathrm{Bin}(t_*-1,\theta_d r^d +a) \le 2k-2) = O((t_* r^d)^{2k-2}\exp(-(\theta_d + c) t_* r^d))= t_*^{-1-c/\theta_d+o(1)},
\end{align*}
as desired.
\end{proof}

\begin{proof}[Proof of \Cref{prop:degk}]
First, we show that $H_{t_*}$ is a $k$-connected graph. An important part of the proof consists in the following two claims.

\begin{claim}[see e.g.\ Chapter~5, Lemma~5.2 in~\cite{Hav11}]\label{cl:Hav}
For any graph $G$ and vertex $v$ in $G$ such that $G\setminus v$ is a $k$-connected graph and $d_G(v)\ge k$, the graph $G$ itself is $k$-connected
\end{claim}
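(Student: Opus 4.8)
The plan is to check the definition of $k$-connectivity for $G$ directly, namely that $|V(G)|\ge k+1$ and that $G\setminus S$ is connected for every vertex set $S$ with $|S|\le k-1$. The cardinality condition is immediate: $|V(G)| = |V(G\setminus v)| + 1 \ge (k+1)+1 = k+2$, using that the $k$-connected graph $G\setminus v$ has at least $k+1$ vertices. So the work is to fix an arbitrary $S\subseteq V(G)$ with $|S|\le k-1$ and argue that $G\setminus S$ is connected, distinguishing whether or not $v\in S$.

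If $v\in S$, then $G\setminus S = (G\setminus v)\setminus (S\setminus\{v\})$ and $|S\setminus\{v\}|\le k-2 < k$, so $G\setminus S$ is connected by the $k$-connectivity of $G\setminus v$.

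If $v\notin S$, I would first observe that $(G\setminus S)\setminus v = (G\setminus v)\setminus S$ is connected, again because $G\setminus v$ is $k$-connected and $|S|\le k-1 < k$; note also that this graph is non-empty, its vertex set having size at least $(k+1)-(k-1)=2$. It then remains to see that $v$ is joined to this piece inside $G\setminus S$: since $d_G(v)\ge k > k-1 \ge |S|$, the vertex $v$ has a neighbour $u$ with $u\notin S$, and $u$ lies in the connected graph $(G\setminus S)\setminus v$; hence $G\setminus S$ is connected. This exhausts the cases.

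I do not expect any real obstacle here — this is the classical fact that adjoining to a $k$-connected graph a new vertex of degree at least $k$ preserves $k$-connectivity. The only minor points that require attention are the trivial verifications of the edge cases, namely that $|V(G)|\ge k+1$ and that the set $(G\setminus v)\setminus S$ is non-empty, so that the phrase ``$v$ has a neighbour in it'' is meaningful.
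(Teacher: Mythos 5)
Your proof is correct. Note that the paper itself does not prove this claim — it is stated as a textbook fact with a citation to Chapter~5, Lemma~5.2 of~\cite{Hav11} and no proof is given — so there is no in-paper argument to compare against. Your direct verification of the definition (checking $|V(G)|\ge k+1$, then splitting into the cases $v\in S$ and $v\notin S$ for a separator $S$ of size at most $k-1$) is the standard elementary argument and is carried out carefully, including the two edge cases you flagged ($|V(G)|\ge k+1$ and non-emptiness of $(G\setminus v)\setminus S$, the latter ensuring the neighbour $u$ of $v$ outside $S$ actually lands in a non-trivial connected piece). No gaps.
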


\begin{claim}\label{cl:local}
Whp, for every vertex $X_i$ of degree $k-1$ in $G_{t_*}$, the subgraph of $H_{t_*}$ induced by the vertices in the ball $B(X_i,2r)$ different from $X_i$ is $k$-connected.
\end{claim}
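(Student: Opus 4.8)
I would first reduce the claim to a geometric statement. By \Cref{lem:Ear}(a), whp every point within distance $3r$ of a vertex of degree at most $k-1$ in $G_{t_*}$ has degree at least $2k$; in particular every point of $(X_t)_{t=1}^{t_*}$ lying in $B(X_i,2r)\setminus\{X_i\}$ then survives in $H_{t_*}$, so the induced subgraph in the statement is exactly the geometric graph $G(W_i,r)$ on the set $W_i$ of all such points. Writing $p_1,\dots,p_{k-1}$ for the neighbours of $X_i$ in $G_{t_*}$ (so that $W_i\cap B(X_i,r)=\{p_1,\dots,p_{k-1}\}$) and $Y_i:=W_i\setminus\{p_1,\dots,p_{k-1}\}$ for the points of $W_i$ in the annulus $A(X_i,r,2r)$, it suffices to show that whp $G(W_i,r)$ is $k$-connected for every such $X_i$ simultaneously.

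For the density input I would fix small absolute constants $\eps,\mu,\eta>0$ with $\eta\sqrt d$ much smaller than $\mu$ and $\eps$, together with a tessellation of $\bbT^d$ into cubes of side $\eta r$. Call a cube \emph{relevant for $X_i$} if it meets $A(X_i,(1+2\eps)r,(2-2\eps)r)$; such a cube is then contained in $A(X_i,(1+\eps)r,(2-\eps)r)\subseteq A(X_i,r,2r)$, there are $O(1)$ of them per $i$, and by \Cref{lem:Ear}(b) (applied with $A$ equal to the cube) together with a union bound over $i\in[t_*]$ I may assume that every relevant cube contains at least $k$ of the points $X_1,\dots,X_{t_*}$. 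Let $B_i$ denote the set of all points lying in cubes relevant for $X_i$, so that $B_i\subseteq Y_i\subseteq W_i$.

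Then I would build $G(W_i,r)$ up from $G(B_i,r)$ in three layers, each step invoking \Cref{cl:Hav}. First, $G(B_i,r)$ is $k$-connected: since $d\ge2$ the annulus $A(X_i,(1+2\eps)r,(2-2\eps)r)$ is connected, so the relevant cubes form a connected graph under the adjacency $Q\sim Q'\iff\diam(Q\cup Q')\le r$; deleting any $k-1$ vertices leaves a point in each relevant cube, and these points form a connected backbone adjacent to every vertex of $B_i$. Second, each $w\in Y_i\setminus B_i$ has $\dist(w,X_i)\in(r,2r)$, and a short computation along the ray from $X_i$ through $w$ shows that $B(w,r)\cap A(X_i,(1+2\eps)r,(2-2\eps)r)$ contains a ball of radius $\mu r$, hence a relevant cube, so $w$ has at least $k$ neighbours in $B_i$; hence one may add all such $w$ one at a time and keep the graph $k$-connected. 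Third, each $p_j$ has at least $2k$ neighbours in $G_{t_*}$; discarding the edge to $X_i$ and the at most $k-2$ edges to the other $p_{j'}$ leaves at least $k+1$ edges from $p_j$ to $Y_i$, so adding $p_1,\dots,p_{k-1}$ one at a time completes the proof that $G(W_i,r)$ is $k$-connected.

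The main obstacle is the geometric bookkeeping near $X_i$: a neighbour $p_j$ of $X_i$ can lie arbitrarily close to $X_i$ and therefore need not be adjacent to any vertex of the dense set $B_i$, which is why $p_j$ must be routed through $Y_i$ using the degree bound $\ge2k$ from \Cref{lem:Ear}(a) rather than directly through $B_i$; the other slightly delicate point is the verification that the relevant cubes form a connected auxiliary graph, which is exactly where $d\ge2$ is used (for $d=1$ the annulus falls into two arcs, $\{p_1,\dots,p_{k-1}\}$ is a cut of size $k-1$, and the statement as phrased does not hold, so I take $d\ge2$ to be in force).
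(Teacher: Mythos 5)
Your proof is correct for $d\ge 2$ and follows the paper's approach: use \Cref{lem:Ear}(a) and \Cref{cl:Hav} to reduce the claim to $k$-connectivity of the induced subgraph on the annulus $A(X_i,r,2r)$, then use \Cref{lem:Ear}(b) and a union bound to guarantee density in $O(1)$ small regions whose auxiliary adjacency graph is connected and remains so after any $k-1$ vertex deletions; the only bookkeeping difference is that the paper partitions the full annulus into such regions directly, whereas you cover a shrunk annulus with cubes of a fixed tessellation and absorb the leftover annulus points in a second layer before bringing in the $k-1$ neighbours of $X_i$. Your remark on $d=1$ is a genuine catch the paper overlooks: on $\bbT^1$ the annulus $A(X_i,r,2r)$ splits into two disjoint arcs, both of which whp contain a vertex of $G_{t_*}$ by \Cref{lem:Ear}(b), so the $k-1$ neighbours of $X_i$ form a $(k-1)$-cut of the induced subgraph on $B(X_i,2r)\setminus\{X_i\}$ and the claim as stated fails; the paper's assertion that $\cA_i$ is connected (and with it the proofs of \Cref{prop:degk} and \Cref{thm:1}) tacitly requires $d\ge 2$, even though the paper's set-up allows $d\ge 1$.
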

\begin{proof}
By \Cref{lem:Ear}(a), whp, for every point $X_i$ as required, every point $X_j\neq X_i$ at distance at most $r$ from $X_i$ has at least $k$ neighbours (in $G_{t_*}$) in the annulus $A(X_i,r,2r)$.
Hence, by \Cref{cl:Hav}, it is enough to show that the subgraph of $G_{t_*}$ induced by the vertices in $A(X_i,r,2r)$ is $k$-connected.

For fixed $i\in [t_*]$ and suitably small constant $c = c(d) > 0$, denote by $\cP_i$ an arbitrary partition of the annulus $A(X_i,r,2r)$ into regions with diameter at most $r/2$ and volume at least $c r^d$.
Define $\cA_i$ to be an auxiliary graph with vertices given by the regions in $\cP_i$ and edges between regions whose boundaries intersect: in particular, the graph $\cA_i$ is connected.
By \Cref{lem:Ear}(b) and a union bound over $i\in [t_*]$ and $|\cP_i|$, whp, for every point $X_i$ of degree $k-1$ in $G_{t_*}$, each region in $\cP_i$ contains at least $k$ points.
By deleting up to $k-1$ points in $A(X_i,r,2r)$, there remains at least one point on every region in $\cP_i$ and, by definition of $\cP_i$, the subgraph of $G_{t_*}$ induced by the remaining points in $A(X_i,r,2r)$:
\begin{itemize}
    \item contains a copy of $\cA_i$ where every vertex belongs to a different region in $\cP_i$,
    \item for every remaining point $X$, some vertex in this copy of $\cA_i$ at distance at most $r/2$ from $X$.
\end{itemize}
Thus, the surviving points induce a connected graph of $G_{t_*}$, which implies the desired $k$-connectivity.
\end{proof}

We are ready to show that $H_{t_*}$ is $k$-connected. The case $k = 1$ is trivial due to \Cref{lem:props}(e). 
Suppose $k \ge 2$. Fix any set $S$ of $k-1$ vertices in $H_{t_*}$ and, for the sake of contradiction, suppose that the graph $H_{t_*}\setminus S$ is not connected.
Condition on properties \Cref{lem:props}(d), \Cref{lem:Ear}(a) and (b) (for all $i\in [t_*]$ and arbitrary partitions of the balls around $X_1,\ldots,X_{t_*}$ into $O(1)$ sets of diameter at most $r/2$ and volume at least $cr^d$) and the statement of \Cref{cl:local}.
Then, by \Cref{cl:local}, for every point $X_i$ of degree (at most) $k-1$, the subgraph of $H_{t_*}$ induced by the vertices in the ball $B(X_i,2r)$ different from $X_i$, say $J_i$, is $k$-connected.
In particular, $J_i\setminus S$ is a non-empty connected graph for every $i$ as above.
We show by contradiction that, for all relevant $i,j$, $J_i\setminus S$ and $J_j\setminus S$ must be in the same connected component of $H_{t_*}\setminus S$. 
Suppose not and let $L_i, L_j$ be distinct connected components of $H_{t_*} \setminus S$ which contain $J_i \setminus S$ and $J_j \setminus S$, respectively.
Since $X_i$ has no neighbours outside of $L_i$, $\{X_i\}\cup V(L_i)$ is a $(k-1)$-separated set in $G_{t_*}$ and
\[2\le |\{X_i\}\cup V(J_i\setminus S)|\le |\{X_i\}\cup V(L_i)|\le t_*-|S|-|\{X_j\}\cup V(J_j\setminus S)|\le t_*-k-1,\]
which contradicts \Cref{lem:props}(d).
Finally, any connected component of $H_{t_*}\setminus S$ not containing the subgraphs $(J_i)$ must also be a connected component in $G_{t_*}\setminus S$.
Moreover, as such, it has to contain a single vertex of degree at most $k-1$, which is a contradiction since $H_{t_*}$ contains no such vertices.

\vspace{0.5em}

It remains to show the second part of the proposition. Say that a point $X$ is \emph{remote at time $t$} if $B(X,r)$ contains no more than $k$ points in $H_t$. 
Our goal is to show that, from time $t_*$ onwards, the remote volume gradually shrinks and eventually disappears before any new point could land there.

Consider a tessellation $\cT$ of $\bbT^d$ into $\ell = \ell(r) = \lceil \log(1/r)/r\rceil^d$ congruent hypercubes.
Next, we analyse the geometry and approximate the total volume of cubes containing a remote point at time $t_*$.
For every cube $s\in \cT$, denote by $\widetilde{B}(s,r)$ the intersection of the balls $B(x, r)$ for all $x\in s$, that is, $\widetilde{B}(s,r) := \bigcap_{x \in s} B(x, r)$. 
We define the set of \emph{remote hypercubes} (at time $t_*$) by setting $\cS = \{s \in \cT: |\widetilde{B}(s,r) \cap V(H_{t_*})| \le k-1\}$.
Remote hypercubes at time $t\ge t_*$ are defined analogously.

\begin{claim}\label{cl:far_cubes}
There is a constant $C_1 = C_1(k,d) > 0$ such that whp no two remote cubes are at distance between $\zeta = \zeta(r) = C_1\log\log(1/r)\cdot r/\log(1/r)$ and $3r$ from each other.
\end{claim}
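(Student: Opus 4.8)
The plan is to prove \Cref{cl:far_cubes} by a union bound over all unordered pairs $\{s,s'\}$ of cubes of $\cT$ with $\dist(s,s')\in[\zeta,3r]$. Since $\cT$ consists of $\Theta\big((\log(1/r)/r)^d\big)$ cubes of side length $a=\Theta(r/\log(1/r))$, and each cube has only $O\big((3r/a)^d\big)=O\big((\log(1/r))^d\big)$ cubes within distance $3r$ of it, there are $O\big((\log(1/r))^{2d}/r^d\big)$ such pairs; it therefore suffices to show that a fixed such pair of cubes is simultaneously remote at time $t_*$ with probability $o\big(r^d(\log(1/r))^{-2d-1}\big)$, a polylogarithmic gain over $r^d$ that we will be free to improve by increasing $C_1$.

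The first step is to replace the event ``$s$ is remote'', which refers to $H_{t_*}$, by a statement about $G_{t_*}$. As $\widetilde B(s,r)\subseteq B(c_s,r)$ (with $c_s$ the centre of $s$), it has diameter at most $2r\le 3r$, so by \Cref{lem:Ear}(a), whp no set $\widetilde B(s,r)$ contains two vertices of degree at most $k-1$ in $G_{t_*}$; call this (probability $1-o(1)$) event $\cE$. On $\cE$, every remote cube $s$ satisfies $|\widetilde B(s,r)\cap V(G_{t_*})|\le(k-1)+1=k$ — the at most $k-1$ vertices of $V(H_{t_*})$ lying in $\widetilde B(s,r)$ together with at most one vertex of degree $\le k-1$ — so two simultaneously remote cubes $s,s'$ force the union $U:=\widetilde B(s,r)\cup\widetilde B(s',r)$ to contain at most $2k$ of the points $X_1,\dots,X_{t_*}$.

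The geometric core of the argument is a lower bound on $\vol{d}(U)$ valid uniformly over $\dist(s,s')\in[\zeta,3r]$. Since $B\big(c_s,\,r-a\sqrt d/2\big)\subseteq\widetilde B(s,r)$, we have $\widetilde B(s,r)\supseteq B(c_s,\rho)$ and $\widetilde B(s',r)\supseteq B(c_{s'},\rho)$ with $\rho=r(1-o(1))$; as the volume of the union of two balls of radius $\rho$ is non-decreasing in the distance between their centres (which here is at least $\dist(s,s')\ge\zeta=o(r)$), \Cref{lem:vols} (applied with balls of radius $\rho$ and centre-separation $\zeta$) yields, for some constant $c=c(d)>0$ and every $r$ small enough,
\[\vol{d}(U)\ \ge\ \theta_d\rho^d+(C+o(1))\,\zeta\,\rho^{d-1}\ \ge\ \theta_d r^d-O\!\Big(\tfrac{r^d}{\log(1/r)}\Big)+\tfrac{C}{2}\,\zeta\,r^{d-1}\ \ge\ \theta_d r^d+c\,C_1\cdot\tfrac{\log\log(1/r)}{\log(1/r)}\,r^d\ =:\ V,\]
where the last step uses $\zeta\, r^{d-1}=C_1\log\log(1/r)\cdot r^d/\log(1/r)$ and that the $\log\log(1/r)$ gain eventually dominates the $O(1/\log(1/r))$ loss incurred by passing to the inscribed ball. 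Since $|U\cap\{X_i\}_{i\le t_*}|\sim\mathrm{Bin}\big(t_*,\vol{d}(U)\big)$ with $\vol{d}(U)\ge V$, $t_*V=\Theta(\log(1/r))\to\infty$ and $t_*V^2\to0$, a standard Poisson-type tail estimate gives $\mathbb P\big(|U\cap\{X_i\}_{i\le t_*}|\le 2k\big)=O\big((t_*V)^{2k}\e^{-t_*V}\big)=O\big((\log(1/r))^{2k}\,\e^{-t_*V}\big)$.

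It remains to estimate $\e^{-t_*V}$ and collect the pieces. By \Cref{lem:props} and \Cref{rem:asymptotic}, $t_*r^d=(1+o(1))d\log(1/r)/\theta_d$, hence $t_*V=\theta_d t_*r^d+\tfrac{cC_1 d}{\theta_d}(1+o(1))\log\log(1/r)$ and so $\e^{-t_*V}\le\e^{-\theta_d t_*r^d}(\log(1/r))^{-(1+o(1))cC_1 d/\theta_d}$; moreover \eqref{eq:expZ} together with $\mathbb E[Z_{k-1,t_*}]\le\log\log(1/r)$ gives $\e^{-\theta_d t_*r^d}=O\big(\log\log(1/r)\cdot r^d(\log(1/r))^{-k}\big)$. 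Thus a fixed pair of cubes is simultaneously remote, on $\cE$, with probability $O\big(r^d\log\log(1/r)\cdot(\log(1/r))^{\,k-(1+o(1))cC_1 d/\theta_d}\big)$; multiplying by the $O\big((\log(1/r))^{2d}/r^d\big)$ pairs and adding $\mathbb P(\cE^c)=o(1)$ proves the claim as soon as $C_1=C_1(k,d)$ is chosen with $cC_1 d/\theta_d>2d+k$. The main obstacle is the volume estimate of the third step: one must extract a volume surplus of order $\zeta\, r^{d-1}$ from $U$ uniformly across the whole window $[\zeta,3r]$ while losing only $O(r^d/\log(1/r))$ when approximating $\widetilde B(s,r)$ by a Euclidean ball — this is precisely why the threshold is taken to be $\zeta=C_1\log\log(1/r)\cdot r/\log(1/r)$ with $C_1$ a large constant, rather than the bare cube scale $r/\log(1/r)$; the reduction in the first step and the bookkeeping in the last step are routine given \Cref{lem:Ear}(a), \Cref{lem:props}, \Cref{rem:asymptotic} and \eqref{eq:expZ}.
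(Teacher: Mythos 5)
Your proof is correct and follows essentially the same route as the paper: reduce ``$s$ is remote'' to ``$\widetilde B(s,r)$ contains at most $k$ points of $G_{t_*}$'' via \Cref{lem:Ear}(a), lower-bound $\vol{d}(\widetilde B(s',r)\cup\widetilde B(s'',r))$ by passing to inscribed balls and applying \Cref{lem:vols}, then finish with a Poisson-type tail estimate and a union bound over $O(r^{-d}(\log(1/r))^{2d})$ pairs, with $C_1$ taken large to absorb that polylogarithmic overcount. The only cosmetic difference is in the bookkeeping of the inscribed-ball loss: the paper observes $\sqrt d\,r/\log(1/r)=o(\zeta)$ and absorbs it into the $o(1)$ of \Cref{lem:vols}, whereas you track the $O(r^d/\log(1/r))$ loss explicitly and note it is dominated once $C_1$ is large — both are fine.
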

\begin{proof}[Proof of \Cref{cl:far_cubes}]
By assuming the property in \Cref{lem:Ear}(a) (which holds whp), for every cube $s\in\cS$, there are up to $k$ vertices of $G_{t_*}$ in $\widetilde{B}(s,r)$: at most $k-1$ vertices in $H_{t_*}$ and at most one in $G_{t_*}\setminus H_{t_*}$.
Moreover, for every pair of cubes $s',s''\in \cS$ at distance in the interval $[\zeta,3r]$ from each other, 
each of $\widetilde{B}(s',r)$ and $\widetilde{B}(s'',r)$ contains a ball of radius $r-\sqrt{d}r/\log(1/r) = r-o(\zeta)$, and the centres of these two balls are at distance at least $\zeta$.
By \Cref{lem:vols} (recall the constant $C = C(d)$ therein), $\vol{d}(\widetilde{B}(s',r)\cup \widetilde{B}(s'',r))$ is at least
\[\theta_d (r-o(\zeta))^d + (C+o(1)) \zeta (r-o(\zeta))^{d-1}\ge \theta_d r^d + C\zeta r^{d-1}/2.\]
Hence, the probability that $\widetilde{B}(s',r) \cup \widetilde{B}(s'',r)$ contains up to $2k$ points among $(X_i)_{i=1}^{t_*}$ is at most
\begin{equation}\label{eq:chern}
\mathbb P(\mathrm{Bin}(t_*,\theta_d r^d + C\zeta r^{d-1}/2)\le 2k) = \Theta((t_* r^d)^{2k} \exp(-\theta_d t_* r^d - C\zeta t_* r^{d-1}/2)).
\end{equation}
Moreover, a union bound over at most 
\[\bigg(\frac{\log(1/r)}{r}\bigg)^d\cdot \frac{\theta_d (4r)^d}{(r/\log(1/r))^d} = O(t_* (\log(1/r))^{2d-1})\] 
pairs of cubes $s',s''$ together with~\eqref{eq:chern} implies that the claim holds whp for large enough $C_1$, as desired.
\end{proof}

Assuming \Cref{cl:far_cubes}, we partition the remote cubes into \emph{clusters} where $s',s''\in \cT$ are part of the same cluster if they are at distance at most $\zeta$ from each other.
As the number of clusters covering vertices of degree $k-1$ in $G_{t_*}$ is not hard to control, we focus on the number of remote cubes and clusters away from such vertices.
More precisely, we estimate the number of cubes in the family $\cS' = \{s \in \cT: |\widetilde{B}(s,r) \cap V(G_{t_*})| \le k-1\}$. To this end, observe that, for every $s\in \cT$, 
\[\vol{d}(\widetilde{B}(s,r))\ge \vol{d}(B(0,r-\sqrt{d}(\vol{d}(s))^{1/d})) \ge \theta_d (r-\Theta(r/\log(1/r)))^d \ge (1-c/\log(1/r))\theta_d r^d\]
for some constant $c>0$. Thus, the probability that a cube belongs to $\cS'$ is dominated by
\begin{align*}
\mathbb P(\mathrm{Bin}(t_*, (1-c/\log(1/r))\theta_d r^d)&\le k-1)\\ 
&= (1+o(1)) \binom{t_*}{k-1} (\theta_d r^d)^{k-1}(1-(1-c/\log(1/r))\theta_d r^d)^{t_*-k-1}\\
&= \Theta((t_* r^d)^{k-1}\exp(-\theta_d t_* r^d)).
\end{align*}
By recalling that $\ell = |\cT|$, Markov's inequality implies that whp there are at most 
\begin{equation}\label{eq:vol'}
\begin{split}
\ell (t_* r^d)^{k-1} \exp(-\theta_d t_* r^d)\cdot (\log(1/r))^{1/4} 
&= O((\log(1/r))^{d-1+1/4} t_* (t_* r^d)^{k-1} \exp(-\theta_d t_* r^d))\\
&= o((\log(1/r))^{d-1/2})
\end{split}
\end{equation}
cubes in $\cS'$, where we used that $\ell = \Theta((\log(1/r))^{d-1} t_*)$ and $t_* (t_* r^d)^{k-1} \exp(-\theta_d t_* r^d) = O(\log\log(1/r))$ thanks to \Cref{lem:props}(b) and~\eqref{eq:expZ}.

To finish the proof, we show that, for every remote cube $s$, there are $k$ points among $(X_i)_{i=t_*+1}^{\infty}$ landing at distance at most $r-\zeta$ from $s$ before any of these points lands in $s$.
A remote cube not satisfying the above property is called \emph{bad}.
Indeed, denote by $\cC_1,\cC_2,\ldots,\cC_h$ the clusters of remote cubes; we slightly abuse notation and denote by $\vol{d}(\cC_i)$ the volume of the union of all cubes constituting $\cC_i$.
By \Cref{cl:far_cubes} each of these clusters has diameter at most $\zeta$ and, therefore, for each cluster $\cC_i$ containing a bad cube, at least one of first $k$ points among $(X_j)_{j=t_*+1}^{\infty}$ landing at distance at most $r-2\zeta$ from $\cC_i$ actually lands in a cube in $\cC_i$. 
Moreover, for each $i\in [h]$, the probability of the latter event is at most
\[1-\bigg(1-\frac{\vol{d}(\cC_i)}{\vol{d}(B(0,r-2\zeta))}\bigg)^k = O\bigg(\frac{\vol{d}(\cC_i)}{r^d}\bigg),\]
where the constant in the $O$-term is uniform over all $i\in [h]$.
Conditionally on the event $Z_{k-1,t_*}\le 2\log\log(1/r)$ (which holds whp by \Cref{lem:props}(b) and \Cref{lem:conc}), the statement of \Cref{cl:far_cubes} and~\eqref{eq:vol'}, a union bound over $i$ shows that a bad cube exists with probability at most
\[O\bigg(Z_{k-1,t_*} \frac{\zeta^d}{r^d}\bigg)+O\bigg(\sum_{\cC_i:\; \cC_i\cap \cS'\neq \varnothing}\frac{\vol{d}(\cC_i)}{r^d}\bigg) = o(1)+O\bigg(\frac{\log(1/r)^{d-1/2} (r/\log(1/r))^d}{r^d}\bigg) = o(1).\]
As a result, for every remote cube $s$, the first $k$ points among $(X_i)_{t=t_*+1}^{\infty}$ landing at distance at most $r-\zeta$ from $s$ merge with the giant $k$-connected component instantaneously.
Thus, whp the remote volume disappears before being visited after time $t_*$, as desired.
\end{proof}

We are ready to deduce the first part of \Cref{thm:1}.

\begin{proof}[Proof of \Cref{thm:1} for $k$-connectivity]
We condition on the events in \Cref{prop:degk} and \Cref{lem:Ear} and show that each of the graphs $(H_t)_{t\ge t_*}$ is $k$-connected, which clearly implies the desired statement.
The base case is guaranteed by the first part of \Cref{prop:degk}. Suppose that $H_t$ is a $k$-connected graph for some $t\ge t_*$.
If $X_{t+1}$ is at distance more than $r$ from all vertices of degree $k-1$ in $G_t$, then $H_{t+1}$ is $k$-connected by \Cref{cl:Hav}.
Otherwise, $X_{t+1}$ connects to one vertex $X$ of degree $k-1$ in $G_t$ (see \Cref{lem:Ear}(a)), implying that $H_{t+1} = G_{t+1}[V(H_t)\cup \{X,X_{t+1}\}]$.
Then, by applying \Cref{cl:Hav} with $H_t$ and $X_{t+1}$, and then once again with $G_{t+1}[V(H_t)\cup \{X_{t+1}\}]$ and $X$, we deduce that $H_{t+1}$ is $k$-connected.
This finishes the induction and the proof.
\end{proof}

\subsection{Hamiltonicity}\label{subsection-hamiltonicity}

In this subsection, we prove that whp each of the graphs $(G_t)_{t \geq \taukcone{2}}$ is Hamiltonian. To this end, we adapt the proof from~\cite{BBKMW11} concerning the analogous phenomenon in the dynamic random graph with growing radius.
This proof is constructive and relies on several structural properties of the graph and the underlying point set which ensure the existence of a Hamilton cycle, and turn out to be typically satisfied.
Throughout the paper, we call the construction exhibited in the sequel the \emph{reference construction}.

\vspace{0.5em}
\noindent
\textbf{Step $1$: Tessellation.} We introduce a tessellation $\cT$ of $\mathbb T^d$ with hypercubes of side length $r/c$ for some large constant $c = c(d)$. 
The (Euclidean or $\ell_{\infty}$) \emph{distance} between two cubes is defined as the (Euclidean or $\ell_{\infty}$) distance between their centres: note that the distances between cubes in~\cite{BBKMW11} is rescaled by a factor of $r/c$ but we avoid this for coherence with the following sections.
In particular, any two points among $(X_i)_{i=1}^{t_*}$ in cubes at distance at most $(c-\sqrt{d})r/c$ form an edge in $G_{t_*}$.

\vspace{0.5em}
\noindent
\textbf{Step $2$: Identifying clusters of non-full hypercubes.}
Fix $k=2$ and recall the choice of $t_*$ from \Cref{lem:props}.

We consider an auxiliary graph $\cG$ with vertex set $\cT$ where two cubes form an edge if they are at $\ell_{\infty}$-distance at most $4r$ from each other.
Fix an integer $M$ to be quantified later.
A cube in $\cT$ is called \emph{$M$-nonfull} if it has less than $M$ vertices of $G_{t_*}$ inside, and \emph{$M$-full} otherwise. 
We bound the size of the largest connected component of $M$-nonfull cubes in $\cG$. The presented lemma is the only modification of the proof from~\cite{BBKMW11} (to be compared with Lemmas~4 and~12 in \cite{BBKMW11}).

\begin{lemma}\label{lem:3}
Fix integers $M \ge 1$ and $U = \lceil \theta_d (c + 1)c^{d-1}\rceil$. Then, whp, at time $t_*$, $\cG$ contains no connected components of $M$-nonfull hypercubes with more than $U$ vertices.
\end{lemma}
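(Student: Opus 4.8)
The plan is to bound the probability that a fixed connected set of $U+1$ cubes in $\cG$ is entirely $M$-nonfull, and then take a union bound over all such sets. Fix a connected subgraph $\cK$ of $\cG$ on $U+1$ vertices (cubes). Since each cube has side length $r/c$ and edges of $\cG$ join cubes at $\ell_\infty$-distance at most $4r$, the cubes of $\cK$ all lie in a bounded region; the key geometric input is a lower bound on $\vol{d}\big(\bigcup_{s\in\cK} s\big)$. Here I would invoke the Brunn--Minkowski-type isoperimetric reasoning: a connected union of $U+1$ axis-parallel cubes of side $r/c$ has volume at least that of a single `long' box of the same total side-length profile, but more simply, since $\cK$ is connected in $\cG$ one can order the cubes $s_1,\dots,s_{U+1}$ so that each $s_{j+1}$ is within $\ell_\infty$-distance $4r$ of $\cup_{i\le j}s_i$, and a short induction shows $\vol{d}(s_1\cup\cdots\cup s_{U+1})\ge (U+1)\cdot\Omega(r^d)$ is too weak — instead I want a volume of at least roughly $\theta_d(c+1)c^{d-1}\cdot(r/c)^d \cdot$ (something) so that the expected point count exceeds a large multiple of $M$. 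The precise choice $U = \lceil \theta_d(c+1)c^{d-1}\rceil$ is tuned exactly so that $U$ cubes of side $r/c$ fit inside (essentially) a ball of radius $r$ after suitable positioning, hence $U$ connected nonfull cubes would correspond to a ball-sized region containing fewer than $UM$ points.

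**Main estimate.** Given the volume lower bound $\vol{d}(\bigcup_{s\in\cK}s) \ge V$ with $V = \Theta(r^d)$ and, crucially, $V \ge (1+\delta)\theta_d r^d$ for some $\delta = \delta(c,d)>0$ when $c$ is large (this is where $U>\theta_d c^d$ matters), the number of points of $(X_i)_{i=1}^{t_*}$ in $\bigcup_{s\in\cK}s$ stochastically dominates $\mathrm{Bin}(t_*, V)$. If every cube of $\cK$ is $M$-nonfull then this union contains at most $(U+1)M = O(1)$ points, so by \Cref{lem:chernoff},
\[
\mathbb P\big(\mathrm{Bin}(t_*,V)\le (U+1)M\big) = O\big((t_* r^d)^{(U+1)M}\exp(-V t_*)\big) = O\big((\log(1/r))^{(U+1)M}\exp(-(1+\delta)\theta_d t_* r^d)\big),
\]
using $\vol{d}\ge (1+\delta)\theta_d r^d$ and $t_* r^d = (1+o(1))d\log(1/r)/\theta_d$ from \Cref{rem:asymptotic}; this is $r^{(1+\delta)d+o(1)}$, which is $o(1/\ell) = o(r^d/(\log(1/r))^d)$ once $\delta d > 0$. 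Finally, the number of connected subgraphs of $\cG$ on $U+1$ vertices is at most $\ell$ (choices of a root cube) times a constant depending only on $U$ and $d$ (the bounded degree of $\cG$), hence $O(\ell) = O((\log(1/r))^d r^{-d})$. Multiplying the per-set probability by this count gives $o(1)$, so whp no such connected set is entirely $M$-nonfull, which is the assertion.

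**Main obstacle.** The delicate point is establishing the volume bound $\vol{d}(\bigcup_{s\in\cK}s)\ge (1+\delta)\theta_d r^d$ for a connected family $\cK$ of $U+1$ cubes of side $r/c$, with $\delta>0$ depending only on $c,d$ and bounded away from $0$: this is exactly what forces the choice of $U$ and must be reconciled with the fact that a connected union of cubes can be `thin' (e.g. a path of cubes) so its volume is only $\approx (U+1)(r/c)^d = \Theta(r^d)$ with a small constant. The resolution is that $U+1 > \theta_d(c+1)c^{d-1} \ge \theta_d c^d\cdot\frac{c+1}{c}$, so $(U+1)(r/c)^d > \theta_d r^d\cdot\frac{c+1}{c}$, giving $\delta = 1/c$ automatically — but one must be careful that the cubes of $\cK$ are genuinely disjoint (true, since $\cT$ is a tessellation) so their volumes add, and that the point process restricted to $\bigcup_{s\in\cK}s$ really dominates $\mathrm{Bin}(t_*,V)$ with $V = \vol{d}(\bigcup_{s\in\cK}s)$ (immediate from the definition of $G_{t_*}$ via $t_*$ i.i.d.\ uniform points). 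I would also handle the (minor) bookkeeping that the statement is about connected components with \emph{more than} $U$ vertices, so it suffices to rule out connected \emph{induced} subgraphs on exactly $U+1$ vertices, of which any larger component contains at least one.
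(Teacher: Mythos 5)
Your proposal is correct and follows essentially the same strategy as the paper: bound the probability that a fixed connected set of $\Theta(U)$ cubes in $\cG$ is entirely $M$-nonfull using the exact volume of the (disjoint) union together with the tuned choice of $U$ giving a slack factor $(c+1)/c$ in the exponent, then union-bound over the $\Theta(r^{-d})$ connected subsets of that size. Two cosmetic remarks: the initial detour through Brunn--Minkowski is unnecessary (as you yourself note, the tessellation cubes are disjoint, so volumes add exactly regardless of shape), and your $\ell = \lceil\log(1/r)/r\rceil^d$ is the count for the Section~3.1 tessellation rather than the one for the side-$r/c$ tessellation $\cT$ used in \Cref{lem:3} (whose cardinality is $(c/r)^d$); fortunately, this over-counts and the union bound still goes through thanks to the extra $r^{d/c}$ in the per-set probability.
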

\begin{proof}
First, note that every connected subgraph of $\cG$ with more than $U$ vertices contains itself a connected subgraph of with exactly $U$ vertices: such may be constructed by pruning an arbitrary spanning tree. 
Moreover, the number of such subgraphs with exactly $U$ vertices is bounded from above by $|\cT| (\prod_{i=1}^{U-1} i(9c)^d) = \Theta( r^{-d} )$.

At the same time, recalling \Cref{rem:asymptotic}, we obtain that
\[U t_* r^d/c^d \ge (1+o(1)) d (1+1/c) \log(1/r).\]
Hence, the probability that some $U$ cubes in $\cT$ jointly cover at most $MU$ points among $(X_i)_{i=1}^{t_*}$ is
\begin{equation}\label{eq:polyr}
\sum_{i=0}^{MU} \binom{t_*}{i} \bigg(\frac{U}{(c/r)^d}\bigg)^i\bigg(1-\frac{U}{(c/r)^d}\bigg)^{t_*-i} = (t_* r^d)^{MU+o(1)} \exp(-(1+o(1)) U t_* r^d/c^d)= o(r^d).
\end{equation}
A union bound finishes the proof.
\end{proof}

\begin{remark}\label{rem:lem:3}
Note that the property of a cube to be $M$-full is monotone: if it holds at time $t_*$ for some realisation of the process $(X_i)_{i = 1}^\infty$, then it holds for every $t > t_*$ as well.
In particular, the property from \Cref{lem:3} is also monotone.
\end{remark}

Given an integer $t\ge 0$, the remainder of the proof in~\cite{BBKMW11} provides a way to deterministically construct a Hamilton cycle in $G_t$ under two assumptions:
\begin{enumerate}[\upshape{\textbf{A\arabic*}}]
   \item\label{prop:cG} the statement of Lemma~\ref{lem:3} holds at time $t$ (instead of $t_*$) for some $M > 2U + 2 + 2(2c + 1)^d$,
   \item\label{prop:G} $G_t$ is 2-connected.
\end{enumerate}

On the one hand, \ref{prop:cG} holds whp for every $t\ge \taukcone{2}$: indeed, it suffices to combine \Cref{lem:3}, the monotonicity exhibited in \Cref{rem:lem:3} and the fact that whp $\taukcone{2}\ge t_*$. 
On the other hand, \ref{prop:G} also holds whp for every $t\ge \taukcone{2}$, as shown in the first part of the proof of \Cref{thm:1} in \Cref{sec:k-conn 1-choice}.
Below, we briefly explain the remaining steps of the reference construction using \ref{prop:cG} and \ref{prop:G} to conclude.
We state some claims without proofs; a more complete account can be found in~\cite{BBKMW11}.

\vspace{0.5em}
\noindent
\textbf{Step $3$: Structure of non-full hypercubes.}
Fix $M$ as in~\ref{prop:cG} and a graph $G_t$ satisfying~\ref{prop:cG} and~\ref{prop:G}. Denote by $\cN$ the set of all $M$-nonfull hypercubes (at time $t$). 
For a set of cubes $S$, denote by $S_b$ the \emph{$b$-blow-up of $S$}, that is, the set of cubes at $\ell_\infty$-distance at most $br/c$ from $S$.

Consider the graph $\widetilde{\cG}$ with vertex set $\cT$ where two cubes form an edge if the Euclidean distance between them is at most $(c-d)r/c$.
We define three types of cubes in $\cT$.
First, $\widetilde{\cG} \setminus \cN$ has a connected component (denoted by $\widetilde{\cA}$ and called the \emph{sea}) containing $(1 - o(1))|\widetilde{\cG}|$ cubes in $\cT$. 
Second, we call a cube in $\cN$ \emph{close} if it has an edge towards $\widetilde{\cA}$ in $\widetilde{\cG}$. 
Third, we call a cube \emph{far} if its description does not fit in the previous categories, that is, it is not close and deleting the close cubes in $\widetilde{\cG}$ disconnects it from $\widetilde{\cA}$.
Note that the far cubes can be $M$-full or $M$-nonfull.

Fix a connected component $N\subseteq \cN$ of $M$-nonfull cubes in the graph $\cG$. 
Then, thanks to \ref{prop:cG}, every two far cubes separated from $\widetilde{\cA}$ by $N$ turn out to be at Euclidean distance at most $r/10$ from each other (see Lemma~5 in~\cite{BBKMW11}).
The rough explanation behind this statement is that every set of cubes of larger diameter has neighbourhood in $\widetilde{\cG}$ of size more than $U$, so some of these cubes must be $M$-full.
As a consequence, the restriction of the graph $G_t$ to the far cubes surrounded by $N$ is a complete graph.

Finally, for any two $M$-nonfull connected components $N', N''$ in $\cG$, the blow-ups $N'_{2c}, N''_{2c}$ are disjoint.
This property guarantees that, in the construction of the Hamilton cycle, each $M$-nonfull component can be treated separately.

\vspace{0.5em}
\noindent
\textbf{Step $4$: Dealing with far hypercubes.} Fix an $M$-nonfull component $N'$ in $\cG$, and consider the union of all far cubes covered by $N_c'$.
We consider three cases. If this region is empty or contains no vertex of $G_t$, we transition to the analysis of the close cubes in $N$ in the end of step 4.
If this region contains one point (second case) or more than one point (third case), by using the assumption of 2-connectivity and the fact that vertices inside far cubes in $N'_c$ form a complete subgraph of $G_t$, we can find a path that
\begin{itemize}
    \item is entirely contained in $N'_{2c}$,
    \item contains all vertices of $G_t$ inside far cubes in $N'_c$,
    \item starts and ends in the same cube in $\widetilde{\cA}$,
    \item contains at most one vertex of $G_t$ in every cube in $\widetilde{\cA}$ except the cube containing the endpoints (which contains two vertices).
\end{itemize}
We refer to this path as the \emph{far-reaching path} for the component $N'$ (see \Cref{subfig:hamilton-a}).

Next, we construct paths which absorb the vertices of $G_t$ in close cubes in $N'$ which remain outside the far-reaching path.
If no such vertices of $G_t$ exist, we can safely transition to step 5.
Otherwise, fix the cubes $p_1,\ldots,p_k\in N'$ containing at least one vertex of $G_t$, and let $q_1,\ldots,q_k\in \widetilde{\cA}$ be arbitrary (and, in particular, not necessarily distinct) neighbours of $p_1,\ldots,p_k$ in $\widetilde{\cG}$, respectively.
By using that $k\le U$ and $M > 2U+2$, one can ensure the existence of $k$ paths with the following properties:
\begin{itemize}
    \item the paths are (vertex-)disjoint between themselves and (vertex-)disjoint from the far-reaching path,
    \item the $i$-th path has its first vertex in $q_i$, then goes through all vertices of $G_t$ in $p_i$ in some order, and has its last vertex in $q_i$ (see \Cref{subfig:hamilton-b}).
\end{itemize}

\begin{figure}[t!]
\centering
\begin{subfigure}[t]{0.3\textwidth}
    \includegraphics[width=\textwidth]{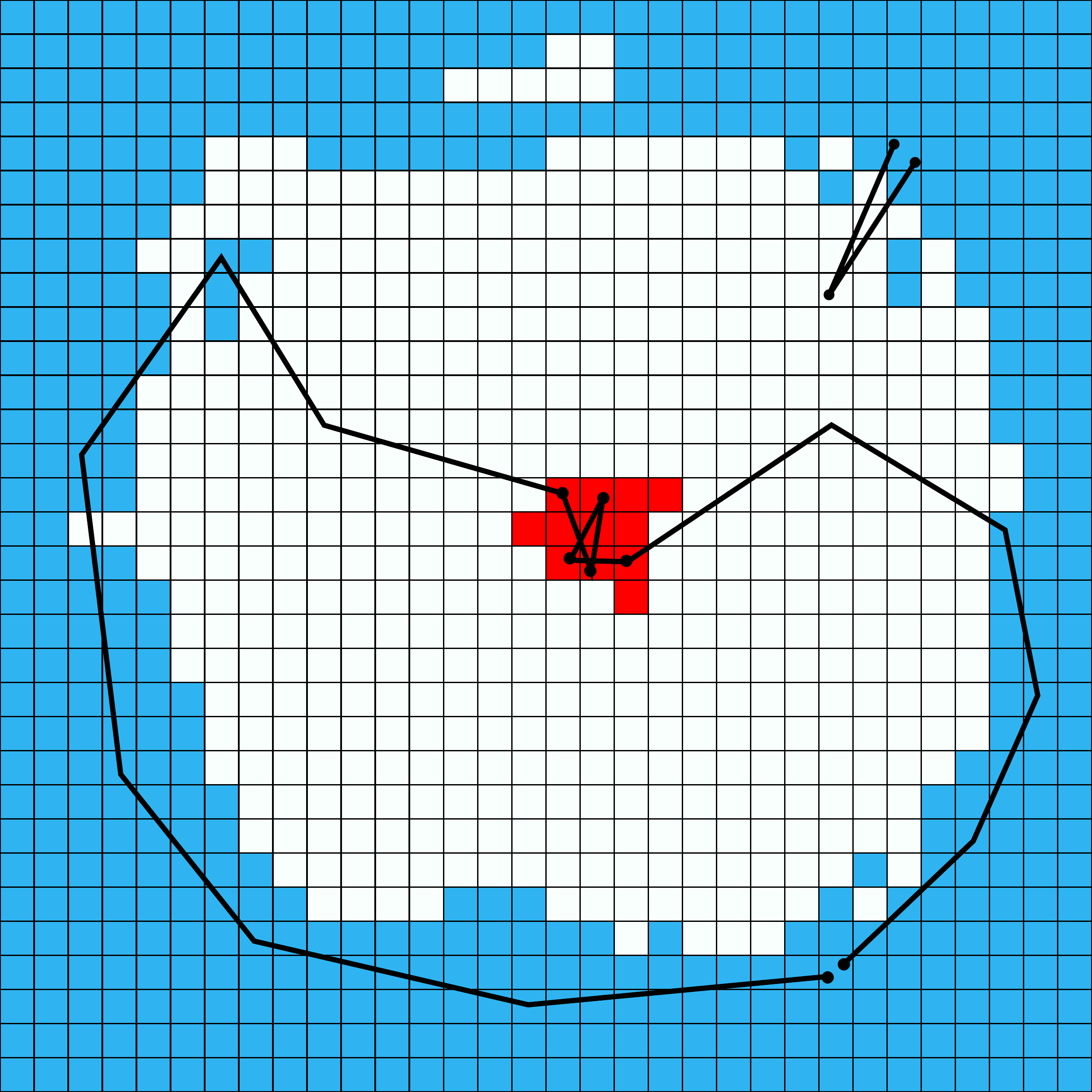}
    \caption{An exemple of an $M$-nonfull component. The far-reaching path absorbs the vertices in the far cubes. The other short black path is one of the multiple paths absorbing unused vertices in the close cubes.}
    \label{subfig:hamilton-a}
\end{subfigure}
\hfill
\begin{subfigure}[t]{0.3\textwidth}
    \includegraphics[width=\textwidth]{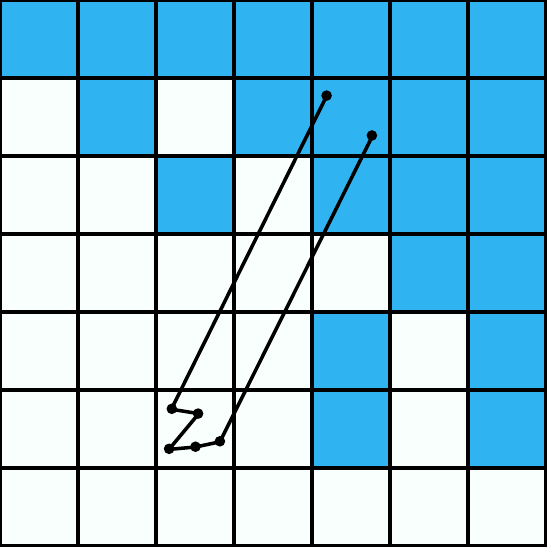}
    \caption{A zoom on the path absorbing vertices in the close cubes from subfigure (A).}
    \label{subfig:hamilton-b}
\end{subfigure}
\hfill
\begin{subfigure}[t]{0.3\textwidth}
    \includegraphics[width=\textwidth]{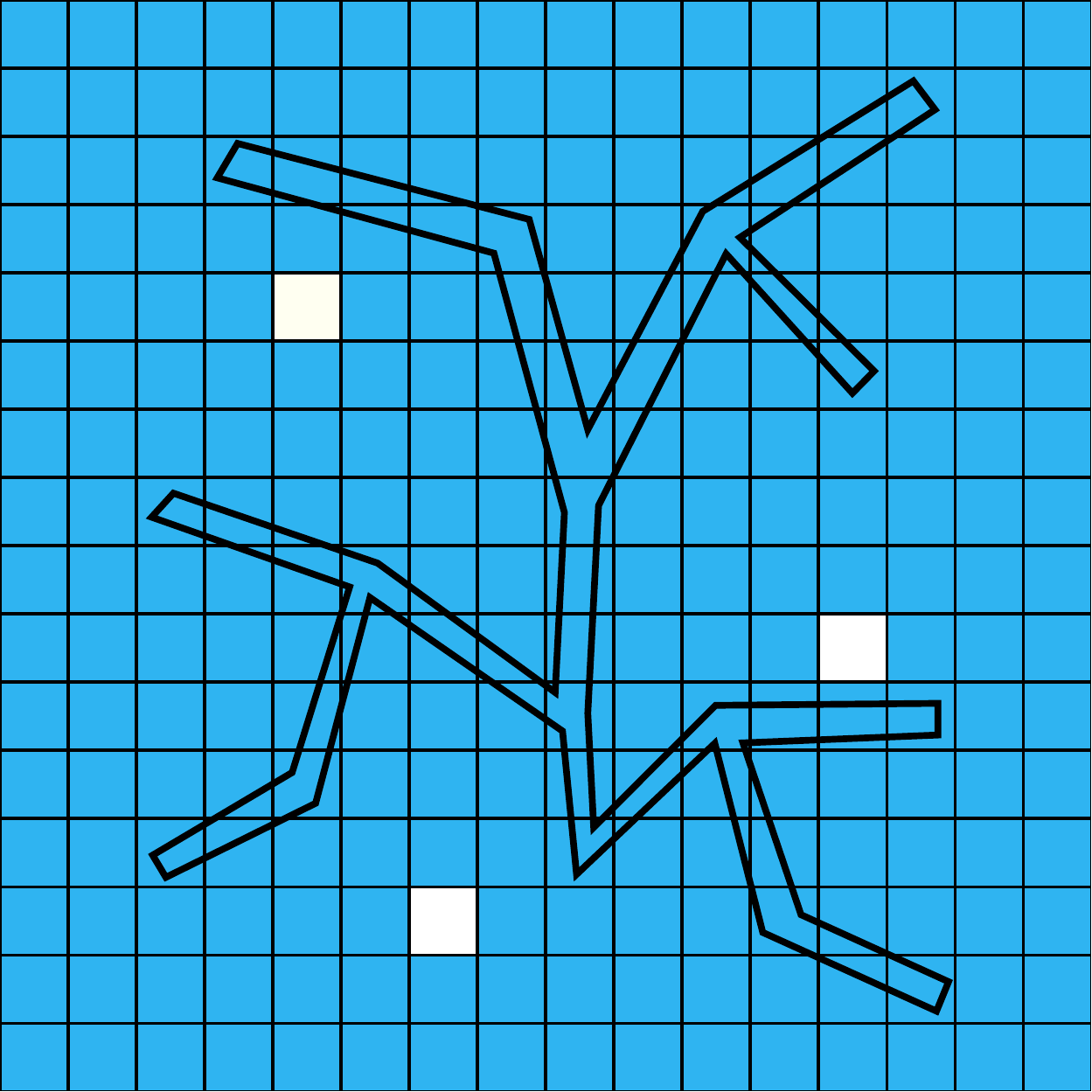}
    \caption{A symbolic representation of the idea of `doubling' the edges of the spanning tree of $\widetilde \cA$ described in Step 5.}
    \label{subfig:hamilton-c}
\end{subfigure}
\caption{An illustration of Steps 4 and 5 of the reference construction for $d = 2$. Blue squares {\fboxsep=0pt\fbox{\color{sea}\rule{2mm}{2mm}}} represent sea cubes, white squares {\fboxsep=0pt\fbox{\color{close}\rule{2mm}{2mm}}} – close cubes, and red squares {\fboxsep=0pt\fbox{\color{far}\rule{2mm}{2mm}}} – far cubes.}
\label{fig:hamilton-reference-construction}
\end{figure}

\vspace{0.5em}
\noindent
\textbf{Step $5$: Joining everything together using the vertices in $\widetilde{\cA}$.}
Finally, after performing step 4 for all $M$-nonfull components, we merge the constructed paths and the remaining vertices of $G_t$ into a single Hamilton cycle. 
To begin with, recall that $\widetilde{\cA}$ is a connected component in $\widetilde{\cG}$ and, therefore, one can find a spanning tree $T$ of $\widetilde{\cA}$. 
Moreover, by `doubling' every edge of $T$ (see \Cref{subfig:hamilton-c}) and using that $\widetilde \cG[\widetilde \cA]$ has maximum degree at most $(2c+1)^d$, one obtains a cycle $C_T$ visiting each cube in $\widetilde{\cA}$ at least twice and at most $2(2c+1)^d$ times.

One can convert this structure into a Hamilton cycle of $G_t$ as follows. Start at an unused vertex in a cube in $\widetilde{\cA}$. Then, move to any unused vertex in the next cube in the cycle $C_T$. If this is the last time when $C_T$ visits this cube, visit all remaining vertices and merge all paths constructed in step 4, and then leave for the next cube in $C_T$. If it is not the last time the cycle $C_T$ visits this cube, then move to any unused vertex in the next cube in $C_T$.
Note that, conditionally on the property from \Cref{lem:3}, the choice of $M$ in~\ref{prop:cG} guarantees that the algorithm does not run out of unused vertices at any point, which concludes the proof.

\section{\texorpdfstring{Hitting times and probabilistic monotonicity in the offline $2$-choice process}{Hitting times and probabilistic monotonicity in the offline 2-choice process}}\label{section:offline-2-choice}

In this section, we analyse the offline 2-choice process. 
First, we focus on the $k$-connectivity part of \Cref{thm:2}.

\subsection{\texorpdfstring{$k$-connectivity in the offline $2$-choice process}{k-connectivity in the offline 2-choice process}}\label{sec:k-conn-2off}

Following the proof outline presented in the introduction, we first draw some parallels with the approach used in \Cref{section:1-choice}.

\subsubsection{\texorpdfstring{Preparation and comparison with the $1$-choice process}{Preparation and comparison with the 1-choice process}}

For an integer $k\ge 1$ and suitably chosen $\varepsilon = \varepsilon(k) > 0$ (characterised in the sequel), we define
\begin{equation}\label{eqn:t**}
t_{\min} = t_{\min}(r,k,\eps,d) := \frac{d}{4\theta_d} \frac{\log(1/r)}{r^d} + (1 - \varepsilon)\frac{2k - 1}{4\theta_d} \frac{\log\log(1/r)}{r^d}.
\end{equation}

Our first lemma provides an analogue of \eqref{eq:expZ} and Lemma~\ref{lem:conc}.
For any $t = t(r) \geq 1$ and $\kappa \geq 0$, we denote by $Z'_{\kappa, t}$ the random variable equal to the number of partner pairs $X_{2i - 1}, X_{2i}$ in $(X_i)_{i = 1}^{2t}$ such that each of $X_{2i - 1}$ and $X_{2i}$ has degree $\kappa$ in $G_{2t}$.

\begin{lemma}\label{lem:conc-pairs}
Consider $\varepsilon > 0$ and integers $\kappa \geq 0$ and $t = t(r) \geq 1$ ensuring that $\mathbb E[Z_{\kappa,t}']\to\infty$. Then, whp $Z'_{\kappa, t} = (1 \pm \varepsilon) \mathbb E[Z_{\kappa,t}'] = (1 \pm 2\varepsilon) \mathbb E[Z_{\kappa,2t}]^2/4t$.
\end{lemma}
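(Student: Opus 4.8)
The plan is the second moment method, following the template of the proof of \Cref{lem:conc} but with partner pairs in place of single vertices. Call a partner pair \emph{good} if both of its points have degree $\kappa$ in $G_{2t}$, and write $\mathcal A$ for the event that the pair $\{X_1,X_2\}$ is good, so that $Z'_{\kappa,t}=\sum_{i=1}^{t}\mathbb 1[\{X_{2i-1},X_{2i}\}\text{ is good}]$, $\mathbb E[Z'_{\kappa,t}]=t\,\mathbb P(\mathcal A)$, and distinct summands are measurable functions of disjoint quadruples of points. Note $t\to\infty$ since $\mathbb E[Z'_{\kappa,t}]\le t$.

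\textbf{Expectation.} Splitting on $|X_1-X_2|$ into the ranges $(2r,\infty)$, $[r,2r)$ and $[0,r)$ exactly as in \Cref{lem:conc}, the leading contribution comes from the first range, where $B(X_1,r)$ and $B(X_2,r)$ are disjoint and each must contain exactly $\kappa$ of the other $2t-2$ points:
\[t\,\mathbb P\big(\mathcal A,\ |X_1-X_2|>2r\big)=(1-O(r^d))\,t\binom{2t-2}{\kappa}\binom{2t-2-\kappa}{\kappa}(\theta_dr^d)^{2\kappa}(1-2\theta_dr^d)^{2t-2-2\kappa}.\]
Using $t\to\infty$ and $tr^{2d}=o(1)$ (which follows from $tr^d=O(\log(1/r))$, itself a consequence of the divergence hypothesis), a comparison of binomial coefficients together with the replacement of $(1-2\theta_dr^d)^{2t}$ by $(1-\theta_dr^d)^{4t}$ up to a factor $1+o(1)$ shows this equals $(1+o(1))\mathbb E[Z_{\kappa,2t}]^2/(4t)=\Theta\big(t(tr^d)^{2\kappa}\e^{-4\theta_dtr^d}\big)$. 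The two remaining ranges are of smaller order by the same estimates as in \Cref{lem:conc}: on $[r,2r)$ one has $\vol{d}(B(X_1,r)\cup B(X_2,r))\ge\tfrac32\theta_dr^d$, while on $[0,r)$ one necessarily has $\kappa\ge1$ and $\{X_1,X_2\}$ has at most $2\kappa-2$ neighbours, so that these contributions are $O\big(tr^d(tr^d)^{2\kappa}\e^{-3\theta_dtr^d}\big)$ and $O\big(tr^d(tr^d)^{2\kappa-2}\e^{-2\theta_dtr^d}\big)$, respectively. Hence $\mathbb E[Z'_{\kappa,t}]=(1+o(1))\mathbb E[Z_{\kappa,2t}]^2/(4t)$, which gives the second equality in the statement, and the divergence hypothesis then forces $\theta_dtr^d\le\tfrac d4\log(1/r)+O(\log\log(1/r))$; this slack is what makes all the error terms above, and those below, vanish (a bound only of the form $\theta_dtr^d\le\tfrac d2\log(1/r)$ would not suffice).

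\textbf{Second moment.} As distinct good pairs involve disjoint quadruples, $\mathbb E[(Z'_{\kappa,t})^2]=\mathbb E[Z'_{\kappa,t}]+t(t-1)\,\mathbb P(\mathcal A\cap\mathcal A^{*})$, where $\mathcal A^{*}$ is the event that $\{X_3,X_4\}$ is good; it suffices to prove $\mathbb P(\mathcal A\cap\mathcal A^{*})=(1+o(1))\mathbb P(\mathcal A)^2$. Let $\mathcal D$ be the event that all six pairwise distances among $X_1,\dots,X_4$ exceed $2r$. On $\mathcal D$ the four balls $B(X_m,r)$ are disjoint and each must receive exactly $\kappa$ of the other $2t-4$ points, so the same binomial comparison as above gives $\mathbb P(\mathcal D\cap\mathcal A\cap\mathcal A^{*})=(1+o(1))\mathbb P(\mathcal A)^2$. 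For $\mathcal D^{c}$ I would decompose according to the \emph{closeness graph} $\Gamma$ on $\{1,2,3,4\}$ (an edge $mm'$ whenever $|X_m-X_{m'}|\le 2r$): for a fixed non-empty $\Gamma$ with $c$ connected components, pruning a spanning forest gives $\mathbb P(\text{closeness graph}=\Gamma)=O(r^{d(4-c)})$, distinct components keep the corresponding groups of balls at pairwise distance $>2r$ so that $\vol{d}\big(\bigcup_mB(X_m,r)\big)\ge c\,\theta_dr^d$, and on $\mathcal A\cap\mathcal A^{*}$ at most $4\kappa$ of the other $2t-4$ points lie in this union. Hence, with $j:=4-c\in\{1,2,3\}$,
\[\mathbb P(\text{closeness graph}=\Gamma,\ \mathcal A\cap\mathcal A^{*})\le O\big(r^{dj}\big)\cdot\mathbb P\big(\mathrm{Bin}(2t-4,c\theta_dr^d)\le 4\kappa\big)=O\big(r^{dj}(tr^d)^{4\kappa}\e^{-2c\theta_dtr^d}\big),\]
which divided by $\mathbb P(\mathcal A)^2=\Theta\big((tr^d)^{4\kappa}\e^{-8\theta_dtr^d}\big)$ equals $O\big(r^{dj}\e^{2j\theta_dtr^d}\big)=O\big(r^{dj/2}(\log(1/r))^{O(1)}\big)=o(1)$ by the slack bound. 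Summing over the $O(1)$ choices of $\Gamma$ yields $\mathbb P(\mathcal D^{c}\cap\mathcal A\cap\mathcal A^{*})=o(\mathbb P(\mathcal A)^2)$, hence $\mathbb P(\mathcal A\cap\mathcal A^{*})=(1+o(1))\mathbb P(\mathcal A)^2$.

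\textbf{Conclusion.} Combining the two moment estimates, $\mathrm{Var}(Z'_{\kappa,t})=\mathbb E[Z'_{\kappa,t}]+o(\mathbb E[Z'_{\kappa,t}]^2)=o(\mathbb E[Z'_{\kappa,t}]^2)$ since $\mathbb E[Z'_{\kappa,t}]\to\infty$, so Chebyshev's inequality gives $Z'_{\kappa,t}=(1\pm\varepsilon)\mathbb E[Z'_{\kappa,t}]$ whp, and since $\mathbb E[Z'_{\kappa,t}]=(1+o(1))\mathbb E[Z_{\kappa,2t}]^2/(4t)$ this lies in $(1\pm2\varepsilon)\mathbb E[Z_{\kappa,2t}]^2/(4t)$ for $r$ small enough. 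I expect the main obstacle to be the $\mathcal D^{c}$ estimate: one must verify, for every closeness pattern among the four points, that the volume factors $O(r^d)$ collected from the closeness constraints outweigh the loss in exponential decay caused by the shrinking of $\bigcup_mB(X_m,r)$, which is exactly where the slack $\theta_dtr^d\le\tfrac d4\log(1/r)+O(\log\log(1/r))$ is used; organising the bookkeeping via the closeness graph keeps this finite casework manageable.
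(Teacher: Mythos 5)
Your proof runs a direct second-moment computation on $Z'_{\kappa,t}$, whereas the paper takes a shortcut: it conditions on the \emph{unlabelled configuration} of $(X_i)_{i=1}^{2t}$ — which already determines $Z_{\kappa,2t}$ — and observes that, given this, the partnership structure is a uniform perfect matching of the $2t$ positions into $t$ pairs. This yields the exact identity $\mathbb E[Z'_{\kappa,t}\mid Z_{\kappa,2t}]=tZ_{\kappa,2t}(Z_{\kappa,2t}-1)/(2t(2t-1))$ and an analogous conditional second moment, so all geometric work is delegated to \Cref{lem:conc} and the lemma follows from the concentration of $Z_{\kappa,2t}$ plus a conditional Chebyshev argument. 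Your route is more elementary but forces the closeness-graph casework; the paper's avoids it entirely.

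There is a gap in your $\mathcal D^c$ estimate. The bound $\mathbb P\big(\mathrm{Bin}(2t-4,c\theta_dr^d)\le 4\kappa\big)=O\big((tr^d)^{4\kappa}\e^{-2c\theta_dtr^d}\big)$ is only valid when the mean $2c\theta_d tr^d$ stays bounded away from $0$: if $tr^d\to 0$ and $\kappa\ge 1$, the left side tends to $1$ while your bound tends to $0$. The uniform bound carries an extra factor $\max\{1,(tr^d)^{4\kappa}\}$, which changes your ratio to $r^{dj}\max\{1,(tr^d)^{-4\kappa}\}\e^{2j\theta_dtr^d}$. The divergence hypothesis gives the upper bound $\theta_dtr^d\le\tfrac d4\log(1/r)+O(\log\log(1/r))$ but no lower bound; in the regime $tr^d\to 0$ it yields only $tr^d=\omega(r^{d/(2\kappa+1)})$, which for $\kappa\ge1$ is strictly weaker than the $tr^d=\omega(r^{d/(4\kappa)})$ that the $j=1$ (three-component) case then requires. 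To close the gap one must use the per-component structure: each singleton component of the closeness graph must contain \emph{exactly} $\kappa$ of the remaining $2t-4$ points in its ball, not merely contribute to a total of at most $4\kappa$ in the union, and this refinement recovers the missing factor $(tr^d)^{2\kappa}$ so that the ratio does vanish under $tr^d=\omega(r^{d/(2\kappa+1)})$. Both invocations of this lemma in the paper (\Cref{lem:hitting-time-offline-lb}) have $tr^d\to\infty$, so your argument already covers them; but as written it does not establish the lemma in its stated generality, whereas the paper's conditioning argument does, with no regime split.
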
 
\begin{proof}
First, note that $\mathbb E[Z_{\kappa,t}']\to\infty$ implies that $\mathbb E[Z_{\kappa,2t}]\to\infty$. In particular, \Cref{lem:conc} shows that whp $Z_{\kappa,2t} = (1+o(1)) \mathbb E[Z_{\kappa,2t}]$. Next, we condition on the configuration of the first $2t$ points: this amounts to revealing the positions of the points $(X_i)_{i = 1}^{2t}$ (and, in particular, the random variable $Z_{\kappa,2t}$) but not their indices. 
Then, 
\begin{equation}\label{eq:EZ'}
\mathbb E[Z_{\kappa,t}'\mid Z_{\kappa,2t}] = t \frac{Z_{\kappa,2t} (Z_{\kappa,2t}-1)}{2t(2t-1)}.
\end{equation}
Furthermore,
\[\mathbb E[(Z_{\kappa,t}')^2\mid Z_{\kappa,2t}] = \mathbb E[Z_{\kappa,t}'\mid Z_{\kappa,2t}] + t(t-1)\frac{Z_{\kappa,2t} (Z_{\kappa,2t}-1)(Z_{\kappa,2t}-2) (Z_{\kappa,2t}-3)}{2t(2t-1)(2t-2)(2t-3)},\]
where the last ratio stands for the probability that each of the vertices $X_{2i-1},X_{2i},X_{2j-1},X_{2j}$ has degree $\kappa$ in $G_{2t}$, for all choices of distinct $i,j\in [t]$.
Then, Chebyshev's inequality yields
\[\mathbb P(|Z_{\kappa,t}' - \mathbb E[Z_{\kappa,t}'\mid Z_{\kappa,2t}]|\ge \eps\mathbb E[Z_{\kappa,t}'\mid Z_{\kappa,2t}]/2\mid Z_{\kappa,2t})\le \frac{\mathrm{Var}(Z_{\kappa,t}'\mid Z_{\kappa,2t})}{(\eps/2)^2 \mathbb E[Z_{\kappa,t}'\mid Z_{\kappa,2t}]^2}.\]
Next, we show that whp
\[\mathbb E[Z_{\kappa,t}'\mid Z_{\kappa,2t}] = (1+o(1))\mathbb E[Z_{\kappa,t}'] = (1 + o(1)) \mathbb E[Z_{\kappa,2t}]^2/4t\quad \text{and}\quad \mathbb E[(Z_{\kappa,t}')^2\mid Z_{\kappa,2t}] = (1+o(1)) \mathbb E[Z_{\kappa,t}']^2.\]
Indeed, the three equalities directly follow from the fact that whp $Z_{\kappa,2t} = (1+o(1))\mathbb E[Z_{\kappa,2t}]\to \infty$ (shown in \Cref{lem:conc}) and $\mathbb E[Z_{\kappa,2t}^2]=(1+o(1))\mathbb E[Z_{\kappa,2t}]^2$ (following from the second moment in the proof of \Cref{lem:conc}). 
As a result, whp $Z_{\kappa,t}' = (1\pm (\eps/2+o(1))) \mathbb E[Z_{\kappa,t}'] = (1\pm \eps) \mathbb E[Z_{\kappa,t}']$, as desired.
\end{proof}

The next lemma estimates the number of partner pairs of two vertices with degrees at most $k-1$ before the hitting time $\tau_{2,k}$.

\begin{lemma}\label{lem:hitting-time-offline-lb}
For every $\varepsilon > 0$, the following statements hold jointly whp:
\begin{enumerate}[label=\emph{(\alph*)}]
    \item\label{lem:hitting-time-offline-lb-item-a} each of the graphs $(G_{2i})_{i=\log\log(1/r)}^{t_{\min}}$ contains at least $\log\log(1/r)$ pairs of partner points of degree at most $k-1$ each;
    \item\label{lem:hitting-time-offline-lb-item-b} all vertices in the graph $G_{2\log\log(1/r)}$ are isolated;
    \item\label{lem:hitting-time-offline-lb-item-c} $\taukctwo{k} - t_{\min} = \omega(1)$.
\end{enumerate}
\end{lemma}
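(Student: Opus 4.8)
The plan is to treat the three items separately. \emph{Part (b)} is a one-line union bound: the probability that two of the first $2\log\log(1/r)$ points lie within distance $r$ is at most $\binom{2\log\log(1/r)}{2}\theta_d r^d=O((\log\log(1/r))^2 r^d)=o(1)$.

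\emph{Part (a)} is the main point. The engine is the following persistence property: if the partner pair $(X_{2j-1},X_{2j})$ has both endpoints of degree at most $k-1$ in $G_{2i}$ for some $i\ge j$, then — both points being present from time $2j$ on and vertex degrees only growing along the process — the same holds in every $G_{2i'}$ with $j\le i'\le i$. So, exactly in the spirit of the proof of \Cref{lem:isol}, it suffices to exhibit at a constant number of reference times many such ``bad'' partner pairs of small pair-index and then chain the resulting time windows. Writing $\alpha=r^{-d/2-0.1}$, I would use three windows. On $[\log\log(1/r),\alpha]$: whp none of the first $\log\log(1/r)$ partner pairs acquires an edge to any of the first $2\alpha$ points, since the complementary probability is at most $\log\log(1/r)\cdot O(\alpha r^d)=O(\log\log(1/r)\cdot r^{d/2-0.1})=o(1)$ (at the very bottom this is anyway part (b)); these pairs thus stay isolated, hence bad, throughout. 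On $[\alpha,t_{\min}/2]$: from~\eqref{eq:expZ} and the form of $t_{\min}$ one gets $\mathbb E[Z_{k-1,t_{\min}}]=r^{-3d/4+o(1)}$, so $\mathbb E[Z'_{k-1,t_{\min}/2}]=r^{-d/2+o(1)}\to\infty$, and \Cref{lem:conc-pairs} gives whp $r^{-d/2+o(1)}$ bad partner pairs in $G_{t_{\min}}$; conditioning on the unordered point set, the $t_{\min}/2$ partner pairs receive their indices in uniformly random order, so the number of these bad pairs with pair-index at most $\alpha$ is a hypergeometric-type variable of mean $(1+o(1))\frac{2\alpha}{t_{\min}}\cdot r^{-d/2+o(1)}=r^{-0.1+o(1)}\gg\log\log(1/r)$, which concentrates, and these pairs are bad throughout the window. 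On $[t_{\min}/2,t_{\min}]$: a direct computation with~\eqref{eq:expZ} and the precise form~\eqref{eqn:t**} of $t_{\min}$ gives $\mathbb E[Z'_{k-1,t_{\min}}]=(c+o(1))(\log(1/r))^{(2k-1)\varepsilon}\to\infty$ for some constant $c=c(k,d)>0$ — this is the only place where the factor $1-\varepsilon$ in~\eqref{eqn:t**} is used — so \Cref{lem:conc-pairs} yields whp at least $\tfrac12(\log(1/r))^{(2k-1)\varepsilon}$ bad pairs in $G_{2t_{\min}}$, and, again by exchangeability, whp at least a quarter of them have pair-index at most $t_{\min}/2$, which is $\gg\log\log(1/r)$; these are bad throughout. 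A union bound over the three windows finishes part (a).

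\emph{Part (c)} needs no new idea: fix any $\psi=\psi(r)\to\infty$ with $\psi(r)=o(r^{-d})$, so that replacing $t_{\min}$ by $t_{\min}+\psi(r)$ changes $\theta_d r^d$ times the number of revealed points by only $o(1)$ and hence leaves all of the above expectations unchanged up to a factor $1+o(1)$; then the proof of part (a) goes through verbatim with $t_{\min}+\psi(r)$ in place of $t_{\min}$, and for $i\le\log\log(1/r)$ the graph $G_{2i}$ is edgeless whp by part (b) and so also contains a bad partner pair. Thus whp $G_{2i}$ contains a bad partner pair for every $i\le t_{\min}+\psi(r)$, which by the definition of $\taukctwo{k}$ forces $\taukctwo{k}>t_{\min}+\psi(r)$. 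The step I expect to require the most care is the third window of part (a): one must follow the second-order term of~\eqref{eqn:t**} through~\eqref{eq:expZ} precisely enough to confirm that the expected number of bad partner pairs at time $t_{\min}$ is of order $(\log(1/r))^{(2k-1)\varepsilon}$ and, crucially, tends to infinity, so that \Cref{lem:conc-pairs} applies. The ``exchangeability plus concentration'' step used in the last two windows is a mild extension of \Cref{lem:conc-pairs}, proved in the same way but not literally contained in it; everything else is routine.
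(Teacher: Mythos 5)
Your proposal is correct and follows essentially the same three-window partition of $[\log\log(1/r),\,t_{\min}]$ that the paper uses (boundaries at $\gamma=r^{-d/2-0.1}$, your $\alpha$, and at $t_{\min}/2$), with the same machinery: persistence of `bad' pairs via degree monotonicity, second-moment concentration for $Z'_{\kappa,\cdot}$, and the conditional Chernoff/hypergeometric device that transfers ``many bad pairs'' to ``many bad pairs of small pair-index'' by exchangeability of the labels after revealing the unordered configuration. The only deviations are cosmetic: you use degree-$(k-1)$ pairs in the middle window where the paper uses isolated pairs (both work), and for part (c) you shift $t_{\min}$ by an arbitrary $\psi\to\infty$ with $\psi=o(r^{-d})$ while the paper replaces $\eps$ by $\eps/2$ (giving the slightly stronger $\Theta(r^{-d}\log\log(1/r))$ gap); either suffices for $\omega(1)$. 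Your computation $\mathbb E[Z'_{k-1,t_{\min}}]=\Theta((\log(1/r))^{(2k-1)\eps})$ is correct and is where the $(1-\eps)$ factor is needed, matching the paper's (coarser) lower bound $\mathbb E[Z'_{k-1,t_{\min}}]=\omega(\log\log(1/r))$. The ``mild extension'' you flag is indeed not contained in \Cref{lem:conc-pairs}, but it is exactly the step the paper performs via events $\cB_2,\cB_4$; it is routine and you describe it correctly.
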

\begin{proof} 
First of all, by combining~\eqref{eq:expZ} and~\eqref{eqn:t**}, we obtain that $\mathbb E[Z_{k-1,2t_{\min}}]\ge t_{\min}^{1/2}\log\log(1/r)$.
Denote by $\cB_1$ the event that $Z'_{k-1, t_{\min}} > \mathbb E[Z'_{k-1, t_{\min}}]/2$.
Then, by~\eqref{eq:EZ'} and the relation 
\[\mathbb E[Z_{k-1,2t_{\min}}^2]\ge \mathbb E[Z_{k-1,2t_{\min}}]^2 \ge t_{\min} (\log\log(1/r))^2\to \infty,\] 
we deduce that $\mathbb E[Z_{k-1,t_{\min}}']=\omega(\log\log(1/r))$ and thus \Cref{lem:conc-pairs} applies; in particular, $\bbP(\cB_1^c) = o(1)$.

Next, denote by $\cB_2$ the event that there are at least $\log\log(1/r)$ pairs of points in $G_{t_{\min}}$ which both have degree $k - 1$ in $G_{2t_{\min}}$.
Observe that, conditioning on the locations of the points $(X_i)_{i=1}^{2t_{\min}}$ and the partnership relation without revealing the order in which the partner pairs arrived in the process, the permutation of the partner pairs remains uniform. Thus, Chernoff's bound yields
\[\bbP(\cB^c_2\mid\cB_1) \leq \bbP(\text{Bin}(\mathbb E[Z'_{k-1, t_{\min}}]/2, 1/2) < \log\log(1/r)) = o(1).\]

Now, denote by $\cB_3$ the event that $G_{t_{\min}}$ contains at least $\mathbb E[Z_{0,t_{\min}/2}']/2 = r^{-d/2 +o(1)}$ partner pairs of isolated vertices.
By Lemma~\ref{lem:conc-pairs}, $\bbP(\cB_3) = 1 - o(1)$. Set $\gamma = \gamma(r) = r^{-d/2 -0.1}$ and denote by $\cB_4$ the event that there are at least $\log\log(1/r)$ partner pairs of isolated vertices in $G_{2\gamma}$ both of which remain isolated in $G_{t_{\min}}$.
Then, again conditionally on the locations of the unlabelled vertices,
\[ \bbP(\cB_4^c \mid \cB_3) \leq \bbP\left(\text{Bin}\left(\frac{\mathbb E[Z_{0,t_{\min}/2}']}{2}, \frac{\gamma}{t_{\min}} \right) < \log\log\left( \frac{1}{r} \right)\right) = o(1), \]
by Lemma~\ref{lem:chernoff}.

Next, denote by $\cB_5$ the event that every two points among $X_1, X_2, \ldots, X_{2\log\log(1/r)}$ are at distance less than $2r$ from each other.
By a simple union-bound, $\bbP(\cB_5^c) \leq \theta_d (2r)^d (2\log\log(1/r))^2 = o(1)$.

Finally, denote by $\cB_6$ the event that each of the points $X_1, X_2, \ldots, X_{2\log\log(1/r)}$ remains isolated in $G_{2\gamma}$.
Then, $\bbP(\cB_6^c \mid \cB_5) = (1 - 2\theta_d r^d \log\log(1/r))^{\gamma - 2\log\log(1/r)} = o(1)$.

To conclude the validity of (a), note that it suffices that the event $\cB_1 \cap \cB_2 \cap \cB_3 \cap \cB_4 \cap \cB_5 \cap \cB_6$ holds, and $\cB_5$ is sufficient for (b). Moreover,
\begin{align*}
    \bbP(\cB_1^c \cup \cB_2^c \cup \cB_3^c \cup \cB_4^c \cup \cB_5^c \cup \cB_6^c) \leq \bbP(\cB_1^c) + \bbP(\cB_2^c \mid \cB_1) + \bbP(\cB_3^c) + \bbP(\cB_4^c \mid \cB_3) + \bbP(\cB_5^c) + \bbP(\cB_6^c \mid \cB_5) = o(1).
\end{align*}
For point (c), it suffices to note that (a) holds whp with $\eps/2$ instead of $\eps$ as well, implying that $\tau_{2,k}\ge t_{\min}(r,k-1,\eps/2,d)$, and $t_{\min}(r,k-1,\eps/2,d)-t_{\min}(r,k-1,\eps,d)=\omega(1)$.
\end{proof}

We turn to describing how uniformly the points $(X_i)^{2t_{\min}}_{i=1}$ are spread. 
To this end, recall the tessellation $\cT$ of $\mathbb T^d$ into cubes of side length $r/c$ and the graph $\cG$ from Steps 1 and 2 in \Cref{subsection-hamiltonicity}.
For integers $M \geq 1$ and $t\ge 0$, a cube in $\mathcal{T}$ is said to be \emph{$M$-nonfull} (\emph{with respect to $(X_i)_{i=1}^t$}) if it contains less than $M$ vertices of $G_t$, and \emph{$M$-full} otherwise. 
By default, we use the terms $M$-nonfull and $M$-full with respect to $(X_i)_{i=1}^{2t_{\min}}$.
The next lemma is a counterpart of Lemma~\ref{lem:3} for $G_{2t_{\min}}$ with the only difference that the constant $U$ is replaced by $2U$.
The proof follows the one of Lemma~\ref{lem:3} verbatim and is therefore omitted.

\begin{lemma}\label{lem:3-counterpart}
    Fix constants $M \ge 1$ and $U = \lceil \theta_d (c+1) c^{d - 1}\rceil$. Then, whp $\cG$ contains no connected components of $M$-nonfull cubes with respect to $(X_i)_{i=1}^{2t_{\min}}$ having more than $2U$ vertices.
\end{lemma}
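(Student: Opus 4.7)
The plan is to follow the proof of \Cref{lem:3} verbatim, only adjusting the constants to reflect that our point set has roughly half the size of the point set in that lemma. The main (essentially only) point to verify is that the exponential suppression coming from $2U$ cubes jointly covering few points of $(X_i)_{i=1}^{2t_{\min}}$ is strong enough to beat the polynomial (in $r^{-1}$) enumeration of connected subgraphs of $\cG$.

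First, I would observe that any connected subgraph of $\cG$ with more than $2U$ vertices contains a connected subgraph with exactly $2U$ vertices (obtained by pruning a spanning tree), and that the number of such subgraphs is at most
\[|\cT|\cdot \prod_{i=1}^{2U-1} i(9c)^d = \Theta(r^{-d}),\]
exactly as in the proof of \Cref{lem:3}, with a larger hidden constant because the product now runs to $2U-1$.

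The key arithmetic check is the exponential factor. From~\eqref{eqn:t**} we have $2t_{\min}r^d = \frac{d}{2\theta_d}\log(1/r) + O(\log\log(1/r))$, which is asymptotically half of the corresponding quantity $t_* r^d$ used in \Cref{lem:3}. Thus, for any fixed $2U$ cubes in $\cT$, the probability that they jointly contain at most $2MU$ points of $(X_i)_{i=1}^{2t_{\min}}$ is, by a direct analogue of~\eqref{eq:polyr},
\[\sum_{i=0}^{2MU}\binom{2t_{\min}}{i}\bigg(\frac{2U}{(c/r)^d}\bigg)^{\!i}\bigg(1-\frac{2U}{(c/r)^d}\bigg)^{\!2t_{\min}-i} = (2t_{\min}r^d)^{2MU+o(1)}\exp\!\Big(-(1+o(1))\cdot 2U\cdot 2t_{\min} r^d/c^d\Big).\]
Using the above asymptotic for $2t_{\min}r^d$ together with the choice $U=\lceil \theta_d (c+1)c^{d-1}\rceil$, the exponent equals
\[(1+o(1))\cdot \frac{2U}{c^d}\cdot \frac{d}{2\theta_d}\log(1/r) = (1+o(1))\,d\bigg(1+\frac{1}{c}\bigg)\log(1/r),\]
so the bound above is $r^{d(1+1/c)-o(1)}=o(r^d)$, matching the estimate in \Cref{lem:3}.

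A union bound over the $\Theta(r^{-d})$ connected subgraphs of size $2U$ then yields that whp no such configuration of $M$-nonfull cubes exists, which is equivalent to the statement. I do not anticipate a genuine obstacle: halving the number of points is exactly compensated by doubling the component size we forbid, which is precisely why the constant $U$ is replaced by $2U$ (and no change to the definition of $U$ itself is needed).
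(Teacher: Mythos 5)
Your proposal is correct and is exactly the argument the paper intends: the paper explicitly omits the proof of this lemma, stating that it follows that of \Cref{lem:3} verbatim with $U$ replaced by $2U$, and your arithmetic correctly verifies that the halving of $t_*r^d$ to $2t_{\min}r^d$ is compensated by the doubling of the forbidden component size, so the exponent $(1+o(1))\,d(1+1/c)\log(1/r)$ and hence the $o(r^d)$ bound are unchanged.
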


As Lemma~\ref{lem:3-counterpart} holds for any choice of constants $M$ and $c$, in the sequel, we assume its statement for some large enough $M, c > 0$ whose exact values are determined later.

While the replacement of the constant $U$ in Lemma~\ref{lem:3} by $2U$ in Lemma~\ref{lem:3-counterpart} may seem minor, it actually has significant effect on the structure of small $(k-1)$-separated components in the geometric graph.
First, we describe some similarities.
For an integer $t\ge 0$, denote by $\fN(t)$ the set of connected components of $M$-nonfull cubes with respect to $(X_i)_{i=1}^{2t}$ in $\cG$, and write $\fN = \fN(t_{\min})$ for short.
Further, recall the graph $\widetilde{\cG}$ from Step 3 in \Cref{subsection-hamiltonicity}.
Then, given $N\in \fN$, by \Cref{lem:3-counterpart}, the graph $\widetilde{\cG} \setminus N$ has one component $A(N)$ consisting of all but a bounded number of cubes.
One can similarly define the \emph{sea} $\widetilde{\cA} := \bigcap_{N \in \fN} A(N)$ and partition the set $\cT\setminus \widetilde{\cA}$ into \emph{close} cubes which are adjacent to some cube in $\widetilde{\cA}$ in $\widetilde{\cG}$, and \emph{far} cubes. 
In particular, every close cube is $M$-nonfull (as otherwise it would be part of the sea) but far cubes can be $M$-full as well. 
Note that controlling the clusters of far cubes is a main additional challenge in our case: indeed, typically there are many more such clusters in $G_{2\taukctwo{k}}$ and some of these are much larger compared to $G_{\taukcone{k}}$.

Recall the definition of a $b$-blow-up of a set from Step 3 in \Cref{subsection-hamiltonicity}.
The following lemma is an analogue of Corollary 11 in \cite{BBKMW11}.

\begin{lemma}\label{lem:sea-connected}
    The set of sea cubes $\widetilde{\cA}$ is connected in $\widetilde{\cG}$. 
\end{lemma}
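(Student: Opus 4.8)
The plan is to show that $\widetilde{\cA}$ is connected in $\widetilde{\cG}$ by exploiting the fact that $\widetilde{\cA}$ is obtained by removing, from a connected graph, only a bounded number of cubes grouped into the connected components $N \in \fN$, each of which has small diameter. First I would recall that $\widetilde{\cG}$ itself is connected (it is a torus tessellation with edges between cubes within Euclidean distance $(c-d)r/c$, which for $c$ large enough is much more than one cube-side, so $\widetilde{\cG}$ is far from edgeless — in fact highly connected). The set $\cT \setminus \widetilde{\cA}$ decomposes into close and far cubes, all of which lie in blow-ups of the components $N \in \fN$; more precisely, for each $N\in\fN$, the cubes removed from $A(N)$ are exactly the close and far cubes ``attached to'' $N$, and they are contained in a bounded-radius neighbourhood of $N$ by \Cref{lem:3-counterpart} (the component $N$ has at most $2U$ cubes, and the far cubes it separates from $\widetilde\cA$ have diameter at most $r/10$, as recalled in Step 3 of \Cref{subsection-hamiltonicity}).

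The key step is the following local-to-global argument. Take two sea cubes $s, s' \in \widetilde{\cA}$. Since $\widetilde{\cG}$ is connected, fix a path $P$ from $s$ to $s'$ in $\widetilde{\cG}$. If $P$ avoids all non-sea cubes we are done; otherwise, $P$ enters and leaves finitely many ``forbidden regions,'' each contained in the blow-up $N_b$ of a single component $N \in \fN$ for a suitable bounded $b = b(c,d)$ (using that, by \Cref{lem:3-counterpart}, the close and far cubes around $N$ all lie within $\ell_\infty$-distance $O(r/c)$ of $N$, and that distinct components have disjoint $2c$-blow-ups, so these detours do not interact). It therefore suffices to show that each time $P$ enters $N_b$ at a sea cube $a$ and next returns to a sea cube $b$, one can reroute the portion of $P$ inside $N_b$ to a path lying entirely in $\widetilde{\cA}$. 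For this I would use the fact that the sea cubes surrounding any fixed $N\in\fN$ form a connected ``ring'': $N$ together with its attached close/far cubes occupies a region of diameter $O(r/c)$, while $\widetilde{\cG}$ restricted to the annulus of cubes at $\ell_\infty$-distance between, say, $2r/c$ and $4r/c$ from $N$ is connected (again because $c$ is large, a thin shell of cubes in a $d$-torus whose graph has edges up to distance $(c-d)r/c$ is connected for $c$ large), and all such cubes are $M$-full since they are not within the bounded neighbourhood of $N$ controlled by \Cref{lem:3-counterpart}, hence they are sea cubes. Connecting $a$ to $b$ through this $M$-full shell gives the desired rerouting.

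Carrying this out, one replaces each forbidden sub-path of $P$ by a detour through sea cubes around the relevant component $N$, and concatenating all the (finitely many) rerouted pieces yields a path from $s$ to $s'$ inside $\widetilde{\cA}$; hence $\widetilde{\cA}$ is connected in $\widetilde{\cG}$.

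I expect the main obstacle to be the bookkeeping needed to make ``the non-sea cubes around $N$ sit inside a bounded blow-up $N_b$, surrounded by a connected shell of sea cubes'' fully rigorous: one must carefully choose the blow-up parameter $b$ as a function of $c$ and $d$ (large enough to contain all close and far cubes attached to $N$, using that far cubes separated by $N$ have pairwise Euclidean distance $\le r/10$ and close cubes are within one $\widetilde\cG$-edge of $\widetilde\cA$), verify that a shell of $M$-full cubes just outside $N_b$ is both connected in $\widetilde{\cG}$ and entirely contained in $\widetilde\cA$ (no cube of this shell belongs to any other component $N' \in \fN$, which follows from disjointness of the $2c$-blow-ups of distinct components in $\fN$), and check that these detours for different traversals of $P$ can be taken disjointly. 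None of these steps is deep, but they require the same careful geometric constants as in \cite{BBKMW11}; in fact, this is precisely why the statement is phrased as an analogue of Corollary~11 there, and the argument mirrors that reference with $U$ replaced by $2U$.
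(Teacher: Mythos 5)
Your local-to-global rerouting scheme is the right idea and matches what the paper has in mind: the paper deduces \Cref{lem:sea-connected} from the disjointness of the $2c$-blow-ups together with \Cref{lem:cutoff-in-2c-blowup}(a), which is exactly the containment your shell argument needs, namely $\cT\setminus A(N)\subseteq N_c$, so that the $\widetilde\cG$-neighbours of the non-sea region sit inside $N_{2c}$. However, the facts you invoke in place of \Cref{lem:cutoff-in-2c-blowup}(a) are not correct in this section. The bound $\diam(\bigcup F(N))\le r/10$ recalled in Step~3 of \Cref{subsection-hamiltonicity} belongs to the 1-choice process, where $M$-nonfull components have at most $U$ cubes; in the offline 2-choice setting components can have up to $2U$ cubes (\Cref{lem:3-counterpart}), and \Cref{lem:nonfull-component-size}(b) explicitly allows $F(N)$ to have diameter $\ge r/10$ (the uniform bound here is $20dr$ from part~(c)). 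Relatedly, the non-sea region around $N$ has diameter $\Theta(r)$, not $O(r/c)$: $N$ is a $\cG$-connected set with $\cG$-adjacency reaching $\ell_\infty$-distance $4r$, so $N$ itself may span $\approx 8Ur$, and $N_c$ extends a further $r$. Your proposed shell at $\ell_\infty$-distance $2r/c$ to $4r/c$ from $N$ would therefore cut straight through the obstacle rather than around it.

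None of this changes the structure of your argument: replace the two incorrect inputs by \Cref{lem:cutoff-in-2c-blowup}(a), and move the shell out to $N_{2c}\setminus N_c$, i.e.\ at $\ell_\infty$-distance roughly between $r$ and $2r$ from $N$, which is still disjoint from every other obstacle region $\cT\setminus A(N')\subseteq N'_c$ by disjointness of $2c$-blow-ups, and the rerouting goes through. Also note that you do not actually need the detours for distinct traversals of $P$ to be vertex-disjoint — any walk from $s$ to $s'$ inside $\widetilde\cA$ already suffices for connectivity — so that part of the bookkeeping can be dropped.
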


\Cref{lem:sea-connected} is a corollary of the fact that the $2c$-blow-ups of different components in $\fN$ are disjoint and part (a) of the following lemma (analogous to Corollary 7 in~\cite{BBKMW11}). Part (b) is a result of general utility, which also implies (a).
Before presenting the statement, we introduce a notation. For a set of cubes $\cQ \subseteq \cT$, we write $\bigcup \cQ$ for the union of cubes in $\cQ$, that is, the set of points in $\mathbb T^d$ contained in at least one cube of $\cQ$. Whenever we refer to the diameter of $\cQ$, we mean the maximal Euclidean distance between two points in $\bigcup \cQ$.

\begin{lemma}\label{lem:cutoff-in-2c-blowup}
Conditionally on the statement of \Cref{lem:3-counterpart} for sufficiently large $c$, for all $N\in\fN$, each of the following holds:
\begin{enumerate}[label=\emph{(\alph*)}]
    \item the set $\cT\setminus A(N)$ is contained in $N_c$. In particular, the set of neighbours of $\cT\setminus A(N)$ in $\widetilde{\cG}$ is contained in $N_{2c}$;
    \item for every far cube $q \in \cT\setminus A(N)$, there is a close cube $q' \in N$ adjacent to $q$ in $\widetilde{\cG}$.
\end{enumerate}
\end{lemma}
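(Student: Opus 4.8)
The plan is to derive both parts from the combinatorial bound on connected components of $M$-nonfull cubes supplied by \Cref{lem:3-counterpart}, exploiting that, for a suitably large constant $c = c(d)$, the adjacency radius $(c-d)r/c$ in $\widetilde\cG$ is comfortably larger than $r/10$ times any fixed constant we need, while the neighbourhood of a set of cubes of bounded $\ell_\infty$-diameter in $\widetilde\cG$ is still of bounded size. Fix $N\in\fN$ and recall that, since $N$ is a connected component of $M$-nonfull cubes in $\cG$ (adjacency radius $4r$ in $\ell_\infty$), \Cref{lem:3-counterpart} gives $|N|\le 2U$.

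For part (a), the key observation is that $\cT\setminus A(N)$ is a union of connected components of $\widetilde\cG\setminus N$ other than $A(N)$, so each such component is `cut off' from the overwhelming majority of cubes by $N$. I would argue that any cube $q$ lying in such a cut-off component must be $\ell_\infty$-close to $N$: if not, one constructs a large set of cubes `behind' $q$ (those separated from $A(N)$ by $N$) together with their $\widetilde\cG$-neighbourhoods, and shows via a Brunn--Minkowski / volume or isoperimetric type estimate (as in the proof of Lemma~5 of~\cite{BBKMW11}, sketched in Step~3 of \Cref{subsection-hamiltonicity}) that such a configuration has too many cubes in its $\widetilde\cG$-boundary, all of which would have to be $M$-nonfull, contradicting $|N|\le 2U$ together with \Cref{lem:3-counterpart}. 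The precise bound one extracts is that every cube of $\cT\setminus A(N)$ is at $\ell_\infty$-distance at most $cr/c = r$ from $N$, i.e.\ lies in $N_c$, provided $c$ is large enough that $(c-d)r/c$ dominates the relevant scale; the statement about neighbours in $\widetilde\cG$ then follows because a $\widetilde\cG$-edge spans $\ell_\infty$-distance at most $\sqrt d(c-d)r/c < cr/c$, so the neighbours of $N_c$ lie in $N_{2c}$.

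For part (b), fix a far cube $q\in\cT\setminus A(N)$. By part (a), $q\in N_c$, so there is some cube $q''\in N$ with $q\in\{q''\}_c$; I want to upgrade this to a cube $q'\in N$ that is actually \emph{close} (i.e.\ has a $\widetilde\cG$-edge to $\widetilde\cA$) and adjacent to $q$. Consider a shortest cube-path in $\widetilde\cG$ (or simply a straight line of cubes) from $q$ towards $A(N)$; such a path exists and has bounded length since $q\in N_c$, and it must first leave the cut-off region through a cube of $N$ before reaching $A(N)$. The last cube of $N$ on this path, call it $q'$, is adjacent in $\widetilde\cG$ to the first cube of $A(N)\subseteq\widetilde\cA$ encountered afterwards, hence $q'$ is close; and because the whole detour stays within $N_c$ which has bounded $\ell_\infty$-diameter (as $|N|\le 2U$), one can choose the path short enough that $q'$ is within $\widetilde\cG$-adjacency range of $q$ — again this is where the largeness of $c$ enters, ensuring the $(c-d)r/c$ radius exceeds the bounded hop-distance incurred. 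Finally, $q'$ is $M$-nonfull: it lies in $N$, and every cube of $N$ is $M$-nonfull by definition.

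I expect the main obstacle to be the quantitative separation estimate underpinning part (a): one must show that a cut-off component together with its separator $N$ cannot have large diameter, which is exactly the place where the constant $U = \lceil\theta_d(c+1)c^{d-1}\rceil$ in \Cref{lem:3-counterpart} was engineered to match an isoperimetric inequality in $\mathbb Z^d$ (a set of cubes of diameter $D$ has $\widetilde\cG$-boundary of size $\gtrsim D^{d-1}$, which for $D$ large enough exceeds $2U$). Making this bookkeeping precise — tracking how the adjacency radii in $\cG$ versus $\widetilde\cG$ interact and choosing $c$ large enough that all the `bounded detour' arguments in (b) go through — is the technical heart; everything else is a routine translation of the arguments from~\cite{BBKMW11} with $U$ replaced by $2U$.
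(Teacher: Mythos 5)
Your argument for part (a) is sound in spirit and matches the paper's key tool: a Brunn--Minkowski/isoperimetric bound showing that if a cube of $\cT\setminus A(N)$ were at $\ell_\infty$-distance more than $r$ from $N$, the $\widetilde\cG$-boundary of the cut-off component would contain too many $M$-nonfull cubes, contradicting $|N|\le 2U$. (The paper organizes this in the opposite order, proving (b) first by this very estimate and then noting that (a) is an immediate consequence; the two orders are equivalent for part (a).)

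The problem is your deduction of (b) from (a). You argue that a shortest cube-path from $q$ to $A(N)$ stays in $N_c$, and that since $N_c$ has bounded $\ell_\infty$-diameter ``one can choose the path short enough that $q'$ is within $\widetilde\cG$-adjacency range of $q$.'' This does not hold. The set $N$ is $\cG$-connected with $|N|\le 2U=O(c^d)$ cubes of side $r/c$ and $\cG$-edges of length up to $4r$, so $\diam(\bigcup N_c)$ can be of order $c^d r$ -- far larger than the $\widetilde\cG$-adjacency radius $(c-d)r/c\approx r$, and increasing $c$ only widens the gap. In fact, a far cube $q$ deep inside a cut-off region could fail to be adjacent to any close cube if $N$ could form a shell of thickness more than $(c-d)r/c$; what rules this out is not a path argument but exactly the quantitative isoperimetric bound, applied directly around $q$: if no close cube is $\widetilde\cG$-adjacent to $q$, then \emph{all} $\widetilde\cG$-neighbours of $q$ are far, so the far component $\cQ$ containing $q$ contains a ball of radius $\nu r$ around $q$, and Brunn--Minkowski forces $|N_{\widetilde\cG}(\cQ)|>2U$, a contradiction since (using that close cubes in $\cT\setminus A(N)$ lie in $N$) $N_{\widetilde\cG}(\cQ)\subseteq N$. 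This is precisely the argument the paper gives; your part (b) needs to be replaced by it rather than by a path-following argument, which cannot yield adjacency of $q'$ to $q$. (Separately, the bound ``$\sqrt d(c-d)r/c<cr/c$'' in your justification that neighbours of $N_c$ lie in $N_{2c}$ is false for large $c$; the correct observation is that a $\widetilde\cG$-edge spans Euclidean, hence $\ell_\infty$, distance at most $(c-d)r/c<r$.)
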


\begin{proof}
Define $\nu = \nu(c, d) = 1 - 2d/c$ and write $N(\cdot) = N_{\widetilde{\cG}}(\cdot)$. We first show the following two claims: 
\begin{enumerate}[\upshape{\textbf{B\arabic*}}]
    \item\label{item:E1} Every close cube in $\cT \setminus A(N)$ belongs to $N$.
    \item\label{item:E2} For every cube $q\in \cT$ and a point $x\in q$, $B(x,\nu r)\subseteq \bigcup N[q]$.
\end{enumerate}
\ref{item:E1} follows from the fact that every cube adjacent to the sea $\widetilde\cA$ is either close or itself in the sea. For \ref{item:E2}, fix a point $x'\in B(x,\nu r)$ in a cube $q'$. 
Then, by the triangle inequality, the distance between the centres of $q$ and $q'$ is at most $\diam(q')/2 + \nu r + \diam(q)/2\le (1-2d/c+\sqrt{d}/c)r < (1-d/c)r$, so $qq'\in E[\widetilde{\cG}]$.

We show part (b) by contradiction. Suppose that a far cube $q \in \cT \setminus A(N)$ with $N\in \fN$ is not~adjacent to a close cube.
Denote by $\cQ$ the maximal $\widetilde{\cG}$-connected set of far cubes containing $q$.
By \ref{item:E1}, $N(\cQ) \subseteq N$.
Also, by \ref{item:E2}, $(\bigcup \cQ) \oplus B(0, \nu r)\subseteq \bigcup N[\cQ]$ and furthermore, since $q$ is not adjacent to any close cube, $\mathrm{Vol}_d(\bigcup \cQ) \ge \mathrm{Vol}_d(B(0, \nu r))$.
Thus, by the Brunn-Minkowski inequality (Lemma~\ref{lem:brunn-minkowski}), for all $d \ge 2$,
\begin{align*}
    \mathrm{Vol}_d \left( \bigcup N(\cQ) \right) & \geq \mathrm{Vol}_d( Q \oplus B(0, \nu r)) - \mathrm{Vol}_d\left( \bigcup \cQ \right)\\ 
    & \ge \mathrm{Vol}_d (B(0, \nu r)) + \binom{d}{1} \mathrm{Vol}_d (B(0, \nu r))^{(d-1)/d} \mathrm{Vol}_d\left( \bigcup \cQ \right)^{1/d} \ge (d + 1) \mathrm{Vol}_d(B(0, \nu r)).
\end{align*}
Since all cubes in $\cT$ have volumes $(r/c)^d$, we have that $|N|$ is at least
\begin{align*}
|N(\cQ)| \ge (d + 1) \mathrm{Vol}_d(B(0, \nu r))/ (r/c)^d \ge 3\theta_d \nu^d r^d / (r/c)^d = 3\theta_d (c - 2d)^d > 2 \lceil \theta_d (c+1) c^{d - 1} \rceil = 2U,
\end{align*}
where the last inequality holds for large enough $c$. This leads to a contradiction and shows part (b).~Part (a) follows directly from part (b). 
\end{proof}

Next, we state and prove a geometric lemma aiming to estimate the size of an $M$-nonfull component depending on the number of far cubes it cuts off from the sea. It extends results from \cite{BBKMW11} (to be compared with Lemmas~5 and~14 therein).

\begin{lemma}\label{lem:nonfull-component-size}
Fix an $M$-nonfull component $N \in \fN$ with size $|N|\le 2U$ and the set of far cubes $F(N)$ within $\cT\setminus A(N)$. Then, each of the following statements holds:
    \begin{enumerate}[label=\emph{(\alph*)}]
        \item\label{lem:nonfull-component-size-item-a} If $F(N)$ is non-empty, then $|N| > (1 - 3d^2/c) U$;
        \item\label{lem:nonfull-component-size-item-b} If $F(N)$ has diameter at least $r/10$, then $|N| > (1 + 9d^2/c)U$;
        \item\label{lem:nonfull-component-size-item-c} If $F(N)$ is connected in $\cG$, then $\bigcup F(N)$ has diameter at most $20dr$.
    \end{enumerate}
\end{lemma}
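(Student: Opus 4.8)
The plan is to deduce all three parts from one volume principle: a far cube is shielded from the sea, which forces $\bigcup N$ to occupy a controlled volume; since $|N|\le 2U$ holds by \Cref{lem:3-counterpart}, we have $\vol{d}(\bigcup N)=|N|(r/c)^d$, so good lower bounds on $\vol{d}(\bigcup N)$ translate directly into lower bounds on $|N|$ (recall also $U\le \theta_d c^d+\theta_d c^{d-1}+1$). Throughout, set $\nu=1-2d/c$, as in the proof of \Cref{lem:cutoff-in-2c-blowup}, and take $c$ large in terms of $d$. The building block, obtained by mimicking the Brunn--Minkowski argument in the proof of \Cref{lem:cutoff-in-2c-blowup}(b), is the following. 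Let $q\in F(N)\setminus N$ be a far cube (a short argument, which we omit here, shows one may restrict to this case), and let $\cK_q$ be the connected component of $q$ in $\widetilde{\cG}\setminus N$; then $\cK_q\subseteq F(N)$, the $\widetilde{\cG}$-neighbourhood of $q$ lies in $\cK_q\cup N$, and the inclusion $B(x,\nu r)\subseteq\bigcup N_{\widetilde{\cG}}[q']$ (valid for every cube $q'$ and every $x\in q'$, as established in that proof) gives $(\bigcup\cK_q)\oplus B(0,\nu r)\subseteq\bigcup(\cK_q\cup N)$. Since the cubes of $\cK_q$ and of $N$ are pairwise disjoint, \Cref{lem:brunn-minkowski} yields
\[\vol{d}\bigl(\bigcup N\bigr)\ \ge\ \bigl(\vol{d}(\bigcup\cK_q)^{1/d}+(\theta_d\nu^d r^d)^{1/d}\bigr)^{\!d}-\vol{d}(\bigcup\cK_q)\ \ge\ \theta_d\nu^d r^d,\]
the last inequality since $a\mapsto(a^{1/d}+b^{1/d})^d-a$ is nondecreasing in $a\ge 0$ with value $b$ at $a=0$. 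Moreover, the $\widetilde{\cG}$-neighbourhood structure of $q$ shows that a Euclidean ball $B_q$ of radius $(1-O(d/c))r$ about the centre of $q$ satisfies $B_q\subseteq\bigcup(\cK_q\cup N)$.

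Part (a) is then immediate: the display gives $\vol{d}(\bigcup N)\ge\theta_d\nu^d r^d\ge(1-2d^2/c)\theta_d r^d$, hence $|N|\ge(1-2d^2/c)\theta_d c^d$, which exceeds $(1-3d^2/c)U$ for $c$ large — a direct check when $d=1$, and a consequence of $(d^2-1)\theta_d c^{d-1}\ge 1$ when $d\ge 2$.

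For part (b), assume $F(N)$ has diameter at least $r/10$ and pick far cubes $q_1,q_2\in F(N)$ whose centres are at distance $\ge r/10-o(r)$. The shielding balls $B_{q_1},B_{q_2}$ lie in the non-sea region, inside $\bigcup(\cK_{q_1}\cup N)$ and $\bigcup(\cK_{q_2}\cup N)$ respectively, and an elementary spherical-cap estimate gives $\vol{d}(B_{q_1}\cup B_{q_2})\ge\theta_d\rho^d+\epsilon_d r^d$ for a constant $\epsilon_d=\epsilon_d(d)>0$, where $\rho=(1-O(d/c))r$ is their common radius. Feeding this surplus into the building block — treating the cases $\cK_{q_1}=\cK_{q_2}$ and $\cK_{q_1}\ne\cK_{q_2}$ separately, and in the latter distinguishing whether the blow-ups $(\bigcup\cK_{q_i})\oplus B(0,\nu r)$ overlap — upgrades the estimate to $\vol{d}(\bigcup N)\ge(1-O(d^2/c)+\epsilon_d/\theta_d)\theta_d r^d$; choosing $c$ so that $\epsilon_d/\theta_d>10d^2/c$ then yields $|N|>(1+9d^2/c)U$.

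For part (c), suppose $F(N)$ is connected in $\cG$ but $\bigcup F(N)$ has diameter $>20dr$, and take a shortest $\cG$-path in $F(N)$ between two far cubes realising this diameter. Picking every second cube along it yields far cubes that are pairwise at Euclidean distance $>4r$ (a shortest path has no chords, and $\cG$-edges join cubes at $\ell_\infty$-distance at most $4r$), at least $3$ in number once $r$ is small. Applying the building block to each such cube $q$: if the components $\cK_q$ are all small, the shells $\bigl((\bigcup\cK_q)\oplus B(0,\nu r)\bigr)\setminus\bigcup\cK_q\subseteq\bigcup N$ are pairwise disjoint, each of volume $\ge\theta_d\nu^d r^d$, so $\vol{d}(\bigcup N)\ge 3\theta_d\nu^d r^d$; if instead some $\cK_q$ is long (of diameter comparable to $\diam(\bigcup F(N))$), a tube-type variant of the Brunn--Minkowski estimate bounds $\vol{d}(\bigcup N)$ from below by a quantity increasing linearly in $\diam(\bigcup F(N))$. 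In either regime the resulting lower bound on $\vol{d}(\bigcup N)$ exceeds $2U(r/c)^d$ for $c$ large and $r$ small (indeed $2U(r/c)^d\le 2\theta_d(1+1/c)r^d+o(r^d)$ while $3\theta_d\nu^d r^d\to 3\theta_d r^d$ as $c\to\infty$), contradicting $\vol{d}(\bigcup N)=|N|(r/c)^d\le 2U(r/c)^d$. Hence $\bigcup F(N)$ has diameter at most $20dr$.

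The main obstacle is the discrete/continuous bookkeeping concealed in the building block and in part (c): making precise that a relevant far cube may be chosen outside $N$, checking that the $\widetilde{\cG}$-neighbours of a far cube fall in $\cK_q\cup N$, and organising the case analysis in part (c) (pairwise disjointness of the shells versus the long-$\cK_q$ regime) carefully enough that the numerical prefactors $3d^2/c$, $9d^2/c$ and $20d$ emerge exactly as stated. The architecture of all three arguments extends, and quantitatively strengthens, that of Lemmas~5 and~14 in~\cite{BBKMW11}.
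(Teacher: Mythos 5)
Your proposal takes a genuinely different route from the paper. You work locally: for each far cube $q\notin N$ you consider its component $\cK_q$ in $\widetilde\cG\setminus N$ and apply Brunn--Minkowski to $(\bigcup\cK_q)\oplus B(0,\nu r)$. The paper instead works globally with $A=\bigcup F(N)$ (all far cubes, including those in $N$) and invokes a quantitative isoperimetric inequality of Penrose (Proposition~5.15 in~\cite{Pen04}, quoted as the Claim in the paper's proof) which, beyond the base term $\theta_d\rho^d$, provides an \emph{extra} $\eta\rho^d$ of volume as soon as the $\ell_\infty$-diameter of $A$ is of order $\rho$; part~(c) is handled by a strip decomposition and quarter-balls rather than by disjoint shells. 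Your local formulation reproduces the base estimate of part~(a) cleanly, but this difference in decomposition is the source of two genuine gaps.

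First, the restriction to $q\in F(N)\setminus N$ (``a short argument, which we omit here, shows one may restrict to this case'') is doing more work than you credit it with: far cubes may well be $M$-nonfull, i.e.\ belong to $N$, and it is not evident that $F(N)\neq\varnothing$ forces $F(N)\setminus N\neq\varnothing$. The paper avoids this entirely by taking $A=\bigcup F(N)$ directly, so far cubes inside $N$ are simply absorbed into $A$ and never need to be ``removed'' from $\widetilde\cG$. Second, and more seriously, your arguments for (b) and (c) rely on the components $\cK_q$ being small; when some $\cK_q$ is long (which can happen, and is exactly the situation that (b) and (c) must handle), the pairwise-disjoint-shells bookkeeping collapses and you fall back on a ``tube-type variant of the Brunn--Minkowski estimate''. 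That variant is precisely the quantitative content of Penrose's Proposition~5.15 (for part (b)) and of the paper's five-strip/quarter-ball construction (for part (c)); without stating and proving it, your proof of (b) and (c) is incomplete. In short: (a) is sound modulo the $F(N)\setminus N$ issue, while (b) and (c) need an explicit replacement for the Penrose isoperimetry and the strip argument that you have only gestured at.
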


\begin{proof}
For a set $A\subseteq [0,\infty)^d$ and $\rho > 0$, define $A_{\rho} = (A \oplus B(0, \rho)) \cap [0,\infty)^d$. The following claim is (a rescaled version of) Proposition 5.15 in \cite{Pen04}.

\begin{claim}\label{lem:positive-orthant}
Fix $d \ge 2$ and $\varepsilon > 0$. Then, there is $\eta = \eta(\varepsilon) > 0$ such that, for every compact set $A \subseteq [0,\infty)^d$ with $\ell_\infty$-diameter at least $\varepsilon \rho$ and point $x\in A$ with minimal $\ell_1$-norm, we have 
\[\vol{d}(A_\rho) \geq \vol{d}(A) + \vol{d}(\{ x\}_\rho) + \eta \rho^d.\]
\end{claim}

We turn to the proof of the lemma.
As $\bigcup N$ has diameter $o(1)$, upon suitable translation if necessary, we can (and do) identify $\bigcup N$ with a subset of $[0.1, 0.9]^d$.
Set $A = \bigcup F(N)$.
If $F(N)$ contains at least one cube, then $A$ has $\ell_\infty$-diameter at least $r/c$.
Set $\rho = (c - 2d)r/c$.
By definition, all the cubes at distance at most $(c - d)r/c$ from $A$ are either far or close.
Therefore, $A_{\rho} \setminus A$ is entirely covered by the cubes of $N$.
Fixing $x \in A$ to be the point in $A$ with minimal $\ell_1$-norm, \Cref{lem:positive-orthant} implies that
\[ \vol{d}(A_{\rho} \setminus A) \geq \vol{d}(\{ x \}_{\rho}) = \theta_d (c - 2d)^d r^d/c^d. \]
Hence, the number of close cubes in $N$ is at least 
\[\theta_d (c - 2d)^d \ge \theta_d\left(1 - \frac{3d^2}{c}\right)(c + 2)c^{d - 1}\ge \left(1 - \frac{3d^2}{c}\right)U.\]
Note that the first inequality holds for sufficiently large $c$ thanks to the fact that the difference of the left and right hand side is a polynomial in $c$ with positive coefficient in front of the most significant term.

For part (b), the only difference in the above consideration is that now $A$ has $\ell_\infty$-diameter at least $r/(10d) \geq \rho/(10d)$. Thus, $\eta = \eta(1/(10d))>0$ does not depend on $c$ and, for $c$ large enough,
\[ \text{Vol}_d(A_{\rho} \setminus A) \geq \vol{d}(\{ x \}_{\rho}) + \eta \rho^d \geq (\theta_d + \eta) \left(1 - \frac{2d}{c}\right)^d r^d \geq \theta_d \left(1 + \frac{9d^2}{c}\right)(c + 2)c^{d - 1} \frac{r^d}{c^d}, \]
implying that $|N| \geq \theta_d (1 + 9d^2/c)(c + 2)c^{d - 1} > (1 + 9d^2/c)U$ and showing (b).

For part (c), we apply a different strategy.
First, observe that Euclidean diameter at least $20dr$ implies $\ell_\infty$-diameter at least $20r$.
Upon rotation if necessary, assume that the $\ell_\infty$-diameter of $\bigcup F(N)$ goes between two points differing only in their first coordinate.
Next, partition $[0, 1]^d$ into strips $S_1,\ldots,S_m$ of thickness $4r$ (except possibly the last one, which may be a slightly wider) which are orthogonal to the first dimension and every cube in $\cT$ belongs to a single strip. Note that $(\bigcup F(N))\cap S_i$ is non-empty for at least five consecutive strips.

For every such strip $S_i$, denote by $x_i^+$ and $x_i^-$ any points in $(\bigcup F(N))\cap S_i$ with maximal and minimal second coordinate, respectively.
Consider the family of cubes $\cC_i^+$ in $S_i$ at (Euclidean) distance at most $(c-2d)r/c$ from $x_i^+$ and centres with larger second coordinate than $x_i^+$. Then, on the other hand, $\bigcup\cC_i^+$ covers a quarter of the ball $B(x_{i}^+, (c-3d)r/c)$ (see \Cref{fig:geometry-coarse}).
On the other hand, the cubes in $\cC_i^+$ cannot be in the sea since some cube in $F(N)$ would then be close. They also cannot be far since they do not belong to $F(N)$ by the extremal choice of $x_i^+$, and do not belong to other far components as these are at distance at least $4r$ from $\bigcup F(N)$.
Thus, $\cC_i^+\subseteq N$.
By similar considerations with $x_i^-$ and the analogously defined family $\cC_i^-$, we conclude that
\[|N|\ge \sum_{i:\, (\bigcup F(N))\cap S_i\neq \varnothing} |\cC_i^+|+|\cC_i^-|\ge 5\cdot 2\frac{\theta_d ((c-3d)r/c)^d/4}{(r/c)^d}\ge 2 (c+2)c^{d-1}\ge 2U,\]
where the penultimate inequality holds for sufficiently large $c$. This finishes the proof of (c).
\end{proof}

\begin{figure}
    \centering
    \includegraphics[width=0.9\linewidth]{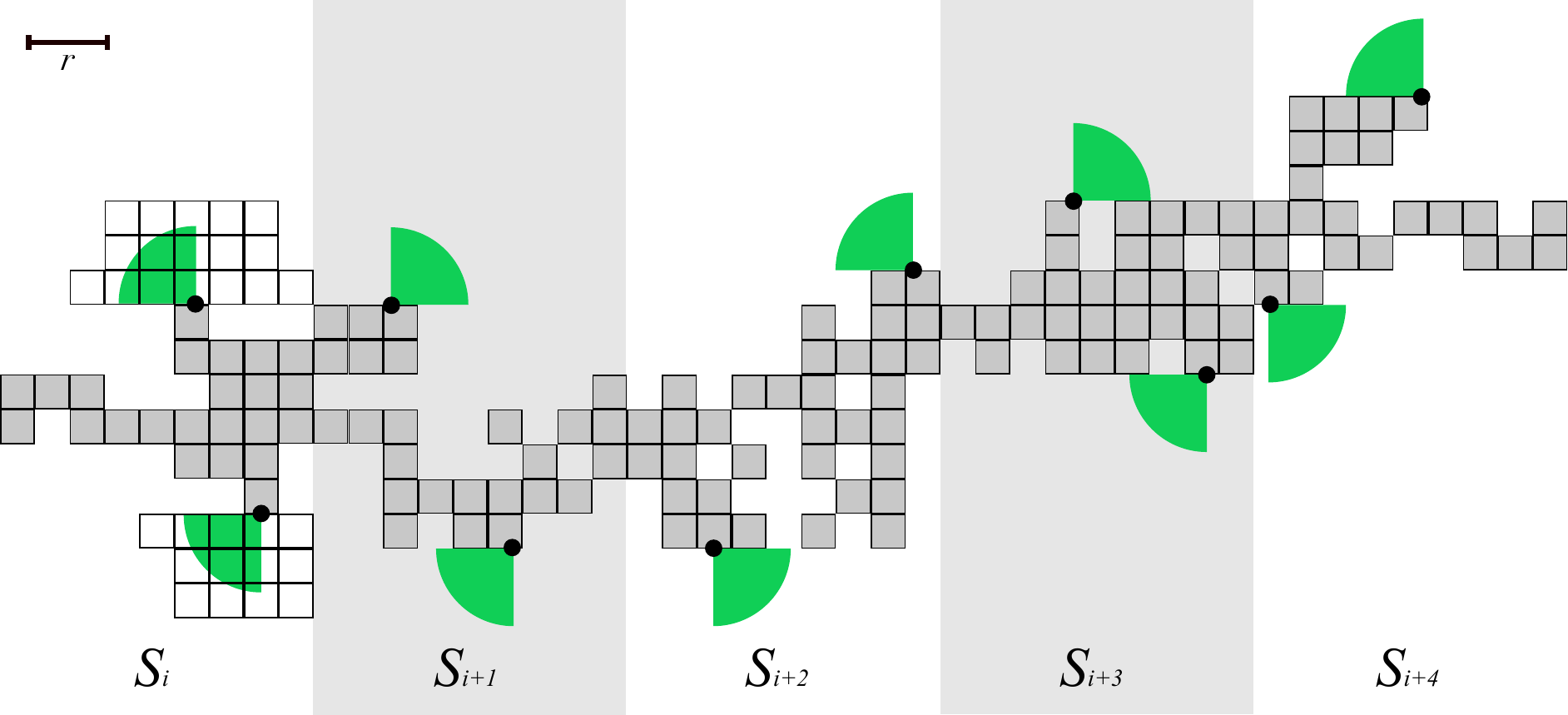}
    \caption{An illustration of the proof of \Cref{lem:nonfull-component-size}(c) in the case $d = 2$. The white squares {\fboxsep=0pt\fbox{\color{close}\rule{2mm}{2mm}}} in $S_i$ represent $\cC_i^+$ and $\cC_i^-$. Their counterparts inside other slices are omitted. The black dots $\bullet$ represent possible choices of the points $x_j^\pm$ for $j = i, \ldots, i+4$. The quarter-balls used to bound $|N|$ from below are drawn in green.}
    \label{fig:geometry-coarse}
\end{figure}

One purpose of the tessellation $\cT$ is to identify the positions of small $(k-1)$-separated sets: indeed, by choosing $M\ge k$, such sets necessarily lie inside far cubes.
In practice, we are going to choose $M$ slightly larger so that, after removing the vertices outside the choice set, typically each $M$-full cube still contains at least $k$ points.
The said approach has one disadvantage: the diameters of most small $(k-1)$-separated sets in $G_{2t_{\min}}$ are typically of order $o(r)$ and their positions in the corresponding $M$-nonfull cubes will be important for their connectivity properties.
As a result, the tessellation $\cT$ turns out to be too coarse to fully describe the structure of $(k-1)$-separated sets with small diameter.
To this end, we consider the finer tessellation $\mathcal{T}_f$ of $\mathbb T^d$ into congruent cubes of side length
\[s_f = s_f(r) := \frac{r\sqrt{\log\log(1/r)}}{\log (1/r)}.\]
Note that a typical cube in $\cT_f$ does not contain any point among $(X_i)_{i=1}^{2t_{\min}}$ when $d\ge 2$: indeed, we do not use $\cT_f$ to detect unusually sparse regions (task already achieved by $\cT$) but to better approximate the positions of the points in far cubes in $\cT$.
For convenience, we assume that $\cT_f$ is a refinement of $\cT$ in the sense that every cube in $\cT_f$ is entirely contained in one cube in $\cT$. 
Additionally, we define the auxiliary graph $\cG_f$ with vertex set $\cT_f$ where two cubes $q_1,q_2$ are adjacent if the Euclidean distance between their centres is at most $2r$.

For a cube $q\subseteq \bbT^d$, denote by $\boxtimes(q)$ its centre.
By adapting the notation $\bigcup \cQ$ to sets $\cQ \subseteq \mathcal{T}_f$, define
\begin{equation}\label{eqn:ann}
    \mathrm{Ann}(\cQ) := \bigg(\bigcup_{q \in \cQ} B\bigg(\boxtimes\hspace{-0.2em}(q), r-\frac{s_f\sqrt{d}}{2}\bigg) \bigg) \bigg{\backslash} \bigg(\bigcup \cQ\bigg).
\end{equation}

For a finite set of points $\bY \subseteq \bbT^d$ and an integer $\kappa\ge 1$, a set $\cQ\subseteq \cT_f$ is called $\kappa$-\emph{remote} (with respect to $\bY$, often abbreviated wrt $\bY$) if each of the following properties holds:
\begin{enumerate}[\upshape{\textbf{C\arabic*}}]
    \item\label{item:F1} $\cQ$ is connected in $\cG_f$;
    \item\label{item:F2} none of the cubes in $\cQ\subseteq \cT_f$ is contained in a cube in the sea $\widetilde{\cA}\subseteq \cT$;
    \item\label{item:F3} there are at most $\kappa-1$ points of $\bY$ in $\mathrm{Ann}(\cQ)$.
\end{enumerate}
We note that distinct $\kappa$-remote sets can overlap.

\begin{claim}\label{rem:remote-in-far}
Conditionally on the statement of \Cref{lem:3-counterpart} for sufficiently large $c$, given $\kappa \in [M]$ and a $\kappa$-remote set $\cC$ wrt $(X_i)_{i=1}^{2t_{\min}}$, there is $N \in \fN$ such that $\bigcup \cC\subseteq \bigcup F(N)$.
In particular, for every such set $\cC$, $\bigcup \cC$ has diameter less than $20dr$.
\end{claim}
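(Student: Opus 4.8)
The plan is to argue by contradiction using the size bounds for $M$-nonfull components from \Cref{lem:nonfull-component-size}. Suppose $\cC$ is $\kappa$-remote wrt $(X_i)_{i=1}^{2t_{\min}}$ but $\bigcup \cC$ is not contained in $\bigcup F(N)$ for any $N \in \fN$. First I would observe that \ref{item:F2} forces every cube of $\cC$ to lie outside the sea, i.e.\ inside some $N \in \fN$ or some far region $F(N)$. Combined with the connectivity property \ref{item:F1} of $\cC$ in $\cG_f$ (whose edges correspond to centre-distance at most $2r$, so the cubes of $\cC$ cannot ``jump'' over a wide band of $M$-full sea cubes), this should pin $\bigcup \cC$ into the union $\bigcup F(N) \cup \bigcup N$ for a single component $N$; the $2c$-blow-ups of distinct components in $\fN$ are disjoint by \Cref{lem:sea-connected} and the remarks preceding it, so different components cannot both be reached. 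The contradiction assumption then means some cube $q \in \cC$ lies inside a cube of $N$ itself (a \emph{close} cube, or more generally an $M$-nonfull cube of $N$), rather than inside a far cube.

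Next I would derive a lower bound on $|N|$ that contradicts $|N| \le 2U$ (which holds for every $N \in \fN$ by \Cref{lem:3-counterpart}). The key point is that $\mathrm{Ann}(\cC)$, as defined in~\eqref{eqn:ann}, is a union of balls of radius $r - s_f\sqrt{d}/2$ centred at the cube centres of $\cC$, with $\bigcup \cC$ removed; since each ball has volume $(1-o(1))\theta_d r^d$ and there are $\le 2U$ cubes in $N$ (each of volume $(r/c)^d = o(r^d)$), the volume of $\mathrm{Ann}(\cC)$ is $\Theta(r^d)$. By \ref{item:F3}, $\mathrm{Ann}(\cC)$ contains at most $\kappa - 1 \le M - 1$ points among $(X_i)_{i=1}^{2t_{\min}}$. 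But now, if $q \in \cC$ sits inside an $M$-nonfull cube of $N$ while $\bigcup \cC$ touches both $N$ and far cubes cut off by $N$, the region $\mathrm{Ann}(\cC)$ must contain cubes outside $N_{2c}$: indeed a ball of radius $\approx r$ around a cube centre inside $\bigcup N$ (of diameter $o(1)$) reaches into the sea, and every sea cube intersecting it is $M$-full (otherwise it would be close, contradicting it being in the sea). Thus $\mathrm{Ann}(\cC)$ contains at least $M$ points, contradicting \ref{item:F3} with $\kappa \le M$. To make this airtight one wants: either $\bigcup \cC \subseteq \bigcup F(N)$, or else $\mathrm{Ann}(\cC)$ reaches the sea and swallows an $M$-full cube.

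The cleanest way to organise this may in fact be to split $\cC$ into its maximal ``runs'' of cubes inside far regions versus inside $N$, but since $\cC$ is $\cG_f$-connected and $\cG_f$-edges span only $2r$, any cube of $\cC$ inside $\bigcup N$ is within $2r$ of the rest of $\cC$, hence within $O(r)$ of $\bigcup F(N)$; its associated ball in $\mathrm{Ann}(\cC)$ then overlaps the sea, and the $M$-fullness of sea cubes gives the contradiction. For the final sentence of the claim, once we know $\bigcup \cC \subseteq \bigcup F(N)$, \Cref{lem:nonfull-component-size}\ref{lem:nonfull-component-size-item-c} (noting $F(N)$ is $\cG$-connected whenever $\bigcup\cC$, and hence the far cubes it meets, are connected --- or applying the lemma to each $\cG$-component of $F(N)$ that $\cC$ meets, which must all coincide since $\cC$ is $\cG_f$-connected and $\cG_f$-edges are shorter than $\cG$-edges) yields diameter at most $20dr$.

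I expect the main obstacle to be the bookkeeping that rules out the ``degenerate'' configurations: verifying that a $\cG_f$-connected $\kappa$-remote set cannot straddle a component $N$ and the far cubes it cuts off \emph{without} forcing $\mathrm{Ann}(\cC)$ to contain an $M$-full sea cube. This requires carefully tracking the interplay between the three scales $s_f \ll r/c \ll r$, the definition of $\mathrm{Ann}$ as balls of radius $r - s_f\sqrt d/2$, and \Cref{lem:cutoff-in-2c-blowup}(a) (which controls where $\cT \setminus A(N)$ sits relative to $N_{2c}$). Everything else is a volume computation plus an appeal to \Cref{lem:3-counterpart} and \Cref{lem:nonfull-component-size}.
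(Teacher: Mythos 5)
Your proposal hits the same central mechanism as the paper's proof: by \ref{item:F2} every fine cube of $\cC$ sits inside a cube $q\in\cT\setminus A(N)$ for some $N\in\fN$; if $q$ were \emph{close} it would be $\widetilde\cG$-adjacent to a sea cube $q'$, the ball $B(\boxtimes(q_f),r-s_f\sqrt d/2)$ would then contain all of $q'$, and since $q'$ is $M$-full and (by \ref{item:F2}) disjoint from $\bigcup\cC$, the whole cube $q'\subseteq\mathrm{Ann}(\cC)$ supplies $\ge M\ge\kappa$ points and contradicts \ref{item:F3}. So the heart of the argument is right, and you correctly note that \Cref{lem:nonfull-component-size}\ref{lem:nonfull-component-size-item-c} then gives the diameter bound. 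But two points need fixing.

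First, your stated plan (``derive a lower bound on $|N|$ that contradicts $|N|\le 2U$'') does not match what you actually do, nor what the paper does: the contradiction is with \ref{item:F3}, via a whole $M$-full sea cube lying inside $\mathrm{Ann}(\cC)$, and the bound $|N|\le 2U$ plays no role in this step. Relatedly, ``every sea cube \emph{intersecting} it is $M$-full'' is not quite the statement you need; you must show $\mathrm{Ann}(\cC)$ \emph{contains} an entire sea cube (which the paper's triangle-inequality chain does give). Also, ``an $M$-nonfull cube of $N$'' overshoots: $N$ can contain $M$-nonfull \emph{far} cubes, and those lie in $F(N)$ already, so the only problematic case is a \emph{close} cube, which is exactly what the paper isolates.

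Second, and more substantively, your uniqueness argument (``the $2c$-blow-ups of distinct components in $\fN$ are disjoint, so different components cannot both be reached'') has a numerical gap. Disjointness of $N_{2c}$ and $N'_{2c}$ only tells you no single cube is in both; it does not prevent a $\cG_f$-edge of length $\le 2r$ from bridging $N_c$ and $N'_c$. Chaining $F(N)\subseteq N_c$, the $\cG_f$-adjacency bound $2r$, and the fine-to-coarse centre corrections gives $\dist(N,N')\le 4r+O(r/c)$, which is \emph{strictly larger} than the $\cG$-adjacency threshold $4r$, so you cannot conclude $N=N'$ this way. The paper closes this gap by invoking \Cref{lem:cutoff-in-2c-blowup}(b): for each far cube $q$ there is a close cube $u\in N$ with $q,u$ adjacent in $\widetilde\cG$, i.e.\ at Euclidean distance at most $(c-d)r/c < r$. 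Chaining $\boxtimes(u)\to\boxtimes(q)\to\boxtimes(q_f)\to\boxtimes(q'_f)\to\boxtimes(q')\to\boxtimes(u')$ then gives exactly $\le 4r$, forcing $N=N'$, and an induction along a spanning tree of $\cG_f[\cC]$ finishes. You should replace the blow-up disjointness argument with this one.
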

\begin{proof}
Fix $q_f \in \cC$. By~\ref{item:F2}, there are $N \in \fN$ and $q \in \cT \setminus A(N)$ such that $q_f \subseteq q$.
We show that $q \in F(N)$. Assume the contrary, namely, that $q$ is a close cube.
Then, let $q' \in \widetilde \cA$ be a sea cube adjacent to $q$ in $\widetilde \cG$.
Fixing a point $x \in q'$ and using the triangle inequality gives
    \begin{align*}
        \text{dist}(\boxtimes(q_f), x)
        &\leq \text{dist}(\boxtimes(q_f), \boxtimes(q)) + \text{dist}(\boxtimes(q), \boxtimes(q')) + \text{dist}(\boxtimes(q'), x) \\
        &\leq \frac{\sqrt{d}}{2} \cdot \frac{r}{c} + \frac{(c - d)r}{c} + \frac{\sqrt{d}}{2} \cdot\frac{r}{c} \leq r - \frac{s_f \sqrt{d}}{2},
    \end{align*}
    where the last inequality holds if $r$ is sufficiently small. 
    Since for every component $\cC\subseteq \cT_f$ containing $q_f$ and disjoint from the sea region $\bigcup \widetilde\cA$ we have that all points in $q'$ are also in $\mathrm{Ann}(\cC)$, $\cC$ cannot be $\kappa$-remote for any $\kappa\le M$, contradicting our assumption.
    
We proved that, for each $q_f \in \cC$, there are $N \in \fN$ and $q \in F(N)$ such that $q_f \subseteq q$.
To conclude, it remains to show that all cubes in $\cC$ fall into $\bigcup F(N)$ for a unique $N\in \fN$.
Fix adjacent cubes $q_f, q'_f \in \cC$ in $\cG_f$ and denote by $q, q' \in \cT$ the cubes of the coarse tessellation containing $q_f, q_f'$, respectively. 
Furthermore, denote by $N, N' \in \fN$ the components of close cubes satisfying $q \in F(N)$ and $q' \in F(N')$.
By Lemma~\ref{lem:cutoff-in-2c-blowup}, there exist $u \in N$, $u' \in N'$ that are adjacent respectively to $q, q'$ in $\widetilde{\cG}$.
Then,
    \begin{align*}
        \text{dist}(\boxtimes(u), \boxtimes(u')) &\leq \text{dist}(\boxtimes(u), \boxtimes(q)) + \text{dist}(\boxtimes(q), \boxtimes(q_f)) + \text{dist}(\boxtimes(q_f), \boxtimes(q_f')) \\
        &+ \text{dist}(\boxtimes(q_f'), \boxtimes(q')) + \text{dist}(\boxtimes(q'), \boxtimes(u')) \\
        &\leq \frac{(c - d)r}{c} + \frac{dr}{c} + 2r + \frac{dr}{c} + \frac{(c - d)r}{c} = 4r.
    \end{align*}
    In particular, $u, u'$ are adjacent in $\cG$ and therefore $N=N'$. 
    An induction over a spanning tree of $\cG_f[\cC]$ finishes the proof.
\end{proof}

\begin{remark}\label{rem:remote-monotonicity}
    The $\kappa$-remote property of a family of cubes $\cQ\subseteq \cT_f$ is decreasing, that is, if $\cQ$ is $\kappa$-remote wrt a finite point set $\bY$ and $\bY' \subseteq \bY$, then $\cQ$ is $\kappa$-remote wrt $\bY'$.
\end{remark}

\subsubsection{A geometric tool}\label{sec:geom}

In this section, we introduce a geometric tool used later to estimate the number of connected $\kappa$-remote sets $\cC$ wrt $(X_i)_{i=1}^{2t_{\min}}$ with particular size and diameter.
Roughly speaking, we balance two competing effects: while $\mathrm{Ann}(\cC)$ must contain an atypically small number of points for any connected $\kappa$-remote set $\cC$ (which is less likely for larger $\cC$), the enumeration of possible connected sets grows with $|\cC|$.
To control the number of connected sets $\cC \subseteq \cT_f$ with diameter at most $20dr$, we find a subset $\cS\subseteq \cC$ of bounded size and a region $Cr(\cS) \subseteq \mathrm{Ann}(\cC)$ such that  $Cr(\cS)$ depends only on $\cS$ and constitutes a significant part of the volume of $\mathrm{Ann}(\cC)$ (a formal statement follows in Proposition~\ref{prop:main-geometric}).
By analysing $Cr(\cS)$ for $\cS\subseteq \cC$, we reduce the more complicated task of enumerating $\kappa$-remote sets to the simpler task of counting constant-size subsets of certain type. One can see the geometric tool defined in this section as an extension of the approach from the proof of \Cref{lem:nonfull-component-size}(c).
Nevertheless, there are some significant differences. Contrary to the said lemma, the construction presented here works with the finer tessellation $\cT_f$, the previously used quarter-balls are replaced with half-cylinders as the latter are better suited to describe components of small (i.e.\ $o(r)$) diameter, and lastly, the following construction uses lines and hyperplanes which are not necessarily parallel to the axes of the coordinate system.

Throughout \Cref{sec:geom}, we assume that $r$ is small and the analysis of sets of diameter at most $20dr$ can be conducted using the Euclidean metric in $\mathbb R^d$ (instead of the locally Euclidean metric on $\bbT^d$).

\begin{proposition}\label{prop:main-geometric}
For every non-empty $\cG_f$-connected set $\cC\subseteq \cT_f$ with $\diam(\bigcup \cC) = \lambda\in [s_f, 20dr)$, there are sets $\cS \subseteq \cC$ and $Cr(\cS) \subseteq \bbT^d$ such that each of the following properties holds:
    \begin{enumerate}[label=\emph{(\alph*)}]
        \item
        $|\cS| < 20d$ and $\diam(\bigcup \cS) = \lambda$,
        \item 
        $Cr(\cS)$ depends only on $\cS$ and $Cr(\cS) \subseteq \mathrm{Ann}(\cC)$,
        \item 
        $\theta_d r^d + (\theta_{d-1} \lambda/2^{d+2} - 3\theta_d d^2 s_f)r^{d-1} \leq \vol{d}(Cr(\cS)) \leq 21d \theta_d r^d$.
    \end{enumerate}
\end{proposition}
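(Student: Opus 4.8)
The plan is to throw away almost all of $\cC$, retaining only a bounded ``skeleton'' $\cS$ that still records the \emph{direction} and the \emph{length} of the diameter of $\bigcup\cC$, and then to assemble $Cr(\cS)$ from balls of radius $r':=r-s_f\sqrt d/2$ centred at cubes of $\cS$, from which we carve out two large spherical caps beyond the two ends of the diameter together with a thin ``half-tube'' running just outside $\bigcup\cC$.

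\emph{Setup and confinement.} Since $\bigcup\cC$ is a finite union of cubes, there are $q_1,q_2\in\cC$ realising $\max_{q,q'\in\cC}\diam(q\cup q')$, and this maximum equals $\diam(\bigcup\cC)=\lambda$ (any two points of $\bigcup\cC$ lie in two cubes of $\cC$, and every pair of cubes of $\cC$ lies in $\bigcup\cC$). Put $p_i=\boxtimes(q_i)$, $e=(p_2-p_1)/|p_2-p_1|$, $\pi(x)=\langle x,e\rangle$, and note $\lambda':=|p_1-p_2|\in[\lambda-s_f\sqrt d,\lambda]$. For $y$ in any cube $q\in\cC$ we have $|y-p_i|\le\diam(q\cup q_i)\le\lambda$, so
\[\textstyle\bigcup\cC\subseteq L:=\overline B(p_1,\lambda)\cap\overline B(p_2,\lambda)\subseteq\{x:\pi(x)\in[\lambda'-\lambda,\lambda]\}\subseteq\{x:\pi(x)\in[-s_f\sqrt d,\lambda'+s_f\sqrt d]\}.\]
The quantities $\lambda,e$ and the slab above depend only on $\{q_1,q_2\}$. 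If $d=1$ we take $\cS=\{q_1,q_2\}$ and Step 3 collapses to the two caps; assume $d\ge2$ below.

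\emph{The skeleton $\cS$.} Let $q_3\in\cC$ be a cube whose component of $\boxtimes(q_3)-p_1$ orthogonal to $e$ is longest, and let $f$ be the unit vector in that direction (any fixed $f\perp e$ if that component vanishes). Partition the slab $\{\pi(x)\in[-s_f\sqrt d,\lambda'+s_f\sqrt d]\}$ into consecutive sub-slabs $S_1,\dots,S_\ell$ orthogonal to $e$, each of thickness $w=\tfrac65 r$; since $\lambda'<20dr$ one gets $\ell\le 17d$ for $r$ small. For each $i$ with $\bigcup\cC\cap S_i\neq\varnothing$, let $s_i\in\cC$ be a cube meeting $S_i$ with $\langle\boxtimes(s_i),f\rangle$ maximal, and put $\cS=\{q_1,q_2,q_3\}\cup\{s_i\}$, so $|\cS|\le 3+17d<20d$. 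As $q_1,q_2\in\cS\subseteq\cC$, we get $\lambda=\diam(q_1\cup q_2)\le\diam(\bigcup\cS)\le\diam(\bigcup\cC)=\lambda$, which is (a).

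\emph{The region and Property (b).} Set $Cr(\cS):=K_1\cup K_2\cup T$, where $K_1:=B(p_1,r')\cap\{\pi(x)<-s_f\sqrt d\}$, $K_2:=B(p_2,r')\cap\{\pi(x)>\lambda'+s_f\sqrt d\}$, and $T:=\bigcup_i HC_i$ with $HC_i$ the half of the cylinder of radius $\rho=\tfrac45r-s_f\sqrt d$ around the segment $\{\boxtimes(s_i)+te:|t|\le\tfrac35 r\}\cap S_i$, lying on the side $\langle x,f\rangle>\langle\boxtimes(s_i),f\rangle+s_f\sqrt d$ (for $d=1$, $Cr(\cS):=K_1\cup K_2$). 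Each $HC_i$ is contained in $B(\boxtimes(s_i),r')$ because $\rho^2+(\tfrac35 r)^2\le r'^2$, so $Cr(\cS)\subseteq\bigcup_{q\in\cS}B(\boxtimes(q),r')\subseteq\bigcup_{q\in\cC}B(\boxtimes(q),r')$, and $Cr(\cS)$ visibly depends only on $\cS$. For disjointness from $\bigcup\cC$: $K_1,K_2$ lie outside the slab containing $\bigcup\cC$; and each $HC_i$ lies inside $S_i$ and, there, strictly above the $f$-topmost cube $s_i$ of $\cC$ meeting $S_i$ (hence strictly above all of $\bigcup\cC\cap S_i$). Thus $Cr(\cS)\cap\bigcup\cC=\varnothing$, i.e. $Cr(\cS)\subseteq\mathrm{Ann}(\cC)$, giving (b).

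\emph{Volumes, Property (c), and the main obstacle.} The upper bound is immediate: $\vol d(Cr(\cS))\le|\cS|\,\theta_d r'^d<21d\,\theta_d r^d$. For the lower bound, $K_1$ and $K_2$ are disjoint spherical caps of $B(\cdot,r')$ cut at height $s_f\sqrt d$, so $\vol d(K_1)+\vol d(K_2)\ge\theta_d r'^d-2\sqrt d\,\theta_{d-1}s_f r'^{d-1}\ge\theta_d r^d-(\tfrac12 d^{3/2}\theta_d+2\sqrt d\,\theta_{d-1})s_f r^{d-1}$ (using $(r')^d\ge r^d-\tfrac12 d^{3/2}s_f r^{d-1}$). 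For $T$: the slab containing $\bigcup\cC$ has $\pi$-length at least $\lambda'$ (it contains $\pi=0$ and $\pi=\lambda'$), so $\bigcup\cC$ meets at least $\lfloor\lambda'/w\rfloor\ge\lambda/(2r)-3$ sub-slabs; each corresponding $HC_i$ has $\pi$-length at least $\tfrac35r-s_f\sqrt d$, hence volume at least $\tfrac12\theta_{d-1}\rho^{d-1}(\tfrac35r-s_f\sqrt d)\ge\tfrac14\cdot\tfrac7{10}^{d-1}\theta_{d-1}r^d-O(s_f r^{d-1})$; and the $HC_i$ lie in pairwise disjoint sub-slabs. Summing, $\vol d(T)\ge(\lambda/(2r))\cdot\tfrac14\cdot\tfrac7{10}^{d-1}\theta_{d-1}r^d-O(d\,s_f r^{d-1})\ge\theta_{d-1}\lambda r^{d-1}/2^{d+2}-O(d\,s_f r^{d-1})$, since $2^d\cdot(7/10)^{d-1}\ge2$ for all $d\ge1$. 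As $K_1\cup K_2$ is disjoint from the slab and $T$ lies in it, $\vol d(Cr(\cS))$ is at least the sum of these two estimates; absorbing all $O(d^2 s_f r^{d-1})$ error terms into $3\theta_d d^2 s_f r^{d-1}$ (valid for $r$ small) yields (c). The main obstacle is exactly this last step: one must calibrate the sub-slab thickness, the cylinder radius, and the upward shift so that the $HC_i$ are simultaneously (i) contained in the balls $B(\boxtimes(q),r')$, (ii) above $\bigcup\cC$ and mutually disjoint, and (iii) of total cross-sectional mass of order $\theta_{d-1}\lambda r^{d-1}$, all while keeping $|\cS|<20d$; making this bookkeeping close within the prescribed constants $2^{-(d+2)}$ and $3\theta_d d^2 s_f$ is the technical heart, and everything else is routine.
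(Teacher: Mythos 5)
Your construction follows essentially the same strategy as the paper's: fix the two extremal cubes realising the diameter, slice the slab orthogonal to their axis, and lower-bound the annulus volume by two spherical caps plus a collection of disjoint half-cylinders sitting just above the $f$-topmost cube in each slice. The minor differences (fixed orthogonal direction $f$ instead of a per-slice extremal point, slightly different slab thickness) are cosmetic.

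There is, however, a genuine gap in the slab-counting step. You choose sub-slabs of thickness $w=\tfrac{6}{5}r$ and then assert that since $\bigcup\cC$ contains points at $\pi=0$ and $\pi=\lambda'$, it ``meets at least $\lfloor\lambda'/w\rfloor$ sub-slabs.'' This does not follow: $\cG_f$-adjacency only forces consecutive cube centres to be at Euclidean distance at most $2r$, and since $2r > w$, a $\cG_f$-path can jump from $S_i$ to $S_{i+2}$ and skip $S_{i+1}$ entirely (place one centre near the top of $S_i$ and the next near the bottom of $S_{i+2}$: the $\pi$-gap is just over $w=\tfrac{6}{5}r<2r$). Consequently as many as half the slabs in the range may contain no cube of $\cC$, and your stated count can be off by a factor of two. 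The paper avoids this exact issue by taking slice thickness equal to $2r$, which makes skipping impossible. As it happens, your per-slice volume estimate $\tfrac{3}{10}(\tfrac{7}{10})^{d-1}\theta_{d-1}r^d$ combined with the halved count $\tfrac{5\lambda'}{12r}$ still yields a coefficient $\tfrac{1}{8}(\tfrac{7}{10})^{d-1}\ge 2^{-(d+2)}$ for every $d\ge 1$, so the final inequality in (c) survives; but the argument as written invokes a topological-connectivity consequence of $\cG_f$-connectivity that is simply false for your choice of $w$, and this must be repaired (either change $w$ to $2r$, or correct the count to $\lceil\lfloor\lambda'/w\rfloor/2\rceil$ and redo the arithmetic). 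Two further loose ends: the $d=1$ case cannot be handled by the two caps alone (they contribute only $\approx\theta_1 r$, which misses the required $\theta_0\lambda/8$ term when $\lambda$ is of order $r$), and the blanket ``absorbing all $O(d^2s_f r^{d-1})$ error terms into $3\theta_dd^2s_fr^{d-1}$'' deserves an explicit constant check, since the bound $\theta_{d-1}\approx\sqrt{d}\,\theta_d$ enters with a non-trivial coefficient.
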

\begin{proof}
Denote by $Z_f$ the set of corners and centres of cubes in $\cT_f$ and fix an arbitrary total order $\prec$ on $Z_f$.
First, we construct $Cr(\cR)$ for an arbitrary $\cG_f$-connected set $\cR\subseteq \cT_f$ with $\diam(\bigcup\cR)=\lambda\le 20dr$ and deduce the construction of $\cS$ and $Cr(\cS)$ from it.
More concretely, we fix points $a=a(\cR),b=b(\cR)\in (\bigcup\cR)\cap Z_f$ at distance $\lambda$: note that this is possible since $Z_f$ contains the extremal points of $\bigcup \cR$. In case of ties, we fix $a$ minimal with respect to $\prec$ and $b$ minimal with respect to $\prec$ given $a$. Also, fix $(d-1)$-dimensional hyperplanes $m_1, m_2, \ldots, m_l, m_{l+1}$ orthogonal to $ab$ so that each of the following holds:
\begin{itemize}
    \item $a \in m_1$,
    \item for every $i\in [l-1]$, we have $\dist(m_i,m_{i+1}) = 2r$,
    \item $\dist(m_l, m_{l+1}) \le 2r$ and $b \in m_{l+1}$.
\end{itemize}
Note that the value of $l=l(\cR)$ depends on the length of $ab$: in particular, if $|ab| \le 2r$, then $l = 1$.

Next, for each $i\in [l]$, we let $\alpha_i=\alpha_i(\cR)$ denote the point in $(\bigcup \cR)\cap Z_f$ between $m_i$ and $m_{i+1}$ which realises the maximal distance to the straight line segment $ab$ (including the points lying on the planes; ties broken following $\prec$ again). 
Then, define $h_i$ as the half-space with boundary $\partial h_i\ni \alpha_i$ such that $\dist(h_i,ab)=\dist(\alpha_i,ab)$.
We remark that this choice is unique since $\alpha_i$ lies outside of the straight line $ab$: indeed, the fact that $\dist(m_i,m_{i+1}) = 2r$ and the connectivity of $\cR$ guarantee that there is at least one cube $q_f \in \cR$ whose centre and two adjacent corners lie between $m_i$ and $m_{i + 1}$, and at least one of these three points lies outside the straight line $ab$.

Then, denote by $B_i(\cR)$ the intersection of the ball $B(\alpha_i, r - 2 s_f \sqrt{d})$, $h_i$ and the slice between $m_i$ and $m_{i+1}$.
Additionally, we let $B_0(\cR)$ be the half of the ball $B(a, r - 2 s_f \sqrt{d})$ lying on the opposite side of $m_1$ than $b$, and $B_{l+1}(\cR)$ be the half of the ball $B(b, r - 2 s_f \sqrt{d})$ lying on the opposite side of $m_{l+1}$ than~$a$.
Finally, we define
\begin{equation}\label{eqn:cr}
Cr(\cR) := \bigcup_{i=0}^{l+1} B_i(\cR).
\end{equation}
Thus, our construction implies that $Cr(\cR)\subseteq \mathrm{Ann}(\cR)$ since $B_i(\cR)\subseteq \mathrm{Ann}(\cR)$ for every $i\in [0,l+1]$.

Now, we construct the sets $\cS$ and $Cr(\cS)$ given $\cC$. For every point $y\in 
\bigcup \cC$, denote by $q(y)$ an arbitrary cube in $\cC$ containing $y$. Then, we set
\[\cS := \{q(a(\cC)),q(b(\cC)),q(\alpha_1(\cC)),q(\alpha_2(\cC)),\ldots,q(\alpha_{l}(\cC))\}.\]
It remains to show that $\cS$ and $Cr(\cS)$ satisfy parts (a)--(c). First, (a) is implied by the inequalities 
\[|\cS| = l + 2 \le \lceil 20dr/2r\rceil+2<20d\qquad \text{and}\qquad \lambda = \dist(a,b) \le \diam(\bigcup \cS)\le \diam(\bigcup \cC) = \lambda.\]
Second, (b) follows from the construction of $Cr(\cdot)$: note that $a(\cS)=a(\cC)$, $b(\cS)=b(\cC)$, $l(\cS)=l(\cC)$ and $\alpha_i(\cS)=\alpha_i(\cC)$ for all $i\in [l]$. Thus, $Cr(\cS) = Cr(\cC)\subseteq \mathrm{Ann}(\cC)$.
    
To derive (c), note that the pairwise intersections of $B_0, B_1, \ldots, B_l, B_{l + 1}$ are either empty or contained in $(d-1)$-dimensional hyperplanes, implying that $\vol{d}(Cr(\cS)) = \vol{d} (B_0)+\vol{d} (B_1)+\ldots+\vol{d} (B_{l+1})$.
First, we focus on the lower bound. On the one hand, since $s_f=o(r)$, when $r$ is sufficiently small,
\[\vol{d}(B_0\cup B_{l+1})=\theta_d(r-2s_f\sqrt{d})^d\ge \theta_dr^d - 3\theta_d d^2 s_f r^{d-1}.\]
For the remainder of the sum, we consider the three cases:

\noindent
\textbf{Case $\lambda < r/2$ and $l=1$.} Note that the region $B_1$ contains half a cylinder with bases of radius $((r-2s_f \sqrt{d})^2-\lambda^2)^{1/2}\ge r/2$ contained in the hyperplanes $m_1$ and $m_2$, altitude $\lambda$ and axis contained in $\partial h_1$ (see \Cref{subfig:geometry-case-1}). In particular,
\[ \vol{d}(Cr(\cS)) = \vol{d}(B_0\cup B_2)+\vol{d}(B_1) \geq \theta_d r^d - 3\theta_d d^2 s_f r^{d-1} + \theta_{d - 1} r^{d - 1} \lambda/2^{d-1}.\]

\noindent
\textbf{Case $\lambda\in [r/2, 2r]$ and $l=1$.} Then, similarly to the previous case, the region $B_1$ contains half a cylinder with bases of radius $r/2$ (but not necessarily contained in the hyperplanes $m_1$ and $m_2$) and altitude $r/2 \geq \lambda/4$ (see \Cref{subfig:geometry-case-2}). In particular,
\[ \vol{d}(Cr(\cS)) \geq \theta_d r^d - 3\theta_d d^2 s_f r^{d-1} + \theta_{d - 1} r^{d - 1} (r/2)/2^{d-1}\ge \theta_d r^d - 3\theta_d d^2 s_f r^{d-1} + \theta_{d - 1} r^{d-1}\lambda/2^{d+1}.\]

\noindent
\textbf{Case $\lambda > 2r$ and $l>1$.} Note that, for every $i\in [l-1]$, $B_i$ contains half a cylinder with dimensions as in the previous case (see \Cref{fig:geometry-case-3}). Using that $\lambda \leq 2rl$, we obtain that
\begin{align*}
\vol{d}(Cr(\cS)) &\ge \vol{d}(B_0\cup B_{l+1}) + \sum_{i = 1}^{l-1} \vol{d}(B_i) \geq \theta_d r^d - 3\theta_d d^2 s_f r^{d-1} + (l-1)\frac{\theta_{d - 1} r^{d-1}(r/2)}{2^{d-1}}\\
&\geq \theta_d r^d - 3\theta_d d^2 s_f r^{d-1} + \frac{l-1}{4l}\frac{\theta_{d - 1} r^{d-1}(2rl)}{2^{d-1}}\ge  \theta_d r^d - 3\theta_d d^2 s_f r^{d-1} + \frac{\theta_{d - 1} r^{d-1}\lambda}{2^{d+2}}.
\end{align*}
For the upper bound, it suffices to notice that the volume of each $B_i$ is at most $\theta_d r^d$ and $l < 20d$.
\end{proof}

\begin{figure}[t!]
\centering
\begin{subfigure}[t]{0.35\textwidth}
    \includegraphics[width=\textwidth]{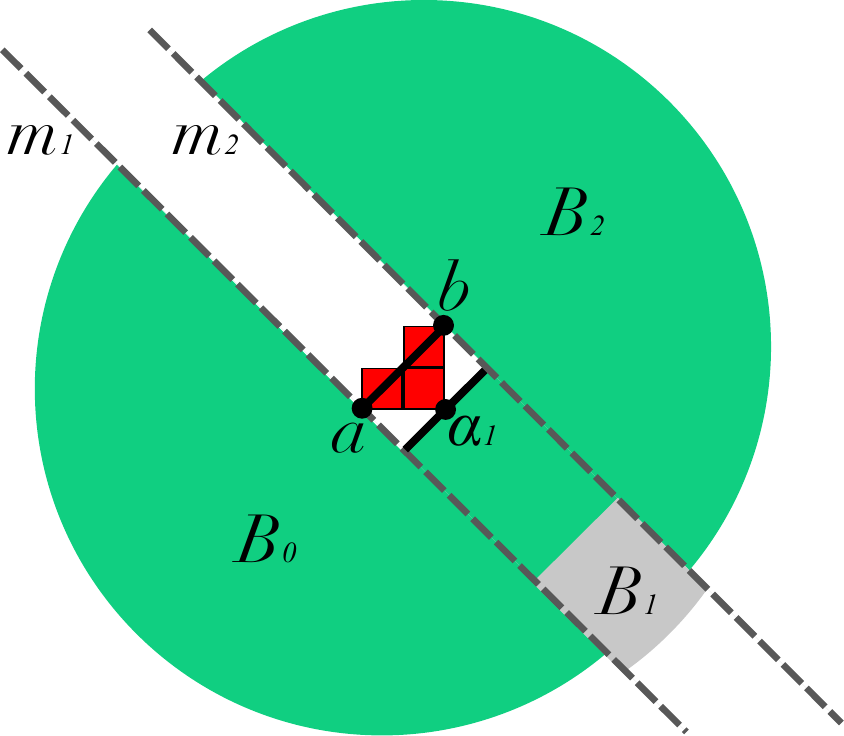}
    \caption{Case $\lambda < r/2$ and $l = 1$.}
    \label{subfig:geometry-case-1}
\end{subfigure}
\hspace{0.1\textwidth}
\begin{subfigure}[t]{0.35\textwidth}
    \includegraphics[width=\textwidth]{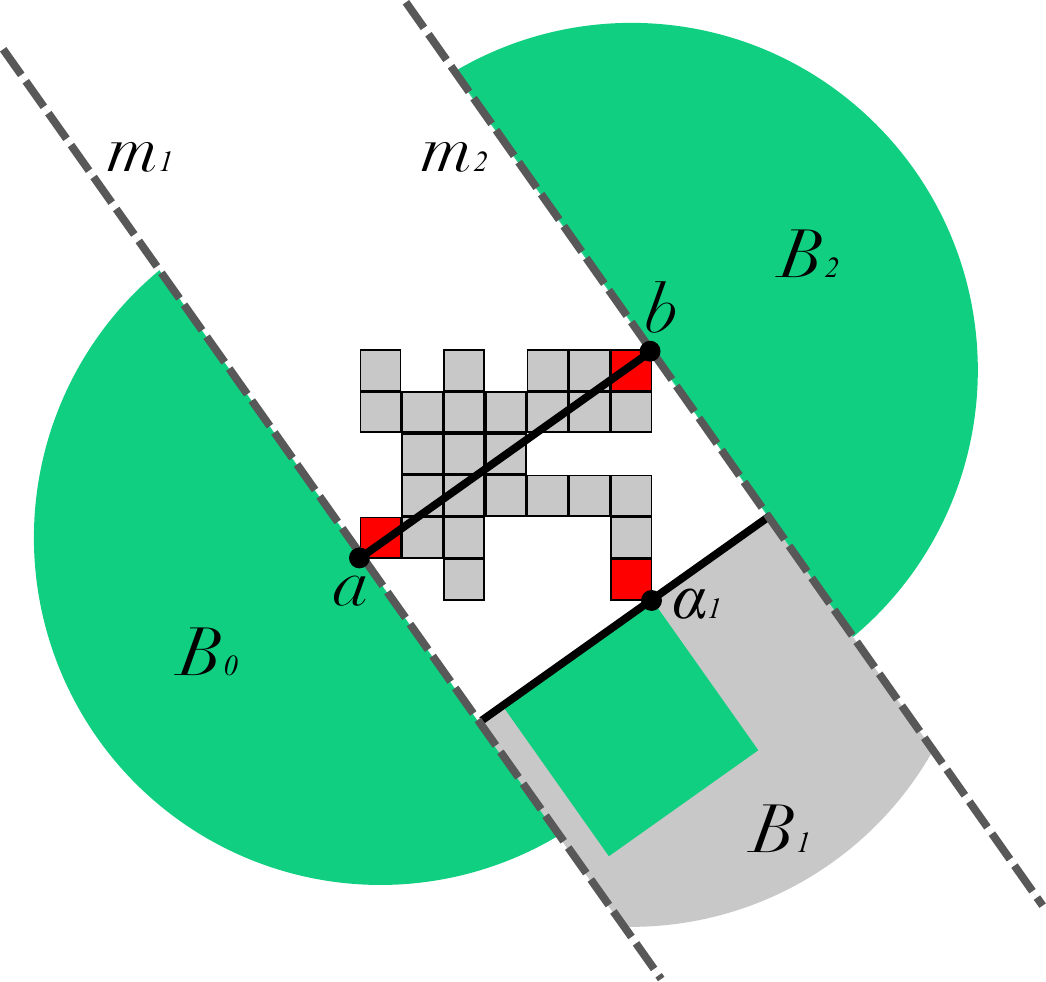}
    \caption{Case $\lambda \in [r/2, 2r]$ and $l = 1$.}
    \label{subfig:geometry-case-2}
\end{subfigure}

\caption{The first two cases in the proof of \Cref{prop:main-geometric} when $d = 2$. The cubes in $\cS$ are marked red. $Cr(\cS)$ is drawn in green.}
\label{fig:geometry}
\end{figure}

\begin{figure}
    \centering
    \includegraphics[width=0.8\linewidth]{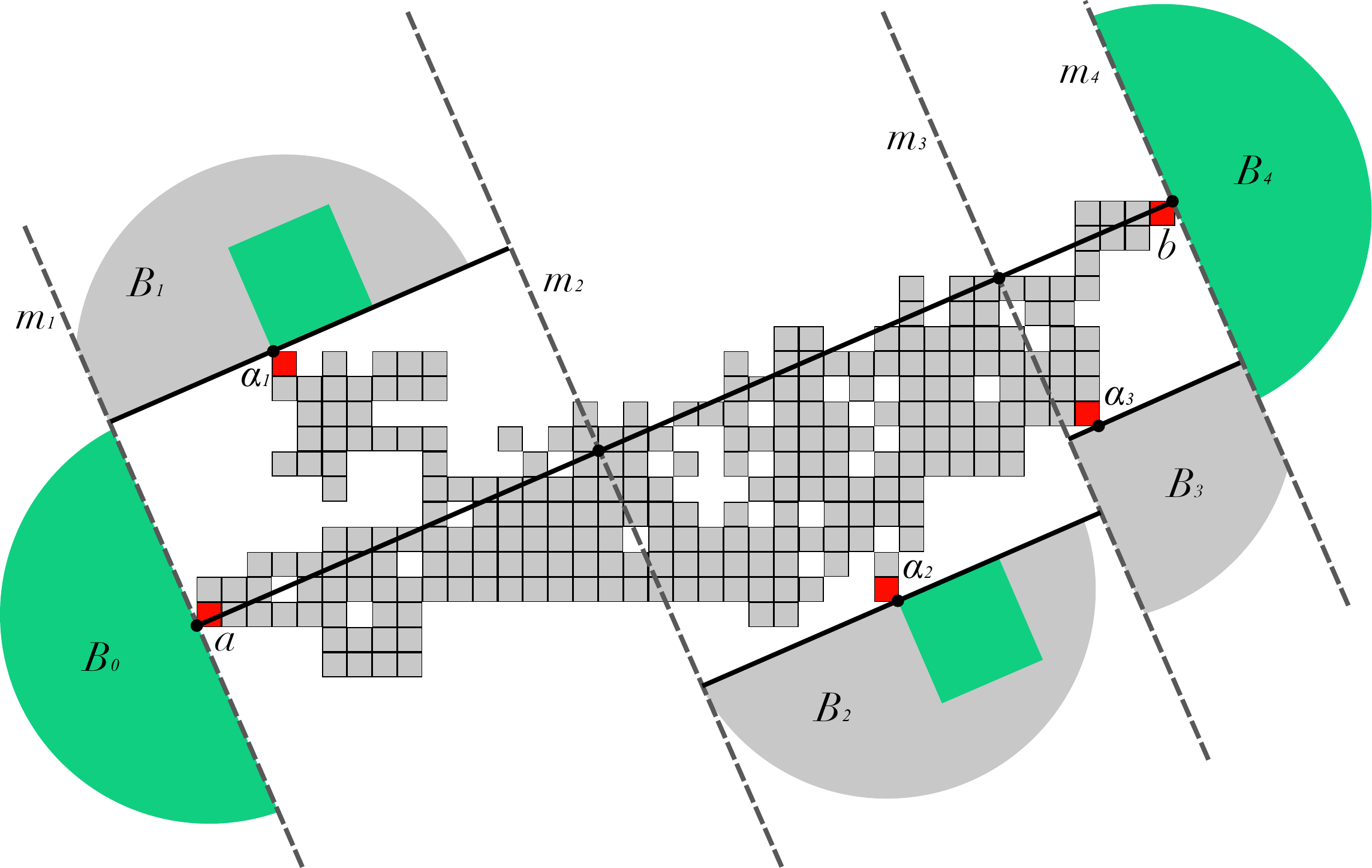}
    \caption{The case $\lambda > 2r$, $l > 1$, $d = 2$ in the proof of \Cref{prop:main-geometric}. }
    \label{fig:geometry-case-3}
\end{figure}

For a cube $q\in \cT_f$ and $\lambda\in [s_f,20dr)$, we define $\sat(q,\lambda)$ to be the set of all subsets $\cS \subseteq \cT_f$ such that $|\cS|<20d$ and $\diam(\bigcup(\cS\cup \{q\}))\le \lambda$. The next lemma bounds $\sat(q,\lambda)$ from above for the purposes of an upcoming union bound.

\begin{lemma}\label{lem:count-sat}
For every $\lambda\in [s_f,20dr)$ and $q\in \cT_f$, $|\sat(q,\lambda)|\le (2\lambda/s_f)^{20d^2}$.
\end{lemma}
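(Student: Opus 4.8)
The plan is to show that any $\cS\in\sat(q,\lambda)$ consists of boundedly many cubes whose centres all lie within distance $\lambda$ of $\boxtimes(q)$, bound the number of candidate cubes by a packing/volume argument, and then count the small subsets while keeping the factorial $1/(20d-1)!$ in the denominator — it is this factorial that absorbs the purely dimensional constants that appear and makes the stated bound come out.

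First I would reduce to the case $\lambda\ge s_f\sqrt d$. Every $\cS\in\sat(q,\lambda)$ satisfies $\diam(\bigcup(\cS\cup\{q\}))\ge\diam(q)=s_f\sqrt d$, so if $\lambda<s_f\sqrt d$ then $\sat(q,\lambda)=\varnothing$ and the bound is trivial (for $d=1$ this case does not even occur, since $\lambda\ge s_f=\diam(q)$ by hypothesis). Henceforth $R:=\lambda/s_f\ge\sqrt d\ge1$. Next, let $\cQ$ be the set of $q'\in\cT_f$ with $\diam(\bigcup\{q,q'\})\le\lambda$. Since $\boxtimes(q),\boxtimes(q')\in\bigcup\{q,q'\}$, every $q'\in\cQ$ has $\dist(\boxtimes(q'),\boxtimes(q))\le\lambda$, so every cube of $\cQ$ lies inside $B(\boxtimes(q),\lambda+s_f\sqrt d/2)$; as the cubes of $\cT_f$ are pairwise disjoint of volume $s_f^d$,
\[ |\cQ|\le\frac{\theta_d(\lambda+s_f\sqrt d/2)^d}{s_f^d}\le\theta_d\Big(\tfrac32\Big)^d R^d, \]
the last step using $R\ge\sqrt d$, so that $s_f\sqrt d/2\le\lambda/2$.

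Every $\cS\in\sat(q,\lambda)$ is a subset of $\cQ$ of size at most $m:=20d-1$, so with $N:=|\cQ|$,
\[ |\sat(q,\lambda)|\le\sum_{j=0}^{m}\binom Nj\le\binom{N+m}{m}\le\frac{(N+m)^m}{m!}, \]
where the middle inequality is the injection $S\mapsto S\cup\{N+1,\dots,N+m-|S|\}$ into the $m$-subsets of $[N+m]$. If $N<20d$ this already gives $|\sat(q,\lambda)|\le 2^N<2^{20d}\le 2^{20d^2}\le(2R)^{20d^2}$. Otherwise $N\ge 20d$, hence $N+m\le 2N$, and combining with the volume bound,
\[ |\sat(q,\lambda)|\le\frac{(2N)^m}{m!}\le\frac{c_d^{\,m}}{(20d-1)!}\,R^{d(20d-1)},\qquad c_d:=2\theta_d\Big(\tfrac32\Big)^d. \]
Since $d(20d-1)<20d^2$ and $R\ge1$, it suffices to verify the numerical inequality $c_d^{\,20d-1}\le 2^{20d^2}(20d-1)!$.

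This last inequality is the only real obstacle; note that the crude bound $\sum_{j\le m}\binom Nj\le (N+1)^m$ (without the $1/m!$) is too lossy to close the gap, so the factorial is genuinely needed. To finish it I would split on the dimension. For $d$ large, Stirling's estimate gives $\theta_d(3/2)^d\le(9\pi e/(2d))^{d/2}\to0$, so $c_d\le 2$ for all $d$ beyond an absolute constant, whence $c_d^{20d-1}\le 2^{20d-1}\le 2^{20d^2}$. For the finitely many remaining dimensions one checks the inequality directly from the actual values of $\theta_d$: here $c_d$ is uniformly bounded, while $2^{20d^2}(20d-1)!$ grows super-exponentially, and the only delicate cases are $d\in\{1,2\}$, where one uses that $c_1=6$ and $c_2=\tfrac92\pi$ are small enough (e.g. $6^{19}\le 2^{20}\cdot19!$). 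This completes the proof.
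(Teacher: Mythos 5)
Your proof is correct, but the extra machinery you invoke (the refinement $\binom{N+m}{m}\le(N+m)^m/m!$ and the dimension-by-dimension numerical check) is a self-inflicted cost coming from a lossy geometric step. The paper's proof is essentially a one-liner built on two observations: first, for every $q'\in\cS$ and every $x\in q'$, both $x$ and $\boxtimes(q)$ lie in $\bigcup(\cS\cup\{q\})$, so $\|x-\boxtimes(q)\|_\infty\le\|x-\boxtimes(q)\|_2\le\lambda$, whence every cube of $\cS$ is contained in the axis-aligned hypercube of side $2\lambda$ centred at $\boxtimes(q)$, which holds at most $(2\lambda/s_f)^d=:N\ge 2$ cubes of $\cT_f$ by a volume count; second, the number of subsets of an $N$-element set of size $<20d$ is at most $\sum_{j<20d}N^j\le N^{20d}$ for $N\ge 2$, which is exactly $(2\lambda/s_f)^{20d^2}$ with no leftover constant to absorb. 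You instead bound the candidate cubes by a Euclidean ball and pass through $\boxtimes(q')$ rather than bounding $\dist(x,\boxtimes(q))\le\lambda$ directly, which both forces the preliminary reduction to $\lambda\ge s_f\sqrt d$ and inflates the cube count to $\theta_d(3/2)^d(\lambda/s_f)^d$, a bound that exceeds $(2\lambda/s_f)^d$ in low dimensions; the factorial trick and the Stirling/small-$d$ verification are then exactly what is needed to dispose of the surplus factor $c_d=2\theta_d(3/2)^d$, and your check that $c_d^{20d-1}\le 2^{20d^2}(20d-1)!$ is sound. So there is no gap, but switching to the $\ell_\infty$-norm and bounding $\dist(x,\boxtimes(q))$ directly gives $N\le(2\lambda/s_f)^d$ on the nose and lets the crude subset count finish, with no constants to chase.
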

\begin{proof}
Recalling that $\boxtimes(q)$ denotes the centre of $q$, note that every cube in $\cS$ is contained in the $\ell_{\infty}$-ball with centre $\boxtimes(q)$ and radius $\lambda$. As there are at most $(2\lambda/s_f)^d$ such cubes and $|\cS|<20d$, the statement follows.
\end{proof}

\subsubsection{\texorpdfstring{The total volume of connected $\kappa$-remote sets}{The total volume of connected к-remote sets}}

Recall $t_{\min} = t_{\min}(r,k,\eps,d)$ from~\eqref{eqn:t**}.
For every $\kappa\ge 1$, denote by $\fR_{\kappa}$ the family of $\kappa$-remote connected sets wrt $(X_i)_{i=1}^{2t_{\min}}$.
In this subsection, we estimate the total volume $\vol{d}(\bigcup \fR_{\kappa}) := \vol{d}(\bigcup_{\cR\in \fR_{\kappa}} (\bigcup \cR))$ via a first moment computation. The main result in this subsection is the following proposition; its proof is delayed to the end of the subsection.

\begin{proposition}\label{prop:main-volume}
For all suitably small $\eps=\eps(k)>0$, whp, for every integer $\kappa\in [M]$, each of the following properties holds:
    \begin{enumerate}[label=\emph{(\alph*)}]
        \item $\vol{d}(\bigcup \fR_{\kappa})\le r^{d/2} (\log(1/r))^{\kappa-k-1/3}$;
        \item the total volume of all $\cG_f$-connected $\kappa$-remote sets wrt $(X_i)_{i=1}^{2t_{\min}}$ of diameter at least 
\begin{equation}\label{eqn:df}
    d_f = d_f(r) := \frac{r (\log\log(1/r))^2}{\log(1/r)} = s_f (\log\log(1/r))^{3/2}
\end{equation}
is $r^{d/2} (\log(1/r))^{-\omega(1)}$.
\end{enumerate}
\end{proposition}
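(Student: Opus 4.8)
The plan is a first moment estimate for $\vol{d}\big(\bigcup\fR_\kappa\big)$ followed by Markov's inequality, with part (b) read off from the same computation by tracking the contribution of large‑diameter sets. Since $\bigcup_{\cR\in\fR_\kappa}\bigcup\cR$ is a union of cubes of $\cT_f$, we have $\vol{d}\big(\bigcup\fR_\kappa\big)\le s_f^d\sum_{q\in\cT_f}\ind{q\in\cR\text{ for some }\cR\in\fR_\kappa}$, and since $|\cT_f|=s_f^{-d}$, it suffices to show that for every fixed cube $q\in\cT_f$,
\[
\mathbb P\big(q\in\cR\text{ for some }\cR\in\fR_\kappa\big)\le r^{d/2}(\log(1/r))^{\kappa-1-(1-\eps)(k-1/2)+o(1)},
\]
and that the analogous probability restricted to witnesses $\cR$ with $\diam\big(\bigcup\cR\big)\ge d_f$ is $r^{d/2}(\log(1/r))^{-\omega(1)}$; a union bound over the $O(1)$ values $\kappa\in[M]$ then settles both claims.

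Fix $q\in\cT_f$. By \Cref{rem:remote-in-far}, every $\cR\in\fR_\kappa$ satisfies $\diam\big(\bigcup\cR\big)\in[s_f,20dr)$, so we split over the dyadic scales $\lambda_j=2^js_f$ for $0\le j\le J$ with $2^Js_f\asymp 20dr$ (hence $J=O(\log\log(1/r))$). If $\cR\in\fR_\kappa$ has $q\in\cR$ and $\diam\big(\bigcup\cR\big)\in[\lambda_j,\lambda_{j+1})$, then \Cref{prop:main-geometric} applied to $\cC=\cR$ yields $\cS\subseteq\cR$ with $|\cS|<20d$ and $\diam\big(\bigcup\cS\big)=\diam\big(\bigcup\cR\big)\in[\lambda_j,\lambda_{j+1})$, so $\cS\in\sat(q,\lambda_{j+1})$, and a region $Cr(\cS)\subseteq\mathrm{Ann}(\cR)$ depending only on $\cS$ of volume at least $V_j:=\theta_d r^d+\big(\theta_{d-1}\lambda_j/2^{d+2}-3\theta_d d^2 s_f\big)r^{d-1}$. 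As $\cR$ is $\kappa$-remote, $\mathrm{Ann}(\cR)$ — and hence $Cr(\cS)$ — contains at most $\kappa-1$ of the i.i.d.\ uniform points $X_1,\dots,X_{2t_{\min}}$; bounding $|\sat(q,\lambda_{j+1})|\le(2^{j+2})^{20d^2}$ by \Cref{lem:count-sat}, we obtain
\[
\mathbb P\big(q\in\cR\text{ for some }\cR\in\fR_\kappa\text{ with }\diam\in[\lambda_j,\lambda_{j+1})\big)\le (2^{j+2})^{20d^2}\,\mathbb P\big(\mathrm{Bin}(2t_{\min},V_j)\le\kappa-1\big).
\]

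From \eqref{eqn:t**} and $s_f\log(1/r)/r=\sqrt{\log\log(1/r)}$ we get $2t_{\min}\theta_d r^d=\tfrac d2\log(1/r)+(1-\eps)(k-\tfrac12)\log\log(1/r)$ and $2t_{\min}(V_j-\theta_d r^d)\ge c_12^j\sqrt{\log\log(1/r)}-c_2\sqrt{\log\log(1/r)}$ for constants $c_1,c_2>0$ depending only on $d$, while $2t_{\min}V_j=O(\log(1/r)+2^j\sqrt{\log\log(1/r)})$. Since $\kappa-1=O(1)$, the elementary tail bound $\mathbb P(\mathrm{Bin}(n,p)\le\kappa-1)=O((np)^{\kappa-1}e^{-np})$ (cf.\ \Cref{lem:chernoff}) gives, after absorbing the factor $e^{c_2\sqrt{\log\log(1/r)}}=(\log(1/r))^{o(1)}$ and constants,
\[
\mathbb P(\mathrm{Bin}(2t_{\min},V_j)\le\kappa-1)\le r^{d/2}(\log(1/r))^{\kappa-1-(1-\eps)(k-1/2)+o(1)}\Big(1+\tfrac{2^j\sqrt{\log\log(1/r)}}{\log(1/r)}\Big)^{\kappa-1}e^{-c_12^j\sqrt{\log\log(1/r)}}.
\]
Multiplying by $(2^{j+2})^{20d^2}$ and summing over $0\le j\le J$: since $e^{-c_12^j\sqrt{\log\log(1/r)}}$ decays doubly exponentially in $j$ whereas the other $j$-dependent factors grow only polynomially in $2^j$ and remain $O(1)$ (recall $2^J\sqrt{\log\log(1/r)}/\log(1/r)=O(1)$), the sum is $O\big(e^{-c_1\sqrt{\log\log(1/r)}}\big)=o(1)$, proving the first displayed bound. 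Then Markov's inequality gives $\mathbb P\big(\vol{d}(\bigcup\fR_\kappa)\ge r^{d/2}(\log(1/r))^{\kappa-k-1/3}\big)\le(\log(1/r))^{k-2/3-(1-\eps)(k-1/2)+o(1)}$, which tends to $0$ once $(1-\eps)(k-1/2)>k-2/3$, i.e.\ $\eps<\tfrac{1}{6k-3}$; fixing such an $\eps=\eps(k)$ (also admissible in \Cref{lem:hitting-time-offline-lb}) and taking a union bound over $\kappa\in[M]$ proves (a).

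For (b), one reruns the same sum restricted to $j\ge j_1$, where $\lambda_{j_1}$ is the least dyadic scale exceeding $d_f=s_f(\log\log(1/r))^{3/2}$, i.e.\ $2^{j_1}\asymp(\log\log(1/r))^{3/2}$; the $j=j_1$ term already carries $e^{-c_12^{j_1}\sqrt{\log\log(1/r)}}=e^{-\Omega((\log\log(1/r))^2)}=(\log(1/r))^{-\omega(1)}$, and the remaining $O(\log\log(1/r))$ terms are smaller, so the expected total volume of connected $\kappa$-remote sets of diameter at least $d_f$ is $r^{d/2}(\log(1/r))^{-\omega(1)}$; Markov's inequality and a union bound over $\kappa\in[M]$ then give (b). The main obstacle is precisely setting up the first moment so that the enumeration of candidate remote sets — which grows with the diameter and is tamed by \Cref{lem:count-sat} — is beaten by the probability that the crescent region $Cr(\cS)$ from \Cref{prop:main-geometric} is underpopulated, which shrinks with the diameter; once this balance is in place the rest is bookkeeping of polylogarithmic factors, the one point needing care being that the loss term $3\theta_d d^2 s_f r^{d-1}$ perturbs the exponent only by $O(\sqrt{\log\log(1/r)})=o(\log\log(1/r))$, whence it is absorbed by any $\eps<\tfrac1{6k-3}$.
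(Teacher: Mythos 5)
The proposal is correct and follows the same first-moment strategy as the paper: bound the per-cube probability that $q\in\cT_f$ lies in a connected $\kappa$-remote set by union-bounding over $\cS\in\sat(q,\cdot)$ from \Cref{prop:main-geometric}, estimate the probability that $Cr(\cS)$ is underpopulated by a binomial tail, and finish with Markov. The only organizational difference is that you unpack the computation directly with a dyadic decomposition over diameter scales $\lambda_j=2^js_f$, whereas the paper routes everything through \Cref{lem:prob-far} with $m=1$, whose proof uses a coarser two-case split (diameter $\le d_f$ versus diameter $\in(d_f,20dr)$); the paper's packaging has the additional payoff of being reusable with $m=2$ in \Cref{lem:E,lem:F}, while your dyadic version makes the balance between the enumeration factor $(2^{j+2})^{20d^2}$ and the exponential gain $e^{-c_12^j\sqrt{\log\log(1/r)}}$ more transparent. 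Your derived constraint $\eps<1/(6k-3)$ is consistent with (and implied by) the paper's choice $\eps<(10k)^{-1}$ in \Cref{lem:prob-far}.
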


\begin{remark}\label{rem:remote-monotone}
By Remark~\ref{rem:remote-monotonicity}, the statement of Proposition~\ref{prop:main-volume} holds more generally for $\kappa$-remote sets wrt $(X_i)_{i=1}^{2t}$ for every $t \ge t_{\min}$.
\end{remark}

Proposition~\ref{prop:main-volume} will be deduced from the next lemma which provides an upper bound on the probability that each cube in a fixed bounded subset of $\cT_f$ belongs to a $\kappa$-remote set.
Given $q \in \cT_f$, $\kappa \ge 1$ and a finite set of points $\bY \subseteq [0, 1]^d$, we define
\begin{equation}\label{eqn:event-R}
R(q, \bY, \kappa) := \{\text{there is a $\cG_f$-connected $\kappa$-remote set $\cC\ni q$ wrt $\bY$}\text{ and } \diam(\bigcup \cC) < 20dr \},
\end{equation}
and
\begin{equation}\label{eqn:event-R'} 
\hspace{-0.8em}R'(q, \bY, \kappa) := \{\text{there is a $\cG_f$-connected $\kappa$-remote set $\cC\ni q$ wrt } \bY \text{ and } \diam(\bigcup \cC)\in (d_f,20dr) \}.
\end{equation}

\begin{lemma}\label{lem:prob-far}
Fix suitably small $\eps=\eps(k)>0$, integers $\kappa \in [M]$, $m \geq 1$ and a set $\cR = \{q_1,\ldots,q_m\} \subseteq \cT_f$ where $\dist(q_i, q_j) > 50dr$ for all distinct $q_i, q_j \in \cR$. Then,
\begin{enumerate}[label=\emph{(\alph*)}]
        \item $\bbP(\bigcap_{j=1}^m R(q_j, (X_i)_{i = 1}^{2t_{\min}}, \kappa)) \le r^{md/2} (\log(1/r))^{m(\kappa - k -2/5)}$, and
        \item $\bbP(R'(q_1, (X_i)_{i = 1}^{2t_{\min}}, \kappa) \cap \bigcap_{j = 2}^{m} R(q_j, (X_i)_{i = 1}^{2t_{\min}}, \kappa)) = r^{md/2} (\log(1/r))^{-\omega(1)}.$
\end{enumerate}
\end{lemma}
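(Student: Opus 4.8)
\textbf{Proof proposal for Lemma~\ref{lem:prob-far}.}
The plan is to combine the geometric tool from \Cref{prop:main-geometric,lem:count-sat} with a first-moment (union bound) computation over constant-size ``skeleta'' $\cS$, crucially using that the $m$ cubes $q_1,\dots,q_m$ are at pairwise distance more than $50dr$, so that the $m$ events $R(q_j,\cdot,\kappa)$ are measurable with respect to the points in disjoint regions of $\bbT^d$ and hence (by independence of the Poissonised/binomial point set on disjoint sets, or a direct conditioning argument) the probability of the intersection factorises (up to $o(1)$-type corrections) into a product over $j$. Thus it suffices to bound $\bbP(R(q,(X_i)_{i=1}^{2t_{\min}},\kappa))$ by $r^{d/2}(\log(1/r))^{\kappa-k-2/5}$ for a single cube $q$, and its primed analogue by $r^{d/2}(\log(1/r))^{-\omega(1)}$; taking the $m$-th power then gives (a) and (b). (One must be slightly careful that the $\kappa$-remote witness $\cC$ for $q_j$ might stretch out of a $50dr$-ball, but \Cref{rem:remote-in-far} and the conditioning on \Cref{lem:3-counterpart} bound $\diam(\bigcup\cC)<20dr$, so the witnessing regions $\mathrm{Ann}(\cC)$ around distinct $q_j$ are genuinely disjoint.)

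For the single-cube bound I would argue as follows. On the event $R(q,(X_i)_{i=1}^{2t_{\min}},\kappa)$ there is a $\cG_f$-connected $\kappa$-remote $\cC\ni q$ with $\diam(\bigcup\cC)=\lambda\in[s_f,20dr)$ (the case $\cC=\{q\}$, i.e.\ $\lambda<s_f$, is handled separately and is in fact the dominant term). By \Cref{prop:main-geometric}, there is $\cS=\cS(\cC)\subseteq\cC$ with $|\cS|<20d$, $q\in\cS$ (since $q\in\cC$ and $q$ is among the $a,b,\alpha_i$ or can be added), $\diam(\bigcup\cS)=\lambda$, and a region $Cr(\cS)\subseteq\mathrm{Ann}(\cC)$ depending only on $\cS$ with $\vol{d}(Cr(\cS))\ge\theta_d r^d+(\theta_{d-1}\lambda/2^{d+2}-3\theta_d d^2 s_f)r^{d-1}$. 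By \ref{item:F3}, $\mathrm{Ann}(\cC)$ — hence $Cr(\cS)$ — contains at most $\kappa-1$ of the points $(X_i)_{i=1}^{2t_{\min}}$. So, union-bounding over the choice of $\cS\in\sat(q,\lambda)$ and over dyadic ranges of $\lambda$, and writing $N=2t_{\min}$,
\[
\bbP(R(q,\cdot,\kappa))\le \sum_{j\ge 0}\ \sum_{\substack{\cS\in\sat(q,2^{j}s_f)\\ 2^{j-1}s_f\le\lambda(\cS)<2^{j}s_f}}\ \bbP\!\big(\mathrm{Bin}(N,\vol{d}(Cr(\cS)))\le\kappa-1\big)+(\text{term for }\cC=\{q\}).
\]
Using $\bbP(\mathrm{Bin}(N,p)\le\kappa-1)=O((Np)^{\kappa-1}\e^{-Np})$, $\vol{d}(Cr(\cS))\ge\theta_d r^d(1+c\lambda/r-O(s_f/r))$ for a constant $c=c(d)>0$, and the defining value of $t_{\min}$ (so $N\theta_d r^d=\tfrac14 d\log(1/r)+(1-\eps)\tfrac{2k-1}{4}\log\log(1/r)$, whence $(Nr^d)^{\kappa-1}\e^{-N\theta_d r^d}=r^{d/4+o(1)}(\log(1/r))^{-(1-\eps)(2k-1)/4}$ roughly, and each extra unit of $\lambda/r$ costs a further $r^{cd/4}$ factor), together with \Cref{lem:count-sat} which gives $|\sat(q,2^{j}s_f)|\le (2\cdot 2^{j})^{20d^2}$, one checks that the sum over $j$ converges geometrically and is dominated by its $\lambda\asymp s_f$ term. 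A direct computation of the $\cC=\{q\}$ contribution — here $Cr$ is essentially a single ball of volume $\theta_d(r-O(s_f))^d=\theta_d r^d(1-O(s_f/r))$, so the exponent is not improved by a $\lambda$-gain but only worsened by the $O(s_f/r)=O(\sqrt{\log\log(1/r)}/\log(1/r))$ correction, which contributes a $(\log(1/r))^{O(1)}$-factor swallowed by choosing $\eps=\eps(k)$ small enough — shows this dominant term is at most $r^{d/2}(\log(1/r))^{\kappa-k-2/5}$; this is exactly where the precise second-order coefficient $(2k-1)/4$ in $t_{\min}$ and the slack between $-1/3$ (in \Cref{prop:main-volume}) and $-2/5$ (here) are used. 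This proves (a) after raising to the $m$-th power.

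For part (b), the only change is that we additionally demand $\lambda=\diam(\bigcup\cC)>d_f=s_f(\log\log(1/r))^{3/2}$, i.e.\ the sum over $j$ starts at $j_0$ with $2^{j_0}s_f\gtrsim d_f$, so $2^{j_0}\gtrsim(\log\log(1/r))^{3/2}$. In that regime the volume gain $\vol{d}(Cr(\cS))\ge\theta_d r^d+\Omega(d_f\,r^{d-1})=\theta_d r^d(1+\Omega((\log\log(1/r))^{3/2}/\log(1/r)))$ produces an extra factor $\exp(-\Omega(N\cdot d_f r^{d-1}))=\exp(-\Omega((\log\log(1/r))^{3/2}))=(\log(1/r))^{-\omega(1)}$, which beats the $(\log(1/r))^{O(1)}$ loss from \Cref{lem:count-sat} and the $(\log(1/r))^{\kappa-k}$ factor; hence the single-cube probability is $r^{d/2}(\log(1/r))^{-\omega(1)}$ and, raising to the $m$-th power, so is the intersection probability in (b).

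The main obstacle I expect is the bookkeeping in the single-cube estimate: one must verify that the entropy cost of enumerating skeleta, $|\sat(q,\lambda)|\le(2\lambda/s_f)^{20d^2}$, never overtakes the probabilistic gain $\exp(-\Omega(N\lambda r^{d-1}))$ coming from the enlarged volume of $Cr(\cS)$ — i.e.\ that $20d^2\log(\lambda/s_f)\ll N\lambda r^{d-1}\asymp\lambda\log(1/r)/r$ uniformly for $\lambda\in[s_f,20dr)$ — and, simultaneously, that the unavoidable $O(s_f/r)$ deficit in $\vol{d}(Cr(\cS))$ relative to a full ball of radius $r$ costs only a $(\log(1/r))^{O_k(1)}$ factor that can be absorbed by choosing $\eps=\eps(k)$ small. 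Getting the constants to line up so that the exponent of $\log(1/r)$ lands at $\kappa-k-2/5$ (strictly better than the $\kappa-k-1/3$ needed downstream in \Cref{prop:main-volume}, leaving room for the subsequent union bound over $q$ and $\kappa$) is the delicate point; everything else is a routine Chernoff-plus-union-bound argument.
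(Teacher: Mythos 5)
Your overall strategy — union-bound over bounded skeleta $\cS$ via \Cref{prop:main-geometric} and \Cref{lem:count-sat}, then a binomial tail estimate using the volume lower bound on $Cr(\cS)$ and the precise second-order term in $t_{\min}$ — is the right one, and your exponent arithmetic for the single-cube bound is consistent with the paper (the requirement $\eps<1/(5(2k-1))$, which is implied by the paper's $\eps<(10k)^{-1}$). However, there are two genuine departures, one of which is a gap.

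The gap is the factorisation step. You want to reduce to a single-cube bound and raise it to the $m$-th power, justified by ``independence of the Poissonised/binomial point set on disjoint sets, or a direct conditioning argument''. But the binomial point set on disjoint regions is \emph{not} independent: with $n=2t_{\min}$ i.i.d.\ uniform points, the counts in disjoint balls are a multinomial vector and are negatively associated, not independent; and the events $R(q_j,\cdot,\kappa)$ are union events over many witness sets, depending on the full local configuration and not only on the count. One can try to patch this via Poissonisation-plus-depoissonisation, or via a negative-association argument for decreasing events (conditioning on the vector of local counts and using that $\bbP(R_j\mid N_j)$ is decreasing in $N_j$), but neither is trivial and you assert it rather than prove it. The paper avoids this issue entirely: it performs the union bound over the \emph{tuple} $(\cS_1,\dots,\cS_m)$ first, observes that for each fixed tuple the sets $Cr(\cS_1),\dots,Cr(\cS_m)$ are pairwise disjoint, and then bounds the $m$ simultaneous sparsity requirements by the single event $|\bY\cap\bigcup_j Cr(\cS_j)|\le m(\kappa-1)$, reducing everything to one binomial tail with the aggregate volume lower bound $\ge m\theta_d r^d - 3m\theta_d d^2 s_f r^{d-1}$ (plus the $\lambda$-gain in the large-diameter case). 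No independence or correlation claim is needed.

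The second difference is organisational and not a gap: you use a dyadic decomposition over $\lambda=\diam(\bigcup\cC)\in[s_f,20dr)$ with a geometric sum, whereas the paper uses a cleaner two-case split, namely (i) all $\cS_j\in\sat(q_j,d_f)$ versus (ii) some $\cS_j\in\sat(q_j,20dr)\setminus\sat(q_j,d_f)$. Case (i) gives the dominant $r^{md/2}(\log(1/r))^{m(\kappa-1)-(1-\eps)m(2k-1)/2-o(1)}$ contribution; case (ii) gives $r^{md/2}(\log(1/r))^{-\omega(1)}$, and is the \emph{only} case that occurs under $R'(q_1,\cdot,\kappa)$, so parts (a) and (b) fall out simultaneously. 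Your dyadic version converges to the same answer but with more bookkeeping, and your aside that ``$\cC=\{q\}$, i.e.\ $\lambda<s_f$'' is slightly off — a single cube of $\cT_f$ has diameter $s_f\sqrt d\ge s_f$, so \Cref{prop:main-geometric} already covers this case and no separate treatment is needed.
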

\begin{proof}
Denote $\bY = (X_i)_{i = 1}^{2t_{\min}}$.
By our assumption on $\cR$, the sets $\sat(q_1, 20dr),\ldots$, $\sat(q_m, 20dr)$ are all pairwise disjoint. Furthermore, for every choice of $\cS_1 \in \sat(q_1, 20dr),\ldots,\cS_m\in \sat(q_m, 20dr)$, the sets $Cr(\cS_1),\ldots,Cr(\cS_m)$ are also pairwise disjoint. Hence,
    \begin{align}
        \bbP\bigg(\bigcap_{i = 1}^m R(q_i, (X_i)_{i = 1}^{2t_{\min}}, \kappa)\bigg)
        &\leq \sum_{\cS_1 \in \text{sat}(q_1, 20dr)} \ldots \sum_{\cS_m \in \text{sat}(q_m, 20dr)} \bbP(|\bY \cap Cr(\cS_1)| < \kappa, \ldots, |\bY \cap Cr(\cS_m)| < \kappa)\nonumber \\
        &\leq \sum_{\cS_1 \in \text{sat}(q_1, 20dr)} \ldots \sum_{\cS_m \in \text{sat}(q_m, 20dr)} \bbP\bigg(\bigg|\bY \cap \bigcup_{i = 1}^m Cr(\cS_i)\bigg| \leq m(\kappa - 1)\bigg).\label{eq:UB-sat}
    \end{align}
    To estimate~\eqref{eq:UB-sat}, we consider two cases.

    \noindent
    \textbf{Case 1:} $\cS_i\in \sat(q_i,d_f)$ for every $i\in [m]$. Then, given $\cS_1,\ldots,\cS_m$, by \Cref{prop:main-geometric}(c), the random variable in~\eqref{eq:UB-sat} has binomial distribution with parameters $2t_{\min}$ and 
    \[\vol{d}(Cr(\cS_1)\cup\ldots\cup Cr(\cS_m))\ge m\theta_d r^d-3m\theta_dd^2s_fr^{d-1} =: \psi(m,r).\]
    Since $\mathbb P(\mathrm{Bin}(n,p)\le x)$ is an decreasing function of $p$ for fixed $n,x$, by using \Cref{lem:count-sat}, we obtain that the part of~\eqref{eq:UB-sat} falling into the current case is bounded from above by
    \[\bigg(\frac{2d_f}{s_f}\bigg)^{20d^2m}\; \sum_{i=0}^{m(\kappa-1)}(2t_{\min})^i \psi(m,r)^i (1-\psi(m,r))^{2t_{\min}-i}.\]
    As every term in the latter sum is negligible compared to the following ones, by using the relation $(1-x)^t=\exp(-xt+O(x^2t))$ as $x\to 0$, the expression is bounded from above by
    \[2\bigg(\frac{2d_f}{s_f}\bigg)^{20d^2m} (2 t_{\min} \psi(m,r))^{m(\kappa-1)} \exp(-2t_{\min} \psi(m,r))= r^{md/2} \bigg(\log\bigg(\frac1r\bigg)\bigg)^{m(\kappa-1)-(1-\eps)m(2k-1)/2-o(1)}.\]
    As the exponent of the logarithm is smaller and bounded away from $m(\kappa-k-2/5)$ for any $\varepsilon \in (0, (10k)^{-1})$, this part of the sum~\eqref{eq:UB-sat} is asymptotically negligible compared to the upper bound in(a).

    \noindent
    \textbf{Case 2:} $\cS_i \in \text{sat}(q_i, 20dr) \setminus \text{sat}(q_i, d_f)$ for at least one $i \in [m]$. 
    Then, by \Cref{prop:main-geometric}(c),
    \[\vol{d}(Cr(\cS_1)\cup\ldots\cup Cr(\cS_m))\ge m\theta_d r^d+\theta_{d-1} d_f r^{d-1}/2^{d+2}-3m\theta_dd^2s_fr^{d-1} =: \varphi(m,r).\]
    Similarly to the previous case, the part of~\eqref{eq:UB-sat} falling into the current case is bounded from above by
    \[\begin{split}
    2\bigg(\frac{20dr}{s_f}\bigg)^{20d^2m} (2 t_{\min} \varphi(m,r))^{m(\kappa-1)} \exp(-2t_{\min} \varphi(m,r)) 
    &= r^{md/2} \exp(-\omega(\log\log(1/r)))\\
    &= r^{md/2} (\log(1/r))^{-\omega(1)},
    \end{split}\]
    which finishes the proof of each of (a) and (b).
\end{proof}

\begin{proof}[Proof of~\Cref{prop:main-volume}]
Recall from \Cref{rem:remote-in-far} that whp every $\kappa$-remote set wrt $(X_i)_{i = 1}^{2t_{\min}}$ has diameter less than $20dr$. Assuming this event, fix $\varepsilon > 0$ such that the statement of \Cref{lem:prob-far} holds, $m = 1$ and $\kappa\in [M]$. 
Then, part (a) (respectively (b)) of the proposition follows from part (a) (respectively (b)) of \Cref{lem:prob-far} and a first moment argument for the number of cubes in $\cT_f$ in connected $\kappa$-remote sets.
\end{proof}

\subsubsection{Favourable properties of the partner pairs}

Before we proceed to the actual construction of the choice set $U$, we pinpoint three typical properties of the partner pairs which are crucial for the construction.
Define
\begin{equation}\label{eqn:t**+}
    t_{\max} = t_{\max}(r,k,d) := \frac{d}{4\theta_d} r^{-d} \log\bigg(\frac{1}{r}\bigg) + \frac{k}{\theta_d} r^{-d} \log\log\bigg(\frac{1}{r}\bigg),
\end{equation}
which will serve as a upper bound for our hitting time $\taukctwo{k}$.
More precisely, we fix suitably small $\eps = \varepsilon(k) > 0$ such that the statements of \Cref{prop:main-volume} and \Cref{lem:prob-far} hold, and denote by $A_k$ the event $\taukctwo{k}\in I_k := [t_{\min} + 4, t_{\max}]$. Our next lemma shows that $A_k$ holds whp.

\begin{lemma}\label{lem:hitting-time-offline-ub}
$\mathbb P(A_k)=1-o(1)$.
\end{lemma}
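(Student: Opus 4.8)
The plan is to establish the two bounds $\taukctwo{k}\ge t_{\min}+4$ and $\taukctwo{k}\le t_{\max}$ separately, each holding whp; together they give $A_k$. The lower bound is immediate from \Cref{lem:hitting-time-offline-lb}(c), applied with the fixed value $\eps=\eps(k)$ chosen above: it gives $\taukctwo{k}-t_{\min}=\omega(1)$ whp, and in particular $\taukctwo{k}\ge t_{\min}+4$ whp.

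For the upper bound I would use a first moment argument. Let $Z'_{[0,k-1],t}$ denote the number of indices $i\in[t]$ such that both partners $X_{2i-1}$ and $X_{2i}$ have degree at most $k-1$ in $G_{2t}$ (the natural common extension of the notation $Z_{[\kappa,\kappa'],n}$ from \Cref{subsec:prelim-rgg} and $Z'_{\kappa,t}$ from before \Cref{lem:conc-pairs}). The condition defining $\taukctwo{k}$ is satisfied at time $t_{\max}$ precisely when $Z'_{[0,k-1],t_{\max}}=0$, so it is enough to show $Z'_{[0,k-1],t_{\max}}=0$ whp, and by Markov's inequality it suffices to prove $\mathbb E[Z'_{[0,k-1],t_{\max}}]\to 0$. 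Writing $n=2t_{\max}$, formula~\eqref{eqn:t**+} gives $\theta_d nr^d=\frac d2\log(1/r)+2k\log\log(1/r)$, hence $\e^{-\theta_d nr^d}=r^{d/2}(\log(1/r))^{-2k}$ and $nr^d=\Theta(\log(1/r))$; by symmetry between partner pairs, $\mathbb E[Z'_{[0,k-1],t_{\max}}]=t_{\max}\cdot\mathbb P(d_{G_n}(X_1)\le k-1\text{ and }d_{G_n}(X_2)\le k-1)$.

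The remaining probability I would bound by splitting on the distance $|X_1-X_2|$. In either regime, if both $X_1$ and $X_2$ have degree at most $k-1$ then at most $2k-2$ of the points $(X_i)_{i=3}^{n}$ lie in $B(X_1,r)\cup B(X_2,r)$. When $|X_1-X_2|>2r$ the two balls are disjoint, so this union has volume $2\theta_d r^d$ and the event has probability at most $\mathbb P(\mathrm{Bin}(n-2,2\theta_d r^d)\le 2k-2)=\Theta((nr^d)^{2k-2}\e^{-2\theta_d nr^d})=\Theta(r^d(\log(1/r))^{-2k-2})$; multiplying by $t_{\max}=\Theta(r^{-d}\log(1/r))$ gives $\Theta((\log(1/r))^{-2k-1})=o(1)$. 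When $|X_1-X_2|\le 2r$, which happens with probability $\theta_d(2r)^d=\Theta(r^d)$, the union still has volume at least $\theta_d r^d$, so the corresponding contribution to the expectation is at most $t_{\max}\cdot\Theta(r^d)\cdot\mathbb P(\mathrm{Bin}(n-2,\theta_d r^d)\le 2k-2)=\Theta(r^{d/2}(\log(1/r))^{-1})=o(1)$. Summing the two contributions yields $\mathbb E[Z'_{[0,k-1],t_{\max}}]\to 0$, which completes the proof.

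I do not anticipate a genuine obstacle: this is a routine Chernoff/Poisson-type first moment estimate, and the only step requiring mild care is the count "at most $2k-2$ points of $(X_i)_{i\ge 3}$ in $B(X_1,r)\cup B(X_2,r)$", where one should remember that $X_1$ and $X_2$ may themselves contribute to each other's degree — but accounting for this can only lower the count, so the stated bound (and, in the far case, the use of the exact volume $2\theta_d r^d$) is safe.
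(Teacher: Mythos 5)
Your proof is correct and follows essentially the same route as the paper's: the lower bound is read off from \Cref{lem:hitting-time-offline-lb}(c), and the upper bound is a first-moment/union-bound argument showing that whp no partner pair in $(X_i)_{i=1}^{2t_{\max}}$ simultaneously has degree below $k$, obtained by splitting on whether $\dist(X_1,X_2)\le 2r$ and bounding the resulting binomial tail in each regime. The only cosmetic differences are that you phrase the union bound as a first moment on the named count $Z'_{[0,k-1],t_{\max}}$ and use the slightly tighter threshold $2k-2$ where the paper writes $<2k$; both computations yield the same conclusion.
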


\begin{proof}
    By Lemma~\ref{lem:hitting-time-offline-lb}, $\bbP(\taukctwo{k} < t_{\min} + 4) = o(1)$. Thus, we focus on the upper bound. We will show that, with high probability, there are no partner pairs among $(X_i)_{i = 1}^{2t_{\max}}$ which simultaneously have degree in that interval $[0, k - 1]$. In such case, $\taukctwo{k} \leq t_{\max}$ will follow from the definition of $\taukctwo{k}$.

    Observe that the probability that $X_1, X_2$ both have degree smaller than $k$ in $G_{2t_{\max}}$ is dominated by
    \begin{align*}
        \bbP(\dist(X_1, X_2) &\leq 2r) \bbP(\mathrm{Bin}(2t_{\max} - 2, \theta_d r^d) < 2k) + \bbP(\dist(X_1, X_2) > 2r) \bbP(\mathrm{Bin}(2t_{\max} - 2, 2\theta_d r^d) < 2k) \\
        & = O(r^d (t_{\max} r^d)^{2k - 1} (1 - \theta_d r^d)^{2t_{\max}} ) + O(  (t_{\max} r^d)^{2k - 1} (1 - 2\theta_d r^d)^{2t_{\max}} ) \\
        & = O(r^d/(\log(1/r))^{2k+1}) = o(t_{\max}^{-1}).
    \end{align*}
    A union bound over all partner pairs in $(X_i)_{i = 1}^{2t_{\max}}$ finishes the proof.
\end{proof}

We come back to describing the promised properties.
The first favourable property of partner pairs essentially states that, whp, no partner pair is located in the proximity of a single $M$-nonfull cube. 
More formally, denote by $D_M$ the event that there do not exist a partner pair $Y,\bar Y\in (X_i)_{i=1}^{2t_{\max}}$ and a cube $q \in \mathcal{T}$ such that $q$ is $M$-nonfull wrt $(X_i)_{i = 1}^{2t_{\min}}$ and $\max\{\dist(q,Y), \dist(q,\bar Y)\} \leq 100dr$.

\begin{lemma}\label{lem:D}
$\bbP(D_M) = 1-o(1)$.
\end{lemma}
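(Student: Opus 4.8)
The plan is a union bound over all cubes $q\in\cT$ and all partner pairs in $(X_i)_{i=1}^{2t_{\max}}$, after decoupling the event ``$q$ is $M$-nonfull'' from the event ``a fixed partner pair lands near $q$''.

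Fix a cube $q\in\cT$ (there are $|\cT|=\Theta(r^{-d})$ of them) and an index $i\in[t_{\max}]$ (there are $t_{\max}=\Theta(r^{-d}\log(1/r))$ partner pairs, by~\eqref{eqn:t**+}), and set $p=(r/c)^d$ for the volume of a cube of $\cT$. Let $\cE_q$ be the event that $q$ contains at most $M-1$ points among $(X_j)_{j=1}^{2t_{\min}}$ (i.e.\ that $q$ is $M$-nonfull wrt $(X_j)_{j=1}^{2t_{\min}}$), and let $\cF_i$ be the event $\max\{\dist(q,X_{2i-1}),\dist(q,X_{2i})\}\le 100dr$. Deleting from $\{1,\ldots,2t_{\min}\}$ the (at most two) indices in $\{2i-1,2i\}$ shows that $\cE_q$ implies the event $\cE_q'$ that at most $M-1$ of the remaining $n'\ge 2t_{\min}-2$ points among $(X_j)_{j=1}^{2t_{\min}}$ lie in $q$. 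Since $\cE_q'$ depends only on points with index outside $\{2i-1,2i\}$, it is independent of $\cF_i$, whence $\mathbb P(\cE_q\cap\cF_i)\le\mathbb P(\cE_q')\,\mathbb P(\cF_i)$.

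I would then bound the two factors. The $100dr$-neighbourhood of $q$ lies in a ball of radius $(100d+1)r$, so, by independence of $X_{2i-1}$ and $X_{2i}$, $\mathbb P(\cF_i)\le\big(\theta_d(100d+1)^d\big)^2 r^{2d}=O(r^{2d})$. For $\cE_q'$, which has the form $\{\mathrm{Bin}(n',p)\le M-1\}$ with $n'\ge 2t_{\min}-2$, monotonicity of $n\mapsto\mathbb P(\mathrm{Bin}(n,p)\le M-1)$ together with~\eqref{eqn:t**} gives that the mean $(2t_{\min}-2)p=(1+o(1))\tfrac{d}{2\theta_d c^d}\log(1/r)$ diverges; applying Chernoff's bound (\Cref{lem:chernoff}) with $\varepsilon\to 1$ yields $\mathbb P(\cE_q')\le\exp\big(-(1+o(1))\tfrac{d}{16\theta_d c^d}\log(1/r)\big)\le r^{\delta}$ for a constant $\delta=\delta(c,d)>0$ and all small enough $r$. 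A union bound over $q$ and $i$ then gives
\[
\mathbb P(D_M^c)\ \le\ |\cT|\cdot t_{\max}\cdot\mathbb P(\cE_q')\,\mathbb P(\cF_i)\ =\ \Theta(r^{-d})\cdot\Theta(r^{-d}\log(1/r))\cdot r^{\delta}\cdot O(r^{2d})\ =\ O\big(r^{\delta}\log(1/r)\big)\ =\ o(1).
\]

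No step presents a genuine difficulty; the only thing requiring a little care is the decoupling of $\cE_q$ from $\cF_i$, since the partner pair $X_{2i-1},X_{2i}$ may itself be among the points deciding whether $q$ is $M$-nonfull — this is handled by discarding those two indices at no cost. The remaining estimates are routine, and the per-combination saving of order $r^{\delta}/\mathrm{polylog}$ comfortably absorbs the $\Theta(r^{-2d}\log(1/r))$ choices of $(q,i)$.
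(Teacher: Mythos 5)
Your proof is correct, and it takes a cleaner, more self-contained route than the paper's. The paper first establishes two auxiliary high-probability properties \ref{item:H1} and \ref{item:H2} (a bound of $r^{\alpha-d}$ on the number of $M$-nonfull cubes wrt $(X_i)_{i\le 2t_{\min}}$, and a bound of $Z\log(1/r)$ on the number of points of $(X_i)_{i\le 2t_{\max}}$ within distance $100dUr$ of any cube), then conditions on the \emph{unlabelled} positions of the first- and second-half point sets and uses the uniformity of the label assignment to bound the probability that two specific points near a fixed $M$-nonfull cube happen to carry partner labels. Your proposal instead runs a plain union bound over pairs $(q,i)$ and breaks the dependence between ``$q$ is $M$-nonfull'' and ``$X_{2i-1},X_{2i}$ land near $q$'' by passing to the relaxed event $\cE_q'$ that discards the two indices $\{2i-1,2i\}$ before counting; this conditional-independence step is exactly what is needed, and the per-term bound $r^\delta\cdot O(r^{2d})$ comfortably beats the $\Theta(r^{-2d}\log(1/r))$ enumeration. (Your Chernoff constant $d/(16\theta_d c^d)$ is not tight — one gets $d/(4\theta_d c^d)$ — but that changes nothing.) The paper's heavier setup is not wasted, though: \ref{item:H1} and \ref{item:H2} are deliberately proved ``in a stronger form than needed here'' because they are reused in \Cref{lem:special}, \Cref{clm:stage-1-balance} and \Cref{lem:k-far-volume-u1}, so the authors fold their proof into this lemma rather than giving the more elementary direct calculation you propose.
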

\begin{proof}
To begin with, we describe two favourable properties of the distribution of $(X_i)_{i = 1}^{t_{\min}}$ with respect to the coarse tessellation $\cT$. 
These properties will be useful in wider generality and, therefore, we state and prove them in a stronger form than needed here. Recall the constant $U = U(c, d)$ defined in \Cref{lem:3-counterpart}. We show that, for sufficiently large constant $Z = Z(c, d)$, the following properties hold jointly whp:
\begin{enumerate}[\upshape{\textbf{D\arabic*}}]
    \item\label{item:H1} there is $\alpha=\alpha(c,d)>0$ such that the number of $M$-nonfull cubes wrt $(X_i)_{i = 1}^{2t_{\min}}$ is at most $r^{\alpha-d}$,
    \item\label{item:H2} for every cube $q \in \cT$,there are at most $Z \log(1/r)$ vertices among $(X_i)_{i=1}^{2t_{\max}}$ at distance at most $100dUr$ from $q$.
\end{enumerate}
For \ref{item:H1} note that, by Chernoff's bound, the expected number of $M$-nonfull cubes wrt $(X_i)_{i = 1}^{2t_{\min}}$ is~at~most 
\[|\cT|\cdot \mathbb P(|q\cap (X_i)_{i=1}^{2t_{\min}}|\le M)=r^{-d+o(1)}\cdot \mathbb P(\mathrm{Bin}(2t_{\min},r^d/c^d)\le M) = r^{-d+o(1)} (t_{\min}r^d)^{O(1)} (1-r^d/c^d)^{2t_{\min}}.\]
Using that $(1-r^d/c^d)^{2t_{\min}} = \exp(-\Omega(\log(1/r)))$ together with Markov's inequality shows \ref{item:H1}.
For \ref{item:H2}, a direct application of Chernoff's bound shows that, for sufficiently large $Z$, a cube has more than $Z \log(1/r)$ vertices in $(X_i)_{i = 1}^{2t_{\max}}$ at distance at most $100dUr$ from itself with probability $o(|\cT|^{-1})$. A union bound over $|\cT|$ cubes shows~\ref{item:H2}.

Next, reveal the positions of the points $\{X_i: i\in [2t_{\min}]\}$ and $\{X_i: i\in [2t_{\min}+1,2t_{\max}]\}$ without revealing their labels, and condition on the events \ref{item:H1} and \ref{item:H2}.
Note that, by doing so, the labels of the said points remain uniformly distributed.
Then, by a union bound,
\[\mathbb P(D_M^c)\le r^{\alpha-d} (Z\log(1/r))^2\cdot \max\bigg\{t_{\min}\cdot  \binom{2t_{\min}}{2}^{-1},(t_{\max}-t_{\min})\binom{2t_{\max}-2t_{\min}}{2}^{-1}\bigg\} = o(1),\]
where the first term chooses an $M$-nonfull cube $q$ wrt $(X_i)_{i = 1}^{2t_{\min}}$, the second term chooses a pair of points at distance at most $100dr \leq 100dUr$ from $q$ simultaneously in $\{X_i: i\in [2t_{\min}]\}$ or in $\{X_i: i\in [2t_{\min}+1,2t_{\max}]\}$, and the last term dominates the probability that these points form a partner pair.
\end{proof}

The next two lemmas are expressed in terms of the finer tessellation $\cT_f$ and focus on partner pairs near $\kappa$-remote sets for different values of $\kappa$. 
For the first lemma, denote by $E_k$ the event that there do not exist points $Z,Y,\Bar{Y} \in (X_i)_{i = 1}^{2\taukctwo{k}}$ with $Y,\Bar{Y}$ being a partner pair, and $k$-remote connected sets $\cC, \Bar{\cC}$ in $\cG_f$ wrt $(X_i)_{i = 1}^{2\taukctwo{k}}$ such that $Z\in \bigcup \cC$ and $\max\{\dist(Y,Z), \dist(\bigcup \Bar{\cC}, \Bar{Y})\} \leq d_f$, where we recall the auxiliary length $d_f$ from \eqref{eqn:df}.

\begin{lemma}\label{lem:E}
$\bbP(E_k) = 1-o(1)$.
\end{lemma}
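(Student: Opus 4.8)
The plan is to bound $\mathbb P(E_k^c)$, and it suffices to bound the probability that $E_k^c$ occurs together with the event $A_k$ (\Cref{lem:hitting-time-offline-ub}) and the whp conclusions of \Cref{lem:3-counterpart}, \Cref{lem:cutoff-in-2c-blowup}, \Cref{rem:remote-in-far}, \Cref{lem:D} and \Cref{prop:main-volume} (for all $\kappa\in[M]$), as well as the event $\{\cN_\kappa\le g(r)\,\mathbb E[\cN_\kappa]\}$ for $\kappa\in\{k-1,k\}$, where $\cN_\kappa := |\{q\in\cT_f : R(q,(X_i)_{i=1}^{2t_{\min}},\kappa)\text{ holds}\}|$ and $g(r)\to\infty$ slowly (this last event holds whp by Markov's inequality), since the complements of all these events have probability $o(1)$. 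On $A_k$ we have $\taukctwo{k}\in[t_{\min}+4,t_{\max}]$, so $Z,Y,\bar Y$ all lie among $(X_i)_{i=1}^{2t_{\max}}$; by \Cref{rem:remote-monotonicity} any $k$-remote connected set wrt $(X_i)_{i=1}^{2\taukctwo{k}}$ is $k$-remote wrt $(X_i)_{i=1}^{2t_{\min}}$, and by \Cref{rem:remote-in-far} it has diameter less than $20dr$ and is contained in $\bigcup F(N)$ for some $N\in\fN$. Combining the latter with \Cref{lem:cutoff-in-2c-blowup}(b), every point within $d_f$ of $\bigcup\cC$ (resp.\ of $\bigcup\bar\cC$) lies within $3r$ of some close, hence $M$-nonfull wrt $(X_i)_{i=1}^{2t_{\min}}$, cube of the corresponding component.

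I would then split on $\dist(\bigcup\cC,\bigcup\bar\cC)$. If this distance is at most $50dr$, then, using $\dist(Y,Z)\le d_f$ with $Z\in\bigcup\cC$, $\dist(\bar Y,\bigcup\bar\cC)\le d_f$, and $\diam(\bigcup\cC),\diam(\bigcup\bar\cC)<20dr$, a short computation places $Y$ and $\bar Y$ both within $100dr$ of a common $M$-nonfull cube, contradicting $D_M$. Hence it remains to treat the case $\dist(\bigcup\cC,\bigcup\bar\cC)>50dr$.

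For the far case I would run a union bound over cubes $q_1\in\cC$ (containing $Z$) and $q_2\in\cT_f$ witnessing $\bar\cC$ near $\bar Y$, which satisfy $\dist(q_1,q_2)>50dr$ and $R(q_1,(X_i)_{i=1}^{2t_{\min}},k)\cap R(q_2,(X_i)_{i=1}^{2t_{\min}},k)$, together with the index $a\le t_{\max}$ of the partner pair $\{Y,\bar Y\}=\{X_{2a-1},X_{2a}\}$ and, where needed, the index of $Z$. After deleting from $(X_i)_{i=1}^{2t_{\min}}$ the $O(1)$ points equal to $Y,\bar Y,Z$ — which perturbs the bounds of \Cref{lem:prob-far} only by a factor $1+o(1)$ — the relevant points become independent of the remoteness structure, so each summand factorises as $\mathbb P(R(q_1,\cdot)\cap R(q_2,\cdot))$ times proximity/membership probabilities that are powers of $d_f^d$ and $s_f^d$. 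The key refinement is a dichotomy: for each of $Y,\bar Y$, being within $d_f$ of $\bigcup\cC$ (resp.\ $\bigcup\bar\cC$) forces it to lie either in $\mathrm{Ann}(\cC)$ (resp.\ $\mathrm{Ann}(\bar\cC)$) — whence deleting it makes the set $(k-1)$-remote, and \Cref{lem:prob-far} with $\kappa=k-1$ gains an extra factor $\log(1/r)$ — or inside $\bigcup\cC$ (resp.\ $\bigcup\bar\cC$), whence its $\cT_f$-cube is itself a $k$-remote cube carrying a process point, absorbed at cost $s_f^d=r^{d+o(1)}$; additionally, the hypothesis $\dist(Y,Z)\le d_f$ supplies a genuine second sparse process point near $Y$, contributing a further factor $d_f^d$. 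Plugging $t_{\max}=\Theta(r^{-d}\log(1/r))$, $|\cT_f|=r^{-d+o(1)}$, the lengths $s_f,d_f$, and $\mathbb E[\cN_\kappa]\le|\cT_f|\,r^{d/2}(\log(1/r))^{\kappa-k-2/5}$ from \Cref{lem:prob-far} into the sum, a direct computation with these parameters bounds it by $(\log(1/r))^{-\Omega(1)}=o(1)$, which completes the proof.

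The main obstacle is precisely this last first-moment estimate: the naive bound — treating a partner pair's proximity to the remote region as independent of the remoteness itself — overshoots the target $o(1)$ by a logarithmic factor, and recovering it requires carefully bookkeeping, for each of the two partners separately, whether it sits in the interior of its remote component or merely in the surrounding annulus, and exploiting that the hypothesis furnishes a genuinely second sparse point $Z$ near $Y$. Extra care is needed for partner pairs arriving before time $t_{\min}$, which is handled through the monotonicity of the remote property under point deletion (\Cref{rem:remote-monotonicity}).
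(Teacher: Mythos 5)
Your proposal follows the same skeleton as the paper's proof: condition on $A_k$, $D_M$ and the conclusion of \Cref{rem:remote-in-far}; use \Cref{rem:remote-monotonicity} to reduce to remoteness with respect to a $2t_{\min}$-point set obtained by removing $Y,\bar Y,Z$; dispatch the case $\dist(\bigcup\cC,\bigcup\bar\cC)\le 50dr$ via $D_M$; and run a union bound over the two witness cubes, the partner-pair index, and the index of $Z$, invoking \Cref{lem:prob-far}(a) with $m=2$, $\kappa=k$. All of this is what the paper does.

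Where you deviate is the ``dichotomy'' refinement, and the worry motivating it is misplaced. The direct accounting already closes the estimate: with $|\cT_f|^2$ for $(q,\bar q)$, $2t_{\max}^2$ for the two indices, $r^d(\log(1/r))^{-4/5}$ from \Cref{lem:prob-far}(a), $|\cT_f|^{-1}=s_f^d$ for $Z\in q$, and $(3d_f)^{2d}$ for $Y,\bar Y$ landing near $q,\bar q$, the product is
\[|\cT_f|^2\cdot 2t_{\max}^2\cdot \frac{r^d}{(\log(1/r))^{4/5}}\cdot\frac{1}{|\cT_f|}\cdot (3d_f)^{2d}=O\bigl((\log(1/r))^{6/5-d}(\log\log(1/r))^{7d/2}\bigr)=o(1)\]
for $d\ge 2$. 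The decisive saving is the factor $s_f^d$ for $Z$ being confined to the \emph{fixed} cube $q$; you instead ascribe to $Z$ a factor $d_f^d$ (constraining it only relative to $Y$), which is a weaker bound and is very likely the source of the logarithmic overshoot you perceive. Similarly, your claim that in the ``interior'' case $\bar Y$'s position is ``absorbed at cost $s_f^d$'' is not justified as stated: the hypothesis only places $\bar Y$ within $d_f$ of $\bigcup\bar\cC$, and with $\bar q$ already fixed by the union bound the relevant volume for $\bar Y$ is $\Theta(d_f^d)$, not $s_f^d$. Finally, the closing estimate is asserted rather than carried out. In short, the route is structurally the paper's, but it rests on a misdiagnosis of the first-moment estimate, and the extra dichotomy layer — which you do not actually verify — is both unnecessary and accounted imprecisely.
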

\begin{proof}
Denote by $B_k$ the event all connected $k$-remote sets wrt $(X_i)_{i = 1}^{2t_{\min}}$ have diameter at most $20dr$; this event holds whp by \Cref{rem:remote-in-far}.
Also, recall the event $A_k$, which holds with high probability by Lemma~\ref{lem:hitting-time-offline-ub}.
Then, \Cref{lem:D} implies
\[\mathbb P(E_k^c)\le \mathbb P(D_M^c)+\mathbb P(B_k^c)+\bbP(A_k^c)+\mathbb P(E_k^c\cap D_M\cap B_k \cap A_k) = o(1)+\mathbb P(E_k^c\cap D_M\cap B_k \cap A_k),\]
so it remains to estimate the last term.

To this end, we employ a union bound.
Fix two cubes $q,\bar q\in \cT_f$ and indices $j \in [2t_{\max}]$, $j' \in [t_{\max}]$, $j \notin \{ 2j' - 1, 2j'\}$. 
The cubes $q,\bar q$ serve to construct $\cC\ni q$ and $\bar\cC\ni \bar q$ and thus, assuming the event $E_k^c\cap D_M$, we can (and do) assume that $q, \bar q$ are at distance at least $50dr$ from each other.
We also define $t_{\text{aux}} := 2t_{\min} + \mathbbm{1}_{j \leq 2t_{\min} } + 2 \cdot \mathbbm{1}_{j' \leq t_{\min}}$.

Next, reveal the set of points $\bY := \{ X_i: i \leq t_{\text{aux}}, i \notin \{ j, 2j' - 1, 2j' \} \}$.
From the construction of $t_{\text{aux}}$ it follows that $|\bY| = 2t_{\min}$.
Assuming the event $A_k$, we have $\bY \subseteq (X_i)_{i = 1}^{2\taukctwo{k}}$. Consequently, by \Cref{rem:remote-monotonicity}, if each of $q,\bar q$ belongs to a $k$-remote set wrt $(X_i)_{i =1}^{2\taukctwo{k}}$, then they also belong to $k$-remote sets wrt $\bY$.
Since the indices $j, j'$ are fixed, the distribution of $\bY$ is the same as $(X_i)_{i=1}^{2t_{\min}}$, and we are in position to apply \Cref{lem:prob-far}(a) with $\kappa=k\le M$ and $m=2$.
As a result, the probability that each of $q,\bar q$ belongs to a $k$-remote set wrt $(X_i)_{i = 1}^{2\taukctwo{k}}$ is at most $r^d (\log(1/r))^{-4/5}$.

Furthermore, the probability that $X_j\in q$ is $|\cT_f|^{-1}$, and the probability that $X_{2j'-1},X_{2j'}$ are at distance at most $d_f$ from $q,\bar q$, respectively,
is at most $(s_f+2d_f)^{2d}\le (3d_f)^{2d}$.
All in all,
\[\mathbb P(E_k^c\cap D_M\cap B_k  \cap A_k)\le |\cT_f|^2\cdot 2t_{\max}^2\cdot \frac{r^d}{ (\log(1/r))^{4/5}}\cdot \frac{1}{|\cT_f|}\cdot (3d_f)^{2d} = O \left( \frac{t_{\max}(\log(1/r))^{1/5}  d_f^{2d} }{s_f^d} \right)=o(1),\]
where we used that $|\cT_f|=s_f^{-d}$, $t_{\max} r^d = O(\log(1/r))$ and $d_f^{2d}=r^d s_f^d (\log(1/r))^{-d+o(1)}$. This leads to the desired conclusion.
\end{proof}

Our last property considers pairs of $M$-remote connected sets $\cC,\bar\cC$ where $\cC$ has atypically large diameter.
Denote by $F_M$ the event that there do not exists a partner pair $Y, \Bar{Y} \in (X_i)_{i = 1}^{2\taukctwo{k}}$ and $M$-remote connected sets $\cC, \Bar{\cC} \subseteq \cT_f$ wrt $(X_i)_{i = 1}^{2\taukctwo{k}}$ with $\diam(\bigcup \cC) > d_f$ and $\max\{\dist(\bigcup \cC,Y), \dist(\bigcup \Bar{\cC},\Bar{Y})\} < 20dr$.

\begin{lemma}\label{lem:F}
$\bbP(F_M) = 1-o(1)$.
\end{lemma}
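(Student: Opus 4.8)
The proof runs entirely parallel to that of \Cref{lem:E}, so the plan is to reuse its architecture with two modifications: here the sets involved are $M$-remote rather than $k$-remote, and one of them (the set $\cC$) is required to have diameter exceeding $d_f$. The first modification just means invoking \Cref{lem:prob-far} and \Cref{rem:remote-in-far} with $\kappa=M$ instead of $\kappa=k$; the second lets us upgrade the use of \Cref{lem:prob-far}(a) to the stronger \Cref{lem:prob-far}(b), gaining a $(\log(1/r))^{-\omega(1)}$ factor. The price to pay is that the two partner points are now only within $20dr$ (instead of $d_f$) of the two remote sets, which inflates some position probabilities from $d_f^d$ to $\Theta(r^d)$ — harmless for the union bound. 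Concretely, letting $B_M$ be the event that every $\cG_f$-connected $M$-remote set wrt $(X_i)_{i=1}^{2t_{\min}}$ has diameter less than $20dr$ (which holds whp by \Cref{rem:remote-in-far} with $\kappa=M$ and \Cref{lem:3-counterpart}), and recalling $D_M$ from \Cref{lem:D} and $A_k$ from \Cref{lem:hitting-time-offline-ub}, I would first write $\bbP(F_M^c)\le \bbP(D_M^c)+\bbP(B_M^c)+\bbP(A_k^c)+\bbP(F_M^c\cap D_M\cap B_M\cap A_k)$; the first three terms are $o(1)$ and it remains to bound the fourth.

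Next I would carry out a geometric reduction on the event $F_M^c\cap D_M\cap B_M\cap A_k$. Fix a witness $Y,\bar Y,\cC,\bar\cC$; on $A_k$ we have $\taukctwo{k}\in[t_{\min}+4,t_{\max}]$, so $Y=X_{2j'-1}$, $\bar Y=X_{2j'}$ for some $j'\le t_{\max}$, and we may pick $q\in\cC$, $\bar q\in\bar\cC$ with $\dist(q,Y)<20dr$ and $\dist(\bar q,\bar Y)<20dr$. I claim these can be chosen with $\dist(q,\bar q)>50dr$ as well. Indeed, by \Cref{rem:remote-monotonicity} (using $(X_i)_{i=1}^{2t_{\min}}\subseteq(X_i)_{i=1}^{2\taukctwo{k}}$ on $A_k$) the set $\cC$ is $M$-remote wrt $(X_i)_{i=1}^{2t_{\min}}$, so by \Cref{rem:remote-in-far} there is $N\in\fN$ with $\bigcup\cC\subseteq\bigcup F(N)$, and $F(N)\neq\varnothing$ because $\diam(\bigcup\cC)>d_f>0$; thus $q$ lies inside a far cube of $N$, and \Cref{lem:cutoff-in-2c-blowup}(b) provides a close (hence $M$-nonfull wrt $(X_i)_{i=1}^{2t_{\min}}$) cube $q^\ast\in N$ with $|\boxtimes(q^\ast)-\boxtimes(q)|<2r$. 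If every valid choice had $\dist(q,\bar q)\le 50dr$, the triangle inequality would give $\dist(q^\ast,Y)<23dr$ and $\dist(q^\ast,\bar Y)\le \dist(q^\ast,q)+\dist(q,\bar q)+\dist(\bar q,\bar Y)+o(r)<100dr$, contradicting $D_M$. Hence on $F_M^c\cap D_M\cap B_M\cap A_k$ there exist $q,\bar q\in\cT_f$ with $\dist(q,\bar q)>50dr$ and $j'\in[t_{\max}]$ such that $\dist(q,X_{2j'-1})<20dr$, $\dist(\bar q,X_{2j'})<20dr$, and each of $q,\bar q$ lies in an $M$-remote set wrt $(X_i)_{i=1}^{2\taukctwo{k}}$, the one through $q$ having diameter in $(d_f,20dr)$ (using $B_M$ for the upper bound).

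The last step is a union bound over $q,\bar q\in\cT_f$ and $j'\in[t_{\max}]$, exactly as in \Cref{lem:E}. For fixed $q,\bar q,j'$ set $t_{\mathrm{aux}}:=2t_{\min}+2\cdot\mathbbm1_{j'\le t_{\min}}$ and $\bY:=\{X_i:i\le t_{\mathrm{aux}},\ i\notin\{2j'-1,2j'\}\}$, so that $|\bY|=2t_{\min}$, $\bY$ has the law of $(X_i)_{i=1}^{2t_{\min}}$, and $\bY$ is independent of $X_{2j'-1},X_{2j'}$. On $A_k$ we have $\bY\subseteq(X_i)_{i=1}^{2\taukctwo{k}}$, so by \Cref{rem:remote-monotonicity} the $M$-remoteness of the sets from the previous step transfers to $\bY$, giving the events $R'(q,\bY,M)$ and $R(\bar q,\bY,M)$; since these depend only on $\bY$, \Cref{lem:prob-far}(b) with $\kappa=M$, $m=2$ (applicable as $\dist(q,\bar q)>50dr$) yields $\bbP(R'(q,\bY,M)\cap R(\bar q,\bY,M))\le r^{d}(\log(1/r))^{-\omega(1)}$, while $\bbP(\dist(q,X_{2j'-1})<20dr)$ and $\bbP(\dist(\bar q,X_{2j'})<20dr)$ are each $O(r^d)$ since $s_f=o(r)$. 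Multiplying these independent events and summing,
\[ \bbP(F_M^c\cap D_M\cap B_M\cap A_k)\le |\cT_f|^2\cdot t_{\max}\cdot O(r^{2d})\cdot r^{d}(\log(1/r))^{-\omega(1)}, \]
and since $|\cT_f|=s_f^{-d}=r^{-d}(\log(1/r))^{O(1)}$ and $t_{\max}=r^{-d}(\log(1/r))^{O(1)}$, the right-hand side is $(\log(1/r))^{-\omega(1)}=o(1)$, which closes the argument.

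The main obstacle is the geometric step: showing that, on $D_M$, a partner pair sitting near two $M$-remote sets forces those sets to be at distance more than $50dr$, which is precisely where the hypothesis $\diam(\bigcup\cC)>d_f$ is used (it makes $F(N)$ non-empty, hence produces a nearby $M$-nonfull cube via \Cref{lem:cutoff-in-2c-blowup}(b) to trigger the $D_M$-contradiction). I expect the remaining bookkeeping — decoupling $\taukctwo{k}$ from the point positions by revealing the fixed-size subconfiguration $\bY$, and plugging \Cref{lem:prob-far}(b) into the union bound — to be routine, as it mirrors \Cref{lem:E} with only constants and one invoked estimate changed.
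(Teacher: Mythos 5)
Your proof is correct and takes essentially the same route as the paper's, which simply repeats the union bound from \Cref{lem:E} with \Cref{lem:prob-far}(b) in place of (a) and the $d_f$-proximity factor replaced by $\Theta(r^{2d})$; your write-up additionally supplies the geometric reduction to $\dist(q,\bar q)>50dr$, which the paper's proof of \Cref{lem:E} asserts without spelling out. One small inaccuracy in your closing remark: the hypothesis $\diam(\bigcup\cC)>d_f$ is not what makes $F(N)\neq\varnothing$ (that already follows from $\cC\neq\varnothing$ and \Cref{rem:remote-in-far}, and the $50dr$-separation argument works for any non-empty remote set); its sole role — as you correctly state earlier in your proposal — is to upgrade $R(q,\cdot,M)$ to $R'(q,\cdot,M)$ so that \Cref{lem:prob-far}(b) applies and delivers the $(\log(1/r))^{-\omega(1)}$ factor that absorbs the enlarged position probabilities.
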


\begin{proof}
Denote by $B_M$ the event all $M$-remote connected sets wrt $(X_i)_{i = 1}^{2t_{\min}}$ have diameter at most $20dr$; this event holds whp by \Cref{rem:remote-in-far}. Then, using Lemmas \ref{lem:hitting-time-offline-ub} and \ref{lem:D} implies
\[\mathbb P(F_M^c)\le \mathbb P(D_M^c)+\mathbb P(B_M^c)+\bbP(A_k^c)+\mathbb P(F_M^c\cap D_M\cap B_M \cap A_k) = o(1)+\mathbb P(F_M^c\cap D_M\cap B_M \cap A_k),\]
so it remains to estimate the last term.
This is done by a union bound similar to the one from the proof of \Cref{lem:E}: indeed, by using \Cref{lem:prob-far}(b) and sparing the terms $2t_{\max}$ for the choice of $X_j$ and $|\cT_f|^{-1}$ for the probability that $X_j\in q$, we obtain
\[\mathbb P(F_M^c\cap D_M\cap B_M \cap A_k)\le |\cT_f|^2\cdot t_{\max}\cdot r^d (\log(1/r))^{-\omega(1)}\cdot (s_f+40dr)^{2d}=o(1),\]
as desired.
\end{proof}

\subsubsection{Construction of the choice set}

We proceed to the construction of the choice set $U$. Along the way, we assume the event $A_k\cap D_M \cap E_k \cap F_M$, which holds whp by Lemmas~\ref{lem:hitting-time-offline-ub}--\ref{lem:F}. 
As previously mentioned, we separate the construction in three parts: namely, $CS = CS_1\cup CS_2 \cup CS_3$ where 
\[ CS_1\cup CS_2 \subseteq \{X_i: i\in [2\taukctwo{k}]\} \quad \text{ and } \quad CS_3 \subseteq \{X_i: i\in [2\taukctwo{k} + 1,\infty]\}.\]

First, we present the construction of $CS_1$.

\vspace{0.5em}
\noindent
\textbf{Stage $1$: Vertices around far cubes.} In this stage, we consider the vertices near far cubes in $\cT$. 
Define the family $\fN_{hit} = \fN(\taukctwo{k})$ of components of $M$-nonfull cubes in $\cG$ wrt $(X_i)_{i = 1}^{2\taukctwo{k}}$. 
Note that, by the definition of $\cG$, the $2c$-blowups of the components in $\fN_{hit}$ are pairwise disjoint.
We denote by $\fN' \subseteq \fN_{hit}$ the family of the components $N$ surrounding at least one far cube wrt $(X_i)_{i = 1}^{2\taukctwo{k}}$, and write $F(N)$ to denote this set of far cubes. 
Further, define $\cB = \{X_i:i\in[2\taukctwo{k}]\} \cap \bigcup \{N_{2c}: N \in \fN'\}$. 
Our goal will be to ensure that the vertices in $\cB$ in $(k-1)$-separated sets in $G_{2\taukctwo{k}}$ do not end up in $CS$ but most of the remaining vertices in $\cB$ do: the latter will form the set $CS_1$.

We consider the structure of the partner pairs in $\cB$. By the event $D_M$, there is no partner pair with both points located in $N_{2c}$ for a single component $N\in \fN'$. 
The next lemma ensures another structural property. Given $N \in \fN'$, we define $P(N) := \{Y \in \{X_i:i\in[2\taukctwo{k}]\} \cap \bigcup N_{2c}: \Bar{Y} \in \cB \}$, that is, the set of vertices in $N_{2c}$ with partners in $\cB$. 
\begin{lemma}\label{lem:special}
The following events hold jointly whp:
\begin{enumerate}[label=\emph{(\alph*)}]
\item\label{clm:special-case-a} for every $N' \in \fN'$, $|P(N')| \leq 1$;
\item\label{clm:special-case-b} for every $N' \in \fN'$ such that $\diam(\bigcup F(N')) \geq r/10$, $|P(N')| = 0$.
\end{enumerate}
\end{lemma}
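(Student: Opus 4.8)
The plan is to work conditionally on the whp events $A_k$, $D_M$, $E_k$, $F_M$ (guaranteed by \Cref{lem:hitting-time-offline-ub,lem:D,lem:E,lem:F}) and on the conclusion of \Cref{lem:3-counterpart} for a sufficiently large constant $c$, and to begin with two easy reductions. Since the property of a cube being $M$-full is monotone in time and $\taukctwo{k}\ge t_{\min}$ on $A_k$, every cube which is $M$-nonfull with respect to $(X_i)_{i=1}^{2\taukctwo{k}}$ is also $M$-nonfull with respect to $(X_i)_{i=1}^{2t_{\min}}$; hence each $N\in\fN'\subseteq\fN_{hit}$ lies inside a component of $\fN$, so \Cref{lem:cutoff-in-2c-blowup} and \Cref{lem:nonfull-component-size} apply to $N$ and to $F(N)$ (in particular $|N|\le 2U$). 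Moreover, by $D_M$, if $Y\in\{X_i:i\in[2\taukctwo{k}]\}$ lies in $N_{2c}$ for some $N\in\fN'$ then $\bar Y$ cannot lie in the same $N_{2c}$, since such a component contains an $M$-nonfull cube within $O(r)$ of both $Y$ and $\bar Y$. Thus every $Y\in P(N')$ has $\bar Y\in (N'')_{2c}$ for some $N''\in\fN'\setminus\{N'\}$, and then $\bar Y\in P(N'')$; in other words the partner relation induces a graph on $\fN'$ and part (a) amounts to showing it has maximum degree at most one.

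The main step is to attach to each $N\in\fN'$ a $\cG_f$-connected $M$-remote witness set $\cC(N)\subseteq\cT_f$ with respect to $(X_i)_{i=1}^{2\taukctwo{k}}$ with $\bigcup\cC(N)\subseteq\bigcup F(N)$ and $\diam(\bigcup\cC(N))\ge\diam(\bigcup F(N))-O(r/c)$, such that the relevant vertices of $(X_i)_{i=1}^{2\taukctwo{k}}$ in $N_{2c}$ lie within Euclidean distance $<20dr$ of $\bigcup\cC(N)$. The construction mirrors \Cref{rem:remote-in-far} run in reverse: using that sea cubes are $M$-full and that, by \Cref{lem:cutoff-in-2c-blowup}(a), every cube at Euclidean distance at most $(c-d)r/c$ from a far cube lies in $\cT\setminus A(N)$, one checks that the fine cubes contained in $\bigcup F(N)$, pruned by an $O(r/c)$ margin near $\partial(\bigcup F(N))$, form a family whose $\mathrm{Ann}(\cdot)$ misses the sea and carries at most $M-1$ points of $(X_i)_{i=1}^{2\taukctwo{k}}$, hence is $M$-remote; since far cubes have side $r/c>d_f$, the diameter bound follows, and $\diam(\bigcup\cC(N))>d_f$ whenever needed. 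The delicate point is the distance bookkeeping: the constants from \Cref{lem:3-counterpart,lem:cutoff-in-2c-blowup,lem:nonfull-component-size} must be played against the fixed geometric thresholds $20dr$ and $r/10$, and one must separate the vertices of $N_{2c}$ that are genuinely near $\bigcup F(N)$ — the only ones that can be half of a partner pair both of whose points sit near far cubes, by $D_M$ together with the sparsity estimates behind \Cref{lem:D} — from those lying deeper inside $N_{2c}$.

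Granting all this, part (b) is immediate: if $\diam(\bigcup F(N'))\ge r/10$ then $\cC(N')$ has diameter $>d_f$, and were some $Y\in P(N')$ to exist, then with $\bar Y\in (N'')_{2c}$ the pair $(Y,\bar Y)$ would have $Y$ within $20dr$ of the $M$-remote $\cC(N')$ and $\bar Y$ within $20dr$ of the $M$-remote $\cC(N'')$, contradicting $F_M$. For part (a), suppose $|P(N')|\ge 2$ and take distinct $Y_1,Y_2\in P(N')$ with $\bar Y_i\in (N_i)_{2c}$, $N_i\in\fN'\setminus\{N'\}$ and $\bar Y_i\in P(N_i)$; by part (b) the far regions of $N'$, $N_1$, $N_2$ all have diameter $<r/10$. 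One then refines to the scale of $\cT_f$: at the hitting time the (small) far region of each such component in fact carries a $k$-remote set with respect to $(X_i)_{i=1}^{2\taukctwo{k}}$ — here one uses that $\widetilde{\cG}$-adjacent cubes induce complete subgraphs of $G$ together with the defining property of $\taukctwo{k}$ to rule out the far region having $\ge k$ neighbours — and the presence of the second vertex $Y_2\in P(N')$ forces the configuration around $N'$ to be concentrated enough that $Y_1$ lies within $d_f$ of a point $Z$ of the $k$-remote set of $N'$ while $\bar Y_1$ lies within $d_f$ of the $k$-remote set of $N_1$, contradicting $E_k$. I expect the main obstacle to be precisely the first step combined with this last squeeze: producing $\cC(N)$ with diameter comparable to $\diam(\bigcup F(N))$ while keeping the relevant part of $\bigcup N_{2c}$ within the fixed thresholds $20dr$, $r/10$, $d_f$, and pinning down why a second element of $P(N')$ tightens the geometry enough to invoke $E_k$ at scale $d_f$.
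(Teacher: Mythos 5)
Your approach is genuinely different from the paper's, and it contains a gap I do not see how to close.

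The paper proves the lemma by a direct union bound over the \emph{coarse} tessellation $\cT$, not via $\kappa$-remote sets. It first establishes two density facts: whp the total number of far cubes at the hitting time is at most $r^{-d/2-\beta}$ for a small $\beta=\beta(c,d)>0$ (using \Cref{lem:nonfull-component-size}(a), which forces $|N|>(1-3d^2/c)U$ for every $N\in\fN'$), and the number of far cubes in clusters $F(N')$ with $\diam(\bigcup F(N'))\ge r/10$ is at most $r^{-d/2+2\beta}$ (using \Cref{lem:nonfull-component-size}(b)). It then conditions on the unlabeled point sets $\{X_i:i\in[2t_{\min}]\}$ and $\{X_i:i\in[2t_{\min}+1,2t_{\max}]\}$ so that the indices remain uniformly distributed, and performs a union bound over choices of far cubes and partner indices, with each chosen point landing within $O(Ur)$ of its assigned far cube with probability $O(\log(1/r)/(t_{\max}-t_{\min}))$. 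The events $E_k$, $F_M$ and the fine tessellation $\cT_f$ play no role in this lemma; they are needed only later, for \Cref{lem:algo_good}.

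The central gap in your proposal is the construction of the $M$-remote witness set $\cC(N)$. You assert that the fine cubes inside $\bigcup F(N)$, pruned near $\partial(\bigcup F(N))$, form a $\cG_f$-connected family whose $\mathrm{Ann}(\cdot)$ carries at most $M-1$ points, hence is $M$-remote. But condition \ref{item:F3} will generally fail: $\mathrm{Ann}(\cC(N))$ is a union of balls of radius $\approx r$ centred inside $F(N)$, so it sweeps over the close cubes of $N$. Each close cube is $M$-nonfull, hence has up to $M-1$ points, but there can be as many as $2U$ of them, so the annulus can carry up to $2U(M-1)\gg M-1$ points. Shrinking $\cC(N)$ to a single deep fine cube to make $\mathrm{Ann}$ small destroys the needed diameter lower bound ($>d_f$, and for part (b) you need it comparable to $\diam(\bigcup F(N))$), so $F_M$ cannot be invoked as you do. The analogous step for part (a) --- that the small far region of an arbitrary $N\in\fN'$ with $|P(N)|\ge 1$ ``carries a $k$-remote set'' --- is likewise unjustified: far cubes are defined by low \emph{density}, not low \emph{degree}, and a far cluster can consist of vertices with many mutual neighbours, in which case no $k$-remote set sits there and $E_k$ says nothing. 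The counting argument at the scale of $\cT$ is the right tool here; $\kappa$-remoteness at the scale of $\cT_f$ is not.
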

\begin{proof}
To begin with, we present some additional favourable properties of the graph $G_{2\taukctwo{k}}$.
These are expressed in terms of the far cubes in the coarse tessellation $\cT$.
More precisely, we will show that there exists $\beta = \beta(c, d)$ such that the following properties hold jointly whp:
\begin{enumerate}[\upshape{\textbf{E\arabic*}}]
    \item\label{item:H3} the number of far cubes $q \in \cT$ wrt $(X_i)_{i = 1}^{2\taukctwo{k}}$ is at most $r^{-d/2 - \beta}$,
    \item\label{item:H4} the number of cubes $q\in \cT$ such that $q \in F(N')$ for some $N'\in \fN'$ with $\diam(\bigcup F(N')) \geq r/10$ is at most $r^{-d/2 + 2\beta}$.
\end{enumerate}

Set $\beta = 3d^3/c$.
Fix $q \in \cT$ and assume that $q\in F(N')$ for some $M$-nonfull component $N' \in \fN'$.
By \Cref{rem:lem:3}, under the assumption of the event $A_k$ from \Cref{lem:hitting-time-offline-ub}, we have $q \in F(N)$ for some $N \in \fN$.
Then, by Lemma~\ref{lem:cutoff-in-2c-blowup}(b), we can (and do) assume that there is a cube $q'\in N$ such that $q, q'$ are adjacent in $\widetilde{\cG}$.
Furthermore, by Lemma~\ref{lem:nonfull-component-size}(a), we know that $|N| > (1 - 3d^2/c)U$.

Next, we compute the probability that such a component $N$ exists for a given cube $q$.
The number of connected subgraphs of $\cG$ adjacent to $q$ and with at most $(1 - 3d^2/c)U$ cubes is at most $\prod_{i = 1}^{U - 1} i(9c)^d \le (9Uc)^{Ud}$.
Thus, the probability that $q \in F(N)$ for some $N\in \fN$ as above is at most
\begin{align*}
(9Uc)^{Ud}\cdot \sum_{i=0}^{MU} \binom{2t_{\min}}{i} &\bigg(\frac{(1-3d^2/c)U}{(c/r)^d}\bigg)^i\bigg(1-\frac{(1-3d^2/c)U}{(c/r)^d}\bigg)^{2t_{\min}-i}\\
&\le (t_{\min} r^d)^{O(1)} \exp(-(1 - 4d^2/c)U\cdot 2t_{\min} \cdot (r/c)^d) = o(r^{d/2 - \beta}).
\end{align*}
Applying Markov's inequality shows \ref{item:H3}.

For \ref{item:H4}, we follow the same strategy, the only difference being that part (b) of Lemma~\ref{lem:nonfull-component-size} implies a stronger condition $|N| > (1 + 9d^2/c)U$.
Hence, the probability that $q$ satisfies \ref{item:H4} is dominated by
\[ (t_{\min} r^d)^{O(1)} \exp{( -(1 + 8d^2/c)U \cdot 2t_{\min} \cdot r^d/c^d )} = o(r^{d/2 + 2\beta}), \]
which, together with Markov's inequality gives \ref{item:H4}.

We now proceed with the proof of Lemma~\ref{lem:special}. For (a), recall the typical event from \Cref{lem:3-counterpart} (which we call $K_M$ in this proof), \ref{item:H2} and the constant $Z$ defined there.
We condition on the sets of point $\{X_i: i\in [2t_{\min}]\}$ and $\{X_i:i\in [2t_{\min}+1,2t_{\max}]\}$ without revealing the labels, and on the events \ref{item:H2}, $K_M$ and \ref{item:H3}. In particular, the labels of the points within the point sets above remain uniformly distributed.
In the sequel, we use the following claim. 
\begin{claim}
Fix a component $N' \in \fN'$, a far cube $q \in F(N')\subseteq \cT$, and a point $x$ in the region $\bigcup N'_{2c}$. Then, $\mathrm{dist}(q, x) < 100dUr$.
\end{claim}
\begin{proof}
Conditionally on $K_M$ and by the monotonicity exhibited in \Cref{rem:lem:3}, we have $|N'| \leq 2U$ which in turn implies $\text{diam}(\bigcup N') < 8dUr$.
Furthermore, under the above setup and using \Cref{lem:cutoff-in-2c-blowup}(b), we obtain that $\text{dist}(q, \bigcup N') < r$. 
Indeed, since the cube $q$ was far wrt $(X_i)_{i=1}^{2t_{\min}}$, \Cref{lem:cutoff-in-2c-blowup}(b) implies that some close cube $q'$ wrt $(X_i)_{i=1}^{2t_{\min}}$ is adjacent to $q$ in $\widetilde{\cG}$.
Since $q$ is a far cube with respect to $(X_i)_{i=1}^{2\taukctwo{k}}$ as well, $q'$ must have remained a close cube.
Combining the latter conclusions with the fact that $\text{dist}(x, \bigcup N') < 2r$ and the triangle inequality conclude the proof.
\end{proof}
A union bound over all far cubes, whose number is bounded by the conditioning on \ref{item:H3}, and pairs of partner pairs shows that (a) fails with probability at most
\begin{equation}\label{eq:bd4.20}
r^{-d/2-\beta}\cdot (2t_{\max})^2\cdot \bigg(\frac{Z \log(1/r)}{2t_{\max} - 2t_{\min}}\bigg)^2\cdot (r^{-d/2-\beta})^2 \bigg(\frac{Z \log(1/r)}{2t_{\max} - 2t_{\min}}\bigg)^2=o(1).
\end{equation}
Here, the first and the fourth terms dominate the number of choices of three components of far cubes (some of which may coincide) by \ref{item:H3}, the second term dominates the number of pairs of pairs of partner points, and the third and the fifth terms dominate the probability that the points in the chosen pairs land within distance $100dUr$ from the fixed far cubes in $\cT$ by \ref{item:H2} and $K_M$.

For (b), conditioning further on the event \ref{item:H4}, a computation similar to~\eqref{eq:bd4.20} shows that (b) fails with probability at most 
\begin{equation*}
r^{-d/2+2\beta}\cdot 2t_{\max}\cdot \bigg(\frac{Z \log(1/r)}{2t_{\max} - 2t_{\min}}\bigg)\cdot r^{-d/2-\beta} \bigg(\frac{Z \log(1/r)}{2t_{\max} - 2t_{\min}}\bigg)=o(1),
\end{equation*}
as desired.
\end{proof}

From now on, we assume that the statement of \Cref{lem:special} holds. 
We construct $CS_1$ by consecutively processing the components in $N \in \fN'$ along an arbitrary ordering $\sigma$ of $\fN'$.
At every step, we use the following \emph{construction algorithm}:

\begin{algorithm}\label{algo:U1}
Fix $N\in \fN'$. First, assume that $|P(N)|=0$. We consider two cases:
\begin{enumerate}[\upshape{\textbf{F\arabic*}}]
    \item\label{item:I1} If $\diam(\bigcup F(N)) \geq r/10$, add all vertices in $N_{2c}\setminus F(N)$ and the partners of all points in $F(N)$ to $CS_1$.
    \item\label{item:I2} If $\diam(\bigcup F(N)) < r/10$, check if there is a $(k-1)$-separated set of vertices inside $F(N)$. If not, add all vertices in $N_{2c}$ to $CS_1$. Otherwise, iteratively and as long as possible, delete all vertices in $(k-1)$-separated sets within $N_{2c}$ (note that deleting vertices may lead to the formation of new such sets). Add all non-deleted vertices and the partners of all deleted vertices in $N_{2c}$~to~$CS_1$.
\end{enumerate}
Next, we assume that $|P(N)|=1$ and consider a pair $Y,\bar Y$ with $Y\in N_{2c}$ and $\bar Y\in \bar N_{2c}$ with $\bar N\in \fN'$. 
If $\bar N$ precedes $N$ in $\sigma$, proceed to the next step.
Otherwise, we distinguish several cases:
\begin{enumerate}[\upshape{\textbf{F\arabic*}},resume]    \item\label{item:I3} If $Y \notin F(N)$ and $\Bar{Y} \notin F(\Bar{N})$, we add all vertices in $(N_{2c}\cup \bar N_{2c})\setminus (\{\bar Y\}\cup F(N)\cup F(\bar N))$ and the partners of all vertices in $F(N)\cup F(\bar N)$ to $CS_1$.
    \item\label{item:I4} If $Y \notin F(N), \Bar{Y} \in F(\Bar{N})$ or vice-versa, we add all vertices in $(N_{2c}\cup \bar N_{2c})\setminus (F(N)\cup F(\bar N))$ and the partners of all vertices in $F(N)\cup F(\bar N)$ to $CS_1$.
    \item\label{item:I5} If $Y \in F(N), \Bar{Y} \in F(\Bar{N})$ and neither $F(N)$ nor $F(\bar N)$ contain any $(k-1)$-separated sets, we add all vertices in $(N_{2c}\cup \bar N_{2c})\setminus F(N)$ and the partners of all vertices in $F(N)$ to $CS_1$.
    \item\label{item:I6} If $Y \in F(N), \Bar{Y} \in F(\Bar{N})$ and one of $F(N),F(\bar N)$ contains a $(k-1)$-separated set (say $F(\tilde N)$ with $\tilde N\in \{N,\bar N\}$) but the other does not, we add all vertices in $(N_{2c}\cup \bar N_{2c})\setminus F(\tilde N)$ and the partners of all vertices in $F(\tilde N)$ to $CS_1$.
    \item\label{item:I7} If $Y\in F(N), \Bar{Y}\in F(\Bar{N})$ and each of $F(N),F(\bar N)$ contains a $(k-1)$-separated set, denote by $V$ (respectively $\bar V$) all vertices deleted during the procedure described in \ref{item:I2} applied for $F(N)$ (respectively for $F(\bar N)$). Then, we add all vertices in $(N_{2c}\cup \bar N_{2c})\setminus (V\cup \bar V)$ and the partners of all vertices in $V\cup \bar V$ to $CS_1$.
\end{enumerate}
\end{algorithm}

Next, we show that $CS_1$ contains exactly one vertex from each partner pair intersecting $\cB$.

\begin{lemma}\label{lem:algo_good}
Whp no iteration of the algorithm forces $|\{X,\bar X\}\cap CS_1|\neq 1$ for a partner pair $X,\bar X$.
\end{lemma}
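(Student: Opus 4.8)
The plan is to verify that each of the seven cases \ref{item:I1}--\ref{item:I7} of \Cref{algo:U1}, together with the rule that a component $N$ whose unique ``dangerous'' partner $Y$ satisfies $\bar Y\in\bar N_{2c}$ with $\bar N$ preceding $N$ in $\sigma$ is skipped, assigns exactly one of $X,\bar X$ to $CS_1$ for every partner pair $\{X,\bar X\}$ that meets $\cB=\{X_i:i\le 2\taukctwo k\}\cap\bigcup\{N_{2c}:N\in\fN'\}$. Since $CS_2$ and $CS_3$ deal with disjoint vertex sets, it suffices to check this for $CS_1$ in isolation. The key structural inputs, all holding whp, are: (i) the event $D_M$ from \Cref{lem:D}, which guarantees that no partner pair has both endpoints within distance $100dr$ of a single $M$-nonfull cube, hence both endpoints in $N_{2c}$ for a single $N\in\fN'$ (recall the blow-ups $N_{2c}$ of distinct components in $\fN'$ are pairwise disjoint); (ii) \Cref{lem:special}\ref{clm:special-case-a}, i.e.\ $|P(N)|\le 1$ for every $N\in\fN'$; and (iii) \Cref{lem:special}\ref{clm:special-case-b}, i.e.\ if $\diam(\bigcup F(N))\ge r/10$ then $|P(N)|=0$. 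Conditioning on $A_k\cap D_M\cap E_k\cap F_M$ and on the statements of \Cref{lem:3-counterpart} and \Cref{lem:special}, I would argue as follows.

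First, classify each partner pair $\{X,\bar X\}$ meeting $\cB$ by the number of its endpoints lying in $\bigcup\{N_{2c}:N\in\fN'\}$. By $D_M$, this number is at most $1$ unless the two endpoints lie in $N_{2c}$ and $\bar N_{2c}$ for \emph{distinct} $N,\bar N\in\fN'$; the case of one endpoint in some $N_{2c}$ and the other outside $\bigcup\{N_{2c}:N\in\fN'\}$ I call ``type I'', and the case of endpoints in distinct $N_{2c},\bar N_{2c}$ ``type II''. For a type-I pair with, say, $X\in N_{2c}$ and $\bar X\notin\bigcup\{N'_{2c}:N'\in\fN'\}$: then $\bar X\notin P(N')$ for every $N'$, so $X\notin P(N)$ either, meaning $X\notin F(N)$ is \emph{not} forced — rather, $X$ is an ordinary vertex of $N_{2c}$ that is not a ``dangerous'' partner. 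In every case \ref{item:I1}--\ref{item:I7}, when we process $N$ we add to $CS_1$ either $X$ itself (if $X$ is non-deleted / $X\notin F(N)$, or $X\notin F(\tilde N)$, etc.) or $\bar X$ (if $X\in F(N)$ and $X$ gets deleted, or if rule \ref{item:I1} applies and $X\in F(N)$); the vertex $\bar X$ lies outside every $N'_{2c}$ and so is never added or removed by any other iteration. Thus exactly one of $X,\bar X$ enters $CS_1$. The one subtlety: a type-I pair could have $X\in F(N)$ with $X$ in a $(k-1)$-separated set, in which case $\bar X$ is added; but then $\bar X\notin\cB$, so no other iteration touches $\bar X$, and $X$ is not added — again exactly one.

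For type-II pairs $\{Y,\bar Y\}$ with $Y\in N_{2c}$, $\bar Y\in\bar N_{2c}$, $N\neq\bar N$: by definition $Y\in P(N)$ and $\bar Y\in P(\bar N)$, and \Cref{lem:special}\ref{clm:special-case-a} gives $P(N)=\{Y\}$, $P(\bar N)=\{\bar Y\}$. Exactly one of $N,\bar N$ precedes the other in $\sigma$; say $N$ precedes $\bar N$. When the algorithm reaches $\bar N$ it finds $P(\bar N)=\{\bar Y\}$ with $\bar Y$'s partner $Y$ in a component ($N$) preceding $\bar N$, so it proceeds to the next step — $\bar N$ is never processed on its own, so no vertex of $\bar N_{2c}$ (in particular not $\bar Y$) is added or removed during the $\bar N$-iteration. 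When the algorithm reaches $N$, it finds $P(N)=\{Y\}$ with partner $\bar Y\in\bar N_{2c}$ and $\bar N$ \emph{not} preceding $N$, so it enters one of \ref{item:I3}--\ref{item:I7}, each of which jointly processes $N_{2c}\cup\bar N_{2c}$. Here I must check, case by case, that $Y$ and $\bar Y$ are handled consistently: in \ref{item:I3} ($Y\notin F(N),\bar Y\notin F(\bar N)$) we explicitly remove $\bar Y$ and add all of $(N_{2c}\cup\bar N_{2c})\setminus(\{\bar Y\}\cup F(N)\cup F(\bar N))$, so $Y$ (being outside $F(N)$) is added and $\bar Y$ is not; in \ref{item:I4}--\ref{item:I7} one checks that of the pair $\{Y,\bar Y\}$, the endpoint lying in the deleted/far set has its partner added and the other endpoint is added directly, never both. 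Since $\diam(\bigcup F(N))<r/10$ for \emph{every} $N$ appearing in \ref{item:I2},\ref{item:I5},\ref{item:I6},\ref{item:I7} by \Cref{lem:special}\ref{clm:special-case-b} (whereas \ref{item:I1} requires $|P(N)|=0$, excluding type-II), the iterative $(k-1)$-separated-set deletion in \ref{item:I2},\ref{item:I7} is well-defined and confined to $F(N)$, so it cannot interfere with vertices of $F(\bar N)$ or with the single partner $Y$ when $Y\notin F(N)$.

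**The main obstacle** is the bookkeeping in case \ref{item:I7}, where both $Y\in F(N)$ and $\bar Y\in F(\bar N)$ and \emph{both} far sets contain $(k-1)$-separated sets: one must verify that the iterative deletion within $F(N)$ (producing $V$) and within $F(\bar N)$ (producing $\bar V$) are genuinely independent — i.e.\ that $F(N)$ and $F(\bar N)$ are vertex-disjoint and at distance $>r$, so deletions in one do not create new $(k-1)$-separated sets in the other — and that whether $Y\in V$ (equivalently $\bar Y\in\bar V$) is immaterial because the rule adds the partner of every deleted vertex. If $Y\in V$ then $\bar Y$ is added as the partner of $Y$; since $\bar Y\in F(\bar N)$, could $\bar Y$ also be deleted in the $F(\bar N)$-process and have \emph{its} partner $Y$ added? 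That would add both $Y$ and $\bar Y$. This is exactly where one uses that $\bar Y\in P(\bar N)$ means $\bar Y$'s partner $Y\in\cB$, but the deletion process in \ref{item:I7} for $F(\bar N)$ deletes $\bar Y$ only if $\bar Y$ lies in a $(k-1)$-separated set \emph{within} $F(\bar N)$, and then the algorithm adds $\bar Y$'s partner $Y$ — so to avoid a contradiction the algorithm must be read as: the union $V\cup\bar V$ is deleted and the partners of $V\cup\bar V$ added, so $Y$ and $\bar Y$ cannot both be in $V\cup\bar V$ only if at most one of them is deleted; here I would invoke that a $(k-1)$-separated set in $F(N)$ and one in $F(\bar N)$ cannot simultaneously contain a partner pair straddling them (again $D_M$, since $F(N)\subseteq N_{2c}$ and $F(\bar N)\subseteq \bar N_{2c}$ are far apart relative to the partner distance scale controlled by the relevant events), so $\{Y,\bar Y\}\cap(V\cup\bar V)$ has size at most $1$, and symmetrically the non-deleted endpoint is added directly exactly once. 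Pinning down this last point rigorously — that no choice of deletion order in \ref{item:I7} can delete both $Y$ and $\bar Y$ — is the crux; everything else is a finite verification across the seven cases.
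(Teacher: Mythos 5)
Your decomposition into types I/II and the verification of cases \ref{item:I1}--\ref{item:I6} is correct and matches the paper's (much terser) ``the statement is clear for \ref{item:I1}--\ref{item:I6}.'' You have also correctly identified the genuine crux, namely showing that in case \ref{item:I7} at most one of $Y,\bar Y$ can lie in $V\cup\bar V$. However, your proposed resolution of that crux is wrong, and you in fact acknowledge it is incomplete (``pinning down this last point rigorously \dots is the crux''). The event $D_M$ cannot help here: $D_M$ only forbids a partner pair whose both endpoints are within distance $100dr$ of a \emph{single} $M$-nonfull cube $q$. In case \ref{item:I7} we are precisely in the situation where $Y\in F(N)\subseteq N_{2c}$ and $\bar Y\in F(\bar N)\subseteq \bar N_{2c}$ for \emph{distinct} $N\neq\bar N$, so $D_M$ is not contradicted and gives no information. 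Likewise, the worry about the two deletion processes ``interfering'' is a red herring — $N_{2c}$ and $\bar N_{2c}$ are disjoint, so $V$ and $\bar V$ are automatically disjoint; the danger is not interference but that each process independently deletes its end of the partner pair.

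The paper's actual argument for ruling out $Y\in V$ and $\bar Y\in\bar V$ is substantially more delicate and uses tools you did not invoke. Suppose for contradiction both are deleted, and fix $(k-1)$-separated sets $S\subseteq V$, $\bar S\subseteq\bar V$ from the deletion procedure. First, the definition of the hitting time $\taukctwo{k}$ forbids $S=\{Y\}$ and $\bar S=\{\bar Y\}$ simultaneously (otherwise $Y,\bar Y$ would be a partner pair of degree at most $k-1$ before the hitting time), so WLOG some $Z\in S\setminus\{Y\}$ exists. Then one splits on whether $\max\{\diam(S\cup\{Y\}),\diam(\bar S\cup\{\bar Y\})\}$ is at most $d_f$ or exceeds $d_f$. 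In the small-diameter case, one constructs $k$-remote sets covering $S$ and $\bar S$ and invokes the event $E_k$ (\Cref{lem:E}), which is tailor-made to forbid the triple $(Z,Y,\bar Y)$. In the large-diameter case, one first uses $H_M$ (\Cref{lem:special}(b)) to force $\diam(V),\diam(\bar V)<r/10$, so each of $V,\bar V$ induces a clique; a short counting argument then shows each of $V,\bar V$ has fewer than $k^2\le M$ external neighbours, making them $M$-remote, and the event $F_M$ (\Cref{lem:F}) forbids a partner pair near two $M$-remote sets of which one has diameter $>d_f$. None of this machinery appears in your proposal, and without it the argument does not close.
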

\begin{proof}
The statement is clear for \ref{item:I1}--\ref{item:I6}. For \ref{item:I7}, the only partner pair which may simultaneously be included in $CS_1$ (and, equivalently, in $V\cup \bar V$) is $Y,\bar Y$. 
We condition on the typical events $E_{k}$ from \Cref{lem:E}, $F_{M}$ from \Cref{lem:F} and $H_M$ from \Cref{lem:special}(b), and show that the said inclusion does not hold.

Suppose for contradiction that $Y\in V$ and $\bar Y\in \bar V$. Fix a $(k-1)$-separated set $S\subseteq V$ (respectively $\bar S\subseteq \bar V$) which does not necessarily contain $Y$ (respectively $\bar Y$).
By definition of the hitting time $\taukctwo{k}$, it cannot occur that $S = \{ Y\}$ and $\bar S = \{ \bar Y\}$.
Without loss of generality, suppose that there is $Z \in S$ different from $Y$. We consider two cases.

First, by recalling $d_f$ from~\eqref{eqn:df}, suppose that $\max\{\diam(S \cup \{ Y \}), \diam(\bar S \cup \{\bar Y\})\}\leq d_f$.
We denote by $\cC \subseteq \cT_f$ a smallest set of cubes covering $S$, and by $\Bar{\cC} \subseteq \cT_f$ a smallest set of cubes covering $\Bar{S}$.
In particular, $Z \in \bigcup \cC$ and, therefore, the event $E_k^c$ from Lemma~\ref{lem:E} holds, contradicting our assumption.

Now, suppose that $\max\{\diam(S \cup \{ Y \}), \diam(\bar S \cup \{\bar Y\})\} > d_f$. As $S \cup \{ Y \} \subseteq V$ and $\bar S \cup \{\bar Y\} \subseteq \bar V$, we have $\max\{\diam(V),\diam(\bar V)\} > d_f$.
On the other hand, by $H_M$, we have $\max\{\diam(V),\diam(\bar V))\} < r/10$, implying that $G_{2\taukctwo{k}}$ restricted to each of $V, \bar V$ is a complete graph.
From this observation and the fact that $S,\bar S$ are $(k-1)$-separated, we deduce that $\max\{|V \setminus S|, |\Bar{V} \setminus \Bar{S}|\}\le k-1$.
Moreover, any vertex in $V, \Bar{V}$ has less than $k$ neighbours outside of $V, \Bar{V}$, respectively, as otherwise it could not participate in a $(k-1)$-separated set at any point during the deletion procedure in \ref{item:I7}.
As a result, $V$ has at most $|N(S)|+(k-1)|V\setminus S| < k^2 \leq M$ neighbours in its complement and the same holds for $\bar V$.
Defining $\cC \subseteq \cT_f$ as the smallest set of cubes covering $V$, and defining analogously $\bar \cC \subseteq \cT_f$ for $\bar V$, the event $F_M^c$ from Lemma~\ref{lem:F} holds, contradicting our assumption again and finishing the proof.
\end{proof}

Before we proceed to the second stage, we describe the picture outside $\cB$ and show that, roughly speaking, the partners of the vertices outside $CS_1$ do not form dense clusters.
Recall $\alpha$ from \ref{item:H1}.

\begin{lemma}\label{clm:stage-1-balance}
Whp there is no set $I \subseteq \{X_i:i\in [2t_{\max}]\}\setminus \cB$ with size $|I| \ge 2d/\alpha$ such that simultaneously $\diam(\{X_i:i \in I\}) < 100dr$ and $\{\bar X_i:i \in I\} \subseteq \bigcup \{N_{2c}: N\in \fN'\}$.
\end{lemma}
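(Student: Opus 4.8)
The plan is to prove the statement by a union bound over possible "bad" configurations, exploiting the fact that the components in $\fN'$ are few (by \ref{item:H3}, since every such component surrounds a far cube) and that, conditioned on the unlabelled point configuration, the labels — and hence the partnership relation — are uniformly random. First I would fix the relevant typical events: \ref{item:H1} (at most $r^{\alpha-d}$ many $M$-nonfull cubes wrt $(X_i)_{i=1}^{2t_{\min}}$, hence at most $r^{\alpha-d}$ components in $\fN'$ by \Cref{rem:lem:3} and the event $A_k$), \ref{item:H2} (at most $Z\log(1/r)$ points among $(X_i)_{i=1}^{2t_{\max}}$ within distance $100dUr$ of any fixed cube of $\cT$), and $K_M$ (so that $|N|\le 2U$ for every $N\in\fN'$, giving $\diam(\bigcup N_{2c})=O(r)$). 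On all of these, the region $\bigcup\{N_{2c}:N\in\fN'\}$ is covered by $O(r^{\alpha-d})$ balls of radius $O(r)$, each contained in the $100dUr$-neighbourhood of some cube of $\cT$.

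The core of the argument is then a first-moment estimate. Suppose such a bad set $I$ exists; since $\diam(\{X_i:i\in I\})<100dr$, all points $\{X_i:i\in I\}$ lie in a single ball $B$ of radius $100dr$, hence (by \ref{item:H2}) there are at most $Z\log(1/r)$ candidate indices available in any fixed such ball among $(X_i)_{i=1}^{2t_{\max}}$. I would reveal the positions of $\{X_i:i\in[2t_{\min}]\}$ and of $\{X_i:i\in[2t_{\min}+1,2t_{\max}]\}$ without their labels, condition on \ref{item:H1}, \ref{item:H2}, \ref{item:H3}, $K_M$ and $A_k$, and note the labels stay uniform. Take $\ell=\lceil 2d/\alpha\rceil$. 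The union bound is over: the choice of a ball $B$ centred at one of the $O(t_{\max})$ point locations (equivalently over the location of one designated point of $I$); the choice of the remaining $\ell-1$ points of $I$ inside $B$, at most $(Z\log(1/r))^{\ell-1}$ ways; and the choice, for each of the $\ell$ points $X_i$ ($i\in I$), of a component $N\in\fN'$ and a landing spot for its partner $\bar X_i$ inside the $O(r)$-radius region $N_{2c}$ — at most $(r^{\alpha-d})^\ell$ ways to pick the components. For each fixed configuration, the probability that the $\ell$ designated "partners" actually are the partners of the $\ell$ points of $I$ is $O\big(t_{\max}^{-\ell}\big)$ (the standard bound $m\binom{2m}{2}^{-1}$-type estimate, split according to whether indices come from the first $2t_{\min}$ or the later block, exactly as in the proof of \Cref{lem:D}), and, given the partnership, the probability that each of the $\ell$ partner points lands in the prescribed $O(r^d)$-volume region $N_{2c}$ is $O(r^d)^\ell$. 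Multiplying, the failure probability is at most, up to constants,
\[
t_{\max}\cdot (Z\log(1/r))^{\ell-1}\cdot (r^{\alpha-d})^{\ell}\cdot t_{\max}^{-\ell}\cdot (r^{d})^{\ell}
= t_{\max}^{-\ell+1}(\log(1/r))^{\ell-1} r^{\alpha\ell}.
\]
Since $t_{\max}=r^{-d+o(1)}$ and $\ell\ge 2$, the right-hand side is $r^{\alpha\ell+(d+o(1))(\ell-1)}=r^{\Omega(1)}=o(1)$, which gives the claim.

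The main obstacle I anticipate is purely bookkeeping rather than conceptual: one must be careful that the designated point of $I$ whose location "anchors" the ball $B$ is chosen from the correct block ($(X_i)_{i\le 2t_{\min}}$ versus $(X_i)_{i\in[2t_{\min}+1,2t_{\max}]}$) so that conditioning leaves the labels uniform, and that the partnership-probability bound $O(t_{\max}^{-\ell})$ is applied blockwise as in \Cref{lem:D} — a partner of a point in one block is itself in that block, so the count of available partners is $\Theta(t_{\min})$ or $\Theta(t_{\max}-t_{\min})$ accordingly, both $r^{-d+o(1)}$. A minor additional point is to absorb the constants $Z$, $U$, $d$ and $\alpha$ into the $o(1)$; since $\ell$ is a fixed constant depending only on $d$ (through $\alpha=\alpha(c,d)$), all these are harmless. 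Once the blockwise structure is set up correctly, the estimate above closes immediately.
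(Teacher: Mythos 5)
Your overall strategy — conditioning on the blockwise unlabelled configuration, invoking \ref{item:H1}/\ref{item:H2} to control the number of $M$-nonfull cubes and the local density, then a first-moment union bound — is exactly the paper's route. But the final probability estimate has a genuine double-counting error, and the displayed bound is not supported by the argument.

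The issue is factor (iv) multiplied by factor (v). Once you have revealed the two blocks of positions without their labels, the only remaining randomness is the label assignment (equivalently, a uniformly random perfect matching on positions within each block). In that conditioned framework the position of a partner $\bar X_i$ is determined by the matching; there is no residual ``probability that the partner lands in a region of volume $O(r^d)$''. If you insist on designating specific candidate partner positions inside the $N^{(j)}_{2c}$ regions, you must union-bound over them — at most $O(\log(1/r))$ candidates per component by \ref{item:H2}, giving a factor $(Z\log(1/r))^\ell$, not $(r^d)^\ell$ — and then multiply by the partnership probability $\approx (t_{\max}-t_{\min})^{-\ell}$. Alternatively, if you do not condition on the partners' positions and treat them as fresh uniform points, the volume factor $(r^d)^\ell$ applies, but then partnership is deterministic and the factor $t_{\max}^{-\ell}$ must be dropped. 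Either way exactly one of the two factors should appear; keeping both inflates the decay by an extra $r^{d\ell+o(1)}$. The corrected product is $\approx t_{\max}^{1-\ell}(\log(1/r))^{O(\ell)}r^{(\alpha-d)\ell} = r^{d+o(1)}$, matching the paper's bound
\[
\binom{2t_{\max}}{\ell}\cdot|\cT|\cdot O\!\left(\Big(\tfrac{\log(1/r)}{t_{\max}-t_{\min}}\Big)^{\ell}\right)\cdot r^{(\alpha-d)\ell}\cdot O\!\left(\Big(\tfrac{\log(1/r)}{t_{\max}-t_{\min}}\Big)^{\ell}\right)=r^{d+o(1)},
\]
and not the much smaller $r^{d+2d^2/\alpha+o(1)}$ you state. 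The conclusion that the probability is $o(1)$ survives, so the lemma is safe, but as written the argument overclaims and the bound it purports to prove is unsupported.
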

\begin{proof}
Suppose that a set $I$ as described exists (call such set \emph{bad} for short). 
Since every subset $I'$ of a bad set $I$ with $|I'|\ge 2d/\alpha$ is also bad, we can (and do) assume that $|I|=2d/\alpha$.

We condition on the point sets $\{\bar X_i:i \in [2t_{\min}]\}$ and $\{\bar X_i:i \in [2t_{\min}+1,2t_{\max}]\}$ without revealing their labels: this is enough to determine $\fN$.
Also, condition on the fact that the said point configuration satisfies the typical events \ref{item:H1} and \ref{item:H2}.
Note that the distribution of the indices in the two point sets remains uniform.
Observe that if $\diam(\{X_i:i \in I\}) < 100dr$, then in particular there exists $q \in \cT$ such that $\text{dist}(X_i, q) \leq 100dr$ for each $i \in I$.
Also, the inclusion $\{\bar X_i:i \in I\} \subseteq \bigcup \{N_{2c}: N\in \fN'\}$ together with \Cref{rem:lem:3} imply that, for each $i \in I$, there exists an $M$-nonfull cube $q_i \in \cT$ wrt $(X_i)_{i =1 }^{2t_{\min}}$ such that $\text{dist}(\bar X_i, q_i) \leq 100dr$.
All in all, the probability that there exists a bad set $I$ of size $2d/\alpha$ is at most
\[\binom{2t_{\max}}{2d/\alpha} \cdot |\cT| \cdot O\bigg(\frac{(\log(1/r))^{2d/\alpha}}{(t_{\max}-t_{\min}})^{2d/\alpha}\bigg)\cdot r^{(\alpha - d) \cdot 2d/\alpha} \cdot O\bigg(\frac{(\log(1/r))^{2d/\alpha}}{(t_{\max}-t_{\min}})^{2d/\alpha}\bigg) = r^{d+o(1)} = o(1),\]
where the first factor selects a set of indices $I$, the second factor selects the cube $q$, the third factor uses that $t_{\max}-t_{\min} = \min\{t_{\max}-t_{\min},t_{\min}\}$ to dominate the probability that all the points in $\{ X_i: i \in I\}$ are located close to $q$, the fourth factor selects the cubes $q_i$ for each $i \in I$, and the last factor dominates the probability that all points in $\{ \bar X_i: i \in I \}$ land close to the respective $q_i$.
\[\binom{2t_{\max}}{2d/\alpha} \cdot |\cT| \cdot ((300dr)^d)^{2d/\alpha}\cdot (r^{\alpha - d})^{2d/\alpha} \cdot ((300dr)^d)^{2d/\alpha} = r^{d+o(1)} = o(1),\]
where the first factor selects a set of indices $I$, the second factor selects the cube $q$, the third factor dominates the probability that all the points in $\{ X_i: i \in I\}$ are located within distance $100dr$ from $q$, the fourth factor selects the cubes $q_i$ for each $i \in I$, and the last factor dominates the probability that all points in $\{ \bar X_i: i \in I \}$ land within distance $100dr$ from the respective $q_i$.
\end{proof}

\vspace{0.5em}
\noindent
\textbf{Stage $2$: Remaining vertices.} So far, we have processed the few partner pairs containing a point near far cubes in $\cT$ wrt $(X_i)_{i=1}^{2\taukctwo{k}}$.
Next, we construct the set $CS_2 = (\{X_i:i\in [2\taukctwo{k}]\}\cap CS)\setminus CS_1$: note that its vertices belong to close cubes or cubes in the sea.
Our construction aims to not significantly alter the geometry of the graph $G_{2\taukctwo{k}}$; in particular, all $M$-full cubes end up having roughly half of the vertices contained in them in the choice set $CS$.

More formally, we define an auxiliary multigraph $\mathcal{M}$ with vertex set $\mathcal{T}$ where every edge $q\bar q$ corresponds to a partner pair $Y,\bar Y\in \{X_i:i\in [2\taukctwo{k}]\}$ with $Y\in q$ and $\bar Y\in \bar q$.
The next lemma appearing as Exercise~1.4.23 in \cite[Chapter~1]{Wes01} will be a main tool in our construction. We provide a proof for completeness.

\begin{lemma}\label{lem:West}
For every unoriented multigraph $\cM$, there exists an orientation of its edges such that, for every vertex $v$, its in-degree and out-degree differ by at most $1$.
\end{lemma}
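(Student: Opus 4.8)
The plan is to reduce to the case where the multigraph is connected (the condition is local to connected components, so an orientation can be built separately on each), and then exploit an Eulerian-type argument. First I would observe that if every vertex of a connected multigraph $\cM$ has even degree, then $\cM$ has a closed Eulerian trail; orienting the edges consecutively along this trail makes the in-degree equal the out-degree at every vertex, so the claim holds with difference $0$. The hard part is therefore handling odd-degree vertices, of which there is an even number, say $2\ell$.

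The key trick is to add $\ell$ auxiliary edges to $\cM$ pairing up the odd-degree vertices arbitrarily, obtaining a multigraph $\cM'$ in which every vertex has even degree (a vertex that was odd gains exactly one incident auxiliary edge, a vertex that was even gains none). If $\cM'$ is connected — which we may assume after first adding, if necessary, a few extra parallel auxiliary edges joining different components, since adding an even number of edges at each affected vertex preserves the even-degree property and adding two parallel edges between two components merges them without changing parities — then $\cM'$ admits a closed Eulerian trail. Orient all edges of $\cM'$ in the direction they are traversed along this trail. Then in $\cM'$ every vertex has in-degree equal to out-degree. Finally, delete the auxiliary edges: each vertex loses some number of incoming and outgoing auxiliary edges. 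A vertex of $\cM$ that had even degree had an even number of incident auxiliary edges, but these need not split evenly between in and out; to control this, I would instead be slightly more careful and pair the auxiliary edges themselves so that each auxiliary pair forms a path of length two through distinct real vertices only when unavoidable. A cleaner route: add the $\ell$ matching edges (plus possibly parallel connector edges, always in pairs at each vertex), take the Eulerian circuit, and note that each auxiliary edge, when removed, changes the in-degree of one endpoint by $1$ and the out-degree of the other endpoint by $1$. Since each vertex of $\cM$ is incident to at most one auxiliary \emph{matching} edge and an even number of auxiliary \emph{connector} edges, and connector edges can be added as parallel pairs oriented oppositely by routing the Eulerian circuit through them consecutively, one sees that after deletion the in-degree and out-degree of each vertex of $\cM$ change by at most $1$ in total, giving $|d^-(v)-d^+(v)|\le 1$.

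The main obstacle is bookkeeping: making sure the connector edges used to glue components together do not themselves spoil the bound. I would resolve this by adding connector edges only in parallel pairs between a fixed pair of components and observing that in any Eulerian circuit such a parallel pair can be assumed to be traversed consecutively (or, if not, that the two traversals contribute one in and one out at each of the two endpoints regardless), so their removal is degree-neutral at every vertex. With that handled, the matching edges contribute a change of at most $1$ to exactly the odd-degree vertices, which is precisely what is needed. This completes the argument.
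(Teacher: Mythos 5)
Your core argument is the same as the paper's: after reducing to connected $\cM$, pair up the (evenly many) odd-degree vertices, add one auxiliary edge per pair to obtain an even-degree multigraph $\cM'$, take a closed Eulerian trail, orient edges along it (so in-degree equals out-degree everywhere in $\cM'$), then delete the auxiliary edges; since each vertex is incident to at most one auxiliary edge, its in- and out-degree in $\cM$ differ by at most $1$. This is correct and is exactly the proof given in the paper.

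You do, however, introduce an unnecessary detour that contains an error. Having reduced to the connected case at the outset, you then worry that $\cM'$ might be disconnected and invent ``connector edges'' added in parallel pairs to restore connectivity. But $\cM'$ is $\cM$ with edges added, so if $\cM$ is connected, $\cM'$ is automatically connected and no connector edges are needed. Moreover, your fallback claim that two parallel auxiliary edges of an Eulerian circuit ``contribute one in and one out at each of the two endpoints regardless'' is false: nothing prevents the circuit from traversing both parallel copies in the same direction, in which case their removal shifts the in-/out-degree balance at each endpoint by $2$. The first alternative (that a parallel pair ``can be assumed to be traversed consecutively'') can in fact be arranged by reversing the sub-walk between the two traversals, but you assert it without justification. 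Fortunately none of this is needed: drop the connector-edge discussion entirely, note that $\cM'$ inherits connectivity from $\cM$, and your argument is complete and coincides with the paper's.
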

\begin{proof}
We show the statement for connected multigraphs $\cM$; the general case follows by addressing every component separately.
Partition the (even number of) odd-degree vertices in $\cM$ arbitrarily into pairs $(u_1,v_1),\ldots,(u_l,v_l)$, and define a multigraph $\cM'$ by adding to $\cM$ one edge between each pair $(u_i,v_i)$. Note that every vertex has even degree in $\cM'$.
Then, find an Euler walk in $\cM'$ and orient it consistently; note that this task can be done by following a greedy algorithm (see e.g.\ \cite[Theorem~1.8.1]{Die05}). Finally, the orientation induced on $\cM$ has the desired property: indeed, the in-degree and the out-degree of every vertex in $\cM'$ are equal, and every vertex in $\cM'$ is incident to at most one edge missing in $\cM$.
\end{proof}

We are ready to finish the construction of $CS_2$. Fix $M\ge 3k^2+2d/\alpha$ with $\alpha$ as in \ref{item:H1} and condition on the statement of \Cref{clm:stage-1-balance}.

\paragraph{Constructing $CS_2$.} Note that, thanks to \Cref{clm:stage-1-balance}, every sea cube wrt $(X_i)_{i=1}^{2\taukctwo{k}}$ contains at least $M-2d/\alpha\ge 2k$ points $Y$ with $\bar Y$ belonging to a sea cube.
Consider the restriction $\cM_s$ of the multigraph $\cM$ to the cubes in $\cT\setminus \bigcup \{N_{2c}:N\in \fN'\}$ and note that $\cM_s$ has minimum degree at least $2k$. 
Fix an orientation $\sigma$ given by \Cref{lem:West} and, for every partner pair $Y\in q,\bar Y\in \bar q$ in the sea with the edge $q\bar q$ oriented from $q$ to $\bar q$ in $\sigma$, include $Y$ in $CS_2$.

\begin{lemma}\label{lem:CS1CS2}
Whp $G_{2\tau_{2,k}}[CS_1 \cup CS_2]$ is a $k$-connected graph.
\end{lemma}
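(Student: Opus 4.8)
We condition on the conjunction of the typical events established so far (Lemmas~\ref{lem:hitting-time-offline-ub}--\ref{lem:F}, \ref{lem:special}, \ref{lem:algo_good}, \ref{clm:stage-1-balance}, together with \Cref{lem:3-counterpart}, \Cref{lem:sea-connected}, \Cref{lem:cutoff-in-2c-blowup} and \Cref{lem:nonfull-component-size}), each of which holds whp. The plan is to build $G' := G_{2\taukctwo{k}}[CS_1\cup CS_2]$ from the inside out, in the spirit of the proof of \Cref{prop:degk} but using the finer control developed in the preceding subsections. Throughout, $M\ge 3k^2+2d/\alpha$ is fixed as in Stage~2.

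\emph{The sea--close core.} Let $C^{+}$ be the set of vertices of $CS_1\cup CS_2$ lying in sea or close cubes of $\cT$ with respect to $(X_i)_{i=1}^{2\taukctwo{k}}$, and let the remaining vertices of $G'$ (all lying in far cubes) be the \emph{far vertices}. First I would show that every $M$-full (sea) cube contains at least $k$ vertices of $CS$: by \Cref{clm:stage-1-balance} it contains at least $M-2d/\alpha$ points whose partner also lies in a cube outside $\bigcup\{N_{2c}:N\in\fN'\}$, and the balanced orientation of $\cM_s$ supplied by \Cref{lem:West} retains at least $\lceil (M-2d/\alpha-1)/2\rceil\ge k$ of them in $CS_2$. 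Since the sea $\widetilde{\cA}$ is connected in $\widetilde{\cG}$ (\Cref{lem:sea-connected}) and points in $\widetilde{\cG}$-adjacent cubes are within distance $r$ (hence adjacent in $G_{2\taukctwo{k}}$), deleting any $k-1$ vertices still leaves at least one vertex of $CS$ in every sea cube and a connecting path between any two of them; thus $G'[C^{+}\cap(\text{sea})]$ is $k$-connected and has many more than $k$ vertices. Every vertex of $CS$ in a close cube is $\widetilde{\cG}$-adjacent to a sea cube and therefore has at least $k$ neighbours in $G'[C^{+}\cap(\text{sea})]$; adding these vertices one at a time and invoking \Cref{cl:Hav} repeatedly shows that $G'[C^{+}]$ is $k$-connected.

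\emph{Attaching the far vertices.} Inspecting \Cref{algo:U1}, the far vertices of $G'$ are produced only by rules \ref{item:I2}, \ref{item:I5}, \ref{item:I6} and \ref{item:I7}: in \ref{item:I1}, \ref{item:I3}, \ref{item:I4} the only far cubes touched are removed, and their points' partners fall outside $\bigcup\{N_{2c}:N\in\fN'\}$ by $D_M$ and $|P(N)|\le 1$, hence outside far cubes. Process the components $N\in\fN'$ along the order $\sigma$ and, for each, add the set $F'(N)$ of its retained far vertices to the current $k$-connected graph. The crucial property is that in each of rules \ref{item:I2}, \ref{item:I5}--\ref{item:I7} the algorithm deletes a maximal iterated family of $(k-1)$-separated sets from the relevant far regions, so that afterwards \emph{no} non-empty $S\subseteq F'(N)$ is $(k-1)$-separated in the surviving graph; consequently every such $S$ has at least $k$ neighbours outside $S$. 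Given that these neighbours lie in $C^{+}\cup F'(N)$ (see the next paragraph), a routine extension of \Cref{cl:Hav} — if $H$ is $k$-connected on at least $k+1$ vertices and $A$ is disjoint from $H$ with every non-empty $S\subseteq A$ having at least $k$ neighbours in $(A\cup H)\setminus S$, then $H\cup A$ is $k$-connected, as one checks by a short Menger-type argument — yields that the enlarged graph is $k$-connected. The consistency of the deletions in the two-component rules \ref{item:I5}--\ref{item:I7} (namely, that no partner pair is both deleted) is exactly \Cref{lem:algo_good}, and \Cref{lem:special}(b) guarantees that a dangerous pair never accompanies a far region of diameter $\ge r/10$, so that every far region being absorbed has diameter $o(r)$ and hence induces a clique in $G_{2\taukctwo{k}}$, keeping the local bookkeeping elementary. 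After all components of $\fN'$ have been treated we have exhausted $V(G')=C^{+}\cup\bigcup_{N\in\fN'}F'(N)$, and $G'$ is $k$-connected; since $|V(G')|\ge k+1$ this is the assertion. For $k=1$ the claim is connectedness, which follows from the same construction together with the uniqueness of the giant component (\Cref{thm:Pen04}(c)).

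\emph{The main obstacle.} The delicate step — and the one on which I would spend the most care — is the claim used above that the $\ge k$ external neighbours certifying that a set $S\subseteq F'(N)$ is not $(k-1)$-separated are genuinely vertices retained in $CS$, rather than their deleted partners. Unlike in \Cref{prop:degk}, a far vertex need not see $k$ of the (at least $k$) retained points of a nearby sea cube, so the argument cannot be made vertex-by-vertex. This is exactly where the margins engineered into $\mathrm{Ann}(\cdot)$ (balls of radius $r-s_f\sqrt d/2$), the finer tessellation $\cT_f$, and the no-clustering statement \Cref{clm:stage-1-balance} are used: the partners of the retained far vertices near any single component $N$ do not form a dense cluster, so removing them from $G_{2\taukctwo{k}}$ cannot drop the external neighbourhood of any $S\subseteq F'(N)$ below $k$; the events $E_k$ and $F_M$ are what make this quantitatively true close to $\taukctwo{k}$. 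Once this is in place, the three paragraphs above combine to give the statement.
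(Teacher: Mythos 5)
Your proof follows essentially the same route as the paper's, and the first three ingredients coincide: every sea cube retains at least $k$ chosen vertices (via \Cref{clm:stage-1-balance} and the \Cref{lem:West} orientation), the sea core is $k$-connected (via \Cref{lem:sea-connected}), and close-cube vertices are appended one at a time via Claim~\ref{cl:Hav}. For the far clusters, you build up one $F(N)$ at a time using a Menger-type extension of Claim~\ref{cl:Hav}, whereas the paper argues by contradiction in a single step: a $(k-1)$-separator $S$ leaves the close-plus-sea core $H\setminus S$ connected, so $S$ would have to cut off a set of far vertices contained in a single $F(N)$ (by the disjointness of the far clusters), but the iterative deletion in \ref{item:I2}/\ref{item:I6}/\ref{item:I7} guarantees that no such $(k-1)$-separated set survives in $F(N)$. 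The two arguments are morally equivalent; the paper's is shorter and avoids having to state and justify the extension of Claim~\ref{cl:Hav}.

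The genuine gap is your unresolved ``main obstacle'' paragraph. You correctly flag that the $\geq k$ external neighbours of a surviving far set must be retained rather than deleted vertices, but the resolution you gesture at does not match how the cited objects are used: $\mathrm{Ann}(\cdot)$, the finer tessellation $\cT_f$, and the events $E_k$ and $F_M$ enter the argument in \Cref{lem:algo_good} and in the analysis of $CS_3$, not in the $k$-connectivity of $G_{2\taukctwo{k}}[CS_1\cup CS_2]$. The relevant (and simpler, purely geometric) fact, used implicitly by the paper's contradiction argument, is that the graph-neighbourhood of each far cluster $F(N)$ is confined to $N_{2c}$ via \Cref{lem:cutoff-in-2c-blowup}(a); inside $N_{2c}$, $CS_1$ retains every non-deleted vertex and the $CS_2$ orientation does not act, so the external neighbourhood seen by the iterative deletion of \ref{item:I2} is precisely the one present in $G_{2\taukctwo{k}}[CS_1\cup CS_2]$. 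Without some version of this observation, your cluster-by-cluster absorption step is not justified.
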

\begin{proof}
Denote by $L_k$ the event that every sea cube in $G_{2\tau_{2,k}}$ contains at least $k$ vertices in $CS_1\cup CS_2$. 
By \Cref{algo:U1}, \Cref{clm:stage-1-balance} and the construction of $CS_2$, $L_k$ holds whp and we assume it in the sequel. 
Recalling that the sea is a connected subgraph of $\widetilde{\cG}$ by \Cref{lem:sea-connected}, the restriction of $G_{2\tau_{2,k}}[CS_1\cup CS_2]$ to the sea is a $k$-connected graph. 
Furthermore, by \Cref{cl:Hav}, adding back the vertices in close cubes one by one maintains $k$-connectivity. Denote by $H$ the $k$-connected graph obtained by restricting $G_{2\tau_{2,k}}[CS_1\cup CS_2]$ to the close and the sea cubes.

Now, suppose that $G_{2\tau_{2,k}}[CS_1 \cup CS_2]$ has a $(k-1)$-separator $S$. Then, $H\setminus S$ is a connected graph. 
Since the clusters of far cubes $F(N)$ for $N\in \fN'$ are at distance at least $4r$ from each other, $S$ must separate some of the vertices in a single cluster $F(N)$ from the rest of the graph (and from $H$ in particular).
However, by \ref{item:I2}, \ref{item:I6} and \ref{item:I7}, no such $(k-1)$-separated set exists, which concludes the proof.
\end{proof}

\vspace{0.5em}

We turn to the construction of $CS_3$ via a simple online strategy. 
Define $CS^{\taukctwo{k}}=CS_1\cup CS_2$ and, for every integer $i > \taukctwo{k}$, we inductively define $CS^i=CS^{i-1} \cup \{X_{2i-1}\}$, if $X_{2i-1}$ has at least $k$ neighbours in $CS^{i-1}$, and $CS^i=CS^{i-1} \cup \{X_{2i}\}$ otherwise. 
Finally, we define 
\[\tau = \min\{i > \taukctwo{k}: G(CS^i) \text{ is not $k$-connected}\}.\] 
To finish the proof, we show that whp $\tau=\infty$. Note that, by  Claim~\ref{cl:Hav} and the fact that $G_{2\tau_{2,k}}[CS_1 \cup CS_2]$ is typically $k$-connected, 
it suffices to show that whp, for every $i>\taukctwo{k}$, the partner pair $X_{2i-1},X_{2i}$ contains a vertex with at least $k$ neighbours in $CS^{i-1}$: 
note that this event certainly holds unless both points land inside $k$-remote cubes in $\cT_f$ with respect to the point set $CS_1\cup CS_2$. Our next lemma estimates the total volume of these cubes.

\begin{lemma}\label{lem:k-far-volume-u1}
There is $\delta = \delta(c, d) > 0$ such that whp the following statements hold simultaneously:
\begin{enumerate}[label=\emph{(\alph*)}]
\item the total volume of the far cubes in $F(N)$ over $N \in \fN'$ with $\diam(\bigcup F(N)) \geq r/10$ is $o(r^{d/2 + \delta})$;
\item the total volume of the far cubes in $F(N)$ over $N \in \fN'$ with $|P(N)| = 1$ is $o(r^{d/2 + \delta})$;
\item the total volume of the $k$-remote cubes with respect to $CS_1\cup CS_2$ is $o(r^{d/2} (\log(1/r))^{-\delta})$.
\end{enumerate}
\end{lemma}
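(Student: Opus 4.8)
Throughout I would fix $\delta=\delta(c,d)>0$ small enough for all three parts (concretely $\delta=3d^3/c$ works once $c$ is large), and condition on the whp events already at our disposal: \ref{item:H1}--\ref{item:H4} from the proofs of \Cref{lem:D} and \Cref{lem:special}, the event $K_M$ of \Cref{lem:3-counterpart}, the event $A_k$ of \Cref{lem:hitting-time-offline-ub}, the events $E_k,F_M$ of \Cref{lem:E} and \Cref{lem:F}, and the event $L_k$ (from the proof of \Cref{lem:CS1CS2}) that every sea cube contains at least $k$ points of $CS_1\cup CS_2$. Part (a) is then immediate: by \ref{item:H4} at most $r^{-d/2+2\beta}$ cubes of $\cT$ lie in $F(N)$ for some $N\in\fN'$ with $\diam(\bigcup F(N))\ge r/10$, and each has volume $(r/c)^d$, so their total volume is $O(r^{d/2+2\beta})=o(r^{d/2+\delta})$ since $\delta<2\beta$.

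For part (b) I would run a union bound of the type used in \Cref{lem:D}, \Cref{lem:E} and \Cref{clm:stage-1-balance}: reveal the positions, but not the labels, of $(X_i)_{i=1}^{2t_{\min}}$ and of $(X_i)_{i=2t_{\min}+1}^{2t_{\max}}$, so that the labels inside each revealed set form a uniform permutation. If $q\in F(N)$ with $N\in\fN'$ and $|P(N)|=1$, then by the claim in the proof of \Cref{lem:special} there is a partner pair $Y,\bar Y$ with $Y$ within $100dUr$ of $q$ and $\bar Y$ within $100dUr$ of some far cube; by \ref{item:H2} there are $O(\log(1/r))$ points within $100dUr$ of $q$, and by \ref{item:H3} at most $r^{-d/2-\beta}$ far cubes, hence $O(r^{-d/2-\beta}\log(1/r))$ points within $100dUr$ of a far cube. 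As there are $\Theta(r^{-d}\log(1/r))$ points in total, the probability of such a pairing is $r^{d/2-\beta+o(1)}$, so summing over the at most $r^{-d/2-\beta}$ far cubes gives expected count $r^{-2\beta+o(1)}$; Markov's inequality then yields at most $r^{-2\beta-\delta}$ such far cubes whp, of total volume $O(r^{d-2\beta-\delta})=o(r^{d/2+\delta})$, using that $c$ is large enough that $4\delta<d/2$.

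Part (c) is the crux. First I would check that every $k$-remote cube wrt $CS_1\cup CS_2$ lies in $\bigcup F(N)$ for some $N\in\fN'$: the argument of \Cref{rem:remote-in-far} goes through verbatim once one replaces ``$\widetilde{\cA}$ contains points of $(X_i)_{i=1}^{2t_{\min}}$'' by ``$\widetilde{\cA}$ contains points of $CS_1\cup CS_2$'', which is valid because the sea cube produced in that proof contains at least $k$ such points by $L_k$. Then I would split $\fN'$. Components with $\diam(\bigcup F(N))\ge r/10$, or with $|P(N)|=1$, contribute by parts (a) and (b) a far-cube volume $o(r^{d/2+\delta})=r^{d/2}\cdot o((\log(1/r))^{-\delta})$, since $r^{\delta}=o((\log(1/r))^{-\delta})$. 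The remaining components ($\diam(\bigcup F(N))<r/10$, $|P(N)|=0$) are exactly those processed by rule \ref{item:I2}. When $F(N)$ contains no $(k-1)$-separated set, all of $N_{2c}$ enters $CS_1$, so on a cube of $\bigcup F(N)$ the sets $CS_1\cup CS_2$ and $(X_i)_{i=1}^{2\taukctwo{k}}$ agree inside $N_{2c}$; since the $\mathrm{Ann}$ of such a cube either contains a full sea cube (ruling out $k$-remoteness wrt $CS_1\cup CS_2$ by $L_k$) or is covered, together with the sparse part of its $\mathrm{Ann}$, by a bounded number of $\cT$-cubes of $N$ (using $\diam(\bigcup F(N))<r/10$ and that disjoint $2c$-blow-ups are far apart), such a cube is also $k$-remote wrt $(X_i)_{i=1}^{2\taukctwo{k}}$, hence captured by \Cref{prop:main-volume}(a) with $\kappa=k$ via \Cref{rem:remote-monotone}, contributing total volume at most $r^{d/2}(\log(1/r))^{-1/3}=r^{d/2}\cdot o((\log(1/r))^{-\delta})$ once $\delta<1/3$. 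When $F(N)$ does contain a $(k-1)$-separated set, \ref{item:I2} deletes a set $V$ obtained by iteratively peeling $(k-1)$-separated sets; here, arguing as for \ref{item:I7} in the proof of \Cref{lem:algo_good} (using $E_k$, $F_M$ and \Cref{lem:special}(b)), one distinguishes whether $\diam(V)$ is below or above $d_f$: in the first case the cubes turning $k$-remote wrt $CS_1\cup CS_2$ only through these deletions still lie in $\bigcup F(N)$ and, after some further bookkeeping, fall under the previous bound, while in the second case $G_{2\taukctwo{k}}[V]$ is complete with fewer than $k^2\le M$ neighbours outside $V$, so those cubes form $\cG_f$-connected $M$-remote sets wrt $CS_1\cup CS_2$ of diameter at least $d_f$, of total volume $r^{d/2}(\log(1/r))^{-\omega(1)}$ by \Cref{prop:main-volume}(b). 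Summing the three classes gives the bound claimed in (c).

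The step I expect to be the main obstacle is this last sub-case of (c): showing that iteratively deleting a $(k-1)$-separated set inside a small far region cannot manufacture more than a negligible volume of new $k$-remote cubes. The delicate point is the diameter dichotomy for the deleted set $V$ — a small $\diam(V)$ keeps the new remote cubes inside an already-controlled far region, whereas a large $\diam(V)$ turns the cubes covering $V$ into a rare geometric configuration to which \Cref{prop:main-volume}(b) applies — and making this dichotomy precise, so that no remoteness parameter is lost beyond $\kappa=k$ in the small-diameter regime, is essentially a re-run of the structural analysis behind \Cref{lem:algo_good}.
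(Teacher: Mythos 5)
Your proposal follows the paper's route closely in parts (a) and (b), and in the overall case-analysis for (c), so I'll focus on the one place where your sketch diverges and where the details actually matter.

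Parts (a) and (b) match the paper: (a) is read off directly from \ref{item:H4}, and (b) is a Markov argument with the same union bound built from \ref{item:H2}, \ref{item:H3}, $K_M$ and hidden labels. Your bookkeeping for (b) (first bounding the \emph{count} of far cubes by Markov, then multiplying by $(r/c)^d$) is a harmless repackaging of the paper's direct computation of the expected \emph{volume}; both give $\delta<d/2-2\beta$, compatible with $\delta=\beta=3d^3/c$ for $c$ large. For (c), you correctly reduce to the components with $|P(N)|=0$ and $\diam(\bigcup F(N))<r/10$, you correctly observe that the no-separated-set case is handled by Proposition~\ref{prop:main-volume}(a) via \Cref{rem:remote-monotone} because on $N_{2c}$ nothing is deleted, and you correctly identify the last sub-case as the crux. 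Where you diverge from the paper is precisely there: you propose a diameter dichotomy on the deleted set $V$, whereas the paper sets $\cC\subseteq\cT_f$ to be a minimal cover of the $(k-1)$-separated set $S$, lets $\cC'$ be any set of cubes that becomes $k$-remote during \ref{item:I2}, observes that $\cC\cup\cC'$ is a \emph{$3k$-remote} set wrt $(X_i)_{i=1}^{2\taukctwo{k}}$ (this is why $M\ge 3k^2+2d/\alpha$ was chosen, and note the parameter \emph{is} lost beyond $k$ — that loss is harmless since Proposition~\ref{prop:main-volume} covers all $\kappa\in[M]$), and splits on $\diam(\bigcup(\cC\cup\cC'))$. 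In the small-diameter regime this yields a clean bound: the cubes of $\cC'$ lie within $\ell_\infty$-distance $d_f$ of $\cC$, so their total volume is at most $O((d_f/s_f)^d)=O((\log\log(1/r))^{2d})$ times the total volume of $k$-remote covers $\cC$, which Proposition~\ref{prop:main-volume}(a) bounds by $r^{d/2}(\log(1/r))^{-1/3}$. Your split on $\diam(V)$ is in the right spirit (and in the large-$\diam(V)$ case your appeal to $F_M$-type control is fine), but the small-$\diam(V)$ branch, which you flag as requiring "further bookkeeping," is genuinely where the argument needs the explicit $\cC$-versus-$\cC'$ device and the $(\log\log(1/r))^{2d}$ multiplicative factor to close; splitting on $V$ alone does not by itself localise the new remote cubes in a countable way.
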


\begin{proof}
Recall \ref{item:H4} and the $\beta$ defined there.
It follows that (a) is satisfied for any $\delta < 2\beta$.

For part (b), recall the typical properties \ref{item:H2}, \ref{item:H3} as well as the event from Lemma~\ref{lem:3-counterpart} (which we call $K_M$ in this proof).
Condition on the point sets $\{ X_i: i \in [2t_{\min}]\}$ and $\{ X_i: i \in [2t_{\min} + 1, 2t_{\max}]\}$ without revealing the labels, and on the events $K_M$, \ref{item:H2}, \ref{item:H3}.
The labels of the points within the respective sets remain uniformly distributed.
By a computation analogous to \eqref{eq:bd4.20}, the expected volume considered in (b) does not exceed
\[ \frac{r^d}{c^d} \cdot (r^{-d/2 - \beta})^2 \cdot 2t_{\max} \cdot \left( \frac{Z \log(1/r)}{2t_{\max} - 2t_{\min}} \right)^2 = \Theta \left(r^{d - 2\beta} \frac{(\log(1/r))^3}{(\log\log(1/r))^2}\right), \]
where the first factor represents the volume of a single cube in $\cT$, the second factor dominates the number of choices of two hypercubes, the third one dominates the number of choices of a pair of indices and the last one is the upper bound for the probability that said a pair of indices is assigned to the points at distance at most $100dr$ from the selected cubes.
Due to Markov's inequality, statement (b) holds whp for any choice of $\delta < d/2 - 2\beta$.
Since (a) and (b) yield stronger bounds than (c), we only estimate the total volume of $k$-remote sets inside far cubes in $\cT\setminus A(N)$ where $N\in \fN'$ satisfies $|P(N)| = 0$ and $\text{diam}(\bigcup F(N)) < r/10$. In particular, the graph $G_i$ restricted to $F(N)$ is complete for all $i\ge 1$.
    
    If $F(N)$ does not contain any $(k-1)$-separated set in $G_{2\taukctwo{k}}$, by construction, all vertices in $N_{2c}$ belong to $CS_1\subseteq CS$. Therefore, we could not have created or enlarge any $k$-remote set there by following \Cref{algo:U1} and, by Proposition~\ref{prop:main-volume}, the total volume of such $k$-remote sets is at most $r^{d/2} (\log(1/r))^{-1/3} = o(r^{d/2} (\log(1/r))^{-\delta})$.

    If $F(N)$ does contain a $(k-1)$-separated set $S$, then some of the vertices inside have been deleted following the process in \ref{item:I2}, and thus \Cref{prop:main-volume} alone is insufficient to conclude as new $k$-remote sets may have been generated. To address this inconvenience, define $W=(\{X_i: i\in [2\taukctwo{k}]\}\cap \bigcup F(N))\setminus S$. Since $\diam(\bigcup F(N)) < r/10$, the set of vertices $S \cup W$ induces a clique in $G_{2\taukctwo{k}}$, and thus $|W|\le k-1$ as otherwise $S$ would not have been $(k-1)$-separated in the first place.
    
    Next, denote by $\cC\subseteq \cT_f$ a minimal set of cubes covering $S$ entirely and fix an arbitrary set $\cC'\subseteq \cT_f$ within $\bigcup F(N)$ which becomes $k$-remote throughout the execution of \Cref{algo:U1}. Since $\cC\cup \cC'$ covers all vertices in $S$, there are at most $|W|\le k-1$ vertices in $N_{2c}$ outside $S$ deleted throughout the iterations of \ref{item:I2}. 
    As a result, $\cC\cup \cC'$ is a $3k$-remote set wrt $(X_i)_{i=1}^{2\taukctwo{k}}$. Two options arise: 
    \begin{itemize}
        \item If $\diam(\bigcup (\cC\cup \cC')) \ge d_f$, then $\cC\cup \cC'$ is a $\cG_f$-connected set with large diameter. The total volume of such sets is $r^{d/2}(\log(1/r))^{-\omega(1)}$ by \Cref{prop:main-volume}.
        \item If $\diam(\bigcup (\cC\cup \cC')) < d_f$, then $\cC'$ is within $\ell_{\infty}$-distance $d_f$ from any cube in $\cC$. Therefore, the total number of cubes in sets $\cC'$ is within a factor of $O((d_f/s_f)^d)\le (\log\log(1/r))^{2d}$ from the number of cubes in $\cC$.
    \end{itemize}
    Using the already mentioned upper bound of $r^{d/2}(\log(1/r))^{-1/3}$ on the total volume of $k$-remote sets, the total volume of $F(N)$ containing a $(k-1)$-separated set, with $|P(N)| = 0$ and $\text{diam}(\bigcup F(N)) < r/10$ is at most 
    \[r^{d/2} (\log(1/r))^{-\omega(1)} + r^{d/2} (\log(1/r))^{-1/3} (\log\log(1/r))^{2d} = o(r^{d/2} (\log(1/r))^{-\delta})\]
    for any $\delta < 1/3$, as desired.
\end{proof}    

We are ready to finish the proof of the $k$-connectivity part of \Cref{thm:2} by exhibiting the claimed monotonicity.

\begin{proof}[Proof of the $k$-connectivity part in \Cref{thm:2}]
We already showed in \Cref{lem:CS1CS2} that $G_{2\taukctwo{k}}[CS^{\taukctwo{k}}]$ is a $k$-connected graph whp.
Denote by $\cT_r \subseteq \cT_f$ the set of all cubes belonging to some $k$-remote set in $CS^{\taukctwo{k}}$. 
We condition on the set $\cT_r$ and on the assumption on $|\cT_r|$ from Lemma~\ref{lem:k-far-volume-u1}(c) holds.

Define $t_1 = C r^{-d} \log\log(1/r)$ and $t_2 = t_1 + C r^{-d} \log(1/r)$ for a large constant $C$ to be determined later.
We analyse the intervals $\{X_i:i\in [2\taukctwo{k}+1,2(\taukctwo{k}+t_1)]\}$ and $\{X_i:i\in [2(\taukctwo{k}+t_1)+1,2(\taukctwo{k}+t_2)]\}$ separately.
First, we show that whp no partner points in the first interval simultaneously land in cubes contained in $k$-remote sets wrt $CS^{\taukctwo{k}}$: indeed, thanks to Lemma~\ref{lem:k-far-volume-u1}(c) and a union bound, the probability of the latter event is at most $t_1 \cdot r^d (\log(1/r))^{-2\delta} = o(1)$.

Next, by \Cref{lem:cutoff-in-2c-blowup}(b), for every fixed far cube (and, in particular, every cube $q\in \cT_r$ in some far region $F(N)$), there is a close cube $q'\in \cT$ within distance $(1-d/c)r$ from $q'$.
Furthermore, for every partner pair $X_{2i-1},X_{2i}$ with $i\in [\taukctwo{k}+1,\taukctwo{k}+t_1]$ where $X_{2i-1}$ lands in $q'$, $X_{2i-1}$ automatically has $k$-neighbours in $CS^{\taukctwo{k}}$ within a nearby sea cube (wrt $CS^{\taukctwo{k}}$). By definition of our strategy, every such point $X_{2i-1}$ is added to the set $CS^i$.
Note that, upon the latter event happening $k$ times for fixed $q,q'$, the cube $q$ cannot participate in any $k$-remote set in $F(N)$ ever again.
Furthermore, the latter event fails with probability
\begin{equation}\label{eq:Tr}
\mathbb P(\mathrm{Bin}(t_1,(c/r)^d)\le k) = \sum_{i=0}^{k-1} \binom{t_1}{i} \bigg(\frac{r}{c}\bigg)^{id} \bigg(1-\bigg(\frac{r}{c}\bigg)^d\bigg)^{t_1-i} = O((r^d t_1)^{k-1}\exp(-(1+o(1))r^d t_1/c^d)).
\end{equation}
As a result, by choosing the constant $C$ suitably large and using \Cref{lem:k-far-volume-u1}(c), Markov's inequality implies that the set $\cT_r'\subseteq \cT_r$ of $k$-remote cubes wrt $CS^{\taukctwo{k}+t_1}$ satisfies $|\cT_r'|\le r^{-d/2} /\log(1/r)$ whp.

Now, consider $CS^{\taukctwo{k} + t_2} \setminus CS^{\taukctwo{k} + t_1}$ conditionally on the set $\cT_r'$ and the latter event.
Since the total volume of the cubes in $\cT_r'$ is $o(r^{d/2}/\log(1/r))$, the expected number of partner pairs simultaneously landing in them is bounded from above by $C r^{-d} \log(1/r) \cdot r^d (\log(1/r))^{-2} = o(1)$.
Moreover, similarly to~\eqref{eq:Tr}, the expected number of cubes $q\in \cT_r'$ such that less than $k$ vertices with odd indices $X_{2j-1}$ with $j\in [\taukctwo{k} + t_1+1,\taukctwo{k} + t_2]$ land in the corresponding close cube $q$ is 
\[\sum_{i=0}^{k-1} \binom{t_2-t_1}{i} \bigg(\frac{r}{c}\bigg)^{id} \bigg(1-\bigg(\frac{r}{c}\bigg)^d\bigg)^{t_2-t_1-i} = O((r^d (t_2-t_1))^{k-1}\exp(-(1+o(1))r^d (t_2-t_1)/c^d)) = o(|\cT'_r|^{-1}),\]
where the last equality holds for large enough $C$. An application of Markov's inequality finishes the proof.
\end{proof}

\subsection{\texorpdfstring{Hamiltonicity in the offline $2$-choice process}{Hamiltonicity in the offline 2-choice process}}\label{sec:Ham-2off}

In this section, we prove the part of \Cref{thm:2} regarding Hamiltonicity.
We use the construction of the choice set $CS$ described in \Cref{sec:k-conn-2off} for $k = 2$ with very few minor modifications, and we argue that it suffices to show that the reference construction described in \Cref{subsection-hamiltonicity} whp produces a Hamilton cycle.

\paragraph{Construction of $CS_1$.} Recall the constants $c$ from the definition of the tessellation $\cT$, $M$, $U$ from \Cref{lem:3-counterpart} and $\alpha$ from \ref{item:H1}.
In this section, we set $M = 2(4U + 6 + 2(2c + 1)^d) + 2d/\alpha$. The remaining part of the construction remains unchanged.

\paragraph{Construction of $CS_2$.} There are no modifications here. 
Observe that due to the choice of $M$ in the previous steps, \Cref{clm:stage-1-balance} and \Cref{lem:West}, after this step every sea cube outside the blow-ups $N_{2c}$ with $N\in \fN'$ contains at least $4U + 6 + 2(2c + 1)^d$ points of $CS_1 \cup CS_2$.

\paragraph{Construction of $CS_3$.} In the previous section, we have discarded a point $X_{2i - 1}$ for $i > \taukctwo{2}$ only if it had less than $2$ neighbours in $CS^{i - 1}$.
For our purposes here, we discard the point $X_{2i-1}$ if it has less than $2$ neighbours in $CS^{i-1}$, or if it has at least $2$ neighbours but it lands in a cube $q$ such that $q\in F(N)$ for some $N\in \fN'$ with $\diam(\bigcup F(N)) \geq r/10$.
In other words, we keep unusually large clusters of far cubes empty for longer, with the idea to avoid dealing with them in the reference construction.

\paragraph{$2$-connectivity.} In the modified construction, it still holds that whp each of $(G_{2i}[CS^i])_{i\ge \taukctwo{2}}$ is $2$-connected. The modifications go as follows.

\paragraph{Reference construction at time $t \geq \taukctwo{2}$.}
The correctness of steps 1-3 remains valid,
and the construction of the far-reaching paths in step 4 remains unmodified for all $N \in \fN'$ with $\diam(\bigcup F(N)) < r/10$.
Moreover, recalling $t_2=(1+o(1))Cr^{-d}\log(1/r)$ from the proof of \Cref{lem:k-far-volume-u1} and assuming \Cref{lem:k-far-volume-u1}(a),
we derive that whp the far sets $F(N)$ with $\diam(\bigcup F(N))\geq r/10$ remain disjoint from $CS^{\taukctwo{2} + t_2}$.
To finish the construction, we argue that typically every cube in the far sets $F(N)$ described above has become close in $CS^{\taukctwo{2} + t_2}$.

\begin{claim}
Whp, for every cube $q \in \cT$ with $q\in F(N)$ for some $N\in \fN'$, there exists a close cube $q' \in \cT$ in $G_{2\taukctwo{2}}$ such that $q'$ contains more than $M$ points of $CS^{\taukctwo{2} + t_2}$.
\end{claim}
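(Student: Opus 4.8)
The plan is to argue that whp every far cube $q \in \cT$ with $q \in F(N)$ for some $N \in \fN'$ lies within Euclidean distance $(1-d/c)r$ of a cube $q' \in \cT$ which is \emph{full in the sea}, that is, $q'$ is a sea cube wrt $(X_i)_{i=1}^{2\taukctwo{2}}$, and which additionally receives at least $M$ points of $CS^{\taukctwo{2}+t_2}$ among the steps $i \in [\taukctwo{2}+1, \taukctwo{2}+t_2]$. The existence of a nearby close cube $q'$ in $G_{2\taukctwo{2}}$ follows immediately from \Cref{lem:cutoff-in-2c-blowup}(b) (recall that $F(N)$ far in the final graph means $q$ was already far wrt $(X_i)_{i=1}^{2t_{\min}}$, so a close cube adjacent to it in $\widetilde\cG$ exists and remains close). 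What remains is to show that such a $q'$ can be chosen so that it, or a sea cube adjacent to it, absorbs more than $M$ new points during the $t_2$ extra steps; and then to note that being close at distance $(1-d/c)r$ from an $M$-full cube forces $q$ itself to stop being far.

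First I would recall from the proof of \Cref{lem:k-far-volume-u1} that, conditionally on the typical events \ref{item:H2}, \ref{item:H3}, $K_M$ (the event of \Cref{lem:3-counterpart}), and after revealing the point positions without labels, the number of far cubes wrt $(X_i)_{i=1}^{2\taukctwo{k}}$ is at most $r^{-d/2-\beta}$, so in particular there are polynomially many (in $r^{-1}$) candidate cubes $q$ to control via a union bound. Fix such a $q$ and the associated close cube $q'$; by \ref{item:H2} the region within distance $(1-d/c)r$ of $q'$ contains $O(\log(1/r))$ points of $G_{2t_{\max}}$, so in particular $q'$ is $O(\log(1/r))$-full at time $2\taukctwo{2}$. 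Now consider the steps $i \in [\taukctwo{2}+1, \taukctwo{2}+t_2]$ with $t_2 = t_1 + Cr^{-d}\log(1/r)$. By the construction of $CS^i$ (and the modified rule for $CS_3$ in the Hamiltonicity construction, which only discards points landing in \emph{large} far clusters, hence does not affect cubes close to $q$), whenever the odd point $X_{2i-1}$ lands in $q'$ it automatically has at least $2$ neighbours in $CS^{i-1}$ inside a nearby sea cube that was already $M$-full and stays that way; hence it is added to $CS^i$. The number of such indices $i$ with $X_{2i-1} \in q'$ is distributed as $\mathrm{Bin}(t_2-t_1, (r/c)^d)$ (up to the $t_1$ burn-in, which can only help), and $(t_2-t_1)(r/c)^d = Cc^{-d}\log(1/r) \to \infty$. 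By Chernoff's bound (\Cref{lem:chernoff}), the probability that fewer than $M$ such indices occur is $\exp(-\Omega(\log(1/r))) = r^{\Omega(1)}$, which for $C$ large enough beats the $r^{-d/2-\beta}$ factor from the union bound over far cubes $q$.

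Hence whp, for every far cube $q \in F(N)$ with $N \in \fN'$, the adjacent cube $q'$ contains more than $M$ points of $CS^{\taukctwo{2}+t_2}$, which is exactly the claim. The main obstacle here is purely bookkeeping: one must check that the modified discarding rule for $CS_3$ in the Hamiltonicity setting — which keeps \emph{large} far clusters empty — does not interfere with absorbing points into the \emph{close} cube $q'$ near a far cube $q$ of an arbitrary (possibly small) cluster, and that the burn-in period $t_1$ (used earlier to shrink the $k$-remote cubes) is harmlessly absorbed into the count. Both follow because $q'$ is a genuine sea-adjacent cube whose neighbours are $M$-full already at time $2\taukctwo{2}$, so no point landing in $q'$ is ever discarded for lack of neighbours, and the Chernoff estimate over the full window $[\taukctwo{2}+1,\taukctwo{2}+t_2]$ (not just after $t_1$) is the one actually used.

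\begin{proof}
By \Cref{lem:cutoff-in-2c-blowup}(b) applied at time $2t_{\min}$, for every $q \in F(N)$ with $N \in \fN'$ there is a cube $q'$ adjacent to $q$ in $\widetilde{\cG}$ which is close wrt $(X_i)_{i=1}^{2t_{\min}}$; since a close cube can only remain close as points are added, $q'$ is close in $G_{2\taukctwo{2}}$ as well, and $\dist(q,q') \le (1-d/c)r$. Fix a sea cube $\widetilde q$ adjacent to $q'$ in $\widetilde{\cG}$; by definition of the sea, $\widetilde q$ and every cube adjacent to it in $\widetilde{\cG}$ that lies in $\widetilde{\cA}$ is $M$-full already wrt $(X_i)_{i=1}^{2t_{\min}}$.

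We condition on the typical events \ref{item:H2}, \ref{item:H3}, $K_M$ and on the positions (without labels) of $(X_i)_{i=1}^{2t_{\max}}$, so that by \ref{item:H3} there are at most $r^{-d/2-\beta}$ far cubes wrt $(X_i)_{i=1}^{2\taukctwo{2}}$. Fix such a cube $q$ and the cube $q'$ as above. For every index $i \in [\taukctwo{2}+1,\taukctwo{2}+t_2]$ with $X_{2i-1} \in q'$, the point $X_{2i-1}$ has at least $M \ge 2$ neighbours in $CS^{i-1}$ inside $\widetilde q$, and $q'$ is not a far cube of a cluster of diameter at least $r/10$ (it is close), so by the construction of $CS_3$ the point $X_{2i-1}$ is added to $CS^i$. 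The number of indices $i$ in this range with $X_{2i-1} \in q'$ stochastically dominates a $\mathrm{Bin}(t_2-t_1,(r/c)^d)$ random variable, whose mean is $(t_2-t_1)(r/c)^d = Cc^{-d}\log(1/r)$. By \Cref{lem:chernoff}, the probability that fewer than $M$ such indices occur is at most $\exp(-\Omega(\log(1/r)))$, which is $o(r^{d/2+\beta})$ for $C=C(c,d)$ large enough.

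A union bound over the at most $r^{-d/2-\beta}$ far cubes $q$ shows that, whp, each such $q$ has an adjacent cube $q'$ containing more than $M$ points of $CS^{\taukctwo{2}+t_2}$, as claimed.
\end{proof}
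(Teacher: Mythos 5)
Your proposal is correct and takes essentially the same route as the paper: invoke \Cref{lem:cutoff-in-2c-blowup}(b) together with the monotonicity of the far region (via $A_2$) to locate a close cube $q'$ adjacent to $q$, observe that any $X_{2i-1}$ with $i>\taukctwo{2}$ landing in $q'$ is always retained in $CS_3$, bound the count by a $\mathrm{Bin}(t_2-t_1,(r/c)^d)$ variable via Chernoff, and finish with a union bound. The only cosmetic difference is that you union over the $\le r^{-d/2-\beta}$ far cubes (using \ref{item:H3}), whereas the paper simply unions over all $|\cT|=\Theta(r^{-d})$ cubes and relies on Chernoff with $C$ large enough to beat $r^{-d}$ directly; both work, and the paper's version avoids the far-cube count entirely.
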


\begin{proof}
Denote by $E$ the event from the statement from the lemma, recall the typical event $A_2$ from \Cref{lem:hitting-time-offline-ub}, and define $E'$ to be the event that every far cube in $G_{2\taukctwo{2}}$ has a close neighbour in $\widetilde{\cG}$. 
By the monotonicity of the far region (implying that far cubes in $G_{2\taukctwo{2}}$ are also far cubes in in $G_{2t_{\min}}$) and $A_2$, $E'$ holds whp thanks to \Cref{lem:cutoff-in-2c-blowup}(b).
Thus, it suffices to show that $\bbP(E^c \cap A_2 \cap E') = o(1)$.

Assuming $A_2 \cap E'$, fix a far cube $q$ in $G_{2\taukctwo{2}}$, and let $q' \in \cT$ be a close neighbour of $q$ in $\widetilde{\cG}$.
Observe that every point $X_{2i - 1}$ with $i>\taukctwo{2}$ landing in $q'$ enters $CS_3$.
Thus, the probability that $q'$ contains at most $M$ points of $CS^{\taukctwo{2} + t_2}$ is dominated by
\[\bbP(\text{Bin}(t_2-t_1, r^d/c^d) \leq M) = o(r^{-d}),\]
where the asymptotic statement holds by \Cref{lem:chernoff} with $C$ sufficiently large.
A union bound over $\cT$ finishes the proof.
\end{proof}

Now, fix any $t\ge \taukctwo{2}$.
Once again, constructing the far-reaching paths crossing large far areas may be done while using at most $2$ vertices of $CS^t$ in each of the sea cubes.
To complete the justification of step 4 in the reference construction, it suffices to show that one can still construct paths absorbing all vertices in close cubes which remain outside the exhibited far-reaching paths.
To this end, recall that whp each of the $M$-nonfull components has size at most $2U$ (by \Cref{lem:3-counterpart}). 
Combining this with the monotonicity exhibited in \Cref{rem:lem:3}, for each component $N\in \fN'$, we need to use at most $4U$ vertices of $CS^t$ for the construction of the paths absorbing the vertices in $N$.
By our choice of $M$, using at most $4U$ vertices per sea cube leaves at least $2(2c + 1)^d$ vertices of $CS^t$ in each sea cube, which is the only condition required for the validity of step 5 of the reference construction.
This concludes the proof of \Cref{thm:2}.

\section{Sharp threshold for the online 2-choice process}\label{section:online-2-choice}

First of all, we focus on the subcritical regime where, for every simultaneous choice function $f$, we show that each of the graphs $G(\mathbf{X}_t(f))$ with $2t\le t_0 = t_0(r) := \mathbb E[\taukcone{1}]-(\theta_d^{-1}+\eps)r^{-d}\log\log(1/r)$ remains typically disconnected. 
Define $t_1 = t_1(r) := r^{-d}/\log\log(1/r)$.

\begin{proof}[Proof of the first statement in \Cref{thm:3}]
For every choice function $f$, the probability that the unique point in $\mathbf{X}_1(f)$ remains isolated in $G_{t_1}$ is at least
\[\mathbb P(X_1\text{ and } X_2 \text{ are isolated in }G_{t_1})\ge (1-2\theta_d r^d)^{t_1-2}=1-o(1).\]
In particular, for every choice function $f$, whp $G(\mathbf{X}_t(f))$ contains an isolated vertex for every $t$ such that $2t\in [t_1]$.
Next, we show that the same holds for every $t$ such that $2t\in [t_1,t_0]$.

Denote by $A$ the event that at least $0.9 t_1$ of the points $(X_i)_{i=1}^{t_1}$ have no other point in the same set at distance at most $2r$ from itself.
By~\eqref{eq:expZ} and \Cref{lem:conc} for the random geometric graph on $(X_i)_{i=1}^{t_1}$ with radius of connectivity $2r$, $A$ holds whp.
Note that conditioning on the positions of the points $(X_i)_{i=1}^{t_1}$ without revealing their indices suffices to determine the presence of event $A$.
Moreover, upon such conditioning, the indices of the said points remain uniformly distributed.
Thus, conditionally on $A$, one can find a set $S\subseteq \{X_i: i\in [t_1]\}$ containing $0.4 t_1$ partner pairs $X_{2i-1},X_{2i}$ of isolated points in $G_{t_1}$ such that the balls $B(X_s,r)$ with $X_s\in S$ are pairwise disjoint.
Furthermore, denote by $S' \subseteq S$ the set of all partner pairs $X_{2i - 1}, X_{2i} \in S$ such that exactly one of these points is isolated in $G_{t_0}$.
Then, the expected number of pairs in $S'$ is given by
\begin{align*}
0.4 t_1 \mathbb P(\text{only one of }X_{2i-1} \text{ and }X_{2i} \text{ is isolated in }G_{t_0})
&= 0.4 t_1\cdot 2(1-\theta_d r^d)^{t_0-t_1} \bigg(1-\bigg(\frac{1-2\theta_d r^d}{1-\theta_d r^d}\bigg)^{t_0-t_1}\bigg)\\
&= (0.8+o(1)) t_1 \e^{-\theta_d r^d(t_0-t_1)} = \omega(\log\log(1/r)).
\end{align*}
A simple second moment computation ensures that whp $|S'|\ge \log\log(1/r)$; call this event $B$. 
Moreover, conditionally on the fact that a partner pair is in $S'$, each point in this pair has probability $1/2$ to be the isolated one in $G_{t_0}$, and this choice is done independently for different pairs. 
Based on this observation, for every choice function $f$, conditionally on the event $B$, the probability that the set $\mathbf{X}_t(f)$ for $2t \ge t_1$ contains a point from every partner pair which is not isolated in $G_{t_0}$ is at most $2^{-\log\log(1/r)} = o(1)$, as desired.
\end{proof}

We turn to the proof of \ref{item:A1}--\ref{item:A2}.
The construction of a consecutive choice function $g$ is the same (with a single minor modification) for each of the parts \ref{item:A1}--\ref{item:A2} and for every $k\ge 1$. 
It is divided into two stages. 
First, for every $t\le t_2 := 2c^d r^{-d} \log \log(1/r)$ with a suitably large constant $c$, we set $g(\bX_{t-1},X_{2t-1}) = 1$, where we write $\bX_t$ for $\bX_t(g)$ for short to the end of this section.
In this way, we start by embedding $t_2$ independent uniformly distributed points. Up to this stage, the distribution of the process is the same as the one of the 1-choice process considered in \Cref{section:1-choice}.

To describe the second stage, we recall the tessellation $\cT$ of $\mathbb T^d$ into cubes of side length $r/c$, the auxiliary graph $\cG$ with vertex set $\cT$, and the definition of an $M$-full and $M$-nonfull cube from \Cref{subsection-hamiltonicity}.
Here, we choose $c$ at least as large as in \Cref{subsection-hamiltonicity}, making sure that the Euclidean distance between any pair of points in cubes sharing a corner is at most $r$.
Also, we choose $M\ge k$ in the proof of \ref{item:A1}, and $M>2U+2+2(2c+1)^d$ for the purposes of \ref{item:A2}.
Denote by $\cN\subseteq \cT$ the set of $M$-nonfull cubes in the graph $G(\mathbf{X}_{t_2})$. In particular,
\begin{equation}\label{eq:cN}
\bbE |\cN| = \sum_{i=0}^{M-1} \bigg(\frac{c}{r}\bigg)^{d} \binom{t_2}{i} \bigg(\frac{r}{c}\bigg)^{id} \bigg(1-\bigg(\frac{r}{c}\bigg)^d\bigg)^{t_2-i} = \Theta\left(\frac{(\log\log (1/r))^{M-1}}{r^d(\log (1/r))^2} \right) = o\bigg(\frac{|\cT|}{\log(1/r)}\bigg).    
\end{equation}
Again, a straightforward second moment computation implies that whp $|\cN| \in [\bbE |\cN|/2, 2\bbE |\cN|]$.

Define
\[\Lambda = \bigcup_{q\in \cN} q \]
to be the union of the cubes in $\cN$, and denote by $\Lambda_{2c}$ the $2c$-blow-up of the corresponding set of cubes in $\cT$. In the rest of the proof, we condition on the point set $\mathbf{X}_{t_2}$ and on the event $|\cN| \in [\bbE |\cN|/2, 2\bbE |\cN|]$.

To finish the construction of the choice function $g$, for every $t > t_2$, we define $g(\bX_{t-1}, X_{2t-1}) = 1$ if $X_{2t-1}$ is in $\Lambda$, and $g(\bX_{t-1}, X_{2t-1}) = 2$ otherwise.
We show that $g$ satisfies each of \ref{item:A1}--\ref{item:A2} for suitably large $C = C(d)$.
Roughly speaking, the reason behind this phenomenon is that most cubes are already $M$-full. 
Similarly to the case of the 1-choice process, in order to reach a $k$-connected geometric graph (in the case of \ref{item:A1}) and a Hamiltonian geometric graph (in the case of \ref{item:A2}), it will be enough to ensure these properties in the region $\Lambda_{2c}$.
By using that $|\cN|$ is rather small compared to $|\cT|$, most partner pairs containing a vertex landing in $\Lambda_{2c}$ actually contain a single such vertex. 
In particular, upon restriction on $\Lambda_{2c}$, the 1-choice process and the online 2-choice process with choice function $g$ `run with approximately the same speed' inside $\Lambda_{2c}$.
We formalise the latter statement by constructing a coupling between the two processes, and ensuring \ref{item:A1}--\ref{item:A2} based on \Cref{thm:1}.

For every $t\ge 1$, we denote by $X_t^g$ the last point in $\bX_t$, that is, the $t$-th point selected by the choice function $g$. The promised coupling is constructed as follows. 
Define $\cI$ as the set of indices $t\ge t_2+1$ such that at least one of $X_{2t-1}$ and $X_{2t}$ lands in $\Lambda_{2c}$, and set $\cJ = \{t_2+1,t_2+2,\ldots\}\setminus \cI$. 
We construct a sequence of independent random points $\mathbf{Y} = (Y_i)_{i=1}^{\infty}$. First, for every $i\in [t_2]$, set $Y_i = X_{2i-1}$.
Then, for every $t\ge t_2+1$, we sample a Bernoulli random variable $B_t$ with success probability $p:=\text{Vol}_d(\Lambda_{2c})$ which is independent of the history of the process. 
If $B_t=1$, select the smallest index $i\in \cI$ such that $X_i^g$ is not yet coupled with the process $\mathbf{Y}$ and set $Y_t = X_i^g$.
If $B_t=0$, select the smallest index $j\in \cJ$ such that $X_j^g$ is not yet coupled with the process $\mathbf{Y}$ and set $Y_t = X_j^g$.

\begin{claim}\label{cl:coupleXY}
$\mathbf{Y}$ is a sequence of independent random variables uniform over $\mathbb T^d$.
\end{claim}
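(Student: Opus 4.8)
The plan is to show that the coupling preserves the distribution of the underlying point process, by exhibiting $\mathbf{Y}$ as a deterministic bijective relabelling of $\{X_i^g : i \geq 1\}$ composed with a resampling that does not depend on the positions. First, I would note that for $i \in [t_2]$ the points $Y_i = X_{2i-1}$ are i.i.d.\ uniform on $\mathbb{T}^d$ and independent of everything sampled later, so it suffices to analyse the tail $t \geq t_2 + 1$ conditionally on $\mathbf{X}_{t_2}$ and on the event $|\cN| \in [\bbE|\cN|/2, 2\bbE|\cN|]$ — but observe that this conditioning only fixes the set $\Lambda_{2c}$, hence the value $p = \vol{d}(\Lambda_{2c})$, and nothing about the points $(X_i)_{i > 2t_2}$, which remain i.i.d.\ uniform.

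The key structural observation is the following. Conditionally on $\mathbf{X}_{t_2}$ (equivalently on $\Lambda_{2c}$), consider the infinite sequence of partner pairs $(X_{2t-1}, X_{2t})_{t > t_2}$. For each such $t$, independently, the pair falls into $\cI$ (at least one of the two points in $\Lambda_{2c}$) with probability $q_* := 1 - (1-p)^2$ and into $\cJ$ otherwise; and crucially, \emph{conditionally on which of $\cI, \cJ$ the index $t$ belongs to}, the point $X_t^g$ selected by $g$ is distributed as follows: if $t \in \cJ$, then $X_t^g = X_{2t-1}$ is uniform on $\mathbb{T}^d \setminus \Lambda_{2c}$; if $t \in \cI$, then $X_t^g$ is uniform on $\Lambda_{2c}$ (since $g$ picks the point in $\Lambda$, and $\Lambda \subseteq \Lambda_{2c}$ — here one uses that $\Lambda \subseteq \Lambda_{2c}$, so a point in $\Lambda$ is automatically in $\Lambda_{2c}$; if the pair has one point in $\Lambda$ and one point in $\Lambda_{2c}\setminus\Lambda$, $g$ still picks the one in $\Lambda$, which is uniform on $\Lambda$, a subset of $\Lambda_{2c}$ — so more care is needed here and this is precisely the subtle point). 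In all cases, the points $(X_t^g)_{t \in \cI}$ form an i.i.d.\ sequence of points each uniform on $\Lambda_{2c}$ conditioned appropriately, independent of the sequence $(X_t^g)_{t \in \cJ}$, which is i.i.d.\ uniform on the complement.

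Now the coupling reorganises these two independent streams: at each step $t \geq t_2 + 1$ it draws $B_t \sim \mathrm{Bernoulli}(p)$ independently, and if $B_t = 1$ it outputs the next unused point from the $\cI$-stream, else the next unused point from the $\cJ$-stream. I would check that: (i) the $B_t$'s are i.i.d.\ and independent of both streams; (ii) almost surely both streams are infinite, so the procedure never stalls — this follows since $|\cI| = |\cJ| = \infty$ a.s.\ because each partner pair independently lands in $\cI$ with a fixed positive probability and in $\cJ$ with a fixed positive probability (both strictly between $0$ and $1$ as $p \in (0,1)$); (iii) by the mixture representation, $Y_t$ has density $p \cdot (\text{uniform density on } \Lambda_{2c}) + (1-p)\cdot(\text{uniform density on } \mathbb{T}^d \setminus \Lambda_{2c})$, which is exactly $\vol{d}(\Lambda_{2c})\cdot \ind{\Lambda_{2c}}/\vol{d}(\Lambda_{2c}) + (1 - \vol{d}(\Lambda_{2c}))\cdot \ind{\mathbb{T}^d\setminus\Lambda_{2c}}/\vol{d}(\mathbb{T}^d\setminus\Lambda_{2c})$, i.e.\ the uniform density on $\mathbb{T}^d$; and (iv) the $Y_t$'s are mutually independent because the choice of stream ($B_t$) is independent across $t$ and independent of the streams, and within each stream the points are i.i.d. A clean way to package (iii)–(iv) is to condition on the entire sequence $(B_t)_{t > t_2}$: given this sequence, $(Y_t)_{t > t_2}$ is a concatenation of two independent i.i.d.\ sequences read off in a fixed order, hence conditionally independent with the stated per-coordinate laws, and then integrate out $(B_t)$.

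\textbf{Main obstacle.} The delicate point is verifying that, conditionally on $t \in \cI$, the selected point $X_t^g$ is genuinely uniform on $\Lambda_{2c}$ — but in fact $g$ selects the point landing in $\Lambda$, not in $\Lambda_{2c}$, so one must be careful about \emph{what set the $\cI$-stream points are actually uniform on} and whether the mixture in (iii) reconstructs the uniform law on $\mathbb{T}^d$. I expect the correct statement uses $\Lambda$ rather than $\Lambda_{2c}$ in the partition (i.e.\ $\cI$ should be indices where some point lands in $\Lambda$, with mixture weight $p = \vol{d}(\Lambda)$), or alternatively one handles the three-way split of a partner pair (both in $\Lambda_{2c}$, one in $\Lambda_{2c}$, none in $\Lambda_{2c}$) together with the sub-split according to membership in $\Lambda$, and checks that summing the contributions with the $B_t$-mixture weight still yields the uniform density. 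Resolving this bookkeeping — making sure the mixture weight $p$ matches the volume of whichever region $g$ actually favours, and that the conditional laws on the two streams are the correctly-truncated uniforms — is the crux of the argument; everything else is routine.
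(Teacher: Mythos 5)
Your approach is exactly the paper's two-sentence proof: realise $\mathbf{Y}$ as a $p$-mixture of two conditionally uniform streams. But your ``main obstacle'' paragraph is not just bookkeeping --- you have caught a genuine inconsistency in the section. As the definitions stand ($g$ favours the point in $\Lambda$, while $\cI$ and $p=\vol{d}(\Lambda_{2c})$ are keyed on $\Lambda_{2c}$), the claim is in fact \emph{false}: writing the mixture density of $Y_t$ ($t>t_2$) explicitly gives
\[
\frac{\ind{y\in\Lambda}+1-\vol{d}(\Lambda)-(1-p)\ind{y\notin\Lambda_{2c}}}{2-p}\;+\;\ind{y\notin\Lambda_{2c}},
\]
which equals $\tfrac{2-\vol{d}(\Lambda)}{2-p}>1$ on $\Lambda$, $\tfrac{1-\vol{d}(\Lambda)}{2-p}<1$ on $\Lambda_{2c}\setminus\Lambda$, and $1+\tfrac{p-\vol{d}(\Lambda)}{2-p}>1$ on $\bbT^d\setminus\Lambda_{2c}$; so it is not the uniform density, and your second suggested route (a finer case split under the stated definitions) cannot work. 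Indeed the paper's own proof of \Cref{cl:coupleXY} asserts that, with probability $p$, $Y_t$ is uniform on $\Lambda$ --- which would require $p=\vol{d}(\Lambda)$ and the $\cI$-stream to live in $\Lambda$ --- contradicting its stated definitions two paragraphs earlier.

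Your first suggested fix (redefine $\cI$ via $\Lambda$, set $p=\vol{d}(\Lambda)$) does make \Cref{cl:coupleXY} true: then $t\in\cI$ iff $X_t^g\in\Lambda$ iff at least one of $X_{2t-1},X_{2t}$ is in $\Lambda$, the $\cI$-stream is i.i.d.\ uniform on $\Lambda$, the $\cJ$-stream is i.i.d.\ uniform on the complement, and the $p$-mixture is uniform on $\bbT^d$. However this is not the fix the rest of the section needs: \Cref{lem:hit} requires the coupling to preserve the restriction of the point process to $\Lambda_{2c}$ (so that $G(\bX_{T_\cI})$ and $G(\bY_{\taukcone{k}})$ induce the same graph on each path-component $\Lambda^i$ of $\Lambda_{2c}$), and the $\Lambda$-based coupling breaks this, since $\cJ$-stream points may land in $\Lambda_{2c}\setminus\Lambda$ and be reordered. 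The fix that keeps the whole argument coherent is the opposite substitution: define $g$ to select $X_{2t-1}$ whenever $X_{2t-1}\in\Lambda_{2c}$ (not $\Lambda$). Then $\cI=\{t\ge t_2+1: X_t^g\in\Lambda_{2c}\}$ coincides with the set as defined, the $\cI$-stream is i.i.d.\ uniform on $\Lambda_{2c}$, $p=\vol{d}(\Lambda_{2c})$ is the correct mixture weight, the claim's proof goes through verbatim with $\Lambda_{2c}$ in place of $\Lambda$, and the probabilities $2p-p^2$ and the Chernoff estimates in the paragraph after \Cref{lem:hit} are already stated for this version.
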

\begin{proof}
For every $t\in [t_2]$, the fact that $Y_t$ is uniform over $\mathbb T^d$ and independent from the history of the process is evident.
For every $t\ge t_2+1$, the statement follows from the fact that, with probability $p$, $Y_t$ is uniformly distributed over the region $\Lambda$ independently of the history of the process and, with probability $1-p$, $Y_t$ is uniformly distributed over $\mathbb T^d\setminus \Lambda$ independently of the history of the process.
\end{proof}

We recall the hitting times $\taukcone{k}$ defined in the introduction and adapt them to the process $\mathbf{Y}$.
Denote by $T$ the number of Bernoulli variables among $(B_t)_{t=t_2+1}^{\taukcone{k}}$ equal to 1: in particular, since $G((Y_t)_{t=1}^{\taukcone{2}})$ is typically Hamiltonian, when $k=2$, whp $T$ is the number of points which landed in the region $\Lambda_{2c}$ until the hitting time for Hamiltonicity of the 1-choice process.
Also, denote by $T_{\cI}$ the $T$-th smallest element in the set $\cI$.

\begin{lemma}\label{lem:hit}
For every $k\ge 1$, whp each of the graphs $G(\mathbf{X}_t)$ for $t\ge T_{\cI}$ is $k$-connected.
\end{lemma}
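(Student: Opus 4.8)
The plan is to transfer the conclusion of \Cref{thm:1} (applied to the process $\mathbf Y$) to the process $\mathbf X$ via the coupling constructed above. First I would observe that, by \Cref{cl:coupleXY}, the sequence $\mathbf Y = (Y_i)_{i\ge 1}$ has exactly the same law as the 1-choice process, so \Cref{thm:1} applies verbatim: whp $G((Y_i)_{i=1}^t)$ is $k$-connected for every $t\ge \taukcone{k}$ (here $\taukcone{k}$ is the hitting time for minimum degree $k$ in the $\mathbf Y$-process). The key structural point is that the multiset $\{Y_i : i\le t_2\}\cup\{X_i^g : i\in \cI, \text{ coupled before step } t\}$ is, by construction, precisely $\{Y_i : i\le t_2+\#\{s\le t : B_s=1\}\}$, i.e.\ the $\mathbf Y$-process sees the $\mathbf X$-points that fell in $\Lambda_{2c}$ in their original relative order, plus independent filler points from $\mathbb T^d\setminus \Lambda_{2c}$. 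Conversely, $\mathbf X_t = \{X_i^g : i\le t\}$ contains all the first $t_2$ points and all $X^g$-points indexed by $\cI\cap[t_2+1,t]$, i.e.\ exactly the points of $\mathbf Y$ that lie in $\Lambda_{2c}$ among the first $t_2 + T'$ indices, where $T'=\#\{s\le t: B_s=1\}$, together with some extra points of $\mathbf X$ lying outside $\Lambda$ (the $\cJ$-indexed ones, which are irrelevant to connectivity inside $\Lambda_{2c}$, and which add extra vertices elsewhere).

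Concretely, the first step is: condition on $\mathbf X_{t_2}$ and on $|\cN|\in[\mathbb E|\cN|/2,2\mathbb E|\cN|]$ as done before the lemma, so that $p = \vol d(\Lambda_{2c}) = \Theta((\log\log(1/r))^{M-1}/(\log(1/r))^2)$, in particular $p=o(1)$ and $p r^{-d}\to\infty$. Next, recall that after the first stage the $\mathbf X$-process and $\mathbf Y$-process coincide; after step $t_2$, whenever $t\in\cJ$ the point $X^g_t$ lies outside $\Lambda$, so it is automatically already $M$-deep inside an $M$-full cube of $\cT$ and (by the argument in \Cref{section:1-choice}, reused in the proof of \Cref{thm:3} below) is irrelevant to whether the graph is $k$-connected — what matters is only the configuration inside $\Lambda_{2c}$. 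Thus $G(\mathbf X_t)$ is $k$-connected as soon as the sub-configuration of $\mathbf X_t$ inside $\Lambda_{2c}$ "matches" a $k$-connected $\mathbf Y$-configuration inside $\Lambda_{2c}$. By definition of $T$ and $T_{\cI}$, when $t = T_{\cI}$ the set of $\cI$-indexed points coupled into $\mathbf Y$ is exactly the first $T$ of them, i.e.\ $\mathbf Y$ has absorbed its first $\taukcone{k}$ points (all of $\mathbf Y_{t_2}$ plus the $T$ points landing in $\Lambda_{2c}$ — note $t_2 + T = \taukcone{k}$ by the very definition of $T$). Hence $G(\mathbf Y_{\taukcone k})$ is whp $k$-connected, and the part of $\mathbf Y_{\taukcone k}$ lying in $\Lambda_{2c}$ equals the part of $\mathbf X_{T_{\cI}}$ lying in $\Lambda_{2c}$, with all additional $\mathbf X_{T_{\cI}}$-vertices sitting in $M$-full cubes outside $\Lambda$, which by \Cref{cl:Hav} only preserves $k$-connectivity. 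For $t>T_{\cI}$ one induces: each further point of $\mathbf X$ either lands in an already-$M$-full cube outside $\Lambda$, where \Cref{cl:Hav} applies since such a point has $\ge k$ neighbours in its cube, or it lands in (or near) $\Lambda_{2c}$, in which case the coupled $\mathbf Y$-process advances past $\taukcone k$ and the monotonicity half of \Cref{thm:1} (whp $G(\mathbf Y_s)$ is $k$-connected for \emph{all} $s\ge\taukcone k$) together with \Cref{cl:Hav} again gives the claim.

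The main obstacle I expect is a bookkeeping/measurability point rather than a probabilistic one: making precise that "$G(\mathbf X_t)$ is $k$-connected iff the restriction to $\Lambda_{2c}$ is $k$-connected and every $M$-full cube keeps $\ge k$ survivors" — this needs the facts, already established for the 1-choice process (and reused throughout \Cref{section:offline-2-choice,section:online-2-choice}), that whp every cube outside $\cN$ is $M$-full with $M\ge k$ and that no $(k-1)$-separated set can survive outside $\Lambda_{2c}$ once $\Lambda_{2c}$-connectivity holds. A second, more quantitative, subtlety is controlling $T$: one needs $t_2 + T = \taukcone k$ exactly, which holds by the \emph{definition} of $T$ as the number of $B_s=1$ among $s\in[t_2+1,\taukcone k]$, but one should also check (via a Chernoff bound, using $pr^{-d}\to\infty$) that $T = \Theta(p r^{-d}\log\log(1/r))$ whp and that $T_{\cI}$ is whp finite and in fact $T_{\cI}\le \mathbb E[\taukcone k] + O(r^{-d}\log\log(1/r))$ — but this last estimate is what the subsequent part of the proof of \Cref{thm:3} needs and can be deferred. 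For the purposes of \emph{this} lemma it suffices that whp $T_{\cI}<\infty$ (immediate since $\cI$ is whp infinite, as $p r^{-d}\to\infty$) and that the coupling identifies $G(\mathbf X_{T_{\cI}})\cap \Lambda_{2c}$ with $G(\mathbf Y_{\taukcone k})\cap\Lambda_{2c}$, after which \Cref{thm:1} and \Cref{cl:Hav} finish the argument.
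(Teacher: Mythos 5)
There is a genuine gap. Your plan correctly identifies the role of the coupling: the restrictions of $G(\mathbf X_{T_\cI})$ and $G(\mathbf Y_{\taukcone k})$ to $\Lambda_{2c}$ coincide, $G(\mathbf Y_{\taukcone k})$ is whp $k$-connected by \Cref{thm:1}, and everything outside $\Lambda$ sits in $M$-full cubes. But your crucial step then reads as if this hands you a $k$-connected base graph on $\Lambda_{2c}$ to which one attaches the remaining $\mathbf X$-vertices via \Cref{cl:Hav}. That inference is false in general: the induced subgraph of a $k$-connected graph on an arbitrary vertex subset need not be $k$-connected, and you cannot pass from $G(\mathbf Y_{\taukcone k})$ to $G(\mathbf X_{T_\cI})$ by \Cref{cl:Hav} alone, since the two vertex sets differ symmetrically (each contains vertices outside $\Lambda_{2c}$ that the other lacks), and \Cref{cl:Hav} only allows adding vertices, never deleting them. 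You dismiss the issue ``$G(\mathbf X_t)$ is $k$-connected iff the restriction to $\Lambda_{2c}$ is $k$-connected'' as bookkeeping, but establishing $k$-connectivity of that restriction is precisely the substantive content you are missing.

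The paper closes this gap with a dedicated Menger-type argument that your proposal does not contain. It decomposes $\Lambda_{2c}$ into its path-connected components $\Lambda^1,\ldots,\Lambda^\kappa$, fixes one such $\Lambda^i$, a $(k-1)$-subset $S$ of the $\mathbf Y$-points in $\Lambda^i$, and two points $Y',Y''$ of $\Lambda^i\setminus S$. By $k$-connectivity of the full graph $G((Y_j)_{j\le t})$, each of $Y',Y''$ reaches some vertex outside $\Lambda^i$, hence a vertex in the boundary shell $\Lambda^i\setminus\Lambda$, which is a topologically connected region of exclusively $M$-full cubes with $M\ge k$. That shell therefore induces a $k$-connected graph (robust to deleting $S$), so $Y'$ and $Y''$ are in fact joined entirely inside $\Lambda^i$ without passing through $S$. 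This is the key geometric observation (the annulus $\Lambda^i\setminus\Lambda$ catches escaping paths) and there is no way around it with \Cref{cl:Hav} alone. Separately, the global gluing of the $\Lambda^i$ with the complementary regions $R^j_c$ is done in the paper via Menger's theorem across $M$-full overlaps, which your proposal also replaces by an (insufficient) appeal to \Cref{cl:Hav}; this second point is less serious but should also be repaired.
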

\begin{proof}
First, we show that it is enough to justify that whp $k$-connectivity holds at each time $t\ge T_{\cI}$ for the geometric graph on $\mathbf{X}_t$ restricted to every (topologically) path-connected component of the region $\Lambda_{2c}$.
Indeed, suppose that this is the case and denote by $\Lambda^1,\ldots,\Lambda^k$ the path-connected components of $\Lambda_{2c}$, and by $R^1,\ldots,R^\ell$ the path-connected components of $\mathbb T\setminus \Lambda_{2c}$.
Then, by our choice of $c$, $M\ge k$ and the $M$-fullness of the cubes outside $\Lambda$, each of the graphs induced by $R^1,\ldots,R^{\ell}$ is $k$-connected.
Moreover, by using that every vertex in $R^i_c$ has at least $k$ neighbours in $R^i$ for each $i\in [\ell]$ and applying \Cref{cl:Hav}, each of the graphs induced by $R^1_c,\ldots,R^{\ell}_c$ is also $k$-connected. 
Since the cubes at $\ell_{\infty}$-distance $c$ from the boundary of $\Lambda_{2c}$ are $M$-full, 
the entire graph $G(\mathbf{X}_t)$ can be constructed by consecutively `gluing' $\Lambda^1,\ldots,\Lambda^k,R^1_{c},\ldots,R^\ell_{c}$ in some order so that every new graph intersects the currently constructed union along (one or several) $k$-connected subgraphs.
The latter property is sufficient to deduce the $k$-connectivity of $G(\mathbf{X}_t)$ by Menger's theorem, see e.g.\ Chapter~3.3 in \cite{Die05}.

Now, fix a path-connected component $\Lambda^i$ of $N_{2c}$. Note that, thanks to the constructed coupling of the 1-choice and the online 2-choice processes following $g$, the points landing in $\Lambda^i$ induce a $k$-connected subgraph of $G(\mathbf{X}_t)$ for all $t\ge T_{\cI}$ if and only if they induce a $k$-connected subgraph of $G((Y_j)_{j=1}^t)$ for all $t\ge \taukcone{k}$. We show that the latter event holds conditionally on the whp statement of \Cref{thm:1}.

Fix any $t\ge \taukcone{k}$ and a subset $S$ of $k-1$ points among $(Y_j)_{j=1}^t$ landing in the region $\Lambda^i$. Moreover, fix any two points $Y',Y''$ among $(Y_j)_{j=1}^t\setminus S$ in the region $\Lambda^i$ and suppose that $Y',Y''$ belong to distinct connected components within $\Lambda^i$ after deleting the points in $S$.
By $k$-connectivity of $G((Y_j)_{j=1}^t)$, for any point $Z$ among $(Y_j)_{j=1}^t\setminus S$ outside $\Lambda^i$, each of $Y'$ and $Y''$ connects to $Z$ via a path in $G((Y_j)_{j=1}^t)\setminus S$.
In particular, each of $Y'$ and $Y''$ connects via a path in $G((Y_j)_{j=1}^t)\setminus S$ to a vertex in $\Lambda^i\setminus \Lambda$.
However, the region $\Lambda^i\setminus \Lambda$ is connected and contains only $M$-full cubes. 
In particular, since $M\ge k$, it induces a $k$-connected graph. Thus, the fact that $Y',Y''$ simultaneously connect to points in $\Lambda^i\setminus \Lambda$ means that they actually remain connected within $\Lambda^i$ without going through $S$. 
This implies that the graph induced by the points among $(Y_j)_{j=1}^t$ landing in the region $\Lambda^i$ is $k$-connected and concludes the proof.
\end{proof}

\begin{lemma}
For $k = 2$, whp each of the graphs $G(\mathbf{X}_t)$ for $t\ge T_{\cI}$ is Hamiltonian.
\end{lemma}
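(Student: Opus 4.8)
The plan is to reduce the statement to the \emph{reference construction} of \Cref{subsection-hamiltonicity}, which deterministically produces a Hamilton cycle in a graph $G$ once the two hypotheses \ref{prop:cG} (the graph $\cG$ contains no connected component of $M$-nonfull cubes with more than $U$ vertices) and \ref{prop:G} ($2$-connectivity) are satisfied; recall that for the present part $M$ was chosen larger than $2U+2+2(2c+1)^d$ precisely so that this construction applies. It therefore suffices to check that, whp, for every $t\ge T_{\cI}$, the graph $G(\bX_t)$ satisfies both \ref{prop:cG} and \ref{prop:G} with respect to the point set $\bX_t$. Hypothesis \ref{prop:G} is immediate: applying \Cref{lem:hit} with $k=2$ gives that whp $G(\bX_t)$ is $2$-connected for every $t\ge T_{\cI}$.

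For \ref{prop:cG} I would first use that $M$-fullness of a cube is monotone in the point set (as in \Cref{rem:lem:3}): every cube outside $\Lambda$ is $M$-full with respect to $\bX_{t_2}$ by definition of $\cN$, hence it is $M$-full with respect to $\bX_t$ for all $t\ge t_2$, so every $M$-nonfull cube with respect to $\bX_t$ lies in $\Lambda$. Next I would invoke the coupling from \Cref{cl:coupleXY}. The points of $\mathbf Y$ indexed by $[t_2]$ form exactly $\bX_{t_2}$; the points of $\mathbf Y$ arriving after time $t_2$ and coupled through a zero-valued Bernoulli variable land outside $\Lambda_{2c}\supseteq\Lambda$; and the points coupled through a one-valued Bernoulli variable among $(B_t)_{t=t_2+1}^{\taukcone{2}}$ (there are $T$ of them) are precisely the points $X^g$ produced at the first $T$ steps of the index set $\cI$, and these belong to $\bX_{T_{\cI}}$ by definition of $T_{\cI}$. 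Consequently the points of $(Y_j)_{j=1}^{\taukcone{2}}$ lying in $\Lambda$ form a subset of $\bX_{T_{\cI}}\subseteq\bX_t$ for every $t\ge T_{\cI}$.

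From this I conclude \ref{prop:cG} as follows. Every cube of $\Lambda$ that is $M$-full with respect to $(Y_j)_{j=1}^{\taukcone{2}}$ is $M$-full with respect to $\bX_t$; combined with the paragraph above, the $M$-nonfull cubes with respect to $\bX_t$ form a subset of the ($M$-nonfull) cubes of the $1$-choice process $\mathbf Y$ at time $\taukcone{2}$. By \Cref{lem:3} applied to $\mathbf Y$ at time $t_*$, the monotonicity from \Cref{rem:lem:3}, and the fact that whp $\taukcone{2}\ge t_*$ (cf.\ \Cref{lem:isol}), those cubes form connected components of $\cG$ with at most $U$ vertices; and a subgraph of a graph all of whose components have at most $U$ vertices enjoys the same property. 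This gives \ref{prop:cG} for $\bX_t$, for every $t\ge T_{\cI}$, and, together with \ref{prop:G}, the reference construction of \Cref{subsection-hamiltonicity} outputs a Hamilton cycle in $G(\bX_t)$ for every $t\ge T_{\cI}$, as desired.

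The only delicate point is the coupling bookkeeping in the second paragraph — identifying exactly which points of $\mathbf Y$ land in $\Lambda$ and verifying they are all contained in $\bX_{T_{\cI}}$ — together with the simultaneous use of the two monotonicities: monotonicity of $M$-fullness in the point set (to confine $M$-nonfull cubes to $\Lambda$) and monotonicity in $t$ of the ``no large $M$-nonfull component'' property (to pass from $t=T_{\cI}$ to all $t\ge T_{\cI}$, using $\bX_{T_{\cI}}\subseteq\bX_t$). Everything else is a direct appeal to \Cref{lem:hit}, \Cref{lem:3} and the reference construction.
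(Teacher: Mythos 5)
Your proposal is correct and follows essentially the same route as the paper: transfer \ref{prop:cG} from the $1$-choice process $\mathbf{Y}$ to the online process $\mathbf{X}$ via the coupling, get \ref{prop:G} from \Cref{lem:hit} with $k=2$, and then invoke the reference construction. The paper's own proof is terser — it simply asserts that the restrictions of $G(\bX_{T_\cI})$ and $G(\bY_{T_\cI})$ to $\Lambda_{2c}$ coincide and derives \ref{prop:cG} from that — whereas you unpack the coupling bookkeeping into the explicit inclusion $\{Y_j : j\le\taukcone{2}\}\cap\Lambda\subseteq\bX_{T_\cI}$ and separately record that cubes outside $\Lambda$ stay $M$-full by monotonicity; both of these points are used but not spelled out in the paper, so your version is in fact the more careful write-up of the same argument.
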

\begin{proof}
Recall properties \ref{prop:cG} and \ref{prop:G} from \Cref{subsection-hamiltonicity}. 
From the construction of the coupling of $\bX$ and $\bY$, it follows that the restrictions of $G(\bX_{T_{\cI}})$ and $G(\bY_{T_{\cI}})$ to $\Lambda_{2c}$ coincide. 
In particular, if the whp statement of \Cref{lem:3} holds for $G(\bY_{T_{\cI}})$, then it also holds for $G(\bX_{T_{\cI}})$. Consequently, \ref{prop:cG} holds whp for $G(\bX_{T_{\cI}})$, and for each of $(G(\bX_t))_{t > T_{\cI}}$ by \Cref{rem:lem:3}. Property \ref{prop:G} follows from \Cref{lem:hit} for $k = 2$.
\end{proof}

It remains to estimate the value of $T_{\cI}$. Fix any $\eps > 0$ and set
\[L^-_\eps = d\frac{r^{-d}\log(1/r)}{\theta_d} + (1-\eps)k \frac{r^{-d}\log\log(1/r)}{\theta_d}\quad \text{and}\quad L^+_\eps = d\frac{r^{-d}\log(1/r)}{\theta_d} + (1+\eps)k \frac{r^{-d}\log\log(1/r)}{\theta_d}.\]
On the one hand, by~\eqref{eq:expZ} and \Cref{lem:conc}, for every $\eps > 0$, we know that whp
\begin{equation}\label{eq:taukcone}
\taukcone{k}\in [L^-_{\eps}, L^+_{\eps}].
\end{equation}
Then, conditionally on $|\cN|$ and assuming~\eqref{eq:taukcone}, we have that $T$ is stochastically dominated by a binomial random variable with parameters $L^+_\eps-t_2$ and $p=\text{Vol}_d(\Lambda_{2c})$.
Moreover, since $\text{Vol}_d(\Lambda_{2c})$ is at most a constant factor away from $\text{Vol}_d(\Lambda) = |\cN|/|\cT|$ and thanks to~\eqref{eq:cN}, $p = \omega((\log(1/r))^{-2})$. 
Thus, Chernoff's bound implies that, for every $\eps > 0$, whp $T \le p(L^+_{2\eps}-t_2)$.
Conditionally on the latter event, by using that a partner pair contains a point in $\Lambda_{2c}$ with probability $2p-p^2$, Chernoff's inequality implies that whp
\[|\cI\cap [t_2+1,(L^+_{4\eps}+t_2)/2]|\ge (2p-p^2)\frac{L^+_{3\eps}-t_2}{2}\ge T\quad\Longrightarrow\quad T_{\cI}\le \frac{L^+_{4\eps}+t_2}{2}.\]
In particular, with high probability, for every $t$ such that $2t\ge L^+_{4\eps}+t_2$, $G(\mathbf{X}_t)$ is $k$-connected (and Hamiltonian in the case $k = 2$), which is enough to deduce \ref{item:A1} and \ref{item:A2} with $C = (2c^d+2)/\theta_d$.

\section{Concluding remarks}\label{section:conclusion}

Several interesting research directions stem from our work. 
Perhaps the most relevant questions concern the power of choice setting when $b\ge 3$ choices are proposed to the agent. 
While we believe that the tools we developed would be useful for the analysis of the model when $b\ge 3$, this further step would undoubtedly require an additional effort due to the growth of far components.

Following a line of research concerning the behaviour of the giant component in the Achlioptas process~\cite{ADS09,BF01,MS15,RW12,SW07}, one could wonder if linear-sized components may appear around the sharp threshold for existence of a giant component $t_g = t_g(r)$ in $(G_t)_{t=0}^{\infty}$ as $r\to 0$.
For $\delta>0$ and small $\eps=\eps(\delta)>0$, by privileging points arriving in the rectangle $[0,\eps]^d$ (seen as a subset of $\mathbb T^d$) over their partners, the agent may force the existence of components in $G_t$ with linear number of vertices as long as $t\ge (1+\delta)t_g$ (as also observed in~\cite{MS15}).
Analysing the optimal proportion of vertices in the largest component throughout the process in each of the online and offline 2-choice setting is an open problem of interest.

Finally, we remark that exhibiting analogous results to \Cref{thm:1,thm:2,thm:3} for perfect matchings (up to slightly annoying parity considerations) follows from a simplified version of the presented approach for Hamiltonicity. 
We believe that similar results could hold more generally for $H$-factors for graphs $H$ of constant order: while the minimum degree assumption is insufficient at this level of generality, the natural sufficient condition would be that every vertex in the graph is contained in at least one copy~of~$H$.

\paragraph{Acknowledgements.} 
The authors would like to thank Marcos Kiwi and Dieter Mitsche for several comments on the manuscript,
James Martin for directing our attention to the reference~\cite{VDK96}, and Matija Pasch for a useful related question.
This work was supported by the Austrian Science Fund (FWF) grant number 10.55776/ESP624. For open access purposes, the authors have applied a CC BY public copyright license to any author-accepted manuscript version arising from this
submission.

\bibliographystyle{abbrv}  
\bibliography{Bibliography}

\end{document}